\documentclass{amsart}

\usepackage{amsfonts}
\usepackage{graphicx}
\usepackage{hyperref}
\usepackage[ngerman, english]{babel}
\usepackage{datetime}
\usepackage{enumitem} 
\usepackage{mathtools}
\usepackage{cite}
\usepackage{url}
\usepackage[a4paper,margin=20ex]{geometry}
\usepackage{xpatch}
\usepackage{amssymb}
\usepackage{amsfonts}
\usepackage{amsmath}
\usepackage{microtype}
\usepackage{hyperref}
\usepackage[dvipsnames]{xcolor}
\usepackage{tikz}
\usepackage{pgfplots}
\definecolor{citation}{rgb}{0.2,0.58,0.2} 
\definecolor{formula}{rgb}{0,0,1}
\hypersetup{colorlinks,breaklinks,
	citecolor=[rgb]{0.2,0.58,0.2},
	urlcolor=[rgb]{0,0.5,0.5},
	linkcolor=[rgb]{0,0,1}}

\xpatchcmd{\proof}
{\itshape}
{\bfseries}
{}
{}

\newcommand\blfootnote[1]{%
  \begingroup
  \renewcommand\thefootnote{}\footnote{#1}%
  \addtocounter{footnote}{-1}%
  \endgroup
}

\makeatletter
\newcommand*{\rom}[1]{\expandafter\@slowromancap\romannumeral #1@}
\makeatother
\def\Xint#1{\mathchoice
{\XXint\displaystyle\textstyle{#1}}%
{\XXint\textstyle\scriptstyle{#1}}%
{\XXint\scriptstyle\scriptscriptstyle{#1}}%
{\XXint\scriptscriptstyle\scriptscriptstyle{#1}}%
\!\int}
\def\XXint#1#2#3{{\setbox0=\hbox{$#1{#2#3}{\int}$ }
\vcenter{\hbox{$#2#3$ }}\kern-.582\wd0}}

\def\dashint{\Xint-}

							
\newtheorem*{defin}{Definition}
\newtheorem{thm}{Theorem}[section]
\newtheorem{cor}[thm]{Corollary}
\newtheorem{lem}[thm]{Lemma}
\newtheorem{rem}[thm]{Remark}

\newtheorem{prop}[thm]{Proposition}

\numberwithin{equation}{section}

\title{Regularity theory for nonlocal equations with VMO coefficients}
\author{Simon Nowak}
\address{Universit\"at Bielefeld, Fakult\"at f\"ur Mathematik, Postfach 100131, D-33501 Bielefeld, Germany}
\email{simon.nowak@uni-bielefeld.de}
\keywords{Nonlocal operator, Nonlocal equations, Sobolev regularity, H\"older regularity, Calder\'on-Zygmund estimates}
\subjclass[2020]{35R09, 35B65, 35D30, 46E35, 47G20}
\begin{document}

\maketitle
\begin{abstract}
We prove higher regularity for nonlinear nonlocal equations with possibly discontinuous coefficients of VMO-type in fractional Sobolev spaces. While for corresponding local elliptic equations with VMO coefficients it is only possible to obtain higher integrability, in our nonlocal setting we are able to also prove a substantial amount of higher differentiability, so that our result is in some sense of purely nonlocal type. By embedding, we also obtain higher H\"older regularity for such nonlocal equations.
\end{abstract}
\pagestyle{headings}

\section{Introduction} 
\subsection{Setting} \label{setting}
In this work, we are dealing with nonlinear nonlocal integro-differential equations of the form \blfootnote{Supported by SFB 1283 of the German Research Foundation.}
\begin{equation} \label{nonlocaleq}
	L_A^\Phi u =  f \text{ in } \Omega \subset \mathbb{R}^n,
\end{equation}
where $\Omega \subset \mathbb{R}^n$ is a domain (= open set) and $f:\Omega \to \mathbb{R}$ is a given function, while $A:\mathbb{R}^n \times \mathbb{R}^n \to \mathbb{R}$ is a coefficient and $\Phi:\mathbb{R} \to \mathbb{R}$ is a nonlinearity with properties to be specified below. Moreover, for some fixed $s \in (0,1)$ the nonlocal operator $L_A^\Phi$ is formally defined by
\begin{equation} \label{no}
	L_A^\Phi u(x) := p.v. \int_{\mathbb{R}^n} \frac{A(x,y)}{|x-y|^{n+2s}} \Phi(u(x)-u(y))dy, \quad x \in \Omega.
\end{equation}
For the sake of simplicity, throughout the paper we assume that $n>2s$.
Moreover, we assume that the coefficient $A$ is measurable and that there exists a constant $\Lambda \geq 1$ such that
\begin{equation} \label{eq1}
	\Lambda^{-1} \leq A(x,y) \leq \Lambda \text{ for almost all } x,y \in \mathbb{R}^n.
\end{equation}
In addition, we require $A$ to be symmetric, that is,
\begin{equation} \label{symmetry}
	A(x,y)=A(y,x) \text{ for almost all } x,y \in \mathbb{R}^n.
\end{equation}
We define $\mathcal{L}_0(\Lambda)$ as the class of all such measurable coefficients $A$ that satisfy the conditions (\ref{eq1}) and (\ref{symmetry}).
Furthermore, we require that the nonlinearity $\Phi$ satisfies $\Phi(0)=0$ and the following Lipschitz continuity and monotonicity assumptions, namely
\begin{equation} \label{PhiLipschitz}
	|\Phi(t)-\Phi(t^\prime)| \leq \Lambda |t-t^\prime| \text{ for all } t,t^\prime \in \mathbb{R}
\end{equation}
and
\begin{equation} \label{PhiMonotone}
	\left (\Phi(t)-\Phi(t^\prime) \right )(t-t^\prime) \geq \Lambda^{-1} (t-t^\prime)^2 \text{ for all } t,t^\prime \in \mathbb{R},
\end{equation}
where for simplicity we use the same constant $\Lambda \geq 1$ as in (\ref{eq1}). The above conditions are for instance satisfied by any $C^1$ function $\Phi$ with $\Phi(0)=0$ such that the image of the first derivative $\Phi^\prime$ of $\Phi$ is contained in $[\Lambda^{-1},\Lambda]$.
Consider the fractional Sobolev space
$$W^{s,2}(\mathbb{R}^n)= \left \{u \in L^2(\mathbb{R}^n) \mathrel{\Big|} \int_{\mathbb{R}^n} \int_{\mathbb{R}^n} \frac{|u(x)-u(y)|^2}{|x-y|^{n+2s}}dydx < \infty \right \}$$
and denote by $W^{s,2}_c(\Omega)$ the set of all functions that belong to $W^{s,2}(\mathbb{R}^n)$ and are compactly supported in $\Omega$. We are now in the position to define weak solutions of the equation (\ref{nonlocaleq}) as follows.
\begin{defin}
	Given $f \in L^\frac{2n}{n+2s}_{loc}(\Omega)$, we say that $u \in W^{s,2}(\mathbb{R}^n)$ is a weak solution of the equation $L_A^\Phi u = f$ in $\Omega$, if 
	\begin{equation} \label{weaksolx1}
		\int_{\mathbb{R}^n} \int_{\mathbb{R}^n} \frac{A(x,y)}{|x-y|^{n+2s}} \Phi(u(x)-u(y))(\varphi(x)-\varphi(y))dydx = \int_{\Omega} f \varphi dx \quad \forall \varphi \in W^{s,2}_c(\Omega).
	\end{equation}
\end{defin}
We remark that the right-hand side of (\ref{weaksolx1}) is finite in view of using H\"older's inequality with H\"older conjugates $\frac{2n}{n+2s}$ and $\frac{2n}{n-2s}$ and the fractional Sobolev embedding (see Proposition \ref{Sobemb}).

In our main results, we require $A$ to be of vanishing mean oscillation close to the diagonal in the following sense.

\begin{defin}
	Let $\delta>0$ and $A \in \mathcal{L}_0(\Lambda)$. We say that $A$ is $\delta$-vanishing in a ball $B \subset \mathbb{R}^n$, if for any $r>0$ and all $x_0,y_0 \in B$ with $B_r(x_0) \subset B$ and $B_r(y_0) \subset B$, we have $$ \dashint_{B_r(x_0)} \dashint_{B_r(y_0)} |A(x,y)-\overline A_{r,x_0,y_0}|dydx \leq \delta ,$$
	where $\overline A_{r,x_0,y_0}:= \dashint_{B_r(x_0)} \dashint_{B_r(y_0)} A(x,y)dydx$. \par
	Moreover, we say that $A$ is $(\delta,R)$-BMO in a domain $\Omega \subset \mathbb{R}^n$ and for some $R>0$, if for any $z \in \Omega$ and any $0<r\leq R$ with $B_r(z) \Subset \Omega$, $A$ is $\delta$-vanishing in $B_r(z)$. \par
	Finally, we say that $A$ is VMO in $\Omega$, if for any $\delta>0$, there exists some $R>0$ such that $A$ is $(\delta,R)$-BMO in $\Omega$.
\end{defin}
If $A$ belongs to the classical space of functions with vanishing mean oscillation $\textnormal{VMO}(\mathbb{R}^{2n})$ (see e.g.\ \cite[section 2.1.1]{PS}, \cite{DiF} or \cite{Sarason}), then $A$ is also VMO in $\mathbb{R}^n$ in the above sense. Nevertheless, our assumption that $A$ is VMO in $\Omega$ is more general, as it roughly speaking only means that $A$ is of vanishing mean oscillation in some arbitrarily small open neighbourhood of the diagonal in $\Omega \times \Omega$, while away from the diagonal in $\Omega \times \Omega$ and outside of $\Omega \times \Omega$ the behaviour of $A$ is allowed to be more general. In particular, if $A$ is continuous in an open neighbourhood of the diagonal in $\Omega \times \Omega$, then $A$ is clearly VMO in $\Omega$. 
Nevertheless, continuity close to the diagonal is not essential, as there are plenty of VMO functions that are discontinuous. For example, assuming that $\Omega$ contains the origin, if for some $\alpha \in (0,1)$ we have
\begin{equation} \label{ex}
	\begin{aligned}
		A(x,y)=
		\begin{cases}
			\textnormal{sin} \left (|\textnormal{log}(|x|+|y|)|^\alpha \right )+2 & \text{ if } x \neq 0 \text{ or } y \neq 0 \\
			0 & \text{ if } x=y=0
		\end{cases}
	\end{aligned}
\end{equation}
or
\begin{equation} \label{ex1}
	\begin{aligned}
		A(x,y)=
		\begin{cases}
			\textnormal{sin} \left (\textnormal{log}|\textnormal{log}(|x|+|y|)| \right )+2 & \text{ if } x \neq 0 \text{ or } y \neq 0 \\
			0 & \text{ if } x=y=0
		\end{cases}
	\end{aligned}
\end{equation}
in an open neighbourhood of $\textnormal{diag}(\Omega \times \Omega)$, then $A$ is VMO in $\Omega$. However, in both cases $A$ is discontinuous at $x=y=0$.
\subsection{Main results} \label{mr}
Our first main result is concerned with Sobolev regularity.

\begin{thm} \label{mainint5z}
	Let $\Omega \subset \mathbb{R}^n$ be a domain, $s \in (0,1)$ and $p \in [2,\infty)$. Moreover, fix some $t$ such that
	\begin{equation} \label{trangexz}
		s \leq t < \min  \left \{2s \left (1-\frac{1}{p} \right) ,1-\frac{2-2s}{p} \right \} = \begin{cases} 2s \left (1-\frac{1}{p} \right), & \text{ if } s \leq 1/2 \\ 1-\frac{2-2s}{p}, & \text{ if } s > 1/2 \end{cases} =:t_{sup}.
	\end{equation} If $A \in \mathcal{L}_0(\Lambda)$ is VMO in $\Omega$ and if $\Phi$ satisfies the conditions (\ref{PhiLipschitz}) and (\ref{PhiMonotone}) with respect to $\Lambda$, 
	then for any weak solution $u \in W^{s,2}(\mathbb{R}^n)$
	of the equation
	$$
	L_A^\Phi u = f \text{ in } \Omega,
	$$
	we have the implication $$f \in L^\frac{np}{n+(2s-t)p}_{loc}(\Omega) \implies u \in W^{t,p}_{loc}(\Omega).$$
\end{thm}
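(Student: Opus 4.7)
The plan is to use a perturbation argument that exploits the VMO condition to locally compare $u$ to a solution of an equation with ``frozen'' coefficient, for which sharp higher regularity is available, and then upgrade pointwise/oscillation comparison estimates to fractional Sobolev regularity via a Calder\'on--Zygmund type argument.

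First I would localize: fix $\Omega' \Subset \Omega$ and, via a covering, reduce to proving the regularity on a small ball $B_r(x_0) \Subset \Omega$. For prescribed $\delta>0$, the VMO assumption delivers some $R>0$ so that $A$ is $\delta$-vanishing on all such balls with $r \leq R$. I would then split $A = \bar{A} + (A - \bar{A})$ with $\bar{A} = \overline{A}_{r,x_0,x_0}$, rewriting the weak equation as $L_{\bar{A}}^\Phi u = f + E_r[u]$ in a weak sense, where the ``error'' $E_r[u]$ encodes the coefficient oscillation on the near-diagonal block $B_r(x_0) \times B_r(x_0)$ and is small when $\delta$ is small, while the off-diagonal and far-field contributions are absorbed into tail terms controlled only by the global $L^\infty$ bound on $A$.

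Next I would introduce the comparison solution $v$ as the unique solution of the Dirichlet problem $L_{\bar{A}}^\Phi v = 0$ on $B_r(x_0)$ with $v = u$ outside. Testing the difference equation with $u - v \in W^{s,2}_c(B_r(x_0))$ and invoking the monotonicity condition (\ref{PhiMonotone}) together with (\ref{PhiLipschitz}), I would obtain a comparison estimate bounding a Gagliardo seminorm of $u - v$ by $\delta$ times a tail seminorm of $u$ plus a suitable Sobolev-dual norm of $f$. Because $\bar{A}$ is constant on $B_r(x_0) \times B_r(x_0)$, the equation satisfied by $v$ falls into a class with kernel sufficiently regular near the diagonal to inherit improved fractional differentiability: from (known) sharp regularity theory for homogeneous equations with translation-invariant-like kernels, $v$ lies in $W^{t,p}$ on a smaller ball for every $t < t_{sup}$, with quantitative scale-invariant estimates.

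Finally I would run a level-set / good-$\lambda$ argument in the spirit of Caffarelli--Peral: applying the comparison step at every scale and center, one obtains sublinear decay of the excess of a fractional sharp maximal function of $u$, and a Fefferman--Stein type bound yields $u \in W^{t,p}_{loc}(\Omega)$. The main obstacle is obtaining the \emph{sharp} range up to $t_{sup}$: this demands a very precise gain of differentiability for $v$ (essentially almost $2s$ derivatives above the $W^{s,2}$ baseline) and a delicate scaling balance in the iteration between the comparison error, the contribution of $f$, and the nonlocal tails, which are not controlled by the VMO condition. A secondary issue is that the nonlinearity $\Phi$ prevents direct linear manipulation, but (\ref{PhiLipschitz}) and (\ref{PhiMonotone}) let one treat $\Phi(u(x)-u(y))$ as essentially equivalent to $u(x)-u(y)$ throughout the energy and comparison estimates.
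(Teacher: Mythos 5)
Your broad strategy (freeze the coefficient via the VMO hypothesis, compare to a Dirichlet problem with good kernel, run a Caffarelli--Peral good-$\lambda$ iteration) matches the paper's skeleton, but the proposal has a genuine gap in precisely the place that makes the theorem interesting: the \emph{differentiability} gain beyond $t=s$. You run the good-$\lambda$ argument for a fractional sharp maximal function of $u$ on $\mathbb{R}^n$, and that kind of scheme can only upgrade integrability, not differentiability; it yields $u\in W^{s,p}_{loc}$ but gives no access to $W^{t,p}_{loc}$ with $t>s$. The paper's decisive device is to work instead with the two-variable function $U(x,y)=|u(x)-u(y)|/|x-y|^{s+\theta}$ and the doubling measure $d\mu = |x-y|^{-(n-2\theta)}\,dx\,dy$ on $\mathbb{R}^{2n}$, using the identity $\|U\|_{L^p(\Omega\times\Omega,\mu)}=[u]_{W^{s_\theta,p}(\Omega)}$ with $s_\theta = s+\theta(1-2/p)$: higher integrability of $U$ with respect to $\mu$ is literally higher differentiability of $u$. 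Nothing in your proposal plays this role, so the threshold $t_{sup}$ would not appear.

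Once one passes to the $\mathbb{R}^{2n}$ picture, the Calder\'on--Zygmund decomposition produces cubes $K_1\times K_2$ far from the diagonal, where the equation $L_A^\Phi u = f$ provides essentially no usable information; your comparison step simply does not apply there. The paper handles this by constructing a second, auxiliary diagonal cover with an earlier exit time, proving separate off-diagonal reverse H\"older inequalities with diagonal correction terms, and then running a rather delicate combinatorial argument to absorb the correction terms. This is the technical heart of the proof and is entirely absent from your plan. Two further points you underestimate: (i) the frozen-coefficient comparison solution $v$ is not known to satisfy a scale-invariant $W^{t,p}$ bound for the full range $t<t_{sup}$; what is available (and what the paper uses) is the $C^{s+\theta}$ estimate of [MeH] for $\theta<\min\{s,1-s\}$, converted into an $L^\infty$ bound on $V$; and (ii) to make the freezing argument work with merely VMO coefficients one needs an a priori small higher-integrability/higher-differentiability gain for $U$ (the self-improving result of [selfimpro]), which you do not invoke. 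Finally, the data $f$ is not fed directly into the comparison but routed through the auxiliary problem $(-\Delta)^s g=f$ and its $H^{2s,p}_{loc}$ theory.
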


\begin{rem} \label{mainrem} \normalfont
	In fact, in order to arrive at the conclusion that $u \in W^{t,p}_{loc}(\Omega)$ for some $t$ and some $p$ in Theorem \ref{mainint5z}, it is actually enough to assume that $A$ is $(\delta,R)$-BMO in $\Omega$ for some arbitrarily small $R>0$ and some small enough $\delta>0$ depending only on $p,t,n,s$ and $\Lambda$, see Theorem \ref{mainint5} below. This is in line with corresponding results for local elliptic equations, see e.g.\ \cite{ByunLp}.
\end{rem}

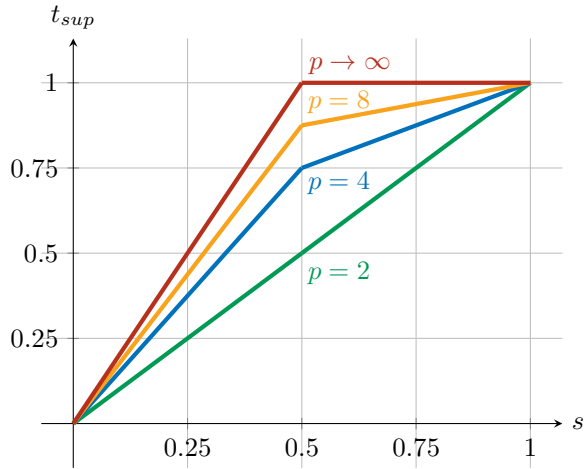
\begin{figure} \label{Fig1}
	\centering
	\begin{tikzpicture}
		\begin{axis}[grid = major,clip = true, 
			clip mode=individual, axis x line = middle, 
			axis y line = middle, xmin=0,ymin=0,xmax=1,ymax=1, samples=50,xlabel=$s$,xlabel style={at=(current axis.right of origin), anchor=west},ylabel=$t_{sup}$,ylabel style={at=(current axis.above origin), anchor=south},xtick = {0,0.25,0.5,0.75,1},ytick = {0,0.25,0.5,0.75,1}, enlarge y limits={rel=0.13}, enlarge x limits={rel=0.07}]
			\addplot[ForestGreen,ultra thick, domain=0:0.5] {x} [yshift=1pt,xshift=14pt] node[below,pos=1] {$p=2$};
			\addplot[ForestGreen,ultra thick,domain=0.5:1] {x};
			\addplot[RoyalBlue,ultra thick, domain=0:0.5] {1.5*x} [yshift=2.5pt,xshift=14pt] node[below,pos=1] {$p=4$};
			\addplot[RoyalBlue,ultra thick,domain=0.5:1] {0.5*x+0.5};
			\addplot[YellowOrange,ultra thick, domain=0:0.5] {1.75*x} [yshift=0.5pt,xshift=14pt] node[above,pos=1] {$p=8$};
			\addplot[YellowOrange,ultra thick,domain=0.5:1] {0.25*x+0.75};
			\addplot[BrickRed,ultra thick, domain=0:0.5] {2*x}[yshift=-1pt,xshift=18pt] node[above,pos=1] {$p \to \infty$};
			\addplot[BrickRed,ultra thick,domain=0.5:1] {1};
		\end{axis}
	\end{tikzpicture}
	\caption{Higher differentiability in relation to $s$ and $p$.}
\end{figure}
An interesting feature of Theorem \ref{mainint5z} is that the differentiability gain indicated by the number $t_{sup}$ depends on the gain of integrability by the relation (\ref{trangexz}). This relation is visualized in the above figure. \par
In particular, on the one hand we observe that in the case when $p$ is close to $2$, that is, in the case of a small gain of integrability, Theorem 1.1 also implies only a small gain of differentiability. On the other hand, in the limit case when $p \to \infty$ we obtain differentiability in the whole range $s \leq t< \min \left \{2s,1 \right \}$, which we expect to be sharp in the case when $A$ is merely VMO. 
An interesting question is if also in the case of smaller values of $p$ the differentiability gain in Theorem \ref{mainint5z} can be improved beyond $t_{sup}$ to the full range $s \leq t< \min \left \{2s,1 \right \}$, or if counterexamples that contradict such an improvement can be constructed. This is because such a gain of differentiability was in fact observed in the recent paper \cite{MSY}. However, in \cite{MSY} this improved regularity is only proved in the linear case when $\Phi(t)=t$ and under some H\"older continuity assumption on $A$, which in particular does not include many examples of discontinuous VMO coefficients like (\ref{ex}) and (\ref{ex1}), see section \ref{pr} for more details. \par
In Theorem \ref{mainint5z}, we stated the result in terms of the higher integrability exponent $p$ at which we arrive, which has the advantage that the statement of Theorem \ref{mainint5z} is relatively clean. However, an interesting question is given by how much higher integrability and differentiability we gain if we instead prescribe the integrability of the source function $f$. This question leads to the following reformulation of Theorem \ref{mainint5z}.
\begin{thm} \label{mainint5zx}
	Let $\Omega \subset \mathbb{R}^n$ be a domain, $s \in (0,1)$ and $f \in L^q_{loc}(\Omega)$ for some $q \in \left (\frac{2n}{n+2s},\infty \right )$. In addition, assume that $A \in \mathcal{L}_0(\Lambda)$ is VMO in $\Omega$ and that $\Phi$ satisfies the conditions (\ref{PhiLipschitz}) and (\ref{PhiMonotone}) with respect to $\Lambda$. Then for any weak solution $u \in W^{s,2}(\mathbb{R}^n)$
	of the equation
	$L_A^\Phi u = f \text{ in } \Omega,$ the following is true. Fix some $t$ such that $s \leq t < 1$.
	\begin{itemize}
		\item If $t$ satisfies
		\begin{equation} \label{trangexy}
			2s-\frac{n}{q}<t < \begin{cases} 2s \left (1- \frac{n}{(n+2s)q} \right ), & \text{ if } s \leq 1/2 \\ 1-\frac{(2-2s)(n+q-2sq)}{(n+2-2s)q}, & \text{ if } s > 1/2, \end{cases}
		\end{equation}
		then we have $u \in W^{t,p}_{loc}(\Omega)$, where $p=\frac{nq}{n-(2s-t)q}$.
		\item If $t$ satisfies 
		\begin{equation} \label{equation}
			t \leq 2s-\frac{n}{q},
		\end{equation}
		then we have $u \in W^{t,p}_{loc}(\Omega)$ for any $p \in (1,\infty)$.
	\end{itemize}
\end{thm}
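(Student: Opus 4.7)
The plan is to show that Theorem \ref{mainint5zx} is a direct reformulation of Theorem \ref{mainint5z}, obtained by inverting the relation between the integrability $q$ of the source $f$ and the target Sobolev exponent $p$. Specifically, I would substitute $p = nq/(n-(2s-t)q)$ (which is well-posed exactly in the first case $t > 2s-n/q$) and verify algebraically that the upper bound in (\ref{trangexy}) corresponds precisely to the constraint $t < t_{sup}$ from (\ref{trangexz}).

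In the first case ($2s-n/q < t < \ldots$), I set $p := nq/(n-(2s-t)q)$ and check the three hypotheses of Theorem \ref{mainint5z}. First, by direct computation, the required integrability exponent $np/(n+(2s-t)p)$ equals exactly $q$, so $f \in L^q_{loc}(\Omega)$ gives the needed source integrability. Second, $p \geq 2$ is equivalent to $q(n+4s-2t) \geq 2n$; rewriting this as a linear upper bound on $t$ and comparing with the upper bound in (\ref{trangexy}), one checks that the latter is strictly smaller whenever $q > 2n/(n+2s)$, so $p \geq 2$ is automatic. Third, the constraint $t < t_{sup}(p)$ must be unpacked separately for $s \leq 1/2$ (where $t_{sup} = 2s(1-1/p)$) and $s > 1/2$ (where $t_{sup} = 1-(2-2s)/p$); in each case, substituting $p = nq/(n-(2s-t)q)$, clearing denominators, and solving the resulting linear inequality in $t$ yields precisely the bound in (\ref{trangexy}). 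Applying Theorem \ref{mainint5z} then gives $u \in W^{t,p}_{loc}(\Omega)$.

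In the second case $t \leq 2s - n/q$, the map $p \mapsto np/(n+(2s-t)p)$ stays bounded above by $q$ for all $p > 0$: indeed, $np/(n+(2s-t)p) \leq q$ is equivalent to $p(n-(2s-t)q) \leq nq$, and $n-(2s-t)q \leq 0$ by assumption, so this holds for every $p$. Given any $p \in (1,\infty)$, I would choose $p' \geq \max\{p,2\}$ so large that $t < \min\{2s(1-1/p'),\,1-(2-2s)/p'\}$, which is possible since $t < 1$ and $t \leq 2s - n/q < 2s$ while the two quantities tend to $\min\{2s,1\}$ as $p' \to \infty$. Then $f \in L^q_{loc} \subset L^{np'/(n+(2s-t)p')}_{loc}$, so Theorem \ref{mainint5z} applied with exponent $p'$ gives $u \in W^{t,p'}_{loc}(\Omega)$. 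Finally, the local inclusion $W^{t,p'}_{loc}(\Omega) \hookrightarrow W^{t,p}_{loc}(\Omega)$ (valid because $p' \geq p$ and hence $L^{p'}(B) \hookrightarrow L^p(B)$ for any bounded ball $B \Subset \Omega$, applied both to $u$ and to the Gagliardo difference quotient) yields the claim.

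The only substantive obstacle is the algebraic verification in step (iii) of the first case: one must patiently unwind the substitution $p = nq/(n-(2s-t)q)$ inside the two expressions defining $t_{sup}$ and confirm that the resulting threshold on $t$ matches (\ref{trangexy}) exactly in both the sub- and super-critical ranges $s \leq 1/2$ and $s > 1/2$. The remainder of the proof is a direct unpacking of definitions together with standard local $L^p$-inclusions.
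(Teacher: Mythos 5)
Your proof of the first case (the substitution $p := nq/(n-(2s-t)q)$ and the algebraic verification that the constraint $t<t_{sup}(p)$ unwinds to (\ref{trangexy})) is correct and matches the paper's approach, modulo the paper leaving the algebra as "elementary computations."

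The second case, however, has a genuine gap: the final inclusion $W^{t,p'}_{loc}(\Omega)\hookrightarrow W^{t,p}_{loc}(\Omega)$ for $p'>p$ is \emph{false} for fractional Sobolev spaces on bounded domains. The paper states this explicitly just before Proposition \ref{Sobcont}: "the fractional Sobolev spaces $W^{s,p}(\Omega)$ are not contained in each other as the integrability exponent $p$ decreases." Your justification via $L^{p'}(B)\hookrightarrow L^{p}(B)$ "applied to the Gagliardo difference quotient" does not go through: if you attempt H\"older with exponents $p'/p$ and $p'/(p'-p)$ on the Gagliardo seminorm $\int\!\int_{\Omega'\times\Omega'} |u(x)-u(y)|^{p}/|x-y|^{n+tp}\,dy\,dx$, the complementary factor reduces to $\bigl(\int\!\int_{\Omega'\times\Omega'} |x-y|^{-n}\,dy\,dx\bigr)^{(p'-p)/p'}$, which diverges because the diagonal singularity $|x-y|^{-n}$ is not locally integrable in $\mathbb{R}^n$. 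The correct remedy is precisely Proposition \ref{Sobcont}, which buys the inclusion $W^{t+\varepsilon,p'}(\Omega')\hookrightarrow W^{t,p}(\Omega')$ at the cost of a small excess of differentiability $\varepsilon>0$. To use it you must first apply Theorem \ref{mainint5z} at a slightly larger differentiability index than the target $t$. When $t<2s-\tfrac{n}{q}$ there is room: pick $\varepsilon>0$ with $t+\varepsilon<2s-\tfrac{n}{q}$, apply Theorem \ref{mainint5z} with index $t+\varepsilon$ and some large $p'$ (using $q\geq np'/(n+(2s-t-\varepsilon)p')$, which holds for all $p'$ since $t+\varepsilon<2s-\tfrac{n}{q}$), then invoke Proposition \ref{Sobcont}. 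When $t=2s-\tfrac{n}{q}$ there is no room to increase $t$ within the second bullet, so one must instead pass through the \emph{first} bullet: choose $t'$ slightly above $2s-\tfrac{n}{q}$ in the range (\ref{trangexy}), obtain $u\in W^{t',p'}_{loc}(\Omega)$ with $p'=nq/(n-(2s-t')q)$, and then use Proposition \ref{BesselTr} together with Proposition \ref{Sobcont} to come down to $W^{t,p}_{loc}(\Omega)$ for all $p\in(1,\infty)$. Your writeup collapses these two subcases and relies on the false inclusion; this is the missing idea.
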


Note that in the first case of Theorem \ref{mainint5zx} we always have $\frac{nq}{n-(2s-t)q} > 2$, so that we always gain integrability beyond the initial integrability exponent $2$ as well as differentiability beyond the initial differentiability parameter $s$. \par Moreover, we note that in the case when $2s-\frac{n}{q}<1$, it is relatively easy to see that we always have
$$2s-\frac{n}{q} < \begin{cases} 2s \left (1- \frac{n}{(n+2s)q} \right ), & \text{ if } s \leq 1/2 \\ 1-\frac{(2-2s)(n+q-2sq)}{(n+2-2s)q}, & \text{ if } s > 1/2, \end{cases}$$
so that in this case the range of $t$ given by (\ref{trangexy}) is always non-empty. \par Also, we remark that in the case when $2s-\frac{n}{q} \geq 1$, Theorem \ref{mainint5zx} implies that $u$ belongs to $W^{t,p}_{loc}(\Omega)$ for any $t$ in the range $s \leq t < 1$ and any $p \in (1,\infty)$. \par
By embedding, Theorem \ref{mainint5zx} also implies the following higher H\"older regularity result.
\begin{thm} \label{C2sreg1}
	Let $\Omega \subset \mathbb{R}^n$ be a domain, $s \in (0,1)$ and $f \in L^q_{loc}(\Omega)$ for some $q>\frac{n}{2s}$. If $A \in \mathcal{L}_0(\Lambda)$ is VMO in $\Omega$ and $\Phi$ satisfies the conditions (\ref{PhiLipschitz}) and (\ref{PhiMonotone}) with respect to $\Lambda$, then for any weak solution $u \in W^{s,2}(\mathbb{R}^n)$ of the equation
	$$
	L_A^\Phi u = f \text{ in } \Omega,
	$$
	we have
	\begin{equation} \label{Hold}
		\begin{aligned}
			u \in
			\begin{cases}
				C^{2s-\frac{n}{q}}_{loc}(\Omega), & \textnormal{ if } 2s-\frac{n}{q}<1 \\
				C^\alpha_{loc}(\Omega) \quad \forall \alpha \in (0,1), & \textnormal{ if } 2s-\frac{n}{q} \geq 1.
			\end{cases}
		\end{aligned}
	\end{equation}
\end{thm}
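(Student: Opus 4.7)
The plan is to derive Theorem \ref{C2sreg1} as an immediate consequence of the Sobolev regularity result Theorem \ref{mainint5zx} combined with the Morrey-type fractional Sobolev embedding $W^{t,p}(B) \hookrightarrow C^{t-n/p}(\overline{B})$, valid on balls $B \Subset \Omega$ whenever $tp > n$ and $0 < t - n/p < 1$. The decisive algebraic point is that if $p = \frac{nq}{n - (2s-t)q}$ is the exponent appearing in the first case of Theorem \ref{mainint5zx}, then a direct calculation yields
$$
t - \frac{n}{p} \;=\; t - \frac{n - (2s-t)q}{q} \;=\; 2s - \frac{n}{q},
$$
so the Hölder exponent produced by the embedding is \emph{independent} of the auxiliary parameter $t$; moreover the condition $tp > n$ reduces precisely to the standing hypothesis $q > n/(2s)$.

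For the case $2s - n/q < 1$, I would fix some admissible $t$. The paper already remarks that the range in (\ref{trangexy}) is non-empty, and a short check, using $n > 2s$ to obtain $q > n/(2s) > 2n/(n+2s)$, confirms that this range intersects $[s, 1)$, so a valid $t$ exists. Applying Theorem \ref{mainint5zx} then gives $u \in W^{t,p}_{loc}(\Omega)$ with $p = \frac{nq}{n - (2s-t)q}$, and the embedding on each ball $B \Subset \Omega$ produces $u \in C^{2s - n/q}_{loc}(\Omega)$, which is the first line of (\ref{Hold}).

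For the case $2s - n/q \geq 1$, given any target exponent $\alpha \in (0,1)$, I would choose $t \in (\max\{s,\alpha\}, 1)$, which automatically satisfies $t < 1 \leq 2s - n/q$, and then select any $p$ large enough that $t - n/p > \alpha$ (for instance $p > n/(t-\alpha)$). The second case of Theorem \ref{mainint5zx} supplies $u \in W^{t,p}_{loc}(\Omega)$, and the Morrey embedding then yields $u \in C^{t - n/p}_{loc}(\Omega) \subset C^{\alpha}_{loc}(\Omega)$. Since $\alpha \in (0,1)$ was arbitrary, the second line of (\ref{Hold}) follows.

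There is no serious obstacle: the argument is essentially parameter matching plus a classical embedding. The only mild points to double-check are the non-emptiness of the admissible $t$-range in the first case (already handled by the paper's remarks) and the purely local nature of the conclusion, so that applying the fractional Morrey embedding on balls $B \Subset \Omega$ suffices.
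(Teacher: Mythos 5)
Your proposal is correct and takes essentially the same route as the paper: apply Theorem~\ref{mainint5zx} to obtain $W^{t,p}_{loc}$ regularity for a suitable $t$, then pass to H\"older regularity via the fractional Sobolev embedding (Proposition~\ref{Sobemb}), using exactly the key algebraic identity $t - n/p = 2s - n/q$ that makes the resulting H\"older exponent independent of the auxiliary parameter $t$. The paper's treatment of the case $2s-n/q \geq 1$ is phrased slightly differently (it invokes the full range of $(t,p)$ at once rather than fixing $\alpha$ first), but the content is identical.
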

While as mentioned it is up to further investigation if the differentiability gain in Theorem \ref{mainint5z} and Theorem \ref{mainint5zx} is optimal, we nevertheless expect the H\"older regularity in Theorem \ref{C2sreg1} to be sharp in the case of VMO coefficients or even continuous coefficients, since even the mentioned improved gain of differentiability along the Sobolev scale in the range $s \leq t< \left \{2s,1 \right \}$ would still only lead to the same amount of H\"older regularity obtained in Theorem \ref{C2sreg1}.

\subsection{Local elliptic equations with VMO coefficients}
For the sake of comparison, let us briefly discuss corresponding regularity results for local elliptic equations in divergence form of the type
\begin{equation} \label{localeq}
	\textnormal{div}(B \nabla u)= f \quad \text{in } \Omega,
\end{equation}
where the matrix of coefficients $B=\{b_{ij}\}_{i,j=1}^n$ is assumed to be uniformly elliptic and bounded. In the linear case when $\Phi(t)=t$, the equation (\ref{localeq}) can in some sense be thought of as a local analogue of the nonlocal equation (\ref{nonlocaleq}) corresponding to the limit case $s=1$. For some rigorous results in this direction, we refer to \cite{GKV}. It is known that if the coefficients $b_{ij}$ belong to $\textnormal{VMO}(\Omega)$ and $f \in L^\frac{np}{n+p}_{loc}(\Omega)$ for some $p>2$, then weak solutions $u \in W^{1,2}_{loc}(\Omega)$ of the equation (\ref{localeq}) belong to $W^{1,p}_{loc}(\Omega)$, see e.g.\ \cite{DiF,ByunLp,IwanSbod} and also \cite{Kin,AM,BD,DongKim1} for more general developments in this direction. This corresponds to our Theorem \ref{mainint5z} in the case when $t=s$. On the other hand, in order to gain any amount of differentiability along the Sobolev scale in the context of local equations, a corresponding amount of differentiability has to be imposed on the coefficients, so that in the case of VMO coefficients in general no differentiability gain at all is attainable. Therefore, the additional differentiability gain in Theorem \ref{mainint5z} is in some sense a purely nonlocal phenomenon. \par 
This nonlocal differential stability effect is also visible in the context of H\"older regularity, although in this case it is somewhat more subtle to recognize it. In fact, by embedding the above $W^{1,p}$ regularity result implies that for any weak solution $u \in W^{1,2}_{loc}(\Omega)$ of (\ref{localeq}) with $f \in L^q_{loc}(\Omega)$ for some $q > \frac{n}{2}$ we indeed have
\begin{align*}
	u \in
	\begin{cases}
		C^{2-\frac{n}{q}}_{loc}(\Omega), & \textnormal{ if } q<n \\
		C^\alpha_{loc}(\Omega) \quad \forall \alpha \in (0,1), & \textnormal{ if } q\geq n,
	\end{cases}
\end{align*}
which at first sight directly corresponds to Theorem \ref{C2sreg1}. However, there is an important difference in the case when $q$ is large, which is due to the differentiability gain in Theorem \ref{mainint5zx}. In order to illustrate this difference, note that in the case when $f \in L^\infty_{loc}(\Omega)$, for any weak solution $u \in W^{1,2}_{loc}(\Omega)$ of (\ref{localeq}) we have $C^\alpha_{loc}(\Omega)$ for any $\alpha \in (0,1)$. Since in some sense the order of the equation (\ref{nonlocaleq}) is $s$ times the order of the equation (\ref{localeq}), one might therefore be tempted to guess that weak solutions $u \in W^{s,2}(\mathbb{R}^n)$ of (\ref{nonlocaleq}) should in general not exceed $C^s$ regularity. However, Theorem \ref{C2sreg1} shows that any such weak solution to (\ref{nonlocaleq}) indeed belongs to $C^\alpha_{loc}(\Omega)$ for any $0<\alpha<\min \big \{2s,1 \big\}$ whenever $f \in L^\infty_{loc}(\Omega)$, exceeding $C^s$ regularity. In particular, in the case when $s \geq 1/2$, such weak solutions to nonlocal equations with VMO coefficients and locally bounded right-hand side enjoy the same amount of H\"older regularity as weak solutions to corresponding local equations with VMO coefficients, despite the fact that the order of such nonlocal equations is lower.

\subsection{Previous results} \label{pr}
In recent years, the regularity theory for weak solutions to nonlocal equations of the type (\ref{nonlocaleq}) has seen a great amount of progress, in particular concerning regularity results of purely nonlocal type, in the sense that as above the obtained regularity is better than one might expect when considering corresponding results for local elliptic equations. \par
Regarding such results for general coefficients $A \in \mathcal{L}_0(\Lambda)$, in \cite{selfimpro} and \cite{Schikorra} it is demonstrated that weak solutions to nonlocal equations of the type (\ref{nonlocaleq}) are slightly higher differentiable and higher integrable along the scale of fractional Sobolev spaces, which is a phenomenon not shared by local elliptic equations of the type (\ref{localeq}) with merely measurable coefficients, where it is only possible to obtain higher integrability. \par
Concerning higher Sobolev regularity of purely nonlocal type, in \cite{MSY} the authors in particular show that in the linear case when $\Phi(t)=t$, if $\Omega=\mathbb{R}^n$ and if the mapping $x \mapsto A(x,y)$ is globally H\"older continuous with some arbitrary H\"older exponent, then the statement of Theorem \ref{mainint5z} holds for $t$ in the improved range $s \leq t<\min \big \{2s,1 \big\}$. As we already discussed briefly in section \ref{mr}, an interesting question is therefore if the regularity obtained in \cite{MSY} can be replicated in our general setting of possibly nonlinear equations with VMO coefficients posed on general domains $\Omega \subset \mathbb{R}^n$, in particular since many examples of discontinuous VMO coefficients like (\ref{ex}) or (\ref{ex1}) are not covered by the H\"older continuity assumption in \cite{MSY}. \par
On the other hand, regarding higher H\"older regularity, by the Sobolev embedding the mentioned Sobolev regularity result in \cite{MSY} implies exactly the same amount of H\"older regularity given by (\ref{Hold}) from Theorem \ref{C2sreg1} under the mentioned assumptions imposed in \cite{MSY}. In other words, although in comparison to \cite{MSY} in general we obtain less differentiability along the Sobolev scale, we nevertheless gain enough differentiability in order to obtain the same amount of H\"older regularity by embedding in our general setting. A similar H\"older regularity result was obtained in \cite{Fall}, again in the case of linear equations, but allowing for coefficients that are merely continuous. Concerning higher H\"older regularity for possibly nonlinear equations, in \cite{MeH} it is in particular proved that if $\Phi$ satisfies the assumptions (\ref{PhiLipschitz}) and (\ref{PhiMonotone}) and $A$ is continuous in $\Omega$, then weak solutions $u$ of (\ref{nonlocaleq}) belong to $C^\alpha_{loc}(\Omega)$ for any $0<\alpha<\min \big \{2s-\frac{n}{q},1 \big\}$ whenever $f \in L^q_{loc}(\Omega)$ for some $q> \frac{n}{2s}$, which almost matches the regularity (\ref{Hold}) obtained in Theorem \ref{C2sreg1}. In addition, while in comparison with Theorem \ref{C2sreg1} the result in \cite{MeH} does not include general discontinuous coefficients of VMO-type, it in fact holds for a slightly larger class of coefficients than simply continuous ones, including in particular coefficients that are translation invariant inside of $\Omega$. Nevertheless, our approach can easily be modified in order to prove our main results under the assumption on $A$ from \cite{MeH}, see Remark \ref{endremark}. In addition, the H\"older regularity result in \cite{MeH} holds for a slightly larger class of weak solutions called local weak solutions, essentially only assuming that $u \in W^{s,2}_{loc}(\Omega)$ and the finiteness of the nonlocal tails of $u$, see \cite{MeH}. While we believe that our approach can be modified in order to generalize our main results to this setting of local weak solutions, we decided not to insist on this point, in particular since this would require also a revision of the previous work \cite{selfimpro}. \par
Let us also mention that in \cite{MeN} (see also \cite{Me}), Theorem \ref{mainint5z} was proved in the case when $t=s$, that is, without the additional differentiability gain, under essentially the same assumptions on $A$ and $\Phi$ as in \cite{MeH}. \par 
More results concerning Sobolev regularity for nonlocal equations are for example proved in \cite{BL,Cozzi,Warma,Grubb,KassMengScott,MP,YLKS,DongKim}, while some more results on H\"older regularity are proved in \cite{BLS,Fall1,NonlocalGeneral,CSa,finnish,Kassmann,CK,Chaker,Stinga,Silvestre,CDF,Peral}. Furthermore, for various regularity results regarding nonlocal equations similar to (\ref{nonlocaleq}) in the more general setting of measure data, we refer to \cite{mdata}.

\subsection{Approach} \label{app}
Our approach is mainly influenced by techniques introduced in \cite{CaffarelliPeral} and \cite{selfimpro}. Namely, in \cite{CaffarelliPeral} techniques were developed allowing to prove higher integrability of the gradient $\nabla u$ of weak solutions to local equations with VMO coefficients of the type (\ref{localeq}), which corresponds to the $W^{1,p}$ regularity theory briefly discussed in section \ref{mr}. \par 
The approach can be summarized as follows. The first step is to use the assumption that the coefficients $b_{ij}$ are VMO in order to locally approximate the gradient of some weak solution $u$ of (\ref{localeq}) by the gradient of a weak solution $v$ to a suitable homogeneous equation with constant coefficients. In order to include discontinuous coefficients of VMO type into the analysis, one uses the fact that $\nabla u$ is known to satisfy a $L^{2+\gamma}_{loc}$ estimate for some small $\gamma>0$, which can be proved in the general setting of merely bounded measurable coefficients by means of so-called Gehring-type lemmas.
One then exploits the fact that the approximate solution $v$, which in the local case up to a change of coordinates is simply a harmonic function, is already known to satisfy a local Lipschitz estimate in order to transfer some regularity from $v$ to $u$. This transfer of regularity is achieved by covering the level sets of the Hardy-Littlewood maximal function of $|\nabla u|^2$ of the form $\left \{\mathcal{M}(|\nabla u|^2) > \lambda^2 \right \}$ by dyadic cubes that are chosen by means of an exit time argument and form a so-called Calder\'on-Zygmund covering, essentially meaning that the cubes in the covering have in some sense good density properties with respect to the level set that is covered by them. Combined with fact that $\nabla u$ can be approximated by the gradient of a harmonic and therefore very regular function, these good density properties are then exploited in order gain control of the measures of the cubes in the covering by means of so-called good-$\lambda$ inequalities. By standard arguments from measure theory, this then allows to prove the desired higher integrability of $\nabla u$, which then implies the desired $W^{1,p}_{loc}$ estimate. \par 
Adapting this approach in order to prove higher Sobolev regularity for nonlocal equations of the type (\ref{nonlocaleq}) comes with a number of obstacles. In particular, a main challenge in the nonlocal context is to find a suitable replacement for the gradient $\nabla u$ which is used in the local context. In \cite{Me} and \cite{MeN}, the above approach was executed for weak solutions $u$ to nonlocal equations of the type (\ref{nonlocaleq}) with the local gradient replaced by the nonlocal gradient-type operator
\begin{equation} \label{sgrad}
	\nabla^s u(x)= \left ( \int_{\mathbb{R}^n} \frac{(u(x)-u(y))^2}{|x-y|^{n+2s}}dy \right )^{\frac{1}{2}}.
\end{equation}
In view of an alternative characterization of Bessel potential spaces, the obtained higher integrability of $\nabla^s u$ then leads to $W^{s,p}_{loc}$ regularity, which corresponds to the case of no differentiability gain as in the setting of local equations. However, as no local higher integrability estimate for small exponents is known for $\nabla^s u$ for weak solutions $u$ to (\ref{nonlocaleq}), the main result in \cite{MeN} does not include the case of VMO coefficients. In addition, while this result corresponds to the $W^{1,p}_{loc}$ estimate obtained in the setting of local equations, considering the gradient-type operator (\ref{sgrad}) does not lead to any higher differentiability. \par 
In order to also gain higher differentiability and include the case of VMO coefficients, we instead use another nonlocal-type gradient operator which is inspired by \cite{selfimpro}.
Fix some $\theta \in \left (0,\frac{1}{2} \right)$. We define a Borel measure $\mu$ on $\mathbb{R}^{2n}$ as follows. For any measurable set $E \subset \mathbb{R}^{2n}$, set
\begin{equation} \label{mudef}
	\mu(E):= \int_{E} \frac{dxdy}{|x-y|^{n-2\theta}}.
\end{equation}
Moreover, for any function $u:\mathbb{R}^n \to \mathbb{R}$ and $(x,y) \in \mathbb{R}^{2n}$ with $x \neq y$, we define the function 
\begin{equation} \label{Udef}
	U(x,y):=\frac{|u(x)-u(y)|}{|x-y|^{s+\theta}}.
\end{equation}
For any domain $\Omega \subset \mathbb{R}^n$, we then clearly have $u \in W^{s,2}(\Omega)$ if and only if $u \in L^2(\Omega)$ and $U \in L^2(\Omega \times \Omega,\mu)$, so that $U$ and $\mu$ are in some sense in duality.
Regarding larger exponents, by a simple computation for any $p > 2$ and $s_\theta:=s+\theta \left (1-\frac{2}{p} \right )>s$, we have 
\begin{equation} \label{muequiv}
	u \in W^{s_\theta,p}(\Omega) \quad \text{if and only if} \quad u \in L^p(\Omega) \text{ and } U \in L^p(\Omega \times \Omega,\mu).
\end{equation}
Therefore, in contrast to the gradient-type operator $\nabla^s$, by proving higher integrability of the gradient-type function $U$ with respect to the measure $\mu$, we do not only gain regularity along the integrability scale of fractional Sobolev spaces, but also a substantial amount of higher differentiability! However, proving this higher integrability of $U$ in the case when $A$ is merely VMO in $\Omega$ comes with a number of additional difficulties. In order to accomplish this, we combine nonlocal adaptations of the approximation and covering techniques from \cite{CaffarelliPeral} with adaptations of some further covering and combinatorial techniques from \cite{selfimpro}. \par
First of all, in order to include equations with VMO coefficients, we need a local higher integrability result for $U$ for small exponents, which is proved in \cite{selfimpro} for $\theta>0$ small enough, which is sufficient for our purposes. \par Furthermore, in contrast to the functions $\nabla u$ and $\nabla^s u$ which are defined on $\mathbb{R}^n$, the function $U$ is defined on $\mathbb{R}^{2n}$. In particular, the level sets of the maximal function with respect to $\mu$ of $U^2$, that is, the sets of the form $\left \{\mathcal{M}(U^2) > \lambda^2 \right \}$, are subsets of $\mathbb{R}^{2n}$ instead of $\mathbb{R}^{n}$. Therefore, in this setting we need to run an exit time argument in $\mathbb{R}^{2n}$ instead of $\mathbb{R}^{n}$ in order to cover the level set of $U$ by Calder\'on-Zygmund cubes in $\mathbb{R}^{2n}$. In other words, every dyadic cube $\mathcal{K}$ in the corresponding Calder\'on-Zygmund covering is of the form $\mathcal{K}=K_1 \times K_2$, where $K_1$ and $K_2$ are dyadic cubes in $\mathbb{R}^n$. A major technical issue that arises at this point is that for cubes $\mathcal{K}=K_1 \times K_2$ that are far away from the diagonal in the sense that $\textnormal{dist}(K_1,K_2)$ is large, the information that $u$ solves a nonlocal equation of the type (\ref{nonlocaleq}) cannot be used effectively. For this reason, we additionally construct an auxiliary diagonal cover consisting of diagonal balls $\mathcal{B}=B \times B$ that once again have nice density properties with respect to the level set $\left \{\mathcal{M}(U^2) > \lambda^2 \right \}$. Since close to the diagonal the information given by the equation can be used much more efficiently, we construct this auxiliary cover in a way such that the exit time at which the balls are chosen is somewhat smaller than the corresponding exit time at which the corresponding Calder\'on-Zygmund cubes are chosen, so that the balls in the auxiliary cover tend to be somewhat larger than the corresponding Calder\'on-Zygmund cubes. All in all, roughly speaking we have $$\left \{\mathcal{M}(U^2) > \lambda^2 \right \} \subset \bigcup \mathcal{B} \cup \bigcup \mathcal{K},$$ 
where the balls $\mathcal{B}$ are diagonal balls with good density properties and the cubes $\mathcal{K}$ are Calder\'on-Zygmund cubes that are far away from the diagonal. \par
The measures of the balls in the auxiliary diagonal cover can then be estimated by approximating $U$ by a corresponding function $V$ in small enough balls, which is given as in (\ref{Udef}) with $u$ replaced by a weak solution $v$ of a corresponding equation of the form $L^\Phi_{\widetilde A}v=0$, where the coefficient $A$ is locally replaced by a suitable constant, while the global behaviour of $A$ has to be left unchanged, since our assumption that $A$ is VMO is local in nature. This leads to the issue that proving a strong enough estimate for $v$, enabling us to transfer enough regularity to $u$, is much more difficult than in the setting of linear local equations, where as mentioned, the approximate solution is effectively simply a harmonic function. Nevertheless, in \cite{MeH} it is proved that such weak solutions $v$ to $L^\Phi_{\widetilde A}v=0$ satisfy a $C^{s+\theta}_{loc}$ estimate in the restricted range $0<\theta < \min \{s,1-s\}$. This H\"older estimate directly implies that $V$ satisfies an $L^\infty_{loc}$ estimate, which is sufficient in order to control the measures of the balls in the auxiliary diagonal cover. \par 
As already indicated, the task of controlling the measures of the off-diagonal Calder\'on-Zygmund cubes requires additional ideas, since far from the diagonal the information provided by the equation is only of very limited use. In order to bypass this problem in the context of proving higher integrability and differentiability of $u$ for small exponents, in \cite[Lemma 5.3]{selfimpro} it was noted that on cubes that are far away from the diagonal, $L^2$-reverse H\"older-type inequalities hold for $U$ without relying on the equation.
Since we want to prove higher integrability also for large exponents, we overcome this problem by noticing that such reverse H\"older-type inequalities for off-diagonal cubes also hold for larger exponents. However, as in \cite[Lemma 5.3]{selfimpro}, these reverse H\"older-type inequalities come with additional diagonal correction terms involving diagonal cubes that do not belong to the original Calder\'on-Zygmund covering, leading to serious difficulties. These difficulties are bypassed by an involved combinatorial argument inspired by a corresponding one in \cite{selfimpro}, enabling us to control also the measures of the off-diagonal Calder\'on-Zygmund cubes. \par 
By combining the estimates for the measures of the diagonal balls and off-diagonal cubes, we are then able to estimate the measure of the level set $\left \{\mathcal{M}(U^2) > \lambda^2 \right \}$ for $\lambda$ large enough, which by a standard application of Fubini's theorem and standard properties of the Hardy-Littlewood maximal function implies the desired $L^p_{loc}$ estimate for $U$ in the form of an a priori estimate, which is then used in order to prove the desired regularity by standard smoothing techniques based on mollifiers.

\subsection{Outline of the paper}
The paper is organized as follows. In section \ref{fracSob}, we formally introduce the fractional Sobolev spaces $W^{s,p}$ and also another type of fractional Sobolev spaces, namely the Bessel potential spaces $H^{s,p}$. \par In section \ref{dm}, we turn to proving some simple properties of the measure $\mu$ introduced in the previous section \ref{app}, while in section \ref{HL} we define the Hardy-Littlewood maximal function with respect to the measure $\mu$ and mention some important properties of it. \par 
In section \ref{pe}, we then discuss some preliminary estimates for nonlocal equations which are essentially known. More precisely, in section \ref{hr} we briefly recall the mentioned higher H\"older regularity result from \cite{MeH}, while in section \ref{kms} we recall the mentioned Sobolev regularity result for small exponents contained in \cite{selfimpro}. In section \ref{fl1}, we then prove a result about $H^{2s,p}$ estimates for the homogeneous Dirichlet problem involving the fractional Poisson-type equation $(-\Delta)^s g=f$, where $(-\Delta)^s$ is the fractional Laplacian. This estimate allows us to focus on proving regularity for nonlocal equations of the type $L_A^\Phi u=(-\Delta)^s g$ instead of (\ref{nonlocaleq}), since once we are able to transfer a sufficient amount of regularity from $g$ to $u$, Theorem \ref{mainint5z} follows by first transferring the regularity from $f$ to some solution $g$ of $(-\Delta)^s g=f$ and then from $g$ to weak solutions $u$ of (\ref{nonlocaleq}). \par 
The rest of the paper is then devoted to the proof of our main results. Namely, in section \ref{ce} we prove a comparison estimate enabling us to carry out the approximation argument and also the smoothing procedure mentioned in section \ref{app}. Section \ref{gl} is devoted to proving good-$\lambda$ inequalities, both at the diagonal and far away from the diagonal. In section \ref{cover}, we then set up the mentioned covering argument and use the good-$\lambda$ inequalities from section \ref{gl} in order to estimate the measure of the level sets of $\mathcal{M}(U^2)$. In section \ref{ape}, this level set estimate is then used in order to prove the desired regularity in the form of an a priori estimate. Finally, in section \ref{pmr} we then use smoothing techniques in order to deduce our main results from the a priori estimates obtained in section \ref{ape}.

\subsection{Some definitions and notation}
For convenience, let us fix some notation which we use throughout the paper. By $C,c$ and $C_i,c_i$, $i \in \mathbb{N}_0$, we always denote positive constants, while dependences on parameters of the constants will be shown in parentheses. As usual, by
$$ B_r(x_0):= \{x \in \mathbb{R}^n \mid |x-x_0|<r \}$$
we denote the open euclidean ball with center $x_0 \in \mathbb{R}^n$ and radius $r>0$. We also set $B_r:=B_r(0)$. In addition, by 
$$ Q_r(x_0):= \{x \in \mathbb{R}^n \mid |x-x_0|_\infty <r/2\}$$
we denote the open cube with center $x_0 \in \mathbb{R}^n$ and sidelength $r>0$.
Moreover, if $E \subset \mathbb{R}^n$ is measurable, then by $|E|$ we denote the $n$-dimensional Lebesgue-measure of $E$. If $0<|E|<\infty$, then for any $u \in L^1(E)$ we define
$$ \overline u_{E}:= \dashint_{E} u(x)dx := \frac{1}{|E|} \int_{E} u(x)dx.$$
Throughout this paper, we often consider integrals and functions on $\mathbb{R}^{2n}=\mathbb{R}^{n} \times \mathbb{R}^{n}$. Instead of dealing with the usual euclidean balls in $\mathbb{R}^{2n}$, for this purpose it is more convenient for us to use the balls generated by the norm
$$ ||(x_0,y_0)||:=\max\{|x_0|,|y_0|\}, \quad (x_0,y_0) \in \mathbb{R}^{2n}.$$
These balls with center $(x_0,y_0) \in \mathbb{R}^{2n}$ and radius $r>0$ are denoted by $\mathcal{B}_r(x_0,y_0)$ and are of the form
$$\mathcal{B}_r(x_0,y_0):=B_r(x_0) \times B_r(y_0).$$
In the case when $x_0=y_0$ we also write $\mathcal{B}_r(x_0):=\mathcal{B}_r(x_0,x_0),$
we call such balls diagonal balls. We also set $\mathcal{B}_r:=\mathcal{B}_r(0)$. Similarly, for $x_0,y_0 \in \mathbb{R}^n$ and $r>0$ we define $\mathcal{Q}_r(x_0,y_0):=Q_r(x_0) \times Q_r(y_0)$ and $\mathcal{Q}_r(x_0):=\mathcal{Q}_r(x_0,x_0)$ and also $\mathcal{Q}_r:=\mathcal{Q}_r(0)$.
\section{Fractional Sobolev spaces} \label{fracSob}
\begin{defin}
	Let $\Omega \subset \mathbb{R}^n$ be a domain. For $p \in [1,\infty)$ and $s \in (0,1)$, we define the fractional Sobolev space
	$$W^{s,p}(\Omega):=\left \{u \in L^p(\Omega) \mathrel{\Big|} \int_{\Omega} \int_{\Omega} \frac{|u(x)-u(y)|^p}{|x-y|^{n+sp}}dydx<\infty \right \}$$
	with norm
	$$ ||u||_{W^{s,p}(\Omega)} := \left (||u||_{L^p(\Omega)}^p + [u]_{W^{s,p}(\Omega)}^p \right )^{1/p} ,$$
	where
	$$ [u]_{W^{s,p}(\Omega)}:=\left (\int_{\Omega} \int_{\Omega} \frac{|u(x)-u(y)|^p}{|x-y|^{n+sp}}dydx \right )^{1/p} .$$
	Moreover, we define the corresponding local fractional Sobolev spaces by
	$$ W^{s,p}_{loc}(\Omega):= \left \{ u \in L^p_{loc}(\Omega) \mid u \in W^{s,p}(\Omega^\prime) \text{ for any domain } \Omega^\prime \Subset \Omega \right \}.$$
	Also, we define the space 
	$$W^{s,p}_0(\Omega):= \left \{u \in W^{s,2}(\mathbb{R}^n) \mid u = 0 \text{ in } \mathbb{R}^n \setminus \Omega \right \}.$$
\end{defin}

We use the following fractional Poincar\'e inequality, see \cite[section 4]{Mingione}.
\begin{lem} \label{Poincare} (fractional Poincar\'e inequality)
	Let $s \in (0,1)$, $p \in [1,\infty)$, $r>0$ and $x_0 \in \mathbb{R}^n$. For any $u \in W^{s,p}(B_r(x_0))$, we have
	$$ \int_{B_r(x_0)} \left | u(x)- \overline u_{B_r(x_0)} \right |^p dx \leq C r^{sp} \int_{B_r(x_0)} \int_{B_r(x_0)} \frac{|u(x)-u(y)|^p}{|x-y|^{n+sp}}dydx,$$
	where $C=C(s,p)>0$.
\end{lem}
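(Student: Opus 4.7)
The plan is to reduce everything to Jensen's inequality combined with the trivial distance bound $|x-y| \le 2r$ on $B_r(x_0) \times B_r(x_0)$. The result is so classical that the only real question is how cleanly the constants line up; there is no serious obstacle.

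First I would rewrite the mean in integral form, so that
$$u(x) - \overline{u}_{B_r(x_0)} = \frac{1}{|B_r(x_0)|} \int_{B_r(x_0)} \bigl(u(x)-u(y)\bigr)\, dy.$$
Since the $L^p$ norm of an average is controlled by the average of the $L^p$ norms (Jensen's inequality applied to the convex function $t \mapsto |t|^p$), this gives
$$\bigl|u(x) - \overline{u}_{B_r(x_0)}\bigr|^p \le \frac{1}{|B_r(x_0)|}\int_{B_r(x_0)} |u(x)-u(y)|^p\, dy.$$

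Next I would integrate both sides in $x$ over $B_r(x_0)$ and use Fubini. The key geometric observation is that for $x,y \in B_r(x_0)$ one has $|x-y| \le 2r$, and therefore
$$|u(x)-u(y)|^p = |x-y|^{n+sp}\cdot \frac{|u(x)-u(y)|^p}{|x-y|^{n+sp}} \le (2r)^{n+sp}\,\frac{|u(x)-u(y)|^p}{|x-y|^{n+sp}}.$$
Inserting this bound into the double integral and using $|B_r(x_0)| = \omega_n r^n$ for the volume of the ball, one finds
$$\int_{B_r(x_0)} \bigl|u(x) - \overline{u}_{B_r(x_0)}\bigr|^p\, dx \le \frac{(2r)^{n+sp}}{\omega_n r^n} \int_{B_r(x_0)}\int_{B_r(x_0)} \frac{|u(x)-u(y)|^p}{|x-y|^{n+sp}}\, dy\, dx,$$
which is exactly the claimed inequality with $C = 2^{n+sp}/\omega_n$ (depending on $n$, which is fixed throughout the paper, and on $s,p$).

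Since each step is elementary, there is no real main obstacle; the proof consists solely of writing $u(x) - \overline{u}_{B_r(x_0)}$ as a mean of differences, applying Jensen, and absorbing the boundedness of $|x-y|$ into the factor $r^{sp}$.
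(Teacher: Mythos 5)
Your proof is correct, and it is the standard elementary argument: write $u(x)-\overline u_{B_r(x_0)}$ as an average of differences, apply Jensen's inequality for $t\mapsto|t|^p$, and absorb the factor $|x-y|^{n+sp}\le(2r)^{n+sp}$ into the integrand. The paper itself does not give a proof of this lemma but simply cites a reference, so there is nothing to compare approaches against; yours is the usual one-line argument, cleanly carried out. The only small point worth flagging, which you already noticed, is that the resulting constant $C=2^{n+sp}/|B_1|$ genuinely depends on the dimension $n$ as well as on $s$ and $p$, whereas the lemma statement writes only $C=C(s,p)$; since $n$ is fixed throughout the paper this is a harmless omission on the paper's side, not a gap in your argument.
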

We also use another Poncar\'e-type inequality, see \cite[Lemma 2.3]{MeH}.
\begin{lem} \label{Friedrichsx} (fractional Friedrichs-Poincar\'e inequality)
	Let $s \in(0,1)$ and consider a bounded domain $\Omega \subset \mathbb{R}^n$. For any $u \in W^{s,2}_0(\Omega)$, we have
	\begin{equation} \label{FPI4}
	\int_{\mathbb{R}^n} |u(x)|^2 dx \leq C |\Omega|^{\frac{2s}{n}} \int_{\mathbb{R}^n} \int_{\mathbb{R}^n} \frac{|u(x)-u(y)|^2}{|x-y|^{n+2s}}dydx,
	\end{equation}
	where $C=C(n,s)>0$.
\end{lem}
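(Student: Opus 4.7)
The plan is to combine the fractional Sobolev embedding $W^{s,2}(\mathbb{R}^n) \hookrightarrow L^{2^*_s}(\mathbb{R}^n)$, where $2^*_s = \frac{2n}{n-2s}$ (which is available since the paper already assumes $n>2s$ and refers to such an embedding as Proposition \ref{Sobemb}), with Hölder's inequality applied over the bounded support $\Omega$. Since $u \in W^{s,2}_0(\Omega)$ vanishes outside $\Omega$, the integral on the left-hand side of \eqref{FPI4} is in fact an integral over $\Omega$, and a single application of Hölder with the conjugate pair $\bigl(\tfrac{2^*_s}{2},\tfrac{n}{2s}\bigr)$ will produce the desired factor $|\Omega|^{2s/n}$.

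First I would write
\[
\int_{\mathbb{R}^n} |u(x)|^2 dx = \int_\Omega |u(x)|^2 dx \leq |\Omega|^{1-2/2^*_s} \left(\int_\Omega |u(x)|^{2^*_s}dx\right)^{2/2^*_s},
\]
using Hölder's inequality with exponent $2^*_s/2 > 1$. A direct computation then gives $1-2/2^*_s = 1 - (n-2s)/n = 2s/n$, which produces exactly the factor $|\Omega|^{2s/n}$ appearing in the statement.

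Next I would invoke the fractional Sobolev embedding, which asserts that
\[
\left(\int_{\mathbb{R}^n} |u(x)|^{2^*_s}dx\right)^{2/2^*_s} \leq C(n,s) \int_{\mathbb{R}^n}\int_{\mathbb{R}^n} \frac{|u(x)-u(y)|^2}{|x-y|^{n+2s}}dydx
\]
for every $u \in W^{s,2}(\mathbb{R}^n)$, noting again that extension by zero outside $\Omega$ is legitimate because $u \in W^{s,2}_0(\Omega) \subset W^{s,2}(\mathbb{R}^n)$ by definition. Substituting this estimate into the previous Hölder inequality and enlarging the domain of integration from $\Omega$ to $\mathbb{R}^n$ yields exactly \eqref{FPI4}.

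There is essentially no obstacle here: the proof is a two-line chain of the Sobolev embedding and Hölder's inequality, with the key observation being that boundedness of $\Omega$ is used solely to pick up the factor $|\Omega|^{2s/n}$ through Hölder's inequality, and that $u \equiv 0$ outside $\Omega$ allows the truncation of the ambient integral. The only point worth care is the exponent bookkeeping $1-2/2^*_s = 2s/n$, which is immediate from the definition of $2^*_s$.
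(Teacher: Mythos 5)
Your argument is correct, and the exponent bookkeeping ($1-2/2^*_s=2s/n$) is right. Note, however, that the paper itself does not prove this lemma — it is cited from \cite[Lemma 2.3]{MeH} — so there is no internal proof to compare against; what follows is just a remark on your references.

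The inequality you actually write down,
\[
\Bigl(\int_{\mathbb{R}^n}|u|^{2^*_s}\,dx\Bigr)^{2/2^*_s}\leq C(n,s)\int_{\mathbb{R}^n}\int_{\mathbb{R}^n}\frac{|u(x)-u(y)|^2}{|x-y|^{n+2s}}\,dy\,dx,
\]
is the scale-invariant fractional Sobolev inequality on the whole space with only the Gagliardo seminorm on the right; the correct citation is \cite[Theorem~6.5]{Hitch}, which the paper already invokes in the proofs of Theorem~\ref{H2spest} and Proposition~\ref{appplxy}. Proposition~\ref{Sobemb}, which you name, is the embedding $W^{s,p}(\Omega)\hookrightarrow L^{np/(n-sp)}(\Omega)$ on a bounded Lipschitz domain whose right-hand side carries the \emph{full} $W^{s,p}$-norm (including $\|u\|_{L^p}$); using that version would leave a spurious $\|u\|_{L^2}$ term on the right that you would then have to absorb, which only works for $|\Omega|$ small. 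Citing the seminorm-only global estimate avoids this issue and gives the constant $C(n,s)$ and the factor $|\Omega|^{2s/n}$ exactly as claimed, after combining with Hölder on $\Omega$ and using that $u$ vanishes a.e.\ outside $\Omega$.
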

For the following embedding results we refer to \cite[Theorem 6.7, Theorem 6.10, Theorem 8.2]{Hitch}.
\begin{prop} \label{Sobemb}
Let $\Omega \subset \mathbb{R}^n$ be a Lipschitz domain, $s \in (0,1)$ and $p \in [1,\infty)$.
\begin{itemize}
	\item If $sp<n$, then we have the continuous embedding
	$$ W^{s,p}(\Omega) \hookrightarrow L^\frac{np}{n-sp}(\Omega).$$
	\item If $sp=n$, then for any $q \in [1,\infty)$ we have the continuous embedding
	$$ W^{s,p}(\Omega) \hookrightarrow L^q(\Omega).$$
	\item If $sp>n$, then we have the continuous embedding
	$$ W^{s,p}(\Omega) \hookrightarrow C^{s-\frac{n}{p}}(\Omega).$$
\end{itemize}
\end{prop}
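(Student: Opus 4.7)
The strategy is to reduce each of the three embeddings to the corresponding statement on $\mathbb{R}^n$ via a bounded extension operator $E\colon W^{s,p}(\Omega) \to W^{s,p}(\mathbb{R}^n)$. Existence of such an operator for Lipschitz $\Omega$ is standard: one flattens the boundary locally by bilipschitz charts, extends via even reflection in each chart, and glues by a partition of unity, exploiting the fact that the Gagliardo seminorm transforms well under bilipschitz maps. Once this reduction is in place, all three embeddings on $\Omega$ are implied by their whole-space counterparts combined with the obvious $L^p$ control on $\Omega$.

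For the subcritical regime $sp < n$, I would use the Riesz potential representation. On $\mathbb{R}^n$ and for $p \in (1,\infty)$, a Littlewood-Paley / Mikhlin multiplier argument identifies the Gagliardo seminorm with the norm of a fractional Laplacian up to constants,
\[
[u]_{W^{s,p}(\mathbb{R}^n)} \sim \|(-\Delta)^{s/2} u\|_{L^p(\mathbb{R}^n)},
\]
so that $u = I_s g$ with $g := (-\Delta)^{s/2} u \in L^p$, where $I_s$ denotes the Riesz potential of order $s$. The Hardy-Littlewood-Sobolev inequality then yields $\|u\|_{L^{np/(n-sp)}} \leq C \|g\|_{L^p}$. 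At the endpoint $p=1$ one bypasses the Bessel calculus via Hedberg's truncation trick: split $I_s g$ into a near-diagonal piece controlled by a fractional maximal function and a far piece controlled by the Hardy-Littlewood maximal function, then optimize the cutoff radius. For the critical case $sp = n$, I would apply the subcritical embedding with $s$ replaced by some $s' < s$: since $W^{s,p}(\Omega) \hookrightarrow W^{s',p}(\Omega) \hookrightarrow L^{np/(n-s'p)}(\Omega)$ and the target exponent diverges as $s' \uparrow s$, interpolation with the trivial embedding $W^{s,p}(\Omega) \hookrightarrow L^p(\Omega)$ covers all $q \in [1,\infty)$ (after restricting to bounded subsets of $\Omega$, which suffices for the stated embedding on Lipschitz domains). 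For the supercritical case $sp > n$, I would prove a fractional Morrey-type estimate directly by a telescoping argument: comparing $u(x)$ with averages $u_{B_{2^{-k} r}(x)}$ for $r = |x-y|$, applying the fractional Poincar\'e inequality of Lemma \ref{Poincare} on each dyadic annulus, using H\"older with exponents $p$ and $p/(p-1)$, and summing the resulting geometric series yields
\[
|u(x) - u(y)| \leq C\, [u]_{W^{s,p}(B_{2r}(x) \cup B_{2r}(y))}\, r^{s - n/p},
\]
which is the claimed H\"older estimate with exponent $s - n/p$.

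The main technical obstacle is the sharp subcritical inequality, whose proof inevitably requires either the Bessel calculus together with Hardy-Littlewood-Sobolev or an ad hoc analogue at $p=1$. The critical and supercritical cases are then obtained by essentially soft arguments, and the extension theorem for Lipschitz domains, while nontrivial, is a purely geometric issue that is insensitive to the particular exponents involved.
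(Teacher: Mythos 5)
The paper itself does not prove this proposition; it is quoted directly from \cite[Theorems~6.7, 6.10, 8.2]{Hitch}, whose proofs are elementary and avoid Fourier multipliers entirely.

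Your overall outline---extension to $\mathbb{R}^n$ via a bounded extension operator for Lipschitz domains, then the whole-space embeddings---is correct, and your treatments of the critical case (subcritical exponent plus interpolation, with the boundedness caveat you flag) and the supercritical case (dyadic telescoping of averages plus the fractional Poincar\'e inequality of Lemma~\ref{Poincare}) are sound. The gap is in the subcritical step. The claimed equivalence
\[
[u]_{W^{s,p}(\mathbb{R}^n)} \sim \bigl\|(-\Delta)^{s/2} u\bigr\|_{L^p(\mathbb{R}^n)}
\]
is false for $p \neq 2$. The Gagliardo seminorm characterizes the Besov space $B^s_{p,p}$, whereas $\|(-\Delta)^{s/2}u\|_{L^p}$ characterizes the Bessel potential (Triebel--Lizorkin) space $H^{s,p} = F^s_{p,2}$; the Littlewood--Paley decomposition of $W^{s,p}$ produces an $\ell^p$ sum over dyadic frequency shells, while Mikhlin multiplier and square-function theory produce an $\ell^2$ sum, and these coincide only when $p=2$. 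In particular, for $p>2$ one has the strict inclusion $H^{s,p} \subsetneq W^{s,p}$, so there is no bound $\|(-\Delta)^{s/2}u\|_{L^p} \lesssim [u]_{W^{s,p}}$, and the Riesz potential representation $u = I_s g$ with $g \in L^p$ is unavailable precisely in the direction your argument requires. (For $p<2$ the inclusion goes the right way and the reduction works; at $p=1$ Hardy--Littlewood--Sobolev is only weak type, so the Hedberg truncation would need to be supplemented in any case.) The intended conclusion $W^{s,p}(\mathbb{R}^n)\hookrightarrow L^{np/(n-sp)}(\mathbb{R}^n)$ is nevertheless true for all $p\in[1,\infty)$ with $sp<n$; the standard proof operates directly on the double integral via a level-set decomposition (\cite[Lemmas~6.1--6.3, Theorem~6.5]{Hitch}), or one can use the sharp Besov-scale embedding $B^s_{p,p}\hookrightarrow L^{p^*}$ (valid since $p\leq p^*$), rather than routing through $H^{s,p}$.
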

\begin{lem} \label{SobPoincare} (fractional Sobolev-Poincar\'e inequality)
Let $s \in (0,1)$, $p \in [1,\infty)$, $r>0$ and $x_0 \in \mathbb{R}^n$. In addition, let 
$$ q \in \begin{cases} 
\left [1,\frac{np}{n-sp} \right ], & \text{if } sp<n \\
[1,\infty), & \text{if } sp \geq n.
\end{cases}$$
Then for any $u \in W^{s,p}(B_r(x_0))$, we have
	$$ \left (\dashint_{B_r(x_0)} \left | u(x)- \overline u_{B_r(x_0)} \right |^q dx \right )^\frac{1}{q} \leq C r^{s} \left ( \dashint_{B_r(x_0)} \int_{B_r(x_0)} \frac{|u(x)-u(y)|^p}{|x-y|^{n+sp}}dydx \right )^\frac{1}{p},$$
	where $C=C(n,s,p,q)>0$.
\end{lem}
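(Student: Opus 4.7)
My strategy is to reduce to the unit ball by scaling and then combine the fractional Poincar\'e inequality (Lemma \ref{Poincare}) with the fractional Sobolev embedding (Proposition \ref{Sobemb}). By Jensen's inequality applied to the map $q'\mapsto \bigl(\dashint_{B_r(x_0)}|u-\overline u_{B_r(x_0)}|^{q'}dx\bigr)^{1/q'}$, it is enough to establish the lemma for the largest admissible value of $q$, namely $q=\tfrac{np}{n-sp}$ if $sp<n$ and any fixed finite exponent $q\geq p$ if $sp\geq n$.

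Next, I would rescale by setting $\tilde u(\xi):=u(x_0+r\xi)$ for $\xi\in B_1$ and perform the change of variables $x=x_0+r\xi,\ y=x_0+r\eta$ in both sides of the claimed inequality. Both the averaged $L^q$-norm of $u-\overline u_{B_r(x_0)}$ and the average of the Gagliardo kernel $|u(x)-u(y)|^p/|x-y|^{n+sp}$ multiplied by $r^{sp}$ are dimensionless, so the lemma reduces to the unit-ball bound
\begin{equation*}
\Bigl(\dashint_{B_1}|v|^q\,d\xi\Bigr)^{1/q}\leq C\,[\tilde u]_{W^{s,p}(B_1)},\qquad v:=\tilde u-\overline{\tilde u}_{B_1},
\end{equation*}
with $C=C(n,s,p,q)$.

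On the unit ball the argument is short. Since $B_1$ is a Lipschitz domain, I would apply Proposition \ref{Sobemb} to $v\in W^{s,p}(B_1)$: in the regime $sp<n$ the first alternative gives $W^{s,p}(B_1)\hookrightarrow L^{np/(n-sp)}(B_1)$; in the regime $sp=n$ the second alternative gives $W^{s,p}(B_1)\hookrightarrow L^{q}(B_1)$ for every finite $q$; and in the regime $sp>n$ the third alternative combined with the trivial inclusion $C^{s-n/p}(B_1)\hookrightarrow L^q(B_1)$ again produces the required embedding. In each case one obtains $\|v\|_{L^q(B_1)}\leq C\bigl(\|v\|_{L^p(B_1)}+[v]_{W^{s,p}(B_1)}\bigr)$, and since $v$ has zero mean, Lemma \ref{Poincare} absorbs the $L^p$-term into the seminorm. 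Finally, $[v]_{W^{s,p}(B_1)}=[\tilde u]_{W^{s,p}(B_1)}$ closes the unit-ball inequality.

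The only step that calls for some care is the scaling bookkeeping, since the left-hand side is a plain $L^q$-average while the right-hand side mixes an $L^p$-average of a kernel of homogeneity $n+sp$ with the prefactor $r^s$; tracking the factors of $r$ and $|B_r|$ so that they collapse into a dimensionless inequality on $B_1$ is elementary but is the one spot where a miscount could occur. Once the reduction is in place, the combination of Sobolev embedding with Poincar\'e is entirely standard.
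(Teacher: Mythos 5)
Your proposal is correct and follows essentially the same route as the paper: scale to the unit ball, apply Proposition \ref{Sobemb} to the zero-mean function $u-\overline{u}_{B_1}\in W^{s,p}(B_1)$, and then absorb the resulting $L^p$-term into the seminorm via Lemma \ref{Poincare}. The one refinement you add — invoking Jensen to reduce to the endpoint exponent $q=np/(n-sp)$ when $sp<n$ — is a step the paper leaves implicit (it applies Proposition \ref{Sobemb} for general admissible $q$, which in the subcritical regime tacitly uses the bounded-domain inclusion $L^{np/(n-sp)}\hookrightarrow L^q$); making it explicit is harmless and slightly cleaner, but it is not a genuinely different argument.
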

\begin{proof}
First, we consider the case when $r=1$ and $x_0 =0$, so that $u \in W^{s,p}(B_1)$.  By applying Proposition \ref{Sobemb} to $u- \overline u_{B_1} \in W^{s,p}(B_1)$ and then using Lemma \ref{Poincare}, we obtain
\begin{align*}
\left (\int_{B_1} \left | u(x)- \overline u_{B_1} \right |^q dx \right )^\frac{1}{q} \leq & C_1 \left (\int_{B_1} \left | u(x)- \overline u_{B_1}\right |^p dx + \int_{B_1} \int_{B_1} \frac{|u(x)-u(y)|^p}{|x-y|^{n+sp}}dydx \right )^\frac{1}{p} \\
\leq & C \left (\int_{B_1} \int_{B_1} \frac{|u(x)-u(y)|^p}{|x-y|^{n+sp}}dydx \right )^\frac{1}{p},
\end{align*}
where $C_1$ and $C$ depend only on $n,s,p$ and $q$. \par
In order to treat the general case, let $u \in W^{s,p}(B_r(x_0))$ and let $u_r(x):=u(rx+x_0)$. Then in view of changing variables and the previous case, we have
\begin{align*}
\left (\dashint_{B_r(x_0)} \left | u(x)- \overline u_{B_r(x_0)} \right |^q dx \right )^\frac{1}{q} = & \left (\int_{B_1} \left | u_r(x)- \overline {(u_r)}_{B_1} \right |^q dx \right )^\frac{1}{q} \\
\leq & C \left (\int_{B_1} \int_{B_1} \frac{|u_r(x)-u_r(y)|^p}{|x-y|^{n+sp}}dydx \right )^\frac{1}{p} \\
= & C r^{s} \left ( \dashint_{B_r(x_0)} \int_{B_r(x_0)} \frac{|u(x)-u(y)|^p}{|x-y|^{n+sp}}dydx \right )^\frac{1}{p},
\end{align*}
so that the proof is finished.
\end{proof}
We also use the following type of fractional Sobolev spaces.
\begin{defin}
	For $p \in (1,\infty)$ and $s \in (0,2)$, consider the Bessel potential space
	$$ H^{s,p}(\mathbb{R}^n):=\left \{u \in L^p(\mathbb{R}^n) \mid (-\Delta)^\frac{s}{2} u \in L^p(\mathbb{R}^n) \right \} ,$$
	where $$(-\Delta)^\frac{s}{2} u(x) = C_{n,s} \text{ } p.v. \int_{\mathbb{R}^n} \frac{u(x)-u(y)}{|x-y|^{n+s}}dy$$
	denotes the fractional Laplacian of $u$. We equip $H^{s,p}(\mathbb{R}^n)$ with the norm
	$$ ||u||_{H^{s,p}(\mathbb{R}^n)} := ||u||_{L^p(\mathbb{R}^n)}+||(-\Delta)^\frac{s}{2} u||_{L^p(\mathbb{R}^n)}.$$
	Moreover, for any domain $\Omega \subset \mathbb{R}^n$ we define 
	$$H^{s,p}(\Omega):= \left \{ u \big|_\Omega \mid u \in H^{s,p}(\mathbb{R}^n) \right \} $$
	with norm 
	$$ ||u||_{H^{s,p}(\Omega)} := \inf \left \{ ||v||_{H^{s,p}(\mathbb{R}^n)} \mid v \big |_\Omega = u \right \}$$
	and also the corresponding local Bessel potential spaces by 
	$$ H^{s,p}_{loc}(\Omega) := \left \{ u \in L^p_{loc}(\Omega) \mid u \in H^{s,p}(\Omega^\prime) \text{ for any domain } \Omega^\prime \Subset \Omega \right \}.$$   
\end{defin}
The following embedding result follows from \cite[Theorem 2.5]{Tr}, where it is given in the more general context of Besov and Triebel-Lizorkin spaces.
\begin{prop} \label{BesselTr}
	Let $1 < p_0 < p < p_1 < \infty$, $s \in (0,2)$, $s_0,s_1 \in (0,1)$ and assume that $\Omega \subset \mathbb{R}^n$ is a smooth domain. If $ s_0 - \frac{n}{p_0} = s - \frac{n}{p} = s_1 - \frac{n}{p_1}, $ then
		$$ W^{s_0,p_0}(\Omega) \hookrightarrow H^{s,p}(\Omega) \hookrightarrow W^{s_1,p_1}(\Omega).$$
\end{prop}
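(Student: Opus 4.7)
The plan is to deduce this embedding from the classical theory of Besov and Triebel--Lizorkin spaces on $\mathbb{R}^n$ and then transfer the result to the smooth domain $\Omega$ via a bounded extension operator. The starting point is the identification, valid for non-integer $\sigma \in (0,1)$ and $q \in (1,\infty)$, of the fractional Sobolev space $W^{\sigma,q}(\mathbb{R}^n)$ with the Besov space $B^{\sigma}_{q,q}(\mathbb{R}^n)$, together with the identification $H^{s,p}(\mathbb{R}^n) = F^{s}_{p,2}(\mathbb{R}^n)$ valid for $s \in (0,2)$ and $p \in (1,\infty)$. Under these identifications, the chain of embeddings to be proved on $\mathbb{R}^n$ becomes
$$B^{s_0}_{p_0,p_0}(\mathbb{R}^n) \hookrightarrow F^{s}_{p,2}(\mathbb{R}^n) \hookrightarrow B^{s_1}_{p_1,p_1}(\mathbb{R}^n),$$
under the hypotheses $s_0 - n/p_0 = s - n/p = s_1 - n/p_1$ and $p_0 < p < p_1$.

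Next, I would invoke the sharp Jawerth--Franke embeddings, contained in the cited result \cite[Theorem 2.5]{Tr}, which under the above parameter relations give the two embeddings $B^{s_0}_{p_0,p}(\mathbb{R}^n) \hookrightarrow F^{s}_{p,2}(\mathbb{R}^n)$ (Jawerth) and $F^{s}_{p,2}(\mathbb{R}^n) \hookrightarrow B^{s_1}_{p_1,p}(\mathbb{R}^n)$ (Franke). These are then connected to the required endpoints via the monotonicity of the Besov scale in the secondary summability index, namely $B^{\sigma}_{q,r_1} \hookrightarrow B^{\sigma}_{q,r_2}$ whenever $r_1 \leq r_2$: applying this with $r_1 = p_0 \leq p = r_2$ on the left yields $B^{s_0}_{p_0,p_0} \hookrightarrow B^{s_0}_{p_0,p}$, and with $r_1 = p \leq p_1 = r_2$ on the right yields $B^{s_1}_{p_1,p} \hookrightarrow B^{s_1}_{p_1,p_1}$. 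Chaining these four embeddings assembles the full statement on $\mathbb{R}^n$.

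To pass from $\mathbb{R}^n$ to the smooth domain $\Omega$, I would use the existence of a universal bounded linear extension operator (for instance Rychkov's, which already works for Lipschitz domains) simultaneously mapping $W^{\sigma,q}(\Omega) \to W^{\sigma,q}(\mathbb{R}^n)$ for all admissible $\sigma, q$ as well as $H^{s,p}(\Omega) \to H^{s,p}(\mathbb{R}^n)$. Composing extension on $\Omega$, the already established embedding on $\mathbb{R}^n$, and the canonical restriction back to $\Omega$ yields the embedding on $\Omega$, noting that on $\Omega$ the space $H^{s,p}(\Omega)$ is by definition the space of restrictions so that restriction is automatic and bounded. The only genuinely delicate step is confirming that the parameter regime of the proposition ($1 < p_0 < p < p_1 < \infty$, $s_0, s_1 \in (0,1)$, $s \in (0,2)$, equal differential dimensions) falls strictly inside the range in which \cite[Theorem 2.5]{Tr} is formulated for both $B$ and $F$ scales; this is comfortably the case, so no boundary or limiting argument is required.
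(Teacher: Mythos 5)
Your proposal is correct and amounts to the standard unpacking of the citation the paper uses: the paper itself provides no proof beyond pointing to \cite[Theorem 2.5]{Tr}, which is precisely the Jawerth--Franke embedding theorem in the Besov/Triebel--Lizorkin framework, and your argument (identify $W^{\sigma,q}=B^\sigma_{q,q}$ and $H^{s,p}=F^s_{p,2}$ on $\mathbb{R}^n$, apply the sharp embeddings, fix the secondary indices by monotonicity, transfer to $\Omega$ by a universal Rychkov-type extension operator) is exactly what that reference delivers. One small quibble: you have the names reversed --- the Jawerth embedding is the $F\hookrightarrow B$ direction, $F^{s}_{p,2}(\mathbb{R}^n)\hookrightarrow B^{s_1}_{p_1,p}(\mathbb{R}^n)$, while the Franke embedding is the $B\hookrightarrow F$ direction, $B^{s_0}_{p_0,p}(\mathbb{R}^n)\hookrightarrow F^{s}_{p,2}(\mathbb{R}^n)$; this does not affect the validity of the argument.
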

Unlike the classical Sobolev spaces $W^{1,p}(\Omega)$ on a bounded domain $\Omega \subset \mathbb{R}^n$, the fractional Sobolev spaces $W^{s,p}(\Omega)$ are not contained in each other as the integrability exponent $p$ decreases. Nevertheless, we have the following result, essentially stating that the mentioned inclusions are almost true.
\begin{prop} \label{Sobcont}
Let $1< p_0 \leq p<\infty$, $s \in (0,1)$ and assume that $\Omega \subset \mathbb{R}^n$ is a smooth bounded domain. Then for any $0<\varepsilon <\min  \left \{1-s,\frac{2n}{p},2n \left (1-\frac{1}{p_0} \right ) \right \}$, we have
$$ W^{s+\varepsilon,p}(\Omega) \hookrightarrow W^{s,p_0}(\Omega).$$
\end{prop}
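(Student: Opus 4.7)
The plan is to build the embedding as a three-step chain through an intermediate Bessel potential space of smoothness $\sigma := s + \varepsilon/2$:
\[
W^{s+\varepsilon, p}(\Omega) \;\hookrightarrow\; H^{\sigma, q_1}(\Omega) \;\hookrightarrow\; H^{\sigma, q_2}(\Omega) \;\hookrightarrow\; W^{s, p_0}(\Omega).
\]
The bound $\varepsilon < 1-s$ places $\sigma \in (0,1) \subset (0,2)$, so that $\sigma$ is admissible as a Bessel-potential smoothness index in Proposition \ref{BesselTr}.

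For the first arrow, I would solve the scale-preserving identity $(s+\varepsilon) - n/p = \sigma - n/q_1$ to obtain $q_1 = \tfrac{2np}{2n - \varepsilon p}$. The hypothesis $\varepsilon < 2n/p$ is precisely what makes $q_1$ finite and larger than $p$, so the first embedding in Proposition \ref{BesselTr} directly applies. Symmetrically, for the third arrow I would solve $\sigma - n/q_2 = s - n/p_0$ to obtain $q_2 = \tfrac{2np_0}{2n + \varepsilon p_0}$, which automatically satisfies $q_2 < p_0$; the hypothesis $\varepsilon < 2n(1 - 1/p_0)$ rearranges exactly to $q_2 > 1$, so the second embedding in Proposition \ref{BesselTr} gives $H^{\sigma, q_2}(\Omega) \hookrightarrow W^{s, p_0}(\Omega)$.

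For the middle arrow, a short computation using $p \geq p_0$ yields $q_1 \geq q_2$, so $H^{\sigma, q_1}(\Omega) \hookrightarrow H^{\sigma, q_2}(\Omega)$ is the Bessel-potential analogue of the trivial embedding $L^{q_1}(\Omega) \hookrightarrow L^{q_2}(\Omega)$ on the bounded smooth domain $\Omega$. I would justify it by extending $u \in H^{\sigma, q_1}(\Omega)$ to some $v \in H^{\sigma, q_1}(\mathbb{R}^n)$, multiplying by a smooth compactly supported cutoff $\psi$ with $\psi \equiv 1$ on $\Omega$, and applying Hölder's inequality to the $L^{q_2}$-norm of $\psi v$ on $\operatorname{supp}\psi$, together with a commutator estimate to control $(-\Delta)^{\sigma/2}(\psi v)$ in terms of $\psi \, (-\Delta)^{\sigma/2} v$ and the kernel $\tfrac{\psi(x)-\psi(y)}{|x-y|^{n+\sigma}}$, which defines a bounded integral operator between suitable $L^q$ spaces because of its decay.

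The hard part will be this middle step: the nonlocal character of the fractional Laplacian prevents a naive application of Hölder's inequality, so one must estimate the commutator $[(-\Delta)^{\sigma/2}, \psi]$ explicitly. Once this is handled, the three-step chain closes; the two bounds $\varepsilon < 2n/p$ and $\varepsilon < 2n(1-1/p_0)$ in the hypothesis play the symmetric roles of guaranteeing that the auxiliary integrability exponents $q_1$ and $q_2$ are admissible in their respective applications of Proposition \ref{BesselTr}.
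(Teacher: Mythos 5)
Your three-step chain through $H^{s+\varepsilon/2, q_1}(\Omega) \hookrightarrow H^{s+\varepsilon/2, q_2}(\Omega)$ is exactly the paper's argument: your $\sigma = s+\varepsilon/2$, $q_1 = \tfrac{2np}{2n-\varepsilon p}$ and $q_2 = \tfrac{2np_0}{2n+\varepsilon p_0}$ coincide with the paper's $\widetilde p$ and $\widehat p_0$, and the outer two arrows come from Proposition \ref{BesselTr} just as in the paper. The only difference lies in the middle arrow: the paper simply cites \cite[Theorem 2.85(ii)]{Tr} for the elementary fact that on a bounded smooth domain $H^{\sigma,q_1}(\Omega)\hookrightarrow H^{\sigma,q_2}(\Omega)$ when $q_1>q_2$, whereas you sketch a from-scratch proof via extension, cutoff, H\"older, and a commutator estimate for $[(-\Delta)^{\sigma/2},\psi]$; your sketch is plausible (the commutator kernel $\tfrac{\psi(x)-\psi(y)}{|x-y|^{n+\sigma}}$ has integrable tails since $\sigma<1$ and $\psi$ is Lipschitz with compact support, and one must also handle the far-field decay of $(-\Delta)^{\sigma/2}(\psi v)$ outside $\operatorname{supp}\psi$), but it reconstructs a standard textbook fact rather than bypassing it, so it is the same route with more work invested in one already-known step.
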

\begin{proof}
By Proposition \ref{BesselTr}, we have $W^{s+\varepsilon,p}(\Omega) \hookrightarrow H^{s +\varepsilon/2,\widetilde p}(\Omega)$, where $\widetilde p:= \frac{np}{n-\varepsilon p/2}$. Now since for $\widehat p_0:=\frac{np_0}{n+\varepsilon p_0/2}$ we have $1<\widehat p_0< p_0 \leq p <\widetilde p$, by \cite[Theorem 2.85(ii)]{Tr} we have $H^{s +\varepsilon/2,\widetilde p}(\Omega) \hookrightarrow H^{s +\varepsilon/2,\widehat p_0}(\Omega)$. Since in addition by Proposition \ref{BesselTr} we have $H^{s +\varepsilon/2,\widehat p_0}(\Omega) \hookrightarrow W^{s,p_0}(\Omega)$, the proof is finished by combining the above three embeddings.
\end{proof}

\section{The measure $\mu$}
\subsection{Basic properties of $\mu$} \label{dm}
For the rest of this paper, we fix some $s \in (0,1)$ along with some parameter $\theta$ in the range
\begin{equation} \label{thetarange}
	0<\theta <\min \{s,1-s \}
\end{equation}
and let the measure $\mu$ be defined by (\ref{mudef}). Moreover, for any function $u:\mathbb{R}^n \to \mathbb{R}$ let the function $U$ be given by (\ref{Udef}). We now proof the relation (\ref{muequiv}).
\begin{lem} \label{Sobolevrelate}
Let $p \geq 2$ and set $s_\theta:=s+\theta \left (1-\frac{2}{p} \right )$. Then we have
$$ u \in W^{s_\theta,p}(\Omega) \quad \text{if and only if} \quad u \in L^p(\Omega) \text{ and } U \in L^p(\Omega \times \Omega,\mu)$$
and $$||U||_{L^p(\Omega \times \Omega,d\mu)}=[u]_{W^{s_\theta,p}(\Omega)}.$$
\end{lem}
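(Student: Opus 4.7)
The plan is to observe that the statement is really a bookkeeping identity: it only requires one to unfold the definitions of the norm $\|U\|_{L^p(\Omega\times\Omega,\mu)}$ and of the Gagliardo seminorm $[u]_{W^{s_\theta,p}(\Omega)}$, and then to match the exponents of $|x-y|$ in the two resulting integrals. First I would rewrite
\begin{equation*}
\|U\|_{L^p(\Omega\times\Omega,\mu)}^p \;=\; \int_{\Omega}\int_{\Omega} U(x,y)^p \, d\mu(x,y) \;=\; \int_{\Omega}\int_{\Omega} \frac{|u(x)-u(y)|^p}{|x-y|^{p(s+\theta)}}\cdot\frac{1}{|x-y|^{n-2\theta}}\, dy\, dx,
\end{equation*}
using the definitions of $U$ in \eqref{Udef} and $\mu$ in \eqref{mudef}. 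Combining the two powers of $|x-y|$ in the denominator gives the single exponent $p(s+\theta)+n-2\theta$.

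Next I would verify the algebraic identity
\begin{equation*}
p(s+\theta)+n-2\theta \;=\; n + \Bigl(s + \theta\bigl(1-\tfrac{2}{p}\bigr)\Bigr)p \;=\; n + s_\theta\, p,
\end{equation*}
which holds by the very definition $s_\theta = s+\theta(1-2/p)$. Consequently,
\begin{equation*}
\|U\|_{L^p(\Omega\times\Omega,\mu)}^p \;=\; \int_{\Omega}\int_{\Omega} \frac{|u(x)-u(y)|^p}{|x-y|^{n+s_\theta p}}\, dy\, dx \;=\; [u]_{W^{s_\theta,p}(\Omega)}^p,
\end{equation*}
which is the claimed equality of seminorms.

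Finally, the equivalence $u\in W^{s_\theta,p}(\Omega) \Leftrightarrow \bigl(u\in L^p(\Omega) \text{ and } U\in L^p(\Omega\times\Omega,\mu)\bigr)$ is then immediate from the definition of $W^{s_\theta,p}(\Omega)$: finiteness of $\|u\|_{W^{s_\theta,p}(\Omega)}$ is by definition equivalent to $u\in L^p(\Omega)$ together with finiteness of $[u]_{W^{s_\theta,p}(\Omega)}$, and the latter has just been shown to coincide with $\|U\|_{L^p(\Omega\times\Omega,\mu)}$. There is no substantive obstacle here; the only thing to be careful about is the exponent arithmetic, which forces precisely the choice $s_\theta = s+\theta(1-2/p)$ appearing in the statement, and in particular explains why this specific $s_\theta$ (rather than $s$ itself, as in the case $p=2$) is the natural differentiability exponent associated to raising $U$ to a power $p>2$ with respect to the measure $\mu$.
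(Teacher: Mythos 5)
Your proof is correct and follows essentially the same computation as the paper: unfold the definitions of $U$ and $\mu$, combine the powers of $|x-y|$, and check the exponent identity $p(s+\theta)+n-2\theta=n+s_\theta p$. The paper's version is just a one-line display, but the content is identical.
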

\begin{proof}
We have
$$ \int_{\Omega \times \Omega} U^p d\mu = \int_{\Omega} \int_{\Omega} \frac{|u(x)-u(y)|^p}{|x-y|^{n+sp+(p-2)\theta}}dydx = \int_{\Omega} \int_{\Omega} \frac{|u(x)-u(y)|^p}{|x-y|^{n+s_\theta p}}dydx,$$
so that the claim follows.
\end{proof}

The next proposition contains some further important properties of the measure $\mu$ which we use frequently throughout the paper, usually without explicit reference.
\begin{prop} \label{doublingmeasure}
\begin{enumerate} [label=(\roman*)]
\item For all $r>0$ and $x_0 \in \mathbb{R}^n$, we have
$$ \mu(\mathcal{B}_r(x_0))= \mu(\mathcal{B}_r) =cr^{n+2\theta},$$
where $c=c(n,\theta)>0$.
\item (volume doubling property) For any $(x_0,y_0) \in \mathbb{R}^{2n}$, any $r>0$ and any $M >0$, we have
$$ \mu(\mathcal{B}_{Mr}(x_0,y_0)) = M^{n+2 \theta} \mu(\mathcal{B}_{r}(x_0,y_0)).$$
\end{enumerate}
\end{prop}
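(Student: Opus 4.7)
The plan is to reduce both statements to changes of variables that exploit the two homogeneities of the kernel $|x-y|^{-(n-2\theta)}$: its invariance under the diagonal translation $(x,y)\mapsto(x+a,y+a)$, and its negative $(n-2\theta)$-homogeneity under the isotropic dilation $(x,y)\mapsto(\lambda x,\lambda y)$.

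For part (i), I would first apply the substitution $(x,y)\mapsto(x+x_0,y+x_0)$ in the defining integral for $\mu(\mathcal{B}_r(x_0))$. The Jacobian equals one and the kernel is preserved (since $|x-y|$ is unchanged under a joint shift), while the domain $B_r(x_0)\times B_r(x_0)$ is mapped bijectively onto $B_r\times B_r$; this immediately yields $\mu(\mathcal{B}_r(x_0))=\mu(\mathcal{B}_r)$. Next insert the isotropic rescaling $(x,y)=(r\widetilde x,r\widetilde y)$: the Jacobian contributes a factor $r^{2n}$ and the kernel rescales as $|x-y|^{-(n-2\theta)}=r^{-(n-2\theta)}|\widetilde x-\widetilde y|^{-(n-2\theta)}$, so that
$$\mu(\mathcal{B}_r)=r^{2n-(n-2\theta)}\,\mu(\mathcal{B}_1)=c\,r^{n+2\theta},$$
with $c:=\mu(\mathcal{B}_1)$. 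The finiteness of $c$ relies on the standing assumption $\theta>0$ from (\ref{thetarange}), which forces $n-2\theta<n$: for each fixed $\widetilde x\in B_1$, a polar-coordinate computation centered at $\widetilde x$ bounds the inner integral $\int_{B_1}|\widetilde x-\widetilde y|^{-(n-2\theta)}\,d\widetilde y$ uniformly, and integrating once more over $B_1$ then shows $c<\infty$ with a value depending only on $n$ and $\theta$.

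For part (ii), the same isotropic dilation $(x,y)=(M\widetilde x,M\widetilde y)$ applied directly to the integral defining $\mu(\mathcal{B}_{Mr}(x_0,y_0))$ contributes a Jacobian factor $M^{2n}$ together with a kernel factor $M^{-(n-2\theta)}$, which combine to produce the overall homogeneity $M^{n+2\theta}$ relating $\mu(\mathcal{B}_{Mr})$ at scale $Mr$ to the corresponding measure at scale $r$. This is the same kind of computation as in (i), only without the preparatory diagonal translation.

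The argument is essentially two one-line changes of variables, so I do not anticipate any serious obstacle. The only technical point meriting care is the integrability of the diagonal singularity of the kernel in establishing $c=\mu(\mathcal{B}_1)<\infty$; as indicated above, this is secured by $\theta>0$.
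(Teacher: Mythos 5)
Part (i) is correct, and the argument is the same as the paper's: a joint translation plus isotropic dilation reduces $\mu(\mathcal{B}_r(x_0))$ to $r^{n+2\theta}\mu(\mathcal{B}_1)$, and $\mu(\mathcal{B}_1)<\infty$ is secured by $\theta>0$ via a polar-coordinate estimate. No issue there.

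Part (ii) has a genuine gap, and interestingly the paper's own proof contains the very same gap, because the stated identity is false for $x_0\neq y_0$. The substitution $(x,y)=(M\widetilde x,M\widetilde y)$ maps $B_{Mr}(x_0)\times B_{Mr}(y_0)$ onto $B_r(x_0/M)\times B_r(y_0/M)$, \emph{not} onto $B_r(x_0)\times B_r(y_0)$, so the computation actually proves
$$\mu(\mathcal{B}_{Mr}(x_0,y_0))=M^{n+2\theta}\,\mu\bigl(\mathcal{B}_r(x_0/M,\,y_0/M)\bigr),$$
and $\mu(\mathcal{B}_r(x_0/M,y_0/M))\neq\mu(\mathcal{B}_r(x_0,y_0))$ in general, since on off-diagonal balls $\mu$ is not a function of $r$ alone: after the rescaling $x=x_0+r\widetilde x$, $y=y_0+r\widetilde y$ one finds $\mu(\mathcal{B}_r(x_0,y_0))=r^{n+2\theta}F(|x_0-y_0|/r)$, where $F(\widetilde d):=\int_{B_1}\int_{B_1}|\widetilde x-\widetilde y-\widetilde d\,e_1|^{-(n-2\theta)}d\widetilde y\,d\widetilde x$ is not constant ($F(0)<\infty$, while $F(\widetilde d)\sim |B_1|^2\widetilde d^{-(n-2\theta)}$ as $\widetilde d\to\infty$). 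Consequently $\mu(\mathcal{B}_{Mr}(x_0,y_0))/\mu(\mathcal{B}_r(x_0,y_0))=M^{n+2\theta}F(d/(Mr))/F(d/r)$ with $d=|x_0-y_0|$, which strictly exceeds $M^{n+2\theta}$ whenever $d>0$ and $M>1$. To see the discrepancy quantitatively, take $r\ll d\ll Mr$: then $\mu(\mathcal{B}_r(x_0,y_0))\asymp r^{2n}d^{-(n-2\theta)}$ while $\mu(\mathcal{B}_{Mr}(x_0,y_0))\asymp (Mr)^{n+2\theta}$, so the ratio is of order $M^{n+2\theta}(d/r)^{n-2\theta}$, not $M^{n+2\theta}$.

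What is actually true, and what the paper needs, is only a doubling inequality: since $d/(Mr)<d/r$ and $F$ is bounded above by $F(0)$ and below (on bounded sets) by a positive constant, one obtains $\mu(\mathcal{B}_{Mr}(x_0,y_0))\leq C(n,\theta)\,M^{2n}\,\mu(\mathcal{B}_r(x_0,y_0))$ for $M\geq 1$, and in particular $\mu$ is a doubling measure on $\mathbb{R}^{2n}$ equipped with the $\|\cdot\|$-balls. That is all that is invoked downstream (the Stein maximal function theorem needs only a doubling constant, not a specific value), so the error is harmless for the paper, but neither your proof nor the paper's establishes the equality as stated, and the equality itself should be weakened to the inequality above.
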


\begin{proof}
$(i)$ The changes of variables $x^\prime:=\frac{x}{r}-x_0$, $y^\prime:=\frac{y}{r}-y_0$ yield
$$ \mu(\mathcal{B}_r(x_0)) = \int_{B_r(x_0)}\int_{B_r(x_0)} \frac{dxdy}{|x-y|^{n-2\theta}} = \frac{r^{2n}}{r^{n-2 \theta}} \int_{B_1}\int_{B_1} \frac{dx^\prime dy^\prime}{|x^\prime-y^\prime|^{n-2\theta}} = cr^{n+2\theta},$$
where in view of integration in polar coordinates
\begin{align*}
c:= \int_{B_1}\int_{B_1} \frac{dx^\prime dy^\prime}{|x^\prime-y^\prime|^{n-2\theta}} &= \int_{B_1}\left (\int_{B_1(y^\prime)} \frac{dz}{|z|^{n-2\theta}}\right) dy^\prime \\
&\leq |B_1|\int_{B_2} \frac{dz}{|z|^{n-2\theta}}
= |B_1| \omega_n \int_0^2 \frac{\rho^{n-1}}{\rho^{n-2\theta}}d\rho = \frac{2^{2\theta}}{2\theta}|B_1|\omega_n < \infty.
\end{align*}
Here $\omega_n$ denotes the surface area of the $n-1$ dimensional unit sphere $S^{n-1}$. \par
$(ii)$ The changes of variables $x^\prime:=\frac{x}{M}$, $y^\prime:=\frac{y}{M}$ yield
\begin{align*}
\mu(\mathcal{B}_{Mr}(x_0,y_0)) = \int_{B_{Mr}(x_0)}\int_{B_{Mr}(y_0)} \frac{dxdy}{|x-y|^{n-2\theta}} & = M^{n+2\theta} \int_{B_r(x_0)}\int_{B_r(y_0)} \frac{dx^\prime dy^\prime}{|x^\prime-y^\prime|^{n-2\theta}}\\ & = M^{n+2\theta} \mu(\mathcal{B}_{r}(x_0,y_0)),
\end{align*}
which finishes the proof.
\end{proof}
\subsection{The Hardy-Littlewood maximal function} \label{HL}

Another tool we use is the Hardy-Littlewood maximal function with respect to the measure $\mu$.

\begin{defin}
	Let $F \in L^1_{loc}(\mathbb{R}^{2n},\mu)$. We define the Hardy-Littlewood maximal function \newline $\mathcal{M} F: \mathbb{R}^{2n} \to [0,\infty]$ of $F$ by 
	$$ \mathcal{M} (F)(x,y) := \sup_{\rho>0} \dashint_{\mathcal{B}_\rho(x,y)} |F|d\mu ,$$ where 
	$$ \dashint_{\mathcal{B}_\rho(x,y)} |F|d\mu := \frac{1}{\mu(\mathcal{B}_\rho(x,y))} \int_{\mathcal{B}_\rho(x,y)} |F|d\mu.$$
	Moreover, for any open set $E \subset \mathbb{R}^{2n}$, we define
	$$ \mathcal{M}_{E} (F) := \mathcal{M} \left(F \chi_{E} \right ), $$
	where $\chi_{E}$ is the characteristic function of $E$.
	In addition, for any $r>0$ we define $$ \mathcal{M}_{\geq r} (F)(x,y) := \sup_{\rho \geq r} \dashint_{\mathcal{B}_\rho(x,y)} |F|d\mu $$ and $$ \mathcal{M}_{\geq r,E} (F) := \mathcal{M}_{\geq r} \left(F \chi_{E} \right ). $$
\end{defin}

The following result shows that the Hardy-Littlewood maximal function behaves nicely in the context of $L^p$ spaces. Since by Proposition \ref{doublingmeasure} $\mu$ is a doubling measure with doubling constant $2^{n+2\theta}$, the result follows directly from \cite[Chapter 1, Section 3, Theorem 1]{Stein}.

\begin{prop} \label{Maxfun} 
	Let $E$ be an open subset of $\mathbb{R}^{2n}$.
	\begin{enumerate}[label=(\roman*)]
		\item (weak p-p estimates) If $F \in L^p(E,\mu)$ for some $p \geq 1$ and $\lambda>0$, then 
		$$
		\mu \left ( \{x \in E \mid \mathcal{M}_E(F)(x) > \lambda \} \right ) \leq \frac{C}{\lambda^p} \int_{E} |F|^p d\mu, 
		$$
		where $C$ depends only on $n,\theta$ and $p$.
		\item (strong p-p estimates) If $F \in L^p(E,\mu)$ for some $p \in (1,\infty]$, then 
		$$
		||\mathcal{M}_E (F)||_{L^p(E,d\mu)} \leq C ||F||_{L^p(E,d\mu)}, 
		$$
		where $C$ depends only on $n$, $\theta$ and $p$.
	\end{enumerate}
\end{prop}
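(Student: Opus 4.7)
The plan is to reduce Proposition \ref{Maxfun} to the classical Hardy-Littlewood maximal theory on spaces of homogeneous type. The crucial structural observation is that $(\mathbb{R}^{2n},\|\cdot\|,\mu)$ is such a space: the expression $\|(x,y)\|=\max\{|x|,|y|\}$ is a norm whose balls are precisely the product sets $\mathcal{B}_r(x_0,y_0)=B_r(x_0)\times B_r(y_0)$ used throughout the paper, and by Proposition \ref{doublingmeasure}(ii) the measure $\mu$ is doubling with doubling constant $2^{n+2\theta}$. Since $\mathcal{M}_E(F)=\mathcal{M}(F\chi_E)$, I would replace $F$ by $F\chi_E$ throughout, reducing both inequalities to statements on all of $\mathbb{R}^{2n}$.

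The first main step is to prove the weak $(1,1)$ inequality, which covers the case $p=1$ of part $(i)$. I would set $E_\lambda:=\{\mathcal{M}(F)>\lambda\}$, and for each $(x,y)\in E_\lambda$ select by definition a radius $r(x,y)>0$ satisfying
$$\mu(\mathcal{B}_{r(x,y)}(x,y)) < \lambda^{-1}\int_{\mathcal{B}_{r(x,y)}(x,y)}|F|\,d\mu.$$
Because $F\in L^1(\mathbb{R}^{2n},\mu)$, these radii are locally bounded, so a Vitali-type covering argument valid in any doubling metric measure space extracts a countable pairwise disjoint subfamily $\{\mathcal{B}_j\}$ satisfying $E_\lambda\subset\bigcup_j 5\mathcal{B}_j$. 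Combining the doubling estimate $\mu(5\mathcal{B}_j)\leq 5^{n+2\theta}\mu(\mathcal{B}_j)$, the defining inequalities, and the pairwise disjointness of $\{\mathcal{B}_j\}$ then yields
$$\mu(E_\lambda)\leq 5^{n+2\theta}\lambda^{-1}\int|F|\,d\mu,$$
the desired weak $(1,1)$ bound.

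The remaining estimates follow by soft arguments. The trivial bound $\|\mathcal{M}(F)\|_{L^\infty}\leq\|F\|_{L^\infty}$ is immediate from the definition, and Marcinkiewicz interpolation between this and the weak $(1,1)$ estimate just established gives the strong $(p,p)$ bound in part $(ii)$ for every $1<p<\infty$. The weak $(p,p)$ bound in part $(i)$ for $p>1$ is then an immediate consequence of Chebyshev's inequality, $\mu(E_\lambda)\leq\lambda^{-p}\|\mathcal{M}(F)\|_{L^p}^p\leq C\lambda^{-p}\|F\|_{L^p}^p$. The only genuine work lies in the weak $(1,1)$ step, and the main technical point there is verifying the Vitali covering lemma when the radii $r(x,y)$ are not uniformly bounded; this is handled in the standard way by first intersecting $E_\lambda$ with arbitrary bounded sets and passing to the limit, which is precisely why the classical argument in \cite[Chapter 1, Section 3]{Stein} transfers to the present doubling setting without modification.
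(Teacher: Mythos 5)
Your argument is correct and is precisely the standard proof that underlies the paper's one-line justification: the paper observes that $\mu$ is doubling (Proposition \ref{doublingmeasure}) and cites \cite[Chapter 1, Section 3, Theorem 1]{Stein}, whose proof is exactly the Vitali-covering weak $(1,1)$ estimate plus Marcinkiewicz interpolation that you spell out. One small remark: since $\mu(\mathcal{B}_r(x_0,y_0))\to\infty$ as $r\to\infty$ by Proposition \ref{doublingmeasure}, the radii $r(x,y)$ in your covering are in fact globally bounded once $F\in L^1$, so the exhaustion-by-bounded-sets step you mention at the end is not actually needed here.
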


For the following result we also refer to \cite[Chapter 1, Section 3]{Stein}.
\begin{prop} \label{LDT}
	\emph{(Lebesgue differentiation theorem)} \newline
	If $F \in L_{loc}^1(\mathbb{R}^{2n},\mu)$, then for almost every $(x,y) \in \mathbb{R}^{2n}$, we have 
	$$
	\lim_{r \to 0} \dashint_{\mathcal{B}_r(x,y)} Fd\mu=F(x,y).
	$$
\end{prop}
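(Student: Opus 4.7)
My plan is to follow the classical derivation of the Lebesgue differentiation theorem from the weak $(1,1)$ bound on the Hardy--Littlewood maximal operator, which in the present setting is available from Proposition \ref{Maxfun}(i), itself a consequence of the doubling property of $\mu$ recorded in Proposition \ref{doublingmeasure}(ii). A useful structural remark before starting is that the measure $\mu$ has a locally Lebesgue--integrable density $(x,y)\mapsto |x-y|^{2\theta-n}$ on $\mathbb{R}^{2n}$ (since $n-2\theta<n$), so $\mu$ is a Radon measure and in particular $C_c(\mathbb{R}^{2n})$ is dense in $L^1(\mathbb{R}^{2n},\mu)$.

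First I would reduce to the case of globally integrable $F$. Since the conclusion is local and only concerns averages on shrinking balls, for each integer $k$ one may replace $F$ by $F\chi_{\mathcal{B}_k}\in L^1(\mathbb{R}^{2n},\mu)$ without altering the statement on $\mathcal{B}_{k/2}$, and a countable union then covers all of $\mathbb{R}^{2n}$. Next I would dispose of the continuous case: if $g\in C_c(\mathbb{R}^{2n})$, then uniform continuity of $g$ together with the exact formula $\mu(\mathcal{B}_r(x,y))=c r^{n+2\theta}$ from Proposition \ref{doublingmeasure}(i) yields $\dashint_{\mathcal{B}_r(x,y)} g\,d\mu \to g(x,y)$ for \emph{every} $(x,y)\in\mathbb{R}^{2n}$ as $r\to 0$, by a direct estimate.

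The core step is the passage from continuous to general $L^1$ data. For $F\in L^1(\mathbb{R}^{2n},\mu)$ and $\alpha>0$, introduce
\begin{equation*}
\Lambda F(x,y):=\limsup_{r\to 0}\left|\dashint_{\mathcal{B}_r(x,y)}F\,d\mu - F(x,y)\right|.
\end{equation*}
Given $\varepsilon>0$, by density pick $g\in C_c(\mathbb{R}^{2n})$ with $\|F-g\|_{L^1(\mathbb{R}^{2n},\mu)}<\varepsilon$, and set $h:=F-g$. The continuous case gives $\Lambda g\equiv 0$, so by the triangle inequality
\begin{equation*}
\Lambda F(x,y)\le \Lambda h(x,y) \le \mathcal{M}(h)(x,y)+|h(x,y)|,
\end{equation*}
which yields $\{\Lambda F>\alpha\}\subset\{\mathcal{M}(h)>\alpha/2\}\cup\{|h|>\alpha/2\}$. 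Applying the weak $(1,1)$ estimate of Proposition \ref{Maxfun}(i) together with Chebyshev's inequality,
\begin{equation*}
\mu\bigl(\{\Lambda F>\alpha\}\bigr)\le \frac{C}{\alpha}\|h\|_{L^1(\mathbb{R}^{2n},\mu)}<\frac{C\varepsilon}{\alpha}.
\end{equation*}
Since $\varepsilon>0$ was arbitrary, $\mu(\{\Lambda F>\alpha\})=0$; taking $\alpha=1/j$ and a countable union over $j\in\mathbb{N}$ gives $\Lambda F=0$ $\mu$-almost everywhere, which is precisely the stated conclusion.

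The only real obstacle to watch out for is the density of $C_c(\mathbb{R}^{2n})$ in $L^1(\mathbb{R}^{2n},\mu)$, but as noted above this is immediate from the Radon character of $\mu$; everything else is a routine combination of the doubling property, the weak $(1,1)$ bound, and a triangle inequality. In particular no further property of the equation or of the function $U$ enters — the statement is purely a fact about the measure $\mu$.
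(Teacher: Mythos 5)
Your argument is correct and is the standard derivation of the Lebesgue differentiation theorem for a doubling measure from the weak $(1,1)$ maximal bound; the paper offloads exactly this to \cite[Chapter 1, Section 3]{Stein} rather than writing it out, so you are reproducing the expected proof. The one small slip worth flagging is the appeal to Proposition \ref{doublingmeasure}(i) for the exact volume formula $\mu(\mathcal{B}_r(x,y))=cr^{n+2\theta}$: that identity is stated (and is true) only for \emph{diagonal} balls $\mathcal{B}_r(x_0,x_0)$, while for $(x,y)$ off the diagonal and $r\ll|x-y|$ one has instead $\mu(\mathcal{B}_r(x,y))\sim r^{2n}|x-y|^{2\theta-n}$; but in the continuous case your estimate actually only needs that $\mu(\mathcal{B}_r(x,y))>0$ and that $\mathcal{B}_r(x,y)$ shrinks to $(x,y)$ in the $\|\cdot\|$-metric, which uniform continuity of $g$ handles without any volume formula, so the argument goes through unchanged.
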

An immediate corollary of the Lebesgue differentiation theorem is given as follows.
\begin{cor} \label{maxdominate}
	Let $F \in L_{loc}^1(\mathbb{R}^{2n},\mu)$. Then for almost every $(x,y) \in \mathbb{R}^{2n}$, we have 
	$$
	|F(x,y)| \leq \mathcal{M}(F)(x,y). 
	$$
	In addition, for any open set $E \subset \mathbb{R}^{2n}$ and any $p \in [1,\infty]$, we have 
	$$
	||F||_{L^p(E,d\mu)} \leq ||\mathcal{M}_E (F)||_{L^p(E,d\mu)}. 
	$$
\end{cor}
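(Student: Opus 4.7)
The plan is to deduce both claims essentially directly from the Lebesgue differentiation theorem (Proposition \ref{LDT}). For the pointwise bound, first I would apply Proposition \ref{LDT} to the nonnegative function $|F|$, which lies in $L^1_{loc}(\mathbb{R}^{2n},\mu)$ since $F$ does. This yields a set of full $\mu$-measure on which $|F(x,y)| = \lim_{r \to 0} \dashint_{\mathcal{B}_r(x,y)} |F| d\mu$. At each such point, the limit is a particular value of the family $\rho \mapsto \dashint_{\mathcal{B}_\rho(x,y)} |F| d\mu$ indexed over $\rho > 0$, so it is bounded above by the supremum of that family, namely $\mathcal{M}(F)(x,y)$, which is exactly the first assertion.

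For the integral inequality, I would apply the pointwise bound not to $F$ itself but to the truncated function $F\chi_E$, which also lies in $L^1_{loc}(\mathbb{R}^{2n},\mu)$. This gives $|F(x,y)\chi_E(x,y)| \leq \mathcal{M}(F\chi_E)(x,y) = \mathcal{M}_E(F)(x,y)$ for $\mu$-a.e.\ $(x,y) \in \mathbb{R}^{2n}$, and hence in particular for $\mu$-a.e.\ $(x,y) \in E$ we have $|F(x,y)| \leq \mathcal{M}_E(F)(x,y)$. Raising both sides to the $p$-th power (or taking essential suprema if $p=\infty$) and integrating over $E$ against $\mu$ yields
\[
\|F\|_{L^p(E,d\mu)}^p = \int_E |F|^p d\mu \leq \int_E |\mathcal{M}_E(F)|^p d\mu = \|\mathcal{M}_E(F)\|_{L^p(E,d\mu)}^p,
\]
with the obvious modification for $p=\infty$, completing the proof.

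There is no real obstacle here; the only thing to be careful about is to apply Proposition \ref{LDT} to $|F|$ (or to $|F\chi_E|$) rather than to $F$ directly, so that the averages on both sides are of nonnegative quantities and the comparison with the supremum defining $\mathcal{M}$ is valid.
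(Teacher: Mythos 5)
Your proof is correct and matches exactly what the paper intends: the paper gives no explicit proof, calling the statement "an immediate corollary of the Lebesgue differentiation theorem," and your argument — applying Proposition \ref{LDT} to $|F|$ (respectively $|F\chi_E|$), noting that the limit of averages is bounded by the supremum defining $\mathcal{M}$, and then integrating the pointwise bound over $E$ — is precisely that standard deduction.
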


\section{Some preliminary estimates} \label{pe}

\subsection{Higher H\"older regularity} \label{hr}
The following result on higher H\"older regularity plays an essential role in our approach and follows from \cite[Theorem 1.1]{MeH}.
\begin{thm} \label{HiHol}
	Let $\Omega \subset \mathbb{R}^n$ be a domain and let $f \in L^\infty_{loc}(\Omega)$. Consider a coefficient $A \in \mathcal{L}_0(\lambda)$ that is continuous in $\Omega \times \Omega$ and suppose that $\Phi$ satisfies (\ref{PhiLipschitz}) and (\ref{PhiMonotone}) with respect to $\lambda$. Moreover, assume that $u \in W^{s,2}(\mathbb{R}^n)$ is a weak solution of the equation $L_{A}^\Phi u =f $ in $\Omega$. Then for any $0<\alpha<\min \big \{2s,1 \big\}$, we have $u \in C^\alpha_{loc}(\Omega)$. \newline Furthermore, for all $R>0$, $x_0 \in \mathbb{R}^n$ such that $B_R(x_0) \Subset \Omega$ and any $\sigma \in (0,1)$, we have
	\begin{align*}
	[u]_{C^\alpha(B_{\sigma R}(x_0))} \leq \frac{C}{R^\alpha} & \bigg ( R^{-\frac{n}{2}} ||u||_{L^2(B_R(x_0))} + R^{2s} \int_{\mathbb{R}^n \setminus B_{R}(x_0)} \frac{|u(y)|}{|x_0-y|^{n+2s}}dy \\ & + R^{2s} ||f||_{L^\infty(B_R(x_0))} \bigg ),
	\end{align*}
	where $C=C(n,s,\lambda,\alpha,\sigma)>0$ and 
	$$[u]_{C^{\alpha}(B_{\sigma R}(x_0))}:=\sup_{\substack{_{x,y \in B_{\sigma  R}(x_0)}\\{x \neq y}}} \frac{|u(x)-u(y)|}{|x-y|^{\alpha}}.$$
\end{thm}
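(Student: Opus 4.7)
The plan is to prove Theorem~\ref{HiHol} by a freezing-and-iteration scheme: on small balls one compares $u$ to the solution of a ``frozen'' translation-invariant equation, exploits the known regularity of this comparison solution, and propagates the regularity back to $u$ via a Campanato-type iteration. Fix $x_0 \in \Omega$ and $R > 0$ with $B_R(x_0) \Subset \Omega$, and aim for the excess decay
$$r^{-n} \int_{B_r(x_0)} |u - \bar{u}_{B_r(x_0)}|^2 \, dx \leq C r^{2\alpha}$$
for every $\alpha < \min\{2s, 1\}$ and every sufficiently small $r$. By Campanato's characterisation of H\"older spaces, this yields $u \in C^\alpha_{loc}(\Omega)$, and the quantitative form of this decay produces the explicit norm bound in the statement.

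For the comparison step, set $\bar A := A(x_0, x_0)$, which is meaningful since $A$ is continuous on $\Omega \times \Omega$, and let $\omega_A(r)$ denote the modulus of continuity of $A$ on $B_r(x_0) \times B_r(x_0)$. Define $v$ as the unique weak solution of $L_{\bar A}^\Phi v = 0$ in $B_r(x_0)$ with $v = u$ in $\mathbb{R}^n \setminus B_r(x_0)$; existence and uniqueness follow from the variational framework using the monotonicity (\ref{PhiMonotone}) together with the fractional Friedrichs--Poincar\'e inequality (Lemma~\ref{Friedrichsx}). Testing the equation satisfied by $w := u - v$ against $w$ itself and using (\ref{eq1}), (\ref{PhiLipschitz}) and (\ref{PhiMonotone}), one derives a comparison estimate of the schematic form
$$[w]_{W^{s,2}(\mathbb{R}^n)}^2 \leq C\,\omega_A(r)^2 \bigl([u]_{W^{s,2}(B_r(x_0))}^2 + r^n \operatorname{Tail}(u; x_0, r)^2\bigr) + C\, r^{2s + n} \|f\|_{L^\infty(B_r(x_0))}^2,$$
where $\operatorname{Tail}(u; x_0, r) := \int_{\mathbb{R}^n \setminus B_r(x_0)} |u(y)|\,|x_0 - y|^{-n-2s}\, dy$ encodes the nonlocal interaction.

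The decisive ingredient is a strong regularity estimate for the frozen solution $v$: since $L_{\bar A}^\Phi$ has a translation-invariant kernel and $\Phi$ is uniformly monotone and Lipschitz, for every $\beta < \min\{2s, 1\}$, $v$ should satisfy
$$[v]_{C^\beta(B_{r/2}(x_0))} \leq C r^{-\beta} \Bigl(r^{-n/2} \|v\|_{L^2(B_r(x_0))} + r^{2s} \operatorname{Tail}(v; x_0, r)\Bigr).$$
This is the main obstacle: one first obtains a base $C^\gamma$ estimate via a De Giorgi--Nash--Moser iteration adapted to the nonlinear nonlocal setting, then bootstraps to H\"older exponents arbitrarily close to $\min\{2s,1\}$ using a discrete difference-quotient argument that genuinely exploits translation invariance of the frozen kernel. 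The restriction $\alpha < \min\{2s,1\}$ enters here as the natural threshold imposed by the order of the operator.

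Finally, writing $\Psi(r)$ for the mean-oscillation quantity on the left-hand side of the target decay and splitting $u = (u - v) + v$, the H\"older estimate for $v$ yields $\Psi(\tau r) \leq C \tau^{2\beta} \Psi(r)$ up to errors controlled by $\omega_A(r)^2$, $r^{4s}\|f\|_\infty^2$ and tail quantities, for any dilation factor $\tau \in (0, 1/2)$. Choosing $\tau$ small once and shrinking $r$ so that the continuity of $A$ makes $\omega_A$ arbitrarily tiny, a standard Campanato-type iteration (in the spirit of the one in \cite{CaffarelliPeral}) produces the claimed decay for any $\alpha < \beta$, hence for any $\alpha < \min\{2s, 1\}$. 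Tracking constants throughout yields the explicit interior norm bound; the principal technical subtlety is propagating uniform control of the nonlocal tail through the iteration, which requires a careful choice of cutoffs that has no analogue in the classical local theory.
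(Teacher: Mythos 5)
Two points of context first. In this paper Theorem~\ref{HiHol} is not proved; it is imported verbatim from \cite[Theorem~1.1]{MeH}, with Remark~\ref{HiHolrem} explaining how the notion of ``local weak solution'' used there relates to the notion of weak solution used here. So there is no in-paper proof to compare against; I am evaluating your proposal on its own terms. Your overall strategy --- freeze the coefficient, compare against the frozen-kernel solution, and run a Campanato-type excess-decay iteration --- is indeed the standard route to such results and is close in spirit to what \cite{MeH} does, so conceptually you are on the right track.

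There is, however, a genuine gap in your comparison step. You freeze \emph{globally}: you take $\bar A := A(x_0,x_0)$ as the coefficient on all of $\mathbb{R}^n\times\mathbb{R}^n$ and then claim the excess
$$[w]_{W^{s,2}(\mathbb{R}^n)}^2 \leq C\,\omega_A(r)^2\bigl([u]_{W^{s,2}(B_r(x_0))}^2 + r^n \operatorname{Tail}(u;x_0,r)^2\bigr) + C r^{2s+n}\|f\|_\infty^2 .$$
This estimate is wrong as written. After testing the two equations against $w=u-v\in W^{s,2}_0(B_r(x_0))$, the coefficient mismatch $A-\bar A$ appears on the whole region $\{(x,y): x\in B_r(x_0) \text{ or } y\in B_r(x_0)\}$. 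For $x\in B_r(x_0)$ and $y\notin B_r(x_0)$, the quantity $|A(x,y)-A(x_0,x_0)|$ is controlled only by $2\Lambda$ --- continuity of $A$ gives you nothing there because $y$ can be far from $x_0$. Consequently the tail contribution to $[w]_{W^{s,2}}^2$ carries a prefactor of size $\Lambda$, not $\omega_A(r)^2$, and the resulting error in the excess decay is $O(r^{2s}\operatorname{Tail}^2)$, which is $O(1)$ and does not vanish as $r\to 0$. The Campanato iteration then fails to give any $\alpha > 0$, let alone the full range $\alpha<\min\{2s,1\}$. The remedy --- which is exactly what Proposition~\ref{appplxy} and Lemma~\ref{apppl} in this paper implement for their own comparison --- is to freeze only near the diagonal: set $\widetilde A = \bar A$ on $\mathcal{B}_{cr}(x_0)$ and $\widetilde A = A$ elsewhere. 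Then $A-\widetilde A$ is supported where both variables are close to $x_0$, the modulus of continuity $\omega_A(r)$ genuinely enters, and the tail drops out of the coefficient-error term entirely.

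Beyond this, the heart of the matter is your claimed $C^\beta(B_{r/2})$ estimate for $v$ for every $\beta<\min\{2s,1\}$. You describe it as ``De Giorgi--Nash--Moser plus a discrete difference-quotient bootstrap,'' but for a kernel multiplied by a coefficient that is only piecewise-constant (constant on the diagonal ball, merely measurable outside) and with a nonlinearity $\Phi$ that is only Lipschitz and monotone, this is a substantial theorem, not a routine bootstrap. The finite-difference $w_h = v(\cdot+h)-v$ satisfies a linearized equation with kernel $\bar A\,\Phi'(\xi)/|x-y|^{n+2s}$ for some measurable $\xi(x,y)$, where $\Phi'(\xi)\in[\Lambda^{-1},\Lambda]$; extracting H\"older continuity of exponent \emph{arbitrarily close to} $\min\{2s,1\}$ from this requires a delicate incremental-quotient scheme with uniform tail control, and this is precisely the main technical content of \cite{MeH}. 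Your write-up names the ideas but does not supply the argument, so as it stands the proposal reduces the theorem to an unproved statement of essentially the same depth. If you want a self-contained proof, those two items --- the near-diagonal freeze and a genuine proof of the frozen-kernel estimate --- are where the work is.
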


\begin{rem} \label{HiHolrem} \normalfont
In \cite{MeH}, Theorem \ref{HiHol} is proved in the more general context of so-called local weak solutions, see \cite[Section 1.1]{MeH} for a precise definition. From \cite[Lemma 3.5]{MeN} it follows that any local weak solution is a weak solution in our sense, so that Theorem \ref{HiHol} indeed follows from \cite[Theorem 1.1]{MeH}. Moreover, it is immediate from the definition of local weak solutions in \cite[Section 1.1]{MeH} that for any local weak solution $u$ of the equation $L_{A}^\Phi u =0 $ in $\Omega$ and any constant $c \in \mathbb{R}$, $u-c$ is also a local weak solution of the same equation. Therefore, in the setting of Theorem \ref{HiHol} for any weak solution $u \in W^{s,2}(\mathbb{R}^n)$ of $L_{A}^\Phi u =0 $ in $\Omega$ and any $c \in \mathbb{R}$, we have the estimate 
$$
[u]_{C^\alpha(B_{\sigma R}(x_0))} \leq \frac{C}{R^\alpha} \bigg ( R^{-\frac{n}{2}} ||u-c||_{L^2(B_R(x_0))} + R^{2s} \int_{\mathbb{R}^n \setminus B_{R}(x_0)} \frac{|u(y)-c|}{|x_0-y|^{n+2s}}dy \bigg ),
$$
where $C=C(n,s,\lambda,\alpha,\sigma)>0$.
\end{rem}

\subsection{Higher integrability of $U$ for small exponents} \label{kms}
For technical reasons, we also study equations with a more general right-hand side than in (\ref{nonlocaleq}).
\begin{defin}
	Let $2_\star:=\frac{2n}{n+2s}$. Given $f \in L^{2_\star}(\Omega)$ and $g \in W^{s,2}(\mathbb{R}^n)$, we say that $u \in W^{s,2}(\mathbb{R}^n)$ is a weak solution of the equation $L_A^\Phi u = (-\Delta)^s g + f$ in $\Omega$, if 
	\begin{align*}
	& \int_{\mathbb{R}^n} \int_{\mathbb{R}^n} \frac{A(x,y)}{|x-y|^{n+2s}} \Phi(u(x)-u(y))(\varphi(x)-\varphi(y))dydx \\
	= & C_{n,s} \int_{\mathbb{R}^n} \int_{\mathbb{R}^n} \frac{g(x)-g(y)}{|x-y|^{n+2s}} (\varphi(x)-\varphi(y))dydx + \int_{\Omega} f \varphi dx \quad \forall \varphi \in W^{s,2}_0(\Omega).
	\end{align*}
\end{defin}

Throughout this work, whenever we deal with functions $u$ and $g$ as in the above definition, for $(x,y) \in \mathbb{R}^{2n}$ with $x \neq y$ we define the functions
$$
U(x,y):=\frac{|u(x)-u(y)|}{|x-y|^{s+\theta}}, \quad G(x,y):=\frac{|g(x)-g(y)|}{|x-y|^{s+\theta}}.
$$
The following higher integrability result is essentially given by \cite[Theorem 6.1]{selfimpro}, where it is stated under the stronger assumptions that the equation holds on the whole space $\mathbb{R}^n$ and that $g$ is higher differentiable and integrable in the whole $\mathbb{R}^n$. Nevertheless, in \cite{selfimpro} the equation in only used in the proof of the Caccioppoli-type inequality \cite[Theorem 3.1]{selfimpro}, where the equation is tested with test functions that are supported in the ball where the estimate is proved. Therefore, it is enough to assume that the equation holds locally. Moreover, as indicated by the below estimate, in comparison with \cite{selfimpro} it is also sufficient to prescribe the higher differentiability and integrability on $g$ locally.

\begin{thm} \label{KMS}
	Let $r>0$, $x_0 \in \mathbb{R}^n$ and $\sigma_0>0$. Moreover, consider a coefficient
	$A \in \mathcal{L}_0(\Lambda)$ and assume that the Borel function $\Phi:\mathbb{R} \to \mathbb{R}$ satisfies
	\begin{equation} \label{weakassump}
		|\Phi(t)| \leq \Lambda t, \quad \Phi(t)t \geq \Lambda^{-1} t^2 \quad \forall t \in \mathbb{R}.
	\end{equation} In addition, assume that $u \in W^{s,2}(\mathbb{R}^n)$ is a weak solution of the equation $L_{A}^\Phi u = (-\Delta)^s g+f$ in $B_{2r}(x_0)$. Then there exist small enough positive constants $\gamma=\gamma(n,s,\Lambda,\sigma_0)\in \left (0,\frac{s}{2} \right )$ and $\sigma=\sigma(n,s,\Lambda,\sigma_0) \in (0,\sigma_0)$ such that if $f \in L^{2_\star+\sigma_0}(B_{2r}(x_0))$ and $g \in W^{s,2}(\mathbb{R}^n) \cap W^{s_\gamma,2+\sigma_0}(B_{2r}(x_0))$ for $s_\gamma:=s+\gamma \left (1-\frac{2}{2+\sigma_0} \right)$, then
	\begin{align*}
	\left (\dashint_{\mathcal{B}_r(x_0)} U^{2+\sigma}_\gamma d\mu_\gamma \right )^{\frac{1}{2+\sigma}} & \leq C \sum_{k=1}^\infty 2^{-k(s-\gamma)} \left ( \dashint_{\mathcal{B}_{2^kr}(x_0)} U_\gamma^2 d\mu_\gamma \right )^\frac{1}{2} \\
	& + C \left ( \dashint_{\mathcal{B}_{2r}(x_0)} G_\gamma^{2+\sigma_0} d\mu_\gamma \right )^\frac{1}{2+\sigma_0} + C \sum_{k=1}^\infty 2^{-k(s-\gamma)} \left ( \dashint_{\mathcal{B}_{2^kr}(x_0)} G_\gamma^2 d\mu_\gamma \right )^\frac{1}{2} \\
	& + C r^{s-\gamma} \left ( \dashint_{\mathcal{B}_{2r}(x_0)} F^{2_\star+\sigma_0} d\mu_\gamma \right )^\frac{1}{2_\star+\sigma_0},
	\end{align*}
where $C=C(n,s,\Lambda,\sigma_0)>0$. Here we denote
\begin{align*}
U_\gamma(x,y):=\frac{|u(x)-u(y)|}{|x-y|^{s+\gamma}}, \quad G_\gamma(x,y):=\frac{|g(x)-g(y)|}{|x-y|^{s+\gamma}}, \quad F(x,y):=f(x),
\end{align*}
while the measure $\mu_\gamma$ is defined on measurable sets $E \subset \mathbb{R}^{2n}$ by
$$
	\mu_\gamma(E):= \int_{E} \frac{dxdy}{|x-y|^{n-2\gamma}}.
$$
\end{thm}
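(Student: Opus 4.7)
The plan is to derive the statement essentially as a localized version of \cite[Theorem 6.1]{selfimpro}. The differences between the two results are exactly that (a) the equation $L_A^\Phi u = (-\Delta)^s g + f$ is assumed to hold only on $B_{2r}(x_0)$ instead of all of $\mathbb{R}^n$, and (b) the higher regularity $W^{s_\gamma,2+\sigma_0}$ of $g$ is required only on $B_{2r}(x_0)$ instead of globally. I would argue that neither relaxation forces any change in the reasoning of \cite{selfimpro}, so that the task essentially reduces to carefully tracking the hypotheses through the existing proof.

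First I would verify that the equation is used only in the proof of the Caccioppoli-type estimate \cite[Theorem 3.1]{selfimpro}, in which the equation is tested against functions compactly supported in the ball on which the local estimate is being derived. Since all such test functions arising in the subsequent self-improvement argument can be arranged to be supported inside $B_{2r}(x_0)$, the Caccioppoli inequality, and hence every later step, remains valid under the present local hypothesis on the equation, provided one always works with concentric sub-balls of $B_{2r}(x_0)$.

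Next I would examine how $g$ enters the right-hand side of the final estimate. The contribution of $g$ splits naturally into the localized higher-integrability term $\bigl(\dashint_{\mathcal{B}_{2r}(x_0)} G_\gamma^{2+\sigma_0}\,d\mu_\gamma\bigr)^{1/(2+\sigma_0)}$, which comes from the local Caccioppoli--Gehring step, and the $L^2$ tail terms $\bigl(\dashint_{\mathcal{B}_{2^kr}(x_0)} G_\gamma^{2}\,d\mu_\gamma\bigr)^{1/2}$, which arise from controlling the nonlocal contributions of the $(-\Delta)^s g$ source through the tail decomposition. The latter only involve $G_\gamma^{2}$, which is integrable for any $\gamma\in(0,s/2)$ whenever $g\in W^{s,2}(\mathbb{R}^n)$ in view of Lemma~\ref{Sobolevrelate}. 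Consequently, no global higher integrability of $g$ beyond $g\in W^{s,2}(\mathbb{R}^n)$ is ever needed, matching exactly the stated hypothesis $g \in W^{s,2}(\mathbb{R}^n) \cap W^{s_\gamma,2+\sigma_0}(B_{2r}(x_0))$.

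The core step and the main technical obstacle is the Gehring-type self-improvement itself, already carried out in \cite{selfimpro}. It takes the reverse-H\"older-type inequality supplied by the Caccioppoli estimate (with its characteristic nonlocal tail and source-term corrections) and upgrades it, via an exit-time Calder\'on--Zygmund covering argument on the product space $\mathbb{R}^{2n}$ equipped with the doubling measure $\mu_\gamma$, to a reverse-H\"older inequality with exponent $2+\sigma$ for some small $\sigma$. The only bookkeeping required in our setting is to ensure that every covering ball $\mathcal{B}_\rho(z)$ used in the argument remains inside $\mathcal{B}_{2r}(x_0)$, so that the local hypotheses suffice; since the cited argument is already structured so as to separate the tail contributions from the Caccioppoli core, this modification is routine and produces exactly the tail structure displayed in the statement.
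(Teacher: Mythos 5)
Your proposal is correct and takes essentially the same approach as the paper: the paper's own justification (given in the paragraph preceding the theorem) argues exactly that the result is a localized reading of \cite[Theorem~6.1]{selfimpro}, observing that the equation enters only via the Caccioppoli-type estimate \cite[Theorem~3.1]{selfimpro} tested against compactly supported functions, and that the higher regularity of $g$ is only needed locally, with the $L^2$ tails controlled by $g \in W^{s,2}(\mathbb{R}^n)$. Your write-up is in fact slightly more explicit than the paper about how the two $g$-terms on the right-hand side separately trace back to the local Caccioppoli--Gehring step versus the nonlocal tail decomposition.
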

We note that the assumptions in (\ref{weakassump}) are clearly implied by the assumptions $\Phi(0)=0$, (\ref{PhiLipschitz}) and (\ref{PhiMonotone}) which are used in our main results. \newline
Since working with the measure $\mu_\gamma$ and the functions $U_\gamma$ and $G_\gamma$ is inconvenient for us, our next goal is to rewrite the right-hand side of the estimate from Theorem \ref{KMS} in terms of the measure $\mu$ and the functions $U$ and $G$.
In order to accomplish this, for the rest of this paper in the above Theorem \ref{KMS} we let $\sigma_0>0$ to be chosen small enough and choose $\gamma>0$ small enough such that $\gamma \leq \theta$. By the latter inequality, we see that $s_\gamma \leq s_\theta:=s+\theta \left (1-\frac{2}{2+\sigma_0} \right)$. Together with Lemma \ref{Sobolevrelate}, we see that for the second term on the right-hand side of the estimate from Theorem \ref{KMS} we have
\begin{align*}
\left ( \dashint_{\mathcal{B}_{2r}(x_0)} G_\gamma^{2+\sigma_0} d\mu_\gamma \right )^{\frac{1}{2+\sigma_0}} & = \left ( \frac{C_1}{r^{n+2\gamma}} \int_{B_{2r}(x_0)}\int_{B_{2r}(x_0)} \frac{|g(x)-g(y)|^{2+\sigma_0}}{|x-y|^{n+(2+\sigma_0)s_\gamma}}dydx \right )^{\frac{1}{2+\sigma}} \\
& \leq \left ( (4r)^{(2+\sigma_0)(s_\theta-s_\gamma)} \frac{C_1}{r^{n+2\gamma}} \int_{B_{2r}(x_0)}\int_{B_{2r}(x_0)} \frac{|g(x)-g(y)|^{2+\sigma_0}}{|x-y|^{n+(2+\sigma_0)s_\theta}}dydx \right )^{\frac{1}{2+\sigma_0}} \\
& = C_2 \left ( r^{(2+\sigma_0)(\theta-\gamma) \left (1-\frac{2}{2+\sigma_0} \right)+2(\theta-\gamma)} \dashint_{\mathcal{B}_{2r}(x_0)} G^{2+\sigma_0} d\mu \right )^{\frac{1}{2+\sigma_0}} \\
& = C_2 r^{\theta-\gamma} \left ( \dashint_{\mathcal{B}_{2r}(x_0)} G^{2+\sigma_0} d\mu \right )^{\frac{1}{2+\sigma_0}},
\end{align*}
where $C_1$ and $C_2$ depend only on $n,s,\theta$ and $\sigma_0$.
Moreover, for the first and third term on the right-hand side of the estimate from Theorem \ref{KMS} we have 
$$
\sum_{k=1}^\infty 2^{-k(s-\gamma)} \left ( \dashint_{\mathcal{B}_{2^kr}(x_0)} U_\gamma^2 d\mu_\gamma \right )^\frac{1}{2} = C_3 r^{\theta-\gamma} \sum_{k=1}^\infty 2^{-k(s-\theta)} \left ( \dashint_{\mathcal{B}_{2^kr}(x_0)} U^2 d\mu \right )^\frac{1}{2},
$$
and 
$$
\sum_{k=1}^\infty 2^{-k(s-\gamma)} \left ( \dashint_{\mathcal{B}_{2^kr}(x_0)} G_\gamma^2 d\mu_\gamma \right )^\frac{1}{2} = C_3 r^{\theta-\gamma} \sum_{k=1}^\infty 2^{-k(s-\theta)} \left ( \dashint_{\mathcal{B}_{2^kr}(x_0)} G^2 d\mu \right )^\frac{1}{2},
$$
where $C_3=C_3(n,s,\gamma,\theta)$. Furthermore, in view of integration in polar coordinates for the last term on the right-hand side of the estimate in question we have 
\begin{align*}
r^{s-\gamma} \left ( \dashint_{\mathcal{B}_{2r}(x_0)} F^{2_\star+\sigma_0} d\mu_\gamma \right )^\frac{1}{2_\star+\sigma_0} & = r^{s-\gamma} \left ( \frac{C_1}{r^{n+2\gamma}} \int_{B_{2r}(x_0)}\int_{B_{2r}(x_0)} \frac{|f(x)|^{2_\star+\sigma_0}}{|x-y|^{n-2\gamma}}dydx \right )^\frac{1}{2_\star+\sigma_0} \\
& \leq r^{s-\gamma} \left ( \frac{C_1}{r^{n+2\gamma}} \int_{B_{2r}(x_0)}|f(x)|^{2_\star+\sigma_0} dx \int_{B_{4r}} \frac{dz}{|z|^{n-2\gamma}} \right )^\frac{1}{2_\star+\sigma_0} \\
& = C_4r^{s-\gamma} \left ( \dashint_{B_{2r}(x_0)}|f(x)|^{2_\star+\sigma_0} dx \right )^\frac{1}{2_\star+\sigma_0} ,
\end{align*}
where $C_4=C_4(n,s,\gamma,\theta)$.
Thus, we arrive at the estimate
\begin{align} \label{KMSmod}
\begin{split}
\left (\dashint_{\mathcal{B}_r(x_0)} U^{2+\sigma}_\gamma d\mu_\gamma \right )^{\frac{1}{2+\sigma}} & \leq C r^{\theta-\gamma} \sum_{k=1}^\infty 2^{-k(s-\theta)} \left ( \dashint_{\mathcal{B}_{2^kr}(x_0)} U^2 d\mu \right )^\frac{1}{2} \\
& + C r^{\theta-\gamma} \left ( \dashint_{\mathcal{B}_{2r}(x_0)} G^{2+\sigma_0} d\mu \right )^{\frac{1}{2+\sigma_0}} \\ & + C r^{\theta-\gamma} \sum_{k=1}^\infty 2^{-k(s-\theta)} \left ( \dashint_{\mathcal{B}_{2^kr}(x_0)} G^2 d\mu \right )^\frac{1}{2} \\
& + C r^{s-\gamma} \left ( \dashint_{B_{2r}(x_0)}f^{2_\star+\sigma_0} dx \right )^\frac{1}{2_\star+\sigma_0}
\end{split}
\end{align}
for a different $C$ as in Theorem \ref{KMS} depending only on $n,s,\Lambda,\theta,\gamma$ and $\sigma_0$.

\subsection{$H^{2s,p}$ estimates for the fractional Laplacian} \label{fl1}

As mentioned, for any regular enough function $u:\mathbb{R}^n \to \mathbb{R}$ and $s \in (0,1)$, the fractional Laplacian of $u$ is formally defined by
$$ (-\Delta)^s u(x) = C_{n,s}\text{ } p.v. \int_{\mathbb{R}^n} \frac{u(x)-u(y)}{|x-y|^{n+2s}}dy,$$
where $C_{n,s}$ is a constant depending on $n$ and $s$ whose exact value is not important for our purposes. In other words, $(-\Delta)^s$ corresponds to the operator $L_A^\Phi$ in the special case when $\Phi(t)=t$ and $A=C_{n,s}$. 
Since we want to study the Dirichlet problem associated to the fractional Laplacian, let us formally define weak solutions to such Dirichlet problems. For later use, we also include general nonlocal operators of the type (\ref{no}) in the definition.
\begin{defin}
	Let $\Omega$ be a domain and consider functions $h \in W^{s,2}(\mathbb{R}^n)$ and $f \in L^\frac{2n}{n+2s}(\Omega)$. We say that $v \in W^{s,2}(\mathbb{R}^n)$ is a weak solution of the Dirichlet problem $$ \begin{cases} \normalfont
		L_{A}^\Phi v = f & \text{ in } \Omega \\
		v = h & \text{ a.e. in } \mathbb{R}^n \setminus \Omega,
	\end{cases} $$ if we have $v = h \text{ a.e. in } \mathbb{R}^n \setminus \Omega$ and
	$$
	\int_{\mathbb{R}^n} \int_{\mathbb{R}^n} \frac{A(x,y)}{|x-y|^{n+2s}} \Phi(u(x)-u(y))(\varphi(x)-\varphi(y))dydx = \int_{\Omega} f \varphi dx \quad \forall \varphi \in W^{s,2}_0(\Omega).
	$$
\end{defin}

For the following local regularity result we follow an approach applied for instance in \cite{Warma} or \cite{KassMengScott}. The main idea is to multiply the solution with an appropriate cutoff function in order to reduce the problem to a corresponding one which is posed on the whole space $\mathbb{R}^n$, for which the desired estimate can be inferred by classical techniques from Fourier analysis, see \cite[Lemma 3.5]{KassMengScott}.

\begin{thm} \label{H2spest}
	Let $\Omega \subset \mathbb{R}^n$ be a bounded domain, $s \in (0,1)$ and $p \in \left (\frac{2n}{n+2s},\infty \right )$. If $f \in L^p(\Omega) \cap L^2(\Omega)$, then the unique weak solution $u \in W^{s,2}(\mathbb{R}^n)$ of the Dirichlet problem
	\begin{equation} \label{fl}
		\begin{cases} \normalfont
			(-\Delta)^s u = f & \text{ in } \Omega \\
			u=0 & \text{ a.e. in } \mathbb{R}^n \setminus \Omega
		\end{cases}
	\end{equation}
	belongs to $H^{2s,p}_{loc}(\Omega)$. Moreover, for any open set $\Omega^\prime \Subset \Omega$, we have the estimate
	\begin{equation} \label{flest}
		||u||_{H^{2s,p}(\Omega^\prime)} \leq C ||f||_{L^p(\Omega)},
	\end{equation}
	where $C=C(n,s,p,\Omega^\prime,\Omega)>0$.
\end{thm}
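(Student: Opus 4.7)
The natural strategy, following the references to \cite{Warma,KassMengScott}, is to reduce the Dirichlet problem on $\Omega$ to a problem on all of $\mathbb{R}^n$ by multiplication with a cutoff, and then apply a full-space $H^{2s,p}$ estimate for the fractional Laplacian. The first step is to invoke Lax--Milgram on the Hilbert space $W^{s,2}_0(\Omega)$, where the bilinear form associated with $(-\Delta)^s$ is coercive by Lemma \ref{Friedrichsx}, to produce the unique weak solution $u$ of (\ref{fl}), satisfying $\|u\|_{W^{s,2}(\mathbb{R}^n)} \leq C\|f\|_{L^2(\Omega)}$. By Proposition \ref{Sobemb}, this gives $u \in L^{2n/(n-2s)}(\Omega)$, and since $\Omega$ is bounded, $u$ lies in every $L^q(\Omega)$ with $q \leq 2n/(n-2s)$.

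Next I fix open sets $\Omega' \Subset \Omega'' \Subset \Omega$ and a cutoff $\eta \in C^\infty_c(\Omega)$ with $\eta \equiv 1$ on $\Omega''$. Set $w := \eta u \in W^{s,2}(\mathbb{R}^n)$; note that $w$ is compactly supported in $\Omega$ and agrees with $u$ on $\Omega'$. By the very definition of the Bessel potential space,
\[
\|w\|_{H^{2s,p}(\mathbb{R}^n)} = \|w\|_{L^p(\mathbb{R}^n)} + \|(-\Delta)^s w\|_{L^p(\mathbb{R}^n)},
\]
so it suffices to bound both norms on the right by $C\|f\|_{L^p(\Omega)}$; this is exactly the content of \cite[Lemma 3.5]{KassMengScott} applied to $w$. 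The $L^p$ norm of $w$ will be handled by bootstrapping: $u$ sits in $L^{2n/(n-2s)}$ initially, and iterating the Green's function / Riesz potential representation associated to the Dirichlet problem pushes $u$ into $L^p$ for the prescribed $p$; alternatively one can first establish the estimate for exponents close to $2$ and use a duality/interpolation argument with the boundedness of $\Omega$ to reach general $p$. Either way, one arrives at $\|w\|_{L^p(\mathbb{R}^n)} \leq C\|f\|_{L^p(\Omega)}$.

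The main work is to control $(-\Delta)^s w$ in $L^p(\mathbb{R}^n)$. I would use the product-type identity
\[
(-\Delta)^s w(x) \;=\; \eta(x)(-\Delta)^s u(x) \;+\; C_{n,s}\, p.v.\!\int_{\mathbb{R}^n} \frac{u(y)\bigl(\eta(x)-\eta(y)\bigr)}{|x-y|^{n+2s}}\,dy \;=:\; \eta(x)(-\Delta)^s u(x) + R(x).
\]
Because $\eta$ is supported in $\Omega$ and $u$ satisfies $(-\Delta)^s u = f$ in $\Omega$, the first term equals $\eta(x) f(x)$ for almost every $x \in \mathbb{R}^n$, hence lies in $L^p(\mathbb{R}^n)$ with norm bounded by $\|f\|_{L^p(\Omega)}$. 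For the remainder $R$, the smoothness of $\eta$ gives $|\eta(x)-\eta(y)| \leq \min\{\|\nabla\eta\|_\infty |x-y|, 2\|\eta\|_\infty\}$, so the integrand is absolutely integrable and splitting the domain of integration into $\{|x-y| \leq 1\}$ and its complement yields a pointwise estimate
\[
|R(x)| \leq C\bigl(\mathcal{M}_s u(x) + \|u\|_{L^1(\Omega)}\langle x\rangle^{-n-2s}\bigr),
\]
where $\mathcal{M}_s u$ is a fractional maximal function of $u$ bounded in $L^p$ by $\|u\|_{L^p}$; moreover, away from the support of $\eta$, one has $\eta(x)=0$ and the integrand in $R$ is supported in the fixed compact set $\{\eta \neq 0\}$, producing integrable tails. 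Assembling these estimates gives $\|R\|_{L^p(\mathbb{R}^n)} \leq C\|f\|_{L^p(\Omega)}$.

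Combining these bounds yields $\|w\|_{H^{2s,p}(\mathbb{R}^n)} \leq C\|f\|_{L^p(\Omega)}$, and since $u = w$ on $\Omega'$ we conclude by the definition of $H^{2s,p}(\Omega')$ that $u \in H^{2s,p}_{loc}(\Omega)$ with the desired estimate (\ref{flest}). The main obstacle is the commutator term $R$: one must simultaneously handle the singularity at $x=y$ (smoothed out by $\eta(x)-\eta(y)$ being Lipschitz) and the far-field behaviour in $x$ (controlled by the compact support of $\eta u$ and the non-integrable singularity of the kernel at infinity), while keeping the final bound linear in $\|f\|_{L^p(\Omega)}$; this is where ensuring an $L^p$ estimate on $u$ itself, not merely on $(-\Delta)^s u$, becomes essential.
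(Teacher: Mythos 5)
Your overall plan — cut off with $\eta$, invoke the full-space $H^{2s,p}$ estimate from \cite[Lemma 3.5]{KassMengScott}, and reduce to an $L^p$ bound on the commutator — is the same scaffolding the paper uses. But your treatment of the commutator has a genuine gap. You write $(-\Delta)^s(\eta u) = \eta(-\Delta)^s u + R$ with $R(x) = C_{n,s}\,p.v.\int u(y)\frac{\eta(x)-\eta(y)}{|x-y|^{n+2s}}dy$ and then claim that, thanks to $|\eta(x)-\eta(y)|\le \min\{L|x-y|,2\|\eta\|_\infty\}$, the integrand is absolutely integrable and $|R|$ is controlled pointwise by a (truncated Riesz potential) maximal function of $u$. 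This is only true for $s<1/2$: near $y=x$ the bound yields $|u(y)|/|x-y|^{n+2s-1}$, and the kernel $|z|^{-(n+2s-1)}\chi_{|z|<1}$ is locally integrable precisely when $n+2s-1<n$, i.e.\ $s<1/2$. For $s\ge 1/2$ the two-term split $\eta(x)u(x)-\eta(y)u(y)=\eta(x)(u(x)-u(y))+u(y)(\eta(x)-\eta(y))$ does not give two separately convergent principal values; the odd cancellation that makes $(-\Delta)^s\eta$ well defined requires the factor $u(x)$, not $u(y)$. The paper avoids this by using the three-term product rule $(-\Delta)^s(\eta u)=u\,(-\Delta)^s\eta+\eta\,(-\Delta)^s u-I_s(u,\eta)$, where $(-\Delta)^s\eta$ is classical since $\eta\in C_c^\infty$ and the bilinear term $I_s(u,\eta)$ has the kernel $(u(x)-u(y))(\eta(x)-\eta(y))$, which gains one power of $|x-y|$ from $\eta$ and is then estimated via H\"older against the $W^{s,q}$ seminorm of $u$.

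The second, and deeper, consequence of your pointwise bound is that it (incorrectly) makes the commutator depend only on $\|u\|_{L^p}$; in fact the paper's estimate for the corresponding term $g$ requires $[u]_{W^{s,q}(\widetilde\Omega_0)}$ \emph{as well as} $\|u\|_{L^q(\Omega)}$. This is not a cosmetic change: the fractional Sobolev seminorm of $u$ is not available a priori in $W^{s,q}$ for $q>2$, and obtaining it is the real work of the proof. For $p\in(\tfrac{2n}{n+2s},2]$ the paper gets the $W^{s,p}$ seminorm by combining the self-improving estimate of Theorem \ref{KMS} with Proposition \ref{Sobcont}; for $p>2$ it iterates finitely many times, passing from $H^{2s,q}$ to $W^{s,q^\star}$ via the embedding in Proposition \ref{BesselTr} and bootstrapping $q\to q^\star$ until reaching $p$. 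Your proposal has no analogue of either step — your "bootstrapping" is only for the $L^p$ norm of $u$ itself (which the paper handles directly by citing \cite[Lemma 2.5]{Warma}) — so even after repairing the decomposition for $s\ge 1/2$, the argument would not close.
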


\begin{proof}
	First of all, the existence of a unique weak solution $u$ to (\ref{fl}) can either be ensured by using the Riesz representation theorem (see e.g.\ \cite[Proposition 5.1]{Me}), by variational methods (see \cite[Theorem 2.3]{finnish}) or by using the theory of monotone operators (see \cite[Proposition 4.1]{MeN}). From \cite[Lemma 2.5]{Warma}, it follows that we have the estimate
	\begin{equation} \label{Lpreg}
		||u||_{L^p(\Omega)} \leq C_1 ||f||_{L^p(\Omega)},
	\end{equation}
	where $C_1=C_1(n,s,p)>0$. Now in view of \cite[Lemma 3.5]{KassMengScott}, for any function $\widetilde f \in L^p(\mathbb{R}^n) \cap L^2(\mathbb{R}^n)$ and any weak solution $\widetilde u \in W^{s,2}(\mathbb{R}^n)$ of $(-\Delta)^s \widetilde u = \widetilde f$ in $\mathbb{R}^n$, we have the estimate
	\begin{equation} \label{flreg}
		||(-\Delta)^s \widetilde u||_{L^p(\mathbb{R}^n)} \leq C_2 ||\widetilde f||_{L^p(\mathbb{R}^n)},
	\end{equation}
	where $C_2=C_2(n,s,p)>0$. \par
	Now fix an open set $\Omega^\prime \Subset \Omega$ and smooth domains $\Omega_0^\star, \Omega_0$ and $\widetilde \Omega_0$ such that $$ \Omega^\prime \Subset \Omega_0^\star \Subset \Omega_0 \Subset \widetilde \Omega_0 \Subset \Omega.$$ In addition, fix a smooth cutoff function $\eta \in C_0^{\infty}(\Omega_0^\star)$ with the properties 
	$$ 0 \leq \eta \leq 1 \text{ in } \Omega_0^\star, \quad \eta = 1 \text{ in } \Omega^\prime, \quad \eta=0 \text{ in } \mathbb{R}^n \setminus \Omega_0^\star.$$ Note that we have
	\begin{equation} \label{lcut}
		(-\Delta)^s (u \eta) = \eta f + u (-\Delta)^s \eta - I_s(u,\eta) \text{ weakly in } \mathbb{R}^n,
	\end{equation}
	where $f$ is extended to $\mathbb{R}^n$ by setting $f=0$ in $\mathbb{R}^n \setminus \Omega$ and $$ I_s(u,\eta)(x):= C_{n,s} \int_{\mathbb{R}^n} \frac{(u(x)-u(y))(\eta(x)-\eta(y))}{|x-y|^{n+2s}}dy, \quad x \in \mathbb{R}^n.$$
	Set $g:= u (-\Delta)^s \eta - I_s(u,\eta).$
	We start by proving that for any $q \in \left (\frac{2n}{n+2s},\infty \right )$, there exists a constant $C_q=C_q(n,s,q,\Omega^\prime,\Omega_0^\star,\Omega_0,\widetilde \Omega_0,\Omega)$ such that
	\begin{equation} \label{gest}
		||g||_{L^q(\mathbb{R}^n)} \leq C_q ( [u]_{W^{s,q}(\widetilde \Omega_0)} + ||u||_{L^{q}(\Omega)}).
	\end{equation}
	First of all, in view of changing variables, for any $x \in \mathbb{R}^n$ we have 
	\begin{align*}
		& |(-\Delta)^s \eta(x)| \\ = & C_{n,s} \left | \int_{\mathbb{R}^n} \frac{\eta(x)-\eta(y)}{|x-y|^{n+2s}} dy \right | \\
		=& \frac{C_{n,s}}{2} \left | \int_{\mathbb{R}^n} \frac{\eta(x+z)+\eta(x-z)-2\eta(x)}{|z|^{n+2s}}dz \right | \\
		\leq& C_{n,s} \left (\int_{B_1} \frac{|\eta(x+z)+\eta(x-z)-2\eta(x)|}{|z|^{n+2s}}dz + \int_{\mathbb{R}^n \setminus B_1} \frac{|\eta(x+z)+\eta(x-z)-2\eta(x)|}{|z|^{n+2s}}dz \right ) \\
		\leq& C_{n,s} \left (||\nabla^2 \eta||_{L^\infty(\mathbb{R}^n,\mathbb{R}^{n \times n})} \int_{B_1} \frac{1}{|z|^{n+2s-2}}dz + 4 ||\eta||_{L^\infty(\mathbb{R}^n)} \int_{\mathbb{R}^n \setminus B_1} \frac{dz}{|z|^{n+2s}} \right ) =:C_3.
	\end{align*}
	Using integration in polar coordinates, we obtain
	$$ \int_{B_1} \frac{1}{|z|^{n+2s-2}}dz = \omega_n \int_0^1 \frac{\rho^{n-1}}{\rho^{n+2s-2}}d\rho = \frac{\omega_n}{2-2s}<\infty$$
	and 
	$$ \int_{\mathbb{R}^n \setminus B_1} \frac{1}{|z|^{n+2s}}dz = \omega_n \int_1^\infty \frac{\rho^{n-1}}{\rho^{n+2s}}d\rho = \frac{\omega_n}{2s}<\infty$$
	where $\omega_n$ denotes the surface area of the $n-1$ dimensional unit sphere $S^{n-1}$. Therefore, $C_3$ is finite.
	Using that $u=0$ a.e. in $\mathbb{R}^n \setminus \Omega$ and H\"older's inequality, we deduce
	$$ ||u L_A \eta||_{L^q(\mathbb{R}^n)} = ||u L_A \eta||_{L^q(\Omega)} \leq ||L_A \eta||_{L^\infty(\Omega)} ||u||_{L^q(\Omega)} \leq C_1 ||u||_{L^q(\Omega)}.$$
	Note also that 
	\begin{align*}
		I_s(u,\eta)(x) = & \int_{\Omega_0}\frac{(u(x)-u(y))(\eta(x)-\eta(y))}{|x-y|^{n+2s}}  dy \\
		& + \eta(x) \int_{\mathbb{R}^n \setminus \Omega_0}\frac{u(x)-u(y)}{|x-y|^{n+2s}} dy = I_1(x)+I_2(x).
	\end{align*}
	Setting $q^\prime :=\frac{q}{q-1}$, by H\"olders inequality we have
	\begin{equation} \label{11}
		|I_1(x)| \leq \left ( \int_{\Omega_0} \frac{|u(x)-u(y)|^{q}}{|x-y|^{n+sq}}dy \right )^{\frac{1}{q}} \left ( \int_{\Omega_0} \frac{|\eta(x)-\eta(y)|^{q^\prime}}{|x-y|^{n+q^\prime s}}dy \right )^{\frac{1}{q^\prime}}.
	\end{equation}
	Since $\widetilde \Omega_0$ is bounded, we have $R:=\textnormal{diam}(\widetilde \Omega_0)<\infty$. In addition, we have $r:=\textnormal{dist}(\partial \Omega_0,\partial \widetilde \Omega_0)>0$.
	Since furthermore $\eta$ is Lipschitz continuous with some Lipschitz constant $L>0$, we have
	\begin{equation} \label{12}
		\begin{aligned}
		& \sup_{ x \in \mathbb{R}^n } \int_{\Omega_0} \frac{|\eta(x)-\eta(y)|^{q^\prime}}{|x-y|^{n+q^\prime s}}dy \\ \leq & \sup_{ x \in \widetilde \Omega_0} \int_{\Omega_0} \frac{|\eta(x)-\eta(y)|^{q^\prime}}{|x-y|^{n+q^\prime s}}dy 
		+ \sup_{ x \in \mathbb{R}^n \setminus \widetilde \Omega_0} \int_{\Omega_0} \frac{\eta(y)^{q^\prime}}{|x-y|^{n+q^\prime s}}dy \\ \leq & L^{q^\prime} \sup_{ x \in \widetilde \Omega_0} \int_{B_R(x)} \frac{1}{|x-y|^{n+q^\prime s- q^\prime}}dy + ||\eta||_{L^\infty(\mathbb{R}^n)}^{q^\prime} \sup_{ x \in \mathbb{R}^n \setminus \widetilde \Omega_0} \int_{\Omega_0} \frac{1}{|x-y|^{n+q^\prime s}}dy \\ \leq & L^{q^\prime} \int_{B_R} \frac{1}{|z|^{n+q^\prime s- q^\prime}}dz + \int_{\mathbb{R}^n \setminus B_{r}} \frac{1}{|z|^{n+q^\prime s}}dz \\
		= & L^{q^\prime} \frac{\omega_n}{q^\prime(1-s)} R^{q^\prime(1-s)} + \frac{\omega_n}{q^\prime s} r^{-q^\prime s} =:C_4< \infty.
		\end{aligned}
	\end{equation}
	By (\ref{11}), (\ref{12}), using the elementary inequality \cite[Formula (A.10)]{Warma} and then the assumption that $u=0$ a.e. in $\mathbb{R}^n \setminus \Omega$, we obtain 
	\begin{align*}
		||I_1||_{L^q(\mathbb{R}^n)}^q \leq & C_4^\frac{1}{q^\prime} \left ( \int_{\widetilde \Omega_0} \int_{\Omega_0} \frac{|u(x)-u(y)|^{q}}{|x-y|^{n+sq}}dydx +  \int_{\mathbb{R}^n \setminus \widetilde \Omega_0} \int_{\Omega_0} \frac{|u(x)-u(y)|^{q}}{|x-y|^{n+sq}}dydx \right ) \\
		\leq & C_5 \Bigg (\int_{\widetilde \Omega_0} \int_{\Omega_0} \frac{|u(x)-u(y)|^{q}}{|x-y|^{n+sq}}dydx + \int_{\mathbb{R}^n \setminus \widetilde \Omega_0} \int_{\Omega_0} \frac{|u(x)|^{q}+|u(y)|^{q}}{(1+|x|)^{n+sq}} dydx \Bigg ) \\
		\leq & C_6 \left ( [u]_{W^{s,q}(\widetilde \Omega_0)}^q + ||u||_{L^{q}(\Omega)}^q \right ),
	\end{align*}
	where $C_5$ and $C_6$ depend only on $\Omega, \Omega_0,\widetilde \Omega_0,n,s$ and $q$. Let us now estimate the term $I_2$. Clearly, we have $I_2=0$ in $\mathbb{R}^n \setminus \Omega_0^\star$. By H\"older's inequality, for any $x \in \Omega_0^\star$ we have 
	$$
	|I_2(x)|^q \leq \eta^{q}(x) \left ( \int_{\mathbb{R}^n \setminus \Omega_0} \frac{1}{|x-y|^{n+sq^\prime}} dy \right )^{q-1} \left ( \int_{\mathbb{R}^n \setminus \Omega_0} \frac{|u(x)-u(y)|^{q}}{|x-y|^{n+sq}}dy\right ).
	$$
	Since $\textnormal{dist}(\partial \Omega_0^\star,\partial \Omega_0)>0$, observe that in view of \cite[Formula (A.10)]{Warma}, there exists a constant $C_7=C_7(\Omega_0^\star,\Omega_0,n,s,q)$ such that for any $x \in \Omega_0^\star$ we have
	$$
	\eta^{q}(x) \left ( \int_{\mathbb{R}^n \setminus \Omega_0} \frac{1}{|x-y|^{n+q^\prime s}} dy \right )^{q-1} \leq \left ( C_7 \int_{\mathbb{R}^n \setminus \Omega_0} \frac{1}{(1+|y|)^{n+sq^\prime}} dy \right )^{q-1} =:C_8< \infty,
	$$
	where we also used that $\eta^{q}(x) \in [0,1]$.
	Thus, we conclude that
	\begin{align*}
		||I_2||_{L^q(\mathbb{R}^n)}^q & = ||I_2||_{L^q(\Omega_0^\star)}^q \leq C_8 \int_{\Omega_0^\star} \int_{\mathbb{R}^n \setminus \Omega_0} \frac{|u(x)-u(y)|^{q}}{|x-y|^{n+sq}}dydx \\
		& \leq C_9 \int_{\Omega_0^\star} \int_{\mathbb{R}^n \setminus \Omega_0} \frac{|u(x)|^{q}+|u(y)|^{q}}{(1+|y|)^{n+sq}}dydx \leq C_{10} ||u||_{L^{q}(\Omega)}^q,
	\end{align*}
	where $C_9$ and $C_{10}$ depend only on $\Omega_0^\star,\Omega_0,n,s$ and $q$.
	By combining the above calculations, we obtain that (\ref{gest}) holds. \par
	In addition, in view of using $u \in W_0^{s,2}(\Omega)$ itself as a test function, H\"older's inequality and the fractional Sobolev inequality (see \cite[Theorem 6.5]{Hitch}), for any $p \in \left (\frac{2n}{n+2s},\infty \right )$ we have
	\begin{align*}
		& C_{n,s} [u]_{W^{s,2}(\mathbb{R}^n)}^2 = \int_{\Omega} fudx \leq ||f||_{L^\frac{2n}{n+2s}(\Omega)} ||u||_{L^\frac{2n}{n-2s}(\mathbb{R}^n)} \leq C_{11} ||f||_{L^{p}(\Omega)} [u]_{W^{s,2}(\mathbb{R}^n)},
	\end{align*}
	where $C_{11}=C_{11}(n,s,p,\Omega)>0$. Thus, we obtain that
	\begin{equation} \label{basic}
		[u]_{W^{s,2}(\mathbb{R}^n)} \leq C_{12} ||f||_{L^{p}(\Omega)},
	\end{equation}
	where $C_{12}=C_{12}(n,s,p,\Omega)>0$.
	Now we consider the case when $p \in \big (\frac{2n}{n+2s}, 2 \big ]$.
	Since $p>\frac{2n}{n+2s}$, by combining the estimate from Theorem \ref{KMS} with Proposition \ref{Sobolevrelate} and a standard covering argument (see our proof of Theorem \ref{mainint5} below and in particular (\ref{cest})), we see that there exists some small $\varepsilon_1=\varepsilon_1(n,s,p)>0$ and some small $\varepsilon_2=\varepsilon_2(n,s,p)>0$ such that
	\begin{equation} \label{kmsflap}
	[u]_{W^{s+\varepsilon_1,2+\varepsilon_2}(\widetilde \Omega_0)} \leq C_{13} ([u]_{W^{s,2}(\mathbb{R}^n)} + ||f||_{L^{p}(\Omega)})
	\leq C_{14} ||f||_{L^{p}(\Omega)},
	\end{equation}
	where $C_{13}$ and $C_{14}$ depend only on $n,s,p,\widetilde \Omega_0$ and $\Omega$ and we used (\ref{basic}) in order to obtain the last inequality. Since we consider the case when $p \in \big (\frac{2n}{n+2s}, 2 \big ]$, by combining (\ref{kmsflap}) with Proposition \ref{Sobcont}, obtain
	\begin{equation} \label{sh}
	[u]_{W^{s,p}(\widetilde \Omega_0)} \leq C_{15} [u]_{W^{s+\varepsilon_1,2+\varepsilon_2}(\widetilde \Omega_0)}
	\leq C_{16} ||f||_{L^{p}(\Omega)},
	\end{equation}
	where again $C_{15}$ and $C_{16}$ depend only on $n,s,p,\widetilde \Omega_0$ and $\Omega$.
	Now in view of (\ref{lcut}), (\ref{flreg}), (\ref{gest}) with $q=p$, (\ref{Lpreg}) and (\ref{sh}), we arrive at
	\begin{align*}
		||(-\Delta)^s (\eta u)||_{L^p(\mathbb{R}^n)} \leq C_{17}( ||\eta f||_{L^p(\mathbb{R}^n)} + ||g||_{L^p(\mathbb{R}^n)}) \leq & C_{18} (||f||_{L^p(\Omega)} + [u]_{W^{s,p}(\widetilde \Omega_0)} + ||u||_{L^{p}(\Omega)}) \\
		\leq & C_{19} ||f||_{L^p(\Omega)},
	\end{align*}
	where all constants depend only on $n,s,q,\Omega^\prime,\Omega_0^\star,\Omega_0,\widetilde \Omega_0,\Omega$. Note that the same is true for all other constants in the further proof. Together with (\ref{Lpreg}) and the fact that $\eta=1$ in $\Omega^\prime$, in this case we conclude that
	\begin{align*}
		||u||_{H^{2s,p}(\Omega^\prime)} \leq ||\eta u||_{H^{2s,p}(\mathbb{R}^n)} \leq ||u||_{L^{p}(\Omega)} + ||(-\Delta)^s (\eta u)||_{L^p(\mathbb{R}^n)} \leq C_{20} ||f||_{L^p(\Omega)}.
	\end{align*}
	Since $\Omega^\prime$ is an arbitrary relatively compact open subset of $\Omega$, this proves the estimate (\ref{H2spest}) in the case when $p \in \big (\frac{2n}{n+2s}, 2 \big ]$. \par
	Next, we consider the case when $p \in (2,\infty)$. For any $q \in (1,p]$, we define
	$$ q^\star := \begin{cases} 
		\min \{\frac{nq}{n-sq},p\}, & \text{if } sq < n \\
		p, & \text{if } sq \geq n .
	\end{cases}$$
	In view of (\ref{lcut}), (\ref{flreg}) and (\ref{gest}) with $q=2^\star$, Proposition \ref{BesselTr}, (\ref{Lpreg}) and using the case $p=2$ which was treated above, we obtain
	\begin{align*}
		||(-\Delta)^s (\eta u)||_{L^{2\star}(\mathbb{R}^n)} \leq C_{21}( ||\eta f||_{L^{2^\star}(\mathbb{R}^n)} + ||g||_{L^{2^\star}(\mathbb{R}^n)}) \leq & C_{22} (||f||_{L^{2^\star}(\Omega)} + [u]_{W^{s,2^\star}(\widetilde \Omega_0)} + ||u||_{L^{2^\star}(\Omega)}) \\
		\leq & C_{23} (||f||_{L^{2^\star}(\Omega)} + ||u||_{H^{2s,2}(\widetilde \Omega_0)}) \\
		\leq & C_{24} ||f||_{L^{2^\star}(\Omega)},
	\end{align*}
	so that together with (\ref{Lpreg}), we obtain
	\begin{align*}
		||u||_{H^{2s,2^\star}(\Omega^\prime)} \leq ||\eta u||_{H^{2s,{2^\star}}(\mathbb{R}^n)} \leq ||u||_{L^{2^\star}(\Omega)} + ||(-\Delta)^s (\eta u)||_{L^{2^\star}(\mathbb{R}^n)} \leq C_{25} ||f||_{L^{2^\star}(\Omega)}.
	\end{align*}
	If $2^\star=p$, the proof is finished. Otherwise, by a similar reasoning we obtain the estimate
	\begin{align*}
		||u||_{H^{2s,{2^\star}^\star}(\Omega^\prime)} \leq C_{26} ||f||_{L^{{2^\star}^\star}(\Omega)}.
	\end{align*}
	If ${2^\star}^\star=p$, the proof is finished. Otherwise, by iterating the above procedure we also arrive at the estimate (\ref{H2spest}) after finitely many iterations, so that the proof is finished.
\end{proof}
For some more local and global regularity results for the fractional Laplacian, we refer to \cite{FracLap} and \cite{Grubb}.

\section{Comparison estimates} \label{ce}
The following Lemma relates the nonlocal tail of a function $u$ to the corresponding function $U$.
\begin{lem} \label{anullusdecomp}
	Let $R>0$ and $x_0 \in \mathbb{R}^n$. For any function $u \in W^{s,2}(\mathbb{R}^n)$, we have
	\begin{equation} \label{andc}
	\int_{\mathbb{R}^n \setminus B_{R}(x_0)} \frac{|u(y)-\overline{u}_{B_R(x_0)}|}{|x_0-y|^{n+2s}}dy \leq C R^{-s+\theta} \sum_{k=1}^\infty 2^{-k(s-\theta)} \left ( \dashint_{\mathcal{B}_{2^kR}(x_0)} U^2 d\mu \right )^\frac{1}{2},
	\end{equation}
	where $C=C(n,s,\theta)>0$.
\end{lem}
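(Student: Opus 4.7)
The plan is to decompose the integral on the left-hand side into a sum over dyadic annuli $B_{2^k R}(x_0) \setminus B_{2^{k-1}R}(x_0)$ for $k \geq 1$, and on each annulus exploit that $|x_0 - y| \geq 2^{k-1}R$ so that the singular kernel gets replaced by a constant of size $C(2^kR)^{-(n+2s)}$. Then on each annulus one is reduced to controlling $\int_{B_{2^kR}(x_0)}|u(y)-\overline{u}_{B_R(x_0)}|\,dy$, and the distance between the small average $\overline{u}_{B_R(x_0)}$ and the relevant scale $\overline{u}_{B_{2^kR}(x_0)}$ is handled by the standard telescoping sum
\[
\overline{u}_{B_R(x_0)} - \overline{u}_{B_{2^kR}(x_0)} = \sum_{j=1}^{k}\bigl(\overline{u}_{B_{2^{j-1}R}(x_0)} - \overline{u}_{B_{2^jR}(x_0)}\bigr).
\]

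The main algebraic ingredient is the identity $U^2\,d\mu = |u(x)-u(y)|^2 |x-y|^{-n-2s}\,dx\,dy$, together with $\mu(\mathcal{B}_r(x_0)) = c\, r^{n+2\theta}$ from Proposition \ref{doublingmeasure}. Combined with the fractional Poincar\'e inequality of Lemma \ref{Poincare} (applied with $p=2$) and Cauchy--Schwarz to pass from $L^1$ to $L^2$ averages, this yields the master estimate
\[
\Bigl(\dashint_{B_r(x_0)}\bigl|u-\overline{u}_{B_r(x_0)}\bigr|^2 dx\Bigr)^{1/2} \leq C r^{s+\theta}\Bigl(\dashint_{\mathcal{B}_r(x_0)} U^2\,d\mu\Bigr)^{1/2}
\]
for every ball. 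Applied at scale $r = 2^kR$ this handles the $|u(y)-\overline{u}_{B_{2^kR}(x_0)}|$ contribution on the $k$-th annulus, producing exactly the expected factor $R^{-s+\theta}\,2^{-k(s-\theta)}\,(\dashint_{\mathcal{B}_{2^kR}(x_0)} U^2\,d\mu)^{1/2}$. Applied at scale $r = 2^jR$ and combined with $B_{2^{j-1}R}(x_0) \subset B_{2^jR}(x_0)$ and $|B_{2^jR}(x_0)|/|B_{2^{j-1}R}(x_0)| = 2^n$, it bounds each telescoping increment $|\overline{u}_{B_{2^{j-1}R}(x_0)} - \overline{u}_{B_{2^jR}(x_0)}|$ by $C(2^jR)^{s+\theta}(\dashint_{\mathcal{B}_{2^jR}(x_0)} U^2\,d\mu)^{1/2}$.

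The only subtle step is the bookkeeping when assembling the annular estimates: the telescoping contribution produces a double sum $\sum_{k=1}^\infty \sum_{j=1}^k (2^kR)^{-2s}(2^jR)^{s+\theta}(\dashint_{\mathcal{B}_{2^jR}(x_0)}U^2 d\mu)^{1/2}$ (the $|B_{2^kR}(x_0)|$ and $(2^kR)^{-n}$ cancel). Swapping the order of summation and performing the geometric sum in $k$,
\[
\sum_{k=j}^\infty 2^{-2sk} = C\,2^{-2sj},
\]
collapses the double sum into $\sum_{j=1}^\infty 2^{-(s-\theta)j}(\dashint_{\mathcal{B}_{2^jR}(x_0)}U^2 d\mu)^{1/2}$ with a prefactor $R^{-s+\theta}$, which is precisely the right-hand side of (\ref{andc}). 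The geometric series converges thanks to the restriction $\theta < s$ from (\ref{thetarange}), and this is really the only place the assumption on $\theta$ is used. The main ``obstacle'' is not conceptual but notational: one has to be careful that the normalization from passing between $\int U^2 d\mu$ and $\dashint U^2 d\mu$, together with the normalization in the fractional Poincar\'e inequality, produces the scale exponent $s+\theta$ at every stage so that the $k$- and $j$-sums telescope cleanly against the kernel factor $(2^kR)^{-2s}$.
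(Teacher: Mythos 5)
Your proof is correct and follows essentially the same route as the paper: dyadic annular decomposition of the tail integral, telescoping between the averages at scales $R$ and $2^kR$, passing to $L^2$ oscillations via Cauchy--Schwarz and the fractional Poincar\'e inequality to produce the factor $r^{s+\theta}(\dashint_{\mathcal{B}_r(x_0)}U^2 d\mu)^{1/2}$, and finally swapping the order of summation and summing the geometric series in $k$ (which converges precisely because $\theta<s$). The only cosmetic difference is index bookkeeping (the paper uses annuli $B_{2^{j+1}R}\setminus B_{2^jR}$, $j\geq 0$, where you use $B_{2^kR}\setminus B_{2^{k-1}R}$, $k\geq 1$), which does not affect the argument.
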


\begin{proof}
	First of all, splitting the integral on the left-hand side into annuli yields
	\begin{align*}
	\int_{\mathbb{R}^n \setminus B_{R}(x_0)} \frac{|u(y)-\overline{u}_{B_R(x_0)}|}{|x_0-y|^{n+2s}}dy & = \sum_{j=0}^\infty \int_{B_{2^{j+1}R}(x_0) \setminus B_{2^jR}(x_0)}\frac{|u(y)-\overline{u}_{B_R(x_0)}|}{|x_0-y|^{n+2s}}dy \\
	& \leq \sum_{j=0}^\infty (2^j R)^{-n-2s} \int_{B_{2^{j+1}R}(x_0)} |u(y)-\overline{u}_{B_R(x_0)}|dy \\
	& = C_1 \sum_{j=0}^\infty (2^j R)^{-2s} \dashint_{B_{2^{j+1}R}(x_0)} |u(y)-\overline{u}_{B_R(x_0)}|dy ,
	\end{align*}
	where $C_1=C_1(n)$.
	Using the Cauchy-Schwarz inequality, we deduce
	\begin{align*}
	& \dashint_{B_{2^{j+1}R}(x_0)} |u(y)-\overline{u}_{B_R(x_0)}|dy \\
	\leq & \dashint_{B_{2^{j+1}R}(x_0)} |u(y)-\overline{u}_{B_{2^{j+1} R}(x_0)}|dy + \sum_{k=0}^j |\overline{u}_{B_{2^{k+1} R}(x_0)}-\overline{u}_{B_{2^{k} R}(x_0)}| \\
	\leq & \dashint_{B_{2^{j+1}R}(x_0)} |u(y)-\overline{u}_{B_{2^{j+1} R}(x_0)}|dy + \sum_{k=0}^j \dashint_{B_{2^{k}R}(x_0)} |u(y)-\overline{u}_{B_{2^{k+1} R}(x_0)}|dy \\
	\leq & \dashint_{B_{2^{j+1}R}(x_0)} |u(y)-\overline{u}_{B_{2^{j+1} R}(x_0)}|dy + 2^n \sum_{k=0}^j \dashint_{B_{2^{k+1}R}(x_0)} |u(y)-\overline{u}_{B_{2^{k+1} R}(x_0)}|dy \\
	\leq & 2^{n+1} \sum_{k=1}^{j+1} \dashint_{B_{2^{k}R}(x_0)} |u(y)-\overline{u}_{B_{2^{k} R}(x_0)}|dy \\
	\leq & 2^{n+1} \sum_{k=1}^{j+1} \bigg( \dashint_{B_{2^{k}R}(x_0)} |u(y)-\overline{u}_{B_{2^{k} R}(x_0)}|^{2} dy \bigg)^\frac{1}{2}.
	\end{align*} 
	In order to further estimate the right-hand side of the previous computation, we use the fractional Poincar\'e inequality (Lemma \ref{Poincare}) in order to obtain
	\begin{align*}
	\dashint_{B_{2^{k}R}(x_0)} |u(y)-\overline{u}_{B_{2^{k} R}(x_0)}|^{2}dx & \leq C_2 (2^k R)^{2s} \int_{B_{2^{k}R}(x_0)}\dashint_{B_{2^{k}R}(x_0)} \frac{|u(x)-u(y)|^2}{|x-y|^{n+2s}}dydx \\
	& = C_3 (2^k R)^{2(s+\theta)} \dashint_{\mathcal{B}_{2^{k}R}(x_0)} U^2 d\mu,
	\end{align*}
	where $C_2$ and $C_3$ depend only on $n,s$ and $\theta$.
	Combining the previous two calculations leads to
	$$ \dashint_{B_{2^{j+1}R}(x_0)} |u(y)-\overline{u}_{B_R(x_0)}|dy \leq 
	C_4 \sum_{k=1}^{j+1} (2^k R)^{(s+\theta)} \bigg (\dashint_{\mathcal{B}_{2^{k}R}(x_0)} U^2 d\mu \bigg )^\frac{1}{2}, $$
	where $C_4=C_4(n,s,\theta)>0$. Next, combining the previous estimate with the first display in this proof yields
	\begin{equation} \label{ds}
	\int_{\mathbb{R}^n \setminus B_{R}(x_0)} \frac{|u(y)-\overline{u}_{B_R(x_0)}|}{|x_0-y|^{n+2s}}dy
	\leq C_1 C_4 R^{-s+\theta} \sum_{j=0}^\infty \sum_{k=1}^{j+1} 2^{-2sj} 2^{k(s+\theta)} \bigg (\dashint_{\mathcal{B}_{2^{k}R}(x_0)} U^2 d\mu \bigg )^\frac{1}{2}.
	\end{equation}
	By reverting the order of summation, we obtain
	\begin{align*}
	& \sum_{j=0}^\infty \sum_{k=1}^{j+1} 2^{-2sj} 2^{k(s+\theta)} \bigg (\dashint_{\mathcal{B}_{2^{k}R}(x_0)} U^2 d\mu \bigg )^\frac{1}{2} \\
	= & \sum_{k=1}^\infty 2^{k(s+\theta)} \bigg (\dashint_{\mathcal{B}_{2^{k}R}(x_0)} U^2 d\mu \bigg )^\frac{1}{2} \sum_{j=k-1}^\infty 2^{-2sj} \\
	= & \sum_{k=1}^\infty 2^{k(s+\theta)} \bigg (\dashint_{\mathcal{B}_{2^{k}R}(x_0)} U^2 d\mu \bigg )^\frac{1}{2} \sum_{j=1}^\infty 2^{-2s(j+k-2)} \\
	\leq & 4^{2s} \sum_{k=1}^\infty 2^{-k(s-\theta)} \bigg (\dashint_{\mathcal{B}_{2^{k}R}(x_0)} U^2 d\mu \bigg )^\frac{1}{2} \sum_{j=1}^\infty 2^{-2sj}.
	\end{align*}
	Since the sum $\sum_{j=1}^\infty 2^{-2sj}$ is finite, we conclude that
	\begin{align*}
	& \sum_{j=0}^\infty \sum_{k=1}^{j+1} 2^{-2sj} 2^{k(s+\theta)} \bigg (\dashint_{\mathcal{B}_{2^{k}R}(x_0)} U^2 d\mu \bigg )^\frac{1}{2} \\
	\leq & C_5 \sum_{k=1}^\infty 2^{-k(s-\theta)} \bigg (\dashint_{\mathcal{B}_{2^{k}R}(x_0)} U^2 d\mu \bigg )^\frac{1}{2},
	\end{align*}
	where $C_5=C_5(s)>0$. Finally, by combining the last display with (\ref{ds}), we arrive at (\ref{andc}).
\end{proof}

A crucial tool for the proof of the higher integrability of $U$ is given by the following comparison estimate. Essentially, it will allow us to transfer some regularity from a solution of a more well-behaved equation to the solution of the original equation.

\begin{prop} \label{appplxy}
Let $x_0 \in \mathbb{R}^n$, $r>0$, $g \in W^{s,2}(\mathbb{R}^n) \cap W^{s_\theta,2+\sigma_0}(B_{2r}(x_0))$, $f \in L^{2_\star+\sigma_0}(B_{2r}(x_0))$,  $\widetilde f \in L^{2_\star}(B_{2r}(x_0))$ and $A \in \mathcal{L}_0(\Lambda)$. In addition, assume that $\Phi$ satisfies the conditions (\ref{PhiLipschitz}) and (\ref{PhiMonotone}). Moreover, let $u \in W^{s,2}(\mathbb{R}^n)$ be a weak solution of the equation
\begin{equation} \label{neq5}
L_A^\Phi u = (-\Delta)^s g + f \text{ in } B_{2r}(x_0),
\end{equation}
and let $v \in W^{s,2}(\mathbb{R}^n)$ be the unique weak solution of the Dirichlet problem
\begin{equation} \label{constcof3}
\begin{cases} \normalfont
L_{\widetilde A}^\Phi v = \widetilde f & \text{ in } B_{2r}(x_0) \\
v = u & \text{ a.e. in } \mathbb{R}^n \setminus B_{2r}(x_0),
\end{cases}
\end{equation}
where $\widetilde A$ is another coefficient of class $\mathcal{L}_0(\Lambda)$ such that $\widetilde A=A$ in $\mathbb{R}^{2n} \setminus \mathcal{B}_{r}(x_0)$.
Then the function $w:=u-v \in W^{s,2}_0(B_{2r}(x_0))$
satisfies 
\begin{align*}
& \int_{\mathbb{R}^n} \int_{\mathbb{R}^n} \frac{(w(x)-w(y))^2}{|x-y|^{n+2s}}dydx \\
\leq & C \omega(A-\widetilde A,r,x_0)^\frac{\gamma}{n-\gamma} \mu(\mathcal{B}_r(x_0)) \left (\sum_{k=1}^\infty 2^{-k(s-\theta)} \left ( \dashint_{\mathcal{B}_{2^kr}(x_0)} U^2 d\mu \right )^\frac{1}{2} \right )^2 \\	& + C \omega(A-\widetilde A,r,x_0)^\frac{\gamma}{n-\gamma} \mu(\mathcal{B}_r(x_0)) \left ( \dashint_{\mathcal{B}_{2r}(x_0)} G^{2+\sigma_0} d\mu \right )^{\frac{2}{2+\sigma_0}} \\ & + C (\omega(A-\widetilde A,r,x_0)^\frac{\gamma}{n-\gamma}+1) \mu(\mathcal{B}_r(x_0)) \left (\sum_{k=1}^\infty 2^{-k(s-\theta)} \left ( \dashint_{\mathcal{B}_{2^kr}(x_0)} G^2 d\mu \right )^\frac{1}{2}\right )^2 \\ & + C\omega(A-\widetilde A,r,x_0)^\frac{\gamma}{n-\gamma} r^{2(s-\theta)}\mu(\mathcal{B}_r(x_0)) \left ( \dashint_{B_{2r}(x_0)}|f|^{2_\star+\sigma_0} dx \right )^\frac{2}{2_\star+\sigma_0} \\
& + C r^{2(s-\theta)}\mu(\mathcal{B}_r(x_0)) \left (\dashint_{B_{2r}(x_0)}|f-\widetilde f|^{2_\star} dx \right )^\frac{2}{2_\star} ,
\end{align*}
where $C=C(n,s,\theta,\Lambda,\sigma_0)>0$ and $$ \omega(A-\widetilde A,r,x_0) := \dashint_{B_r(x_0)} \dashint_{B_r(x_0)} |A(x,y)- \widetilde A(x,y)|dydx .$$
\end{prop}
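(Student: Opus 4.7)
The plan is to exploit that $w := u - v$ is an admissible test function in both weak formulations: $w \in W^{s,2}_0(B_{2r}(x_0))$ because $u = v$ a.e.\ outside $B_{2r}(x_0)$ by \eqref{constcof3}. Writing $A = \widetilde A + (A - \widetilde A)$ on the $u$-side of \eqref{neq5} and subtracting the equation for $v$ yields
\[
\int_{\mathbb{R}^n}\int_{\mathbb{R}^n} \frac{\widetilde A(x,y)}{|x-y|^{n+2s}}\bigl[\Phi(u(x)-u(y))-\Phi(v(x)-v(y))\bigr](w(x)-w(y))\,dy\,dx = -T_1 + T_2 + T_3,
\]
where $T_1$ is the coefficient-difference term, supported in $\mathcal{B}_r(x_0)$ by the assumption $\widetilde A = A$ on $\mathbb{R}^{2n}\setminus\mathcal{B}_r(x_0)$, $T_2$ is the bilinear form pairing $g$ with $w$, and $T_3 = \int_{B_{2r}(x_0)}(f-\widetilde f)w\,dx$. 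The monotonicity assumption \eqref{PhiMonotone} applied with $a-b = w(x)-w(y)$, combined with $\widetilde A \geq \Lambda^{-1}$, bounds the left-hand side from below by $\Lambda^{-2}[w]_{W^{s,2}(\mathbb{R}^n)}^2 = \Lambda^{-2}\|W\|_{L^2(\mathbb{R}^{2n},\mu)}^2$, so that the task reduces to estimating each $T_i$ with a single factor of $[w]_{W^{s,2}(\mathbb{R}^n)}$ that will be absorbed at the end by Young's inequality.

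The principal work is the $T_1$ bound. Using \eqref{PhiLipschitz} with $\Phi(0)=0$ and rewriting the integrand in terms of $U$, $W$ and $d\mu$ gives $|T_1| \leq \Lambda^2 \int_{\mathcal{B}_r(x_0)} |A-\widetilde A|\cdot U\cdot W\,d\mu$. I would then apply a three-factor Hölder inequality with exponents $a$, $2+\sigma$, $2$ satisfying $\frac{1}{a}+\frac{1}{2+\sigma}+\frac{1}{2}=1$, where $\sigma$ is the self-improvement exponent from Theorem \ref{KMS}. The middle factor $\|U\|_{L^{2+\sigma}(\mathcal{B}_r(x_0),\mu)}$ is then bounded directly by \eqref{KMSmod} applied to $u$, and this step is precisely the source of the four kinds of tail term (in $U$, $G^{2+\sigma_0}$, $G$, and $f$) appearing in the first four lines of the statement. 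To estimate $\|A-\widetilde A\|_{L^a(\mathcal{B}_r(x_0),\mu)}$ I would use $|A-\widetilde A|\leq 2\Lambda$ to reduce to $|A-\widetilde A|^a \leq (2\Lambda)^{a-1}|A-\widetilde A|$, and then a further Hölder step splitting $|A-\widetilde A| = |A-\widetilde A|^{\beta}\cdot|A-\widetilde A|^{1-\beta}$ against the singular weight $|x-y|^{-(n-2\theta)}$, where the dual power $|x-y|^{-(n-2\theta)/(1-\beta)}$ remains integrable on $\mathcal{B}_r(x_0)$ provided $\beta < 2\theta/n$; this is admissible since $\theta>0$. An explicit integration then gives $\int_{\mathcal{B}_r(x_0)}|A-\widetilde A|\,d\mu \leq C\mu(\mathcal{B}_r(x_0))\,\omega(A-\widetilde A,r,x_0)^\beta$, and balancing $\beta$ with $a$ (with $\gamma$ coupled to $\sigma$ through Theorem \ref{KMS}) produces the displayed power $\gamma/(n-\gamma)$ on $\omega$ together with the correct factor $\mu(\mathcal{B}_r(x_0))$.

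For $T_2$ I would decompose $\mathbb{R}^{2n}=\mathcal{B}_{2r}(x_0)\cup(\mathbb{R}^{2n}\setminus\mathcal{B}_{2r}(x_0))$. Cauchy–Schwarz in $\mu$ on the diagonal block directly supplies the $k=1$ term of the $G$-tail sum, while the off-diagonal part, using that $w$ vanishes outside $B_{2r}(x_0)$ and that $T_2$ is invariant under $g \mapsto g - \overline g_{B_{2r}(x_0)}$, is treated by a dyadic decomposition combined with Lemma \ref{anullusdecomp} applied to $g$ and the fractional Friedrichs–Poincaré inequality (Lemma \ref{Friedrichsx}) applied to $w$, generating the remaining terms $\sum_{k\geq 2}2^{-k(s-\theta)}\bigl(\dashint_{\mathcal{B}_{2^k r}(x_0)}G^2\,d\mu\bigr)^{1/2}$. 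The term $T_3$ is bounded by Hölder with conjugate exponents $2_\star$ and $2^\star$ together with the fractional Sobolev embedding for $W^{s,2}_0(B_{2r}(x_0))$, upon which the dimensional identity $|B_{2r}(x_0)|^{1/2_\star}=c\,r^{s-\theta}\mu(\mathcal{B}_r(x_0))^{1/2}$ recasts the constant into the form of the final displayed term. Summing the three bounds and absorbing the resulting $[w]_{W^{s,2}(\mathbb{R}^n)}$ factors via Young's inequality completes the proof. The most delicate point is the Hölder balancing in the $T_1$ estimate that calibrates the exponent on $\omega$ to be exactly $\gamma/(n-\gamma)$; the tail bound for $T_2$ is technical but follows the template of Lemma \ref{anullusdecomp}.
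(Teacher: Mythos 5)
Your strategy matches the paper's almost step for step: test both equations with $w$, use \eqref{PhiMonotone} for coercivity, split into the coefficient-difference term $T_1$, the $g$-pairing $T_2$, and the source term $T_3$, estimate $T_1$ by H\"older against the self-improved norm from Theorem~\ref{KMS}, treat $T_2$ via a near/far diagonal split together with Lemma~\ref{anullusdecomp} and the Friedrichs--Poincar\'e inequality, treat $T_3$ by H\"older plus the fractional Sobolev inequality, and absorb $[w]_{W^{s,2}(\mathbb{R}^n)}$ at the end.

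One step does not go through as written: you claim the middle H\"older factor $\|U\|_{L^{2+\sigma}(\mathcal{B}_r(x_0),\mu)}$ is ``bounded directly by \eqref{KMSmod}''. Estimate \eqref{KMSmod} controls $\bigl(\dashint_{\mathcal{B}_r(x_0)}U_\gamma^{2+\sigma}\,d\mu_\gamma\bigr)^{1/(2+\sigma)}$, which by Lemma~\ref{Sobolevrelate} is the $W^{s+\gamma(1-\frac{2}{2+\sigma}),\,2+\sigma}(B_r(x_0))$ seminorm, whereas your quantity is the $W^{s+\theta(1-\frac{2}{2+\sigma}),\,2+\sigma}(B_r(x_0))$ seminorm. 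Since $\gamma\le\theta$ the latter is strictly stronger; concretely, $U^{2+\sigma}\,d\mu$ carries the extra singularity $|x-y|^{-\sigma(\theta-\gamma)}$ relative to $U_\gamma^{2+\sigma}\,d\mu_\gamma$, so the inequality you need goes the wrong way and \eqref{KMSmod} does not bound it. The repair is to run the triple H\"older with $U_\gamma$, $W_\gamma$ and $d\mu_\gamma$, which is legitimate because $U\,W\,d\mu=U_\gamma\,W_\gamma\,d\mu_\gamma=|u(x)-u(y)|\,|w(x)-w(y)|\,|x-y|^{-n-2s}\,dy\,dx$; this is exactly what the paper does and it lands you on \eqref{KMSmod} as intended. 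Your subsequent H\"older split of $|A-\widetilde A|$ against the kernel must then be run against $|x-y|^{-(n-2\gamma)}$ rather than $|x-y|^{-(n-2\theta)}$, which tightens your admissibility constraint to $\beta<2\gamma/n$, still compatible with the target power $\gamma/(n-\gamma)$. A smaller point: splitting $T_2$ at $\mathcal{B}_{2r}(x_0)$ leaves pairs $(x,y)$ with $|x-y|$ arbitrarily small in the ``far'' region (one point just inside $B_{2r}(x_0)$, the other just outside), which defeats the $|x_0-y|\le 2|x-y|$ tail estimate needed to invoke Lemma~\ref{anullusdecomp}; the paper splits at $B_{4r}(x_0)$ so that the far region has $|x-y|\ge 2r$.
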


\begin{proof} 
First of all, note that the function $v$ that uniquely solves (\ref{constcof3}) exists by \cite[Proposition 4.1]{MeN}. Using $w$ as a test function in (\ref{constcof3}) and also in (\ref{neq5}), using (\ref{PhiMonotone}) and taking into account that $A(x,y)=\widetilde A(x,y)$ whenever $(x,y) \notin \mathcal{B}_r(x_0)$, we obtain
\begin{align*}
& \int_{\mathbb{R}^n} \int_{\mathbb{R}^n} \frac{(w(x)-w(y))^2}{|x-y|^{n+2s}}dydx \\
\leq & \Lambda \int_{\mathbb{R}^n} \int_{\mathbb{R}^n} \widetilde A(x,y) \frac{((u(x)-u(y))-(v(x)-v(y)))^2}{|x-y|^{n+2s}}dydx \\
& \leq \Lambda^2 \bigg ( \int_{\mathbb{R}^n} \int_{\mathbb{R}^n} \widetilde A(x,y) \frac{\Phi(u(x)-u(y))(w(x)-w(y))}{|x-y|^{n+2s}}dydx \\
& - \int_{\mathbb{R}^n} \int_{\mathbb{R}^n} \widetilde A(x,y) \frac{ \Phi(v(x)-v(y))(w(x)-w(y))}{|x-y|^{n+2s}}dydx \bigg ) \\
= & \Lambda^2 \bigg (\int_{\mathbb{R}^n} \int_{\mathbb{R}^n} (\widetilde A(x,y)-A(x,y)) \frac{\Phi(u(x)-u(y))(w(x)-w(y))}{|x-y|^{n+2s}}dydx \\
& + \int_{\mathbb{R}^n} \int_{\mathbb{R}^n} A(x,y) \frac{\Phi(u(x)-u(y))(w(x)-w(y))}{|x-y|^{n+2s}}dydx - \int_{B_{2r}(x_0)} \widetilde f(x)w(x)dx \bigg )  \\
= & \Lambda^2 \bigg ( \underbrace{\int_{B_{r}(x_0)} \int_{B_{r}(x_0)} (\widetilde A(x,y)-A(x,y)) \frac{\Phi(u(x)-u(y))(w(x)-w(y))}{|x-y|^{n+2s}}dydx}_{=: I_1} \\
& + C_{n,s} \underbrace{\int_{\mathbb{R}^n} \int_{\mathbb{R}^n} \frac{(g(x)-g(y))(w(x)-w(y))}{|x-y|^{n+2s}}dydx}_{:= I_2} \\
& + \underbrace{\int_{B_{2r}(x_0)} (f(x) - \widetilde f(x))w(x)dx}_{:=I_3} \bigg ).
\end{align*}
Let $\sigma=\sigma(n,s,\Lambda,\sigma_0)>0$ and  $\gamma=\gamma(n,s,\Lambda,\sigma_0)>0$ be given by Theorem \ref{KMS}.
By using (\ref{PhiLipschitz}), the Cauchy-Schwarz inequality and then H\"older's inequality with conjugated exponents $\frac{2+\sigma}{\sigma}$ and $\frac{2+\sigma}{2}$, we estimate $I_1$ as follows
\begin{align*}
I_1 \leq & \Lambda \left (\int_{B_{r}(x_0)} \int_{B_{r}(x_0)} (\widetilde A(x,y)-A(x,y))^2 \frac{(u(x)-u(y))^2}{|x-y|^{n+2s}}dydx \right )^\frac{1}{2} \\
& \times \left ( \int_{B_{r}(x_0)} \int_{B_{r}(x_0)} \frac{(w(x)-w(y))^2}{|x-y|^{n+2s}}dydx \right )^\frac{1}{2} \\
= & \Lambda \left (\int_{\mathcal{B}_{r}(x_0)} (\widetilde A(x,y)-A(x,y))^2 U_\gamma^2(x,y) d\mu_\gamma \right )^\frac{1}{2} \\
& \times \left ( \int_{B_{r}(x_0)} \int_{B_{r}(x_0)} \frac{(w(x)-w(y))^2}{|x-y|^{n+2s}}dydx \right )^\frac{1}{2} \\
\leq & C_1 \left ( \left (\int_{\mathcal{B}_{r}(x_0)} |\widetilde A(x,y)-A(x,y)|^{\frac{4}{\sigma}+2} d\mu_\gamma \right )^\frac{\sigma}{2+\sigma} \left (\dashint_{\mathcal{B}_{r}(x_0)} U_\gamma^{2+\sigma} d\mu_\gamma \right )^{\frac{2}{2+\sigma}}r^\frac{2n+4\gamma}{2+\sigma} \right )^\frac{1}{2} \\
& \times \left ( \int_{\mathbb{R}^n} \int_{\mathbb{R}^n} \frac{(w(x)-w(y))^2}{|x-y|^{n+2s}}dydx \right )^\frac{1}{2} ,
\end{align*}
where $C_1=C_1(n,\Lambda,\sigma,\gamma)>0$.
By using H\"older's inequality with conjugated exponents $\frac{n-\gamma}{n-2\gamma}$ and $\frac{n-\gamma}{\gamma}$, we obtain
\begin{align*}
& \int_{\mathcal{B}_{r}(x_0)} |\widetilde A(x,y)-A(x,y)|^{\frac{4}{\sigma}+2} d\mu_\gamma \\
= & \int_{B_{r}(x_0)} \int_{B_{r}(x_0)} \frac{|\widetilde A(x,y)-A(x,y)|^{\frac{4}{\sigma}+2}}{|x-y|^{n-2\gamma}}dydx \\
\leq & \left ( \int_{B_{r}(x_0)} \int_{B_{r}(x_0)} |\widetilde A(x,y)-A(x,y)|^{\left (\frac{4}{\sigma}+2 \right ) \left (\frac{n-\gamma}{\gamma} \right)} dydx \right )^\frac{\gamma}{n-\gamma} \left ( \int_{B_{r}(x_0)} \int_{B_{r}(x_0)} \frac{dydx}{|x-y|^{n-\gamma}} \right )^\frac{n-2\gamma}{n-\gamma} \\
\leq & (2\Lambda)^{\frac{4}{\sigma}+2} \left ( \dashint_{B_{r}(x_0)} \dashint_{B_{r}(x_0)} |\widetilde A(x,y)-A(x,y)| dydx \right )^\frac{\gamma}{n-\gamma} C_2 r^{2n \frac{\gamma}{n-\gamma}} \mu_{\gamma/2}(\mathcal{B}_{r}(x_0))^\frac{n-2\gamma}{n-\gamma} \\
\leq & C_3 \omega(A-\widetilde A,r,x_0)^\frac{\gamma}{n-\gamma} r^{\frac{2n \gamma}{n-\gamma}+(n+\gamma) \frac{n-2\gamma}{n-\gamma}},
\end{align*}
where $C_2=C_2(n,s,\gamma)>0$ and $C_3=C_3(n,s,\gamma,\sigma,\Lambda)>0$.

By combining the last two displays with the estimate (\ref{KMSmod}) and the fact that
$$ \frac{\sigma}{2+\sigma} \left (\frac{2n \gamma}{n-\gamma}+(n+\gamma) \frac{n-2\gamma}{n-\gamma} \right ) + \frac{2n+4\gamma}{2+\sigma}+2(\theta-\gamma) = n+2\theta,$$
we arrive at
\begin{align*}
I_1 \leq & C_4 \Bigg ( \omega(A-\widetilde A,r,x_0)^\frac{\gamma}{n-\gamma} \mu(\mathcal{B}_r(x_0)) \left (\sum_{k=1}^\infty 2^{-k(s-\theta)} \left ( \dashint_{\mathcal{B}_{2^kr}(x_0)} U^2 d\mu \right )^\frac{1}{2} \right )^2 \\
& + \omega(A-\widetilde A,r,x_0)^\frac{\gamma}{n-\gamma} \mu(\mathcal{B}_r(x_0)) \left ( \dashint_{\mathcal{B}_{2r}(x_0)} G^{2+\sigma_0} d\mu \right )^{\frac{2}{2+\sigma_0}} \\ & + \omega(A-\widetilde A,r,x_0)^\frac{\gamma}{n-\gamma} \mu(\mathcal{B}_r(x_0)) \left (\sum_{k=1}^\infty 2^{-k(s-\theta)} \left ( \dashint_{\mathcal{B}_{2^kr}(x_0)} G^2 d\mu \right )^\frac{1}{2}\right )^2 \\ & + \omega(A-\widetilde A,r,x_0)^\frac{\gamma}{n-\gamma} r^{2(s-\theta)}\mu(\mathcal{B}_r(x_0)) \left ( \dashint_{B_{2r}(x_0)}|f|^{2_\star+\sigma_0} dx \right )^\frac{2}{2+\sigma_0}\Bigg ) ,
\end{align*}
where $C_4=C_4(n,s,\gamma,\theta,\sigma,\sigma_0,\Lambda)>0$.

In order to estimate $I_2$, we set $g_1:=g-\overline {g}_{B_{2r}(x_0)}$ and split the integral as follows
\begin{align*}
I_2 \leq & \int_{B_{4r}(x_0)} \int_{B_{4r}(x_0)} \frac{|g(x)-g(y)| |w(x)-w(y)|}{|x-y|^{n+2s}}dydx \\
& + 2 \int_{B_{2r}(x_0)} \int_{\mathbb{R}^n \setminus B_{4r}(x_0)} \frac{|g_1(x)-g_1(y)| |w(x)|}{|x-y|^{n+2s}}dydx \\
\leq & \underbrace{\int_{B_{4r}(x_0)} \int_{B_{4r}(x_0)} \frac{|g(x)-g(y)| |w(x)-w(y)|}{|x-y|^{n+2s}}dydx}_{=:I_{2,1}} \\
& + 2 \underbrace{ \int_{B_{2r}(x_0)} \int_{\mathbb{R}^n \setminus B_{4r}(x_0)} \frac{|g_1(x)| |w(x)|}{|x-y|^{n+2s}}dydx}_{=:I_{2,2}} + 2 \underbrace{ \int_{B_{2r}(x_0)} \int_{\mathbb{R}^n \setminus B_{4r}(x_0)} \frac{|g_1(y)| |w(x)|}{|x-y|^{n+2s}}dydx}_{=:I_{2,3}}.
\end{align*}
By using the Cauchy-Schwarz inequality, we estimate $I_{2,1}$ as follows
\begin{align*}
I_{2,1} \leq & C_5 \mu(\mathcal{B}_r(x_0))^\frac{1}{2} \left ( \dashint_{\mathcal{B}_{4r}(x_0)} G^2 d\mu \right )^\frac{1}{2} \left ( \int_{\mathbb{R}^n} \int_{\mathbb{R}^n} \frac{(w(x)-w(y))^2}{|x-y|^{n+2s}}dydx \right )^\frac{1}{2},
\end{align*}
where $C_5=C_5(n,\theta)>0$.
For any $x \in B_{2r}(x_0)$ and any $y \in \mathbb{R}^n \setminus B_{4r}(x_0)$ we have
\begin{equation} \label{elemest21}
|x_0-y| \leq |x_0-x|+|x-y| <\left(\frac{2r}{|x-y|}+1 \right )|x-y| \leq \left(\frac{2r}{2r}+1 \right) |x-y|=2|x-y|. 
\end{equation}
Moreover, in view of integration in polar coordinates we have
\begin{equation} \label{intpolarnon}
\int_{\mathbb{R}^n \setminus B_{4r}(x_0)} \frac{dy}{|x_0-y|^{n+2s}} =  \int_{\mathbb{R}^n \setminus B_{4r}} \frac{dy}{|y|^{n+2s}} = \frac{C_6}{r^{2s}},
\end{equation}
where $C_6=C_6(n,s) >0$.
Therefore, by using (\ref{elemest21}), (\ref{intpolarnon}), the Cauchy-Schwarz inequality, the fractional Poincar\'e inequality (Lemma \ref{Poincare}) and the fractional Friedrichs-Poincar\'e inequality (Proposition \ref{Friedrichsx}), we obtain
\begin{align*}
I_{2,2} \leq & 2^{n+2s} \int_{B_{2r}(x_0)} \int_{\mathbb{R}^n \setminus B_{4r}(x_0)} \frac{|g_1(x)| |w(x)|}{|x_0-y|^{n+2s}}dydx \\
= & C_7 r^{-2s} \int_{B_{2r}(x_0)} |g_1(x)| |w(x)| dx \\
\leq & C_7 r^{-2s} \left (\int_{B_{2r}(x_0)} |g_1(x)|^2 dx \right )^\frac{1}{2} \left ( \int_{B_{2r}(x_0)} |w(x)|^2 dx \right )^\frac{1}{2} \\
\leq & C_8 \mu(\mathcal{B}_r(x_0))^\frac{1}{2} \left (\dashint_{\mathcal{B}_{2r}(x_0)} G^2d\mu \right )^\frac{1}{2} \left ( \int_{\mathbb{R}^n} \int_{\mathbb{R}^n} \frac{|w(x)-w(y)|^2}{|x-y|^{n+2s}} dydx \right )^\frac{1}{2} ,
\end{align*}
where and all constants depend only on $n,s,\theta$ and $\Lambda$. 
Next, by (\ref{elemest21}), the Cauchy-Schwarz inequality, the fractional Friedrichs-Poincar\'e inequality (Proposition \ref{Friedrichsx}) and Lemma \ref{anullusdecomp}, we obtain
\begin{align*}
I_{2,3} \leq & 2^{n+2s} \bigg (\int_{\mathbb{R}^n \setminus B_{2r}(x_0)} \frac{|g_1(y)|}{|x_0-y|^{n+2s}}dy \bigg ) \bigg (\int_{B_{2r}(x_0)}|w(x)|dx \bigg ) \\
\leq & C_9 \left (r^{-s+\theta} \sum_{k=1}^\infty 2^{-k(s-\theta)} \left ( \dashint_{\mathcal{B}_{2^k r}(x_0)} G^2 d\mu \right )^\frac{1}{2} \right) |B_{2r}|^\frac{1}{2} \left (\int_{B_{2r}(x_0)}|w(x)|^2dx \right )^\frac{1}{2} \\
\leq & C_{10} \mu(\mathcal{B}_r(x_0))^\frac{1}{2} \left (\sum_{k=1}^\infty 2^{-k(s-\theta)} \left ( \dashint_{\mathcal{B}_{2^k r}(x_0)} G^2 d\mu \right )^\frac{1}{2} \right) \left (\int_{\mathbb{R}^n} \int_{\mathbb{R}^n} \frac{|w(x)-w(y)|^2}{|x-y|^{n+2s}} dydx \right )^\frac{1}{2} ,
\end{align*}
$C_{9}$ and $C_{10}$ depend only on $n,s$ and $\theta$. Next, by H\"older's inequality and the fractional Sobolev inequality (see \cite[Theorem 6.5]{Hitch}), for $I_3$ we get
\begin{align*}
I_3 & \leq \left (\int_{B_{2r}(x_0)} |f(x)-\widetilde f(x)|^{2_\star} dx \right )^\frac{1}{2_\star} \left ( \int_{B_{2r}(x_0)} |w(x)|^\frac{2n}{n-2s} dx \right )^\frac{n-2s}{2n} \\
& \leq C_{11} \left (\int_{B_{2r}(x_0)} |f(x)-\widetilde f(x)|^{2_\star} dx \right )^\frac{1}{2_\star} \left ( \int_{\mathbb{R}^n} \int_{\mathbb{R}^n} \frac{|w(x)-w(y)|^2}{|x-y|^{n+2s}} dydx  \right )^\frac{1}{2} ,
\end{align*}
where $C_{11}=C_{11}(n,s)>0$.
Combining all the above estimates along with squaring both sides of the resulting inequality and then dividing by $\int_{\mathbb{R}^n} \int_{\mathbb{R}^n} \frac{|w(x)-w(y)|^2}{|x-y|^{n+2s}} dydx$ on both sides now finishes the proof.
\end{proof}

From now on, we fix some $p \in (2,\infty)$, some $\Lambda \geq 1$ and some coefficient $A \in \mathcal{L}_0(\Lambda)$ that is $\delta$-vanishing in $B_{4n}$, where $\delta >0$ remains to be chosen small enough later. Moreover, we fix another number $q \in [2,p)$ and define
\begin{equation} \label{qstar}
q^\star:=\begin{cases} 
\frac{nq}{n-sq}, & \text{if } n>sq \\
2p, & \text{if } n \leq sq.
\end{cases}
\end{equation}
In addition, we choose the number $\sigma_0>0$ small enough such that $2+\sigma_0 <\min\{(q+q^\star)/2,p\}$ and set 
\begin{equation} \label{q0}
q_0 := \max\{2+\sigma_0,q\}<\min\{(q+q^\star)/2,p\}.\end{equation}
Furthermore, we fix some $g \in W^{s,2}(\mathbb{R}^n)$ and a weak solution $u \in W^{s,2}(\mathbb{R}^n)$ 
of the equation 
\begin{equation} \label{helpeqx}
L_A^\Phi u = (-\Delta)^s g \text { in } B_{4n}
\end{equation}
and set
\begin{equation} \label{tailcontrol}  
\lambda_0:= M_0 \Bigg (\sum_{k=1}^\infty 2^{-k(s-\theta)} \left (\dashint_{\mathcal{B}_{2^k 4n}} U^2 d\mu \right )^\frac{1}{2} + \delta^{-1} \sum_{k=1}^\infty 2^{-k(s-\theta)} \left (\dashint_{\mathcal{B}_{2^k 4n}} G^2 d\mu \right )^\frac{1}{2} +\left (\dashint_{\mathcal{B}_{4n}} G^{q_0} d\mu \right )^\frac{1}{q_0} \Bigg ),
\end{equation}
where $M_0 \geq 1 $ remains to be chosen large enough.

\begin{lem} \label{apppl}
	Let $M>0$, $x_0 \in B_{\frac{\sqrt{n}}{2}}$, $r \in \left (0,\frac{\sqrt{n}}{2} \right )$ and $\lambda \geq \lambda_0$. Moreover, consider the coefficient
	$$ \widetilde A(x,y) := \begin{cases} \normalfont
	\overline A_{3r,x_0,x_0} & \text{if } (x,y) \in \mathcal{B}_{3r}(x_0) \\
	A(x,y) & \text{if } (x,y) \notin \mathcal{B}_{3r}(x_0).
	\end{cases} $$
	Then for any $\varepsilon_0 >0$, there exists some small enough $\delta = \delta (\varepsilon_0,n,s,\theta,\Lambda,M) \in (0,1)$, such that
	under the assumptions made above along with
\begin{equation} \label{condU}
 {\mathcal{M}}_{\mathcal{B}_{4n}} (U^2)(x_0) \leq  M\lambda^2, \quad  {\mathcal{M}}_{\mathcal{B}_{4n}} (G^{q_0})(x_0) \leq M\lambda^{q_0} \delta^{q_0} ,
\end{equation}
	for the unique weak solution $v \in W^{s,2}(\mathbb{R}^n)$ of the Dirichlet problem
\begin{equation} \label{constcof3x}
\begin{cases} \normalfont
L_{\widetilde A}^\Phi v = 0 & \text{ in } B_{6r}(x_0) \\
v = u & \text{ a.e. in } \mathbb{R}^n \setminus B_{6r}(x_0)
\end{cases}
\end{equation}
	and the function $$W(x,y):=\frac{|u(x)-v(x)-u(y)+v(y)|}{|x-y|^{s+\theta}}, \quad (x,y) \in \mathbb{R}^{2n},$$ we have
		\begin{equation} \label{west}
		\int_{\mathbb{R}^{2n}} W^2 d \mu \leq \varepsilon^2 \lambda^2 \mu(\mathcal{B}_r(x_0)) .
		\end{equation}
		\newline Moreover, the function
		$$V(x,y):=\frac{|v(x)-v(y)|}{|x-y|^{s+\theta}}, \quad (x,y) \in \mathbb{R}^{2n}$$
		satisfies the estimate
		\begin{equation} \label{Vest}
		||V||_{L^\infty(\mathcal{B}_{2r}(x_0),d\mu)} \leq N_0 \lambda
		\end{equation}
		for some constant $N_0= N_0(n,s,\theta,\Lambda,M)>0$.
\end{lem}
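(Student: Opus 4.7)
The plan is to establish \eqref{west} first, by applying the comparison estimate of Proposition \ref{appplxy}, and then use \eqref{west} together with the higher Hölder regularity of Theorem \ref{HiHol} to deduce \eqref{Vest}.

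For \eqref{west}, I apply Proposition \ref{appplxy} with the role of $r$ played by $3r$, and with $f \equiv \widetilde f \equiv 0$. Since $x_0 \in B_{\sqrt n/2}$ and $r < \sqrt n/2$, the ball $B_{6r}(x_0)$ lies inside $B_{4n}$, and $\widetilde A$ agrees with $A$ outside $\mathcal B_{3r}(x_0)$, so the hypotheses are met. The oscillation $\omega(A-\widetilde A, 3r, x_0)$ is at most $\delta$ because $A$ is $\delta$-vanishing on $B_{4n}$. It remains to bound the tail and local-average terms on the right-hand side of the conclusion of Proposition \ref{appplxy} using the maximal function hypotheses \eqref{condU} combined with the definition \eqref{tailcontrol} of $\lambda_0$: averages over $\mathcal B_{2^k 3r}(x_0)$ that lie inside $\mathcal B_{4n}$ are controlled directly by \eqref{condU}, while those whose radius dominates $\sqrt n$ are dominated (up to universal constants, via volume doubling and $|x_0| \leq \sqrt n/2$) by averages over origin-centered balls $\mathcal B_{2^j 4n}$, which are controlled by $\lambda \geq \lambda_0$. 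This produces
\[
\sum_{k=1}^\infty 2^{-k(s-\theta)} \Bigl(\dashint_{\mathcal B_{2^k 3r}(x_0)} U^2 d\mu\Bigr)^{1/2} \leq C_M \lambda, \quad \Bigl(\dashint_{\mathcal B_{6r}(x_0)} G^{q_0} d\mu\Bigr)^{\frac{2}{q_0}} \leq C_M \delta^2 \lambda^2,
\]
and, crucially thanks to the $\delta^{-1}$ in front of the $G$-tail in \eqref{tailcontrol}, $\sum_k 2^{-k(s-\theta)}(\dashint_{\mathcal B_{2^k 3r}(x_0)} G^2 d\mu)^{1/2} \leq C_M \delta \lambda$. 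Plugging these bounds into Proposition \ref{appplxy} and using $\mu(\mathcal B_{3r}(x_0)) = 3^{n+2\theta}\mu(\mathcal B_r(x_0))$ yields
\[
\int_{\mathbb R^{2n}} W^2 d\mu \leq C_{M,n,s,\theta,\Lambda,\sigma_0}\bigl(\delta^{\frac{\gamma}{n-\gamma}} + \delta^2\bigr)\lambda^2 \mu(\mathcal B_r(x_0)),
\]
and \eqref{west} follows by choosing $\delta$ small in terms of $\varepsilon, n, s, \theta, \Lambda, M$.

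For the $L^\infty$ bound \eqref{Vest}, I exploit the fact that on $\mathcal B_{3r}(x_0) = B_{3r}(x_0) \times B_{3r}(x_0)$ the coefficient $\widetilde A$ is the constant $\overline A_{3r,x_0,x_0}$ and hence continuous there, and $v$ weakly solves $L_{\widetilde A}^\Phi v = 0$ in $B_{3r}(x_0)$. The assumption $0 < \theta < \min\{s,1-s\}$ gives $\alpha := s+\theta < \min\{2s,1\}$, so Theorem \ref{HiHol} combined with Remark \ref{HiHolrem} applies on $\Omega = B_{3r}(x_0)$ with, say, $R = 5r/2$ and $\sigma = 4/5$, yielding, for any constant $c$,
\[
[v]_{C^{s+\theta}(B_{2r}(x_0))} \leq \frac{C}{r^{s+\theta}}\Bigl(r^{-n/2}\|v-c\|_{L^2(B_{5r/2}(x_0))} + r^{2s}\int_{\mathbb R^n \setminus B_{5r/2}(x_0)} \frac{|v(y)-c|}{|x_0-y|^{n+2s}}dy\Bigr).
\]
I pick $c = \overline u_{B_{6r}(x_0)}$ and decompose $v = u + (v-u)$: the $L^2$ bound for $u - c$ follows from the fractional Poincaré inequality (Lemma \ref{Poincare}) and the first half of \eqref{condU}, while the $L^2$ bound for $v-u = -w$ on $\mathbb R^n$ uses the Friedrichs-Poincaré inequality (Lemma \ref{Friedrichsx}) for $w \in W^{s,2}_0(B_{6r}(x_0))$, which together with the already-proved estimate \eqref{west} gives $\|w\|_{L^2(\mathbb R^n)} \leq C\varepsilon \lambda\, r^{s+\theta+n/2}$. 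The tail integral is split into the annulus $B_{6r}(x_0)\setminus B_{5r/2}(x_0)$, handled via Cauchy-Schwarz and the same $L^2$ bounds, and the exterior of $B_{6r}(x_0)$, where $v = u$, on which Lemma \ref{anullusdecomp} and the same origin-vs-$x_0$ centred ball comparison used in the first step yield control. Putting these pieces together produces $[v]_{C^{s+\theta}(B_{2r}(x_0))} \leq N_0\lambda$ with $N_0 = N_0(n,s,\theta,\Lambda,M)$, and \eqref{Vest} follows since $|V(x,y)| \leq [v]_{C^{s+\theta}(B_{2r}(x_0))}$ for every $(x,y) \in \mathcal B_{2r}(x_0)$.

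The main technical obstacle is the bookkeeping in the first step: the hypotheses \eqref{condU} are stated in terms of the maximal function relative to the origin-centred ball $\mathcal B_{4n}$, while the tail sums provided by Proposition \ref{appplxy} are centred at $x_0$, so one has to pass between these two families of balls carefully, using that $x_0$ lies well inside $B_{4n}$. The factor $\delta^{-1}$ in front of the $G$-tail in \eqref{tailcontrol} is the essential ingredient that allows the $G$-contributions to enter the final bound with a full factor of $\delta$, making it possible to absorb them by choosing $\delta$ small.
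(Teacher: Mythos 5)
Your proof is correct and takes essentially the same route as the paper: apply Proposition \ref{appplxy} (with radius $3r$ and $f=\widetilde f=0$) to get \eqref{west}, using the maximal-function hypotheses for the near-range scales and the definition of $\lambda_0$ for the far-range scales, then invoke Theorem \ref{HiHol}/Remark \ref{HiHolrem} on the ball where $\widetilde A$ is constant to get the $C^{s+\theta}$ bound on $v$, controlling the interior $L^2$ part via Poincar\'e and the tail via Lemma \ref{anullusdecomp} and the Friedrichs--Poincar\'e inequality for $w=u-v$.

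The one place where you genuinely deviate from the paper is the tail estimate in the $V$-bound. The paper takes $c=\overline v_{B_{3r}(x_0)}$, writes $v_1=v-c$, and splits the tail into a piece involving $u-\overline u_{B_{3r}}$ plus a constant-difference piece $J_2$, under the stated premise that $v=u$ on $\mathbb{R}^n\setminus B_{3r}(x_0)$; in fact $v=u$ only on $\mathbb{R}^n\setminus B_{6r}(x_0)$, so this replacement ignores the contribution from $B_{6r}\setminus B_{3r}$ (a contribution that is of course controllable by \eqref{west} and Friedrichs--Poincar\'e, so the conclusion stands). Your version instead takes $c=\overline u_{B_{6r}(x_0)}$, decomposes $v=u+(v-u)$, and splits the tail cleanly into the annulus $B_{6r}\setminus B_{5r/2}$ (where Cauchy--Schwarz and the $L^2$ bounds for $u-c$ and $w$ apply) and the exterior of $B_{6r}$ (where $v=u$, so Lemma \ref{anullusdecomp} applies directly). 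This is a bit more careful bookkeeping and avoids the imprecision, at the minor cost of one extra annulus term; both lead to the same constant $N_0(n,s,\theta,\Lambda,M)$. Everything else --- the choice of $\delta$ to absorb the $\delta^{\gamma/(n-\gamma)}$ factor, the role of the $\delta^{-1}$ weight on the $G$-tail in \eqref{tailcontrol}, and the origin-vs-$x_0$ ball comparison for far scales --- matches the paper.
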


\begin{proof}
Fix $x_0 \in B_{\frac{\sqrt{n}}{2}}$ and $r \in \left (0,\frac{\sqrt{n}}{2} \right )$ and note that $\widetilde A=A $ in $\mathbb{R}^{2n} \setminus \mathcal{B}_{3r}(x_0).$ Moreover, since $A$ is $\delta$-vanishing in $B_{4n}$, we have
\begin{equation} \label{van}
\omega(A-\widetilde A,3r,x_0) = \dashint_{B_{3r}(x_0)} \dashint_{B_{3r}(x_0)} |A(x,y)-\overline A_{3r,x_0,x_0}|dydx \leq \delta.
\end{equation}
First, we prove (\ref{west}).
Let $m \in \mathbb{N}$ be determined by $2^{m-1} r < \sqrt{n} \leq 2^{m} r$, note that $m \geq 2$. Then for any $k < m$, by (\ref{condU}) we have
\begin{equation} \label{UG}
\dashint_{\mathcal{B}_{2^k3r}(x_0)} U^2 d\mu \leq M\lambda^2, \quad \dashint_{\mathcal{B}_{2^k3 r}(x_0)} G^{q_0} d\mu \leq M\lambda^{q_0} \delta^{q_0} .
\end{equation}
On the other hand, in view of (\ref{tailcontrol}) and the inclusions $$\mathcal{B}_{2^k \sqrt{n}}(x_0) \subset \mathcal{B}_{2^{k+m-1}3r}(x_0) \subset \mathcal{B}_{2^k 3\sqrt{n}}(x_0) \subset \mathcal{B}_{2^k 4n},$$ we have
\begin{align*}
\sum_{k=m}^\infty 2^{-k(s-\theta)} \left ( \dashint_{\mathcal{B}_{2^k 3r}(x_0)} U^2 d\mu \right )^\frac{1}{2} = & 2^{-(m-1)(s-\theta)} \sum_{k=1}^\infty 2^{-k(s-\theta)} \left ( \dashint_{\mathcal{B}_{2^{k+m-1}3r}(x_0)} U^2 d\mu \right )^\frac{1}{2} \\
\leq & \sum_{k=1}^\infty 2^{-k(s-\theta)} \left (\frac{\mu(\mathcal{B}_{2^k 4n})}{\mu \left (\mathcal{B}_{2^k \sqrt{n}} \right)} \dashint_{\mathcal{B}_{2^k 4n}} U^2 d\mu \right )^\frac{1}{2} \\
= & C_1 \sum_{k=1}^\infty 2^{-k(s-\theta)} \left ( \dashint_{\mathcal{B}_{2^k 4n}} U^2 d\mu \right )^\frac{1}{2} \leq C_1 \lambda_0,
\end{align*}
where $C_1=C_1(n,\theta)>0$.
Together with (\ref{UG}) and the facts that $\theta < s$ and $\lambda \geq \lambda_0$, we arrive at
\begin{equation} \label{Ulambda}
\begin{aligned}
& \sum_{k=1}^\infty 2^{-k(s-\theta)} \left ( \dashint_{\mathcal{B}_{2^k3r}(x_0)} U^2 d\mu \right )^\frac{1}{2} \\ \leq & \sum_{k=1}^{m-1} 2^{-k(s-\theta)} \left ( \dashint_{\mathcal{B}_{2^k3r}(x_0)} U^2 d\mu \right )^\frac{1}{2} + \sum_{k=m}^\infty 2^{-k(s-\theta)} \left ( \dashint_{\mathcal{B}_{2^k3r}(x_0)} U^2 d\mu \right )^\frac{1}{2} \\
\leq & M^\frac{1}{2} \lambda \sum_{k=1}^\infty 2^{-k(s-\theta)}+ C_1\lambda_0 \leq C_2 \lambda, 
\end{aligned}
\end{equation}
where $C_2=C_2(n,s,\theta,M)>0$.
By a similar reasoning as above, we have
\begin{align*}
\sum_{k=m}^\infty 2^{-k(s-\theta)} \left ( \dashint_{\mathcal{B}_{2^k3r}(x_0)} G^2 d\mu \right )^\frac{1}{2} 
\leq C_1 \sum_{k=1}^\infty 2^{-k(s-\theta)} \left ( \dashint_{\mathcal{B}_{2^k 4n}} G^2 d\mu \right )^\frac{1}{2} \leq C_1 \lambda_0 \delta
\end{align*}
and therefore along with H\"older's inequality
\begin{equation} \label{Glambda}
\begin{aligned}
& \sum_{k=1}^\infty 2^{-k(s-\theta)} \left ( \dashint_{\mathcal{B}_{2^k3r}(x_0)} G^2 d\mu \right )^\frac{1}{2} \\ \leq & \sum_{k=1}^{m-1} 2^{-k(s-\theta)} \left ( \dashint_{\mathcal{B}_{2^k3r}(x_0)} G^{q_0} d\mu \right )^\frac{1}{q_0} + \sum_{k=m}^\infty 2^{-k(s-\theta)} \left ( \dashint_{\mathcal{B}_{2^k3r}(x_0)} G^2 d\mu \right )^\frac{1}{2} \\
\leq & M^\frac{1}{q_0} \lambda \delta \sum_{k=1}^\infty 2^{-k(s-\theta)}+ C_1\lambda_0\delta \leq C_2 \lambda\delta.
\end{aligned}
\end{equation}
By combining (\ref{van}), (\ref{UG}), (\ref{Ulambda}) and (\ref{Glambda}) with Proposition \ref{appplxy}, the fact that $2+\sigma_0 \leq q_0$ and H\"older's inequality, we obtain
\begin{align*}
& \int_{\mathbb{R}^{2n}} W^2 d \mu
= \int_{\mathbb{R}^n} \int_{\mathbb{R}^n} \frac{(w(x)-w(y))^2}{|x-y|^{n+2s}}dydx \\
\leq & C_3 \omega(A-\widetilde A,3r,x_0)^\frac{\gamma}{n-\gamma} \mu(\mathcal{B}_r(x_0)) \left (\sum_{k=1}^\infty 2^{-k(s-\theta)} \left ( \dashint_{\mathcal{B}_{2^k3r}(x_0)} U^2 d\mu \right )^\frac{1}{2} \right )^2 \\	& + C_3 \omega(A-\widetilde A,3r,x_0)^\frac{\gamma}{n-\gamma} \mu(\mathcal{B}_r(x_0)) \left ( \dashint_{\mathcal{B}_{6r}(x_0)} G^{q_0} d\mu \right )^{\frac{2}{q_0}} \\ & + C_3 (\omega(A-\widetilde A,3r,x_0)^\frac{\gamma}{n-\gamma}+1) \mu(\mathcal{B}_r(x_0)) \left (\sum_{k=1}^\infty 2^{-k(s-\theta)} \left ( \dashint_{\mathcal{B}_{2^k3r}(x_0)} G^2 d\mu \right )^\frac{1}{2}\right )^2 \\
\leq & C_4 \mu(\mathcal{B}_r(x_0)) \lambda^2 \delta^\frac{2\gamma}{n-\gamma} < \varepsilon^2 \lambda^2 \mu(\mathcal{B}_r(x_0)) ,
\end{align*}
where $C_4=C_4(n,s,\theta,\Lambda,M)>0$ and the last inequality was obtained by choosing $\delta$ sufficiently small. This proves (\ref{west}). \par
Let us now proof the estimate (\ref{Vest}).
Since $\widetilde A$ is constant and therefore continuous in $\mathcal{B}_{3r}(x_0)$, by Theorem \ref{HiHol} and Remark \ref{HiHolrem} with $\alpha=s+\theta \in \left (0,\min \{2s,1\} \right )$ and $c=\overline v_{B_{3r}(x_0)}$, we have
\begin{align*}
||V||_{L^\infty(\mathcal{B}_{2r}(x_0),d\mu)}^2 \leq & [v]_{C^{\alpha}(B_{2r}(x_0))}^2 \\
\leq & \frac{C_{5}}{r^{2(s+\theta)}} \left (r^{-n} ||v_1||_{L^2(B_{3r}(x_0))}^2 + \left (r^{2s} \int_{\mathbb{R}^n \setminus B_{3r}(x_0)} \frac{|v_1(y)|}{|x_0-y|^{n+2s}}dy \right )^2 \right ) \\
\leq & \frac{C_{6}}{r^{2(s+\theta)}} \left (r^{2s-n} [v]_{W^{s,2}(B_{3r}(x_0))}^2 + \left (r^{2s} \int_{\mathbb{R}^n \setminus B_{3r}(x_0)} \frac{|v_1(y)|}{|x_0-y|^{n+2s}}dy \right )^2 \right ),
\end{align*}
where $v_1:=v-\overline v_{B_{3r}(x_0)}$ and $C_5$ and $C_6$ depend only on $n,s$ and $\Lambda$. Here we also used Lemma \ref{Poincare} in order to obtain the last inequality.
By using (\ref{UG}) and (\ref{west}), We further estimate the first term on the right-hand side as follows
\begin{align*}
& r^{2s-n} [v]_{W^{s,2}(B_{3r}(x_0))}^2 \\ \leq & 2 r^{2s-n} \left (\int_{B_{3r}(x_0)} \int_{B_{3r}(x_0)} \frac{ |u(x)-u(y)|^2 }{|x-y|^{n+2s}}dydx + \int_{B_{3r}(x_0)} \int_{B_{3r}(x_0)} \frac{ |w(x)-w(y)|^2 }{|x-y|^{n+2s}}dydx \right )
\\ \leq & 2 r^{2s-n} \left (\lambda^2\mu(\mathcal{B}_{3r}(x_0)) + \varepsilon^2 \lambda^2 \mu(\mathcal{B}_r(x_0)) \right ) \leq C_7\lambda^2 r^{2(s+\theta)},
\end{align*}
where $C_7=C_7(n,\theta)>0$.
Moreover, by taking into account that $v=u$ in $\mathbb{R}^n \setminus B_{3r}(x_0)$, we split the tail term as follows
\begin{align*}
& \left (r^{2s} \int_{\mathbb{R}^n \setminus B_{3r}(x_0)} \frac{|v_1(y)|}{|x_0-y|^{n+2s}}dy \right )^2 \\ 
\leq & 2 \underbrace{ \left (r^{2s} \int_{\mathbb{R}^n \setminus B_{3r}(x_0)} \frac{|u(y)- \overline u_{B_{3r}(x_0)}|}{|x_0-y|^{n+2s}}dy \right )^2}_{=:J_1} + 2 \underbrace{\left ( r^{2s} \int_{\mathbb{R}^n \setminus B_{3r}(x_0)} \frac{|\overline u_{B_{3r}(x_0)}- \overline v_{B_{3r}(x_0)}|}{|x_0-y|^{n+2s}}dy \right )^2 }_{=:J_2}
\end{align*}
In view of Lemma \ref{anullusdecomp} and (\ref{Ulambda}), for $J_1$ we have
$$ J_1 \leq C_8 \left (r^{s+\theta} \sum_{k=1}^\infty 2^{-k(s-\theta)} \left ( \dashint_{\mathcal{B}_{2^k3r}(x_0)} U^2 d\mu \right )^\frac{1}{2} \right )^2 \leq C_8 \lambda^2 r^{2(s+\theta)} .$$
Moreover, by using (\ref{intpolarnon}), applying Jensen's inequality twice, using the fractional Friedrichs-Poincar\'e inequality with respect to $w=u-v \in W^{s,2}_0(B_{2r}(x_0))$ and again taking into account (\ref{UG}) and (\ref{west}), for $J_2$ we obtain
\begin{align*}
J_2 = & C_9 \left | \dashint_{B_{3r}(x_0)} (u(x) - \overline v_{B_{3r}(x_0)})dx \right |^2 \\ \leq & C_9 \dashint_{B_{3r}(x_0)} \left | u(x) - \overline v_{B_{3r}(x_0)} \right |^2 dx \\
= & C_9 \dashint_{B_{3r}(x_0)} \left | \dashint_{B_{3r}(x_0)} (u(x) - v(y))dy \right |^2 dx \\
\leq & C_9 \dashint_{B_{3r}(x_0)} \dashint_{B_{3r}(x_0)}  \left |u(x) - v(y) \right |^2 dydx \\
\leq & 2 C_{10} \left (r^{-2n} \int_{B_{3r}(x_0)} \int_{B_{3r}(x_0)}  \left |u(x) - u(y) \right |^2 dydx + r^{-n} \int_{B_{3r}(x_0)}  \left |w(y) \right |^2 dy \right ) \\
\leq & C_{11} r^{2s-n} \left ( \int_{B_{3r}(x_0)} \int_{B_{3r}(x_0)} \frac{|u(x) - u(y) |^2}{|x-y|^{n+2s}} dydx + \int_{\mathbb{R}^n} \int_{\mathbb{R}^n} \frac{|w(x) - w(y) |^2}{|x-y|^{n+2s}} dydx \right ) \\
\leq & C_{12} \lambda^2 r^{2(s+\theta)},
\end{align*}
where all the constants depend only on $n,s,\theta,\Lambda$ and $M$.
Combining the last five displays now shows that the estimate (\ref{Vest})
holds for some $N_0=N_0(n,s,\theta,\Lambda,M)>0$, which finishes the proof.
\end{proof}

\section{Good-$\lambda$ inequalities} \label{gl}
\subsection{Diagonal good-$\lambda$ inequalities}
The following result is a consequence of the above approximation lemma and roughly speaking shows that if the set where the maximal function of $U$ is very large has a large enough density in a ball, then in this ball the maximal functions of $U$ and $G$ cannot be too small.
\begin{lem} \label{mfuse}
	There is a constant $N_d=N_d(n,s,\theta,\Lambda) \geq 1$, such that the following holds. For any $\varepsilon > 0$ and any $\kappa>0$ there exists some small enough $\delta = \delta(\varepsilon,\kappa,n,s,\theta,\Lambda) \in (0,1)$, 
	such that for any $\lambda \geq \lambda_0$, any $r \in \left (0,\frac{\sqrt{n}}{2} \right )$ and any point $x_0 \in Q_1$
	with
	\begin{equation} \label{ccll}
	\mu \left ( \left \{(x,y) \in \mathcal{B}_{r}(x_0) \mid  {\mathcal{M}}_{\mathcal{B}_{4n}} (U^2)(x,y) > N_d^2 \lambda^2 \right \} \right ) \geq \kappa \varepsilon \mu(\mathcal{B}_r(x_0)),
	\end{equation}
	we have
	\begin{equation} \label{incly}
	\begin{aligned}
	\mathcal{B}_r(x_0) \subset & \left \{ (x,y) \in \mathcal{B}_r(x_0) \mid  {\mathcal{M}}_{\mathcal{B}_{4n}} (U^2)(x,y) > \lambda^2 \right \} \\ 
	& \cap \left \{ (x,y) \in \mathcal{B}_r(x_0) \mid  {\mathcal{M}}_{\mathcal{B}_{4n}} \left (G^{q_0} \right )(x,y) > \lambda^{q_0} \delta^{q_0} \right \},
	\end{aligned}
	\end{equation}
\end{lem}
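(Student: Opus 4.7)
The plan is to argue by contraposition, in the spirit of the Caffarelli--Peral good-$\lambda$ technique: I would suppose there exists a point $(\bar x,\bar y)\in \mathcal{B}_r(x_0)$ at which both
\[
\mathcal{M}_{\mathcal{B}_{4n}}(U^2)(\bar x,\bar y)\leq \lambda^2 \quad \text{and} \quad \mathcal{M}_{\mathcal{B}_{4n}}(G^{q_0})(\bar x,\bar y)\leq \lambda^{q_0}\delta^{q_0},
\]
and then derive that the set on the left-hand side of (\ref{ccll}) has $\mu$-measure strictly less than $\kappa\varepsilon\,\mu(\mathcal{B}_r(x_0))$, contradicting the hypothesis.

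The first step is to transfer this pointwise control from the off-diagonal point $(\bar x,\bar y)$ to the diagonal point $x_0$ so that Lemma \ref{apppl} becomes applicable. Since $|x_0-\bar x|,|x_0-\bar y|<r$, for every $\rho\geq r$ one has $\mathcal{B}_\rho(x_0)\subset \mathcal{B}_{2\rho}(\bar x,\bar y)$, and the volume doubling property of $\mu$ yields
\[
\dashint_{\mathcal{B}_\rho(x_0)} U^2\,d\mu \leq 2^{n+2\theta}\lambda^2, \qquad \dashint_{\mathcal{B}_\rho(x_0)} G^{q_0}\,d\mu \leq 2^{n+2\theta}\lambda^{q_0}\delta^{q_0}.
\]
Inspection of the proof of Lemma \ref{apppl} shows that the hypothesis (\ref{condU}) is used only through the displays (\ref{UG}), which involve averages over balls around $x_0$ of radius at least $3r$; consequently that lemma can be applied here with $M=2^{n+2\theta}$ and any approximation parameter $\varepsilon_0>0$, provided $\delta=\delta(\varepsilon_0,n,s,\theta,\Lambda)$ is small enough, producing an approximate solution $v$ of (\ref{constcof3x}) that satisfies both the comparison estimate (\ref{west}) and the $L^\infty$ bound (\ref{Vest}).

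The second step is the pointwise bound
\[
U^2\chi_{\mathcal{B}_{4n}} \leq U^2\chi_{\mathcal{B}_{4n}\setminus \mathcal{B}_{2r}(x_0)} + 2V^2\chi_{\mathcal{B}_{2r}(x_0)} + 2W^2\chi_{\mathcal{B}_{2r}(x_0)},
\]
obtained from $U\leq V+W$ on $\mathcal{B}_{2r}(x_0)$. Taking maximal functions and restricting to $(x,y)\in \mathcal{B}_r(x_0)$, a ball $\mathcal{B}_\rho(x,y)$ only meets $\mathcal{B}_{4n}\setminus \mathcal{B}_{2r}(x_0)$ once $\rho>r$, and for such $\rho$ it is contained in $\mathcal{B}_{2\rho}(x_0)$, so the first term is dominated by a constant $C_1(n,\theta)\lambda^2$ via the transfer step; the middle term is bounded by $2N_0^2\lambda^2$ since $V\leq N_0\lambda$ on $\mathcal{B}_{2r}(x_0)$. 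Setting
\[
N_d:=\sqrt{2C_1+4N_0^2+1},
\]
which depends only on $n,s,\theta,\Lambda$, the bad set $\{\mathcal{M}_{\mathcal{B}_{4n}}(U^2)>N_d^2\lambda^2\}\cap \mathcal{B}_r(x_0)$ is contained in $\{\mathcal{M}(W^2\chi_{\mathcal{B}_{2r}(x_0)})>c\lambda^2\}$ for some $c=c(n,s,\theta,\Lambda)>0$. The weak-type $(1,1)$ estimate of Proposition \ref{Maxfun} combined with (\ref{west}) then gives the upper bound $C_3\varepsilon_0^2 c^{-1}\mu(\mathcal{B}_r(x_0))$, and picking $\varepsilon_0=\varepsilon_0(n,s,\theta,\Lambda,\varepsilon,\kappa)$ so that $C_3\varepsilon_0^2/c<\kappa\varepsilon$, which in turn fixes $\delta$ through Lemma \ref{apppl}, produces the desired contradiction.

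The main difficulty lies in the transfer step: since $\mu$ places no mass on the diagonal, pointwise control at the off-diagonal point $(\bar x,\bar y)$ does not imply a bound on $\mathcal{M}_{\mathcal{B}_{4n}}(U^2)(x_0)$ for small radii, where the doubling comparison $\mathcal{B}_\rho(x_0)\subset\mathcal{B}_{2\rho}(\bar x,\bar y)$ degenerates. Fortunately this is precisely the range Lemma \ref{apppl} does not actually use, so the approximation argument goes through; the remainder is a standard weak-type estimate on the error $W$.
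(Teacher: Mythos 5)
Your proposal is correct and follows essentially the same route as the paper: contraposition, transfer of the off-diagonal pointwise bounds to diagonal ball averages for $\rho\geq r$, an application of Lemma \ref{apppl} to produce $v$, and then a weak $(1,1)$ maximal-function estimate on $W^2$ to bound the density of the bad set. The one presentational difference is that you decompose $U^2\chi_{\mathcal{B}_{4n}}$ into off-ball, $V$, and $W$ contributions and bound the maximal function of each piece, while the paper instead shows directly that the complement of $\{\mathcal{M}_{\mathcal{B}_{2r}(x_0)}(W^2)\leq N_0^2\lambda^2\}$ is contained in the bad set, splitting the supremum in the maximal function into the ranges $\rho<r$ (where $V$ is used) and $\rho\geq r$ (where the transferred bound is used). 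Both yield the same constant structure and the same choice of $N_d$ up to cosmetics. You also explicitly observe that the transfer only controls $\dashint_{\mathcal{B}_\rho(x_0)}\chi_{\mathcal{B}_{4n}}U^2\,d\mu$ for $\rho\geq r$, whereas (\ref{condU}) as stated demands all $\rho>0$; the paper glosses over this, but as you correctly note, inspection of the proof of Lemma \ref{apppl} shows only radii $\geq 3r$ are used, so this is harmless. Two minor points you left implicit but which are routine: your doubling ratio $\mu(\mathcal{B}_{2\rho}(x_0))/\mu(\mathcal{B}_\rho(x,y))$ for $(x,y)\in\mathcal{B}_r(x_0)$, $\rho>r$, is bounded most cleanly by the sandwich $\mathcal{B}_\rho(x,y)\subset\mathcal{B}_{2\rho}(x_0)\subset\mathcal{B}_{3\rho}(x,y)$ together with Proposition \ref{doublingmeasure}(ii), and similarly the initial transfer factor is controlled via $\mathcal{B}_\rho(x_0)\subset\mathcal{B}_{2\rho}(\bar x,\bar y)\subset\mathcal{B}_{3\rho}(x_0)$.
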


\begin{proof}
	Let $\varepsilon_0 >0$ and $M>0$ to be chosen and consider the corresponding $\delta = \delta(\varepsilon_0,n,s,\theta,\Lambda,M) \in (0,1)$ given by Lemma $\ref{apppl}$.
	Fix $\varepsilon, \kappa > 0$, $r \in \left (0,\frac{\sqrt{n}}{2} \right )$, $x_0 \in Q_1$ and assume that (\ref{ccll}) holds, but that (\ref{incly}) is false, so that there exists
	a point $(x^\prime,y^\prime) \in \mathcal{B}_r(x_0)$ such that 
	$$ {\mathcal{M}}_{\mathcal{B}_{4n}} (U^2)(x^\prime,y^\prime) \leq \lambda^2, \quad  {\mathcal{M}}_{\mathcal{B}_{4n}} \left (G^{q_0} \right )(x^\prime,y^\prime) \leq \lambda^{q_0} \delta^{q_0}.$$
	Therefore, for any $\rho >0$ we have 
	\begin{equation} \label{lumh}
	\dashint_{\mathcal{B}_\rho(x^\prime,y^\prime)}\chi_{\mathcal{B}_{4n}} U^2 d\mu \leq \lambda^2, \quad \dashint_{\mathcal{B}_\rho(x^\prime,y^\prime)} \chi_{\mathcal{B}_{4n}} G^{q_0} d\mu \leq \lambda^{q_0} \delta^{q_0}.
	\end{equation}
	Note that for any $\rho \geq r$ we have $\mathcal{B}_\rho(x_0) \subset \mathcal{B}_{2\rho}(x^\prime,y^\prime) \subset \mathcal{B}_{3\rho}(x_0)$.
	Together with (\ref{lumh}), this observation yields
	\begin{align*}
	\dashint_{\mathcal{B}_\rho(x_0)} \chi_{\mathcal{B}_{4n}} U^2 d\mu \leq \frac{\mu(\mathcal{B}_{2\rho}(x^\prime,y^\prime))}{\mu(\mathcal{B}_{\rho}(x_0))} \text{ } \dashint_{\mathcal{B}_{2\rho}(x^\prime,y^\prime)} \chi_{\mathcal{B}_{4n}} U^2 d\mu \leq & \frac{\mu(\mathcal{B}_{3\rho}(x_0))}{\mu(\mathcal{B}_\rho(x_0))} \text{ } \dashint_{\mathcal{B}_{2\rho}(x^\prime,y^\prime)} \chi_{\mathcal{B}_{4n}} U^2 d\mu \\ \leq & 3^{n+2\theta} \lambda^2
	\end{align*}
	and similarly
	\begin{align*}
	\dashint_{\mathcal{B}_\rho(x_0)} \chi_{\mathcal{B}_{4n}} G^{q_0} d\mu \leq & \frac{\mu(\mathcal{B}_{2\rho}(x^\prime,y^\prime))}{\mu(\mathcal{B}_{\rho}(x_0))} \text{ } \dashint_{\mathcal{B}_{2\rho}(x^\prime,y^\prime)} \chi_{\mathcal{B}_{4n}} G^{q_0} d\mu \\ \leq& \frac{\mu(\mathcal{B}_{3\rho}(x_0))}{\mu(\mathcal{B}_\rho(x_0))} \text{ } \dashint_{\mathcal{B}_{2\rho}(x^\prime,y^\prime)} \chi_{\mathcal{B}_{4n}} G^{q_0} d\mu \leq 3^{n+2\theta} \lambda^{q_0} \delta^{q_0}
	\end{align*}
	so that $U$ and $G$ satisfy 
	the condition $(\ref{condU})$ with $M=3^{n+2\theta}$.
	Therefore, by Lemma \ref{apppl} the unique weak solution $v \in W^{s,2}(\mathbb{R}^n)$ of the Dirichlet problem
	$$ \begin{cases} \normalfont
	L_{\widetilde A}^\Phi v = 0 & \text{ weakly in } B_{6r}(x_0) \\
	v = u & \text{ a.e. in } \mathbb{R}^n \setminus B_{6r}(x_0),
	\end{cases} $$
	satisfies
	\begin{equation} \label{aprox5}
	\int_{\mathbb{R}^{2n}} W^2 d\mu \leq  \varepsilon_0^2 \lambda^2 \mu(\mathcal{B}_{r}(x_0)) ,
	\end{equation}
	where $W$ is given as in Lemma \ref{apppl}.
	Moreover, by Lemma $\ref{apppl}$ there exists a constant $N_0=N_0(n,s,\theta, \Lambda) >0$ such that 
	\begin{equation} \label{loclinf}
	||V||_{L^\infty(\mathcal{B}_{2r}(x_0))}^2 \leq N_0^2 \lambda^2.
	\end{equation}
	Next, we define $N_d := (\max \{ 4 N_0^2, 5^{n+2\theta} \})^{1/2} > 1$ and claim that 
	\begin{equation} \label{inclusion}
	\begin{aligned}
	& \left \{ (x,y) \in \mathcal{B}_r(x_0) \mid  {\mathcal{M}}_{\mathcal{B}_{4n}} ( U^2 )(x,y) > N_d^2\lambda^2 \right \} \\ \subset & \left \{ (x,y) \in \mathcal{B}_r(x_0) \mid  {\mathcal{M}}_{\mathcal{B}_{2r}(x_0)} ( W^2 )(x,y) > N_0^2\lambda^2 \right \}. 
	\end{aligned}
	\end{equation}
	To see this, assume that 
	\begin{equation} \label{menge}
	(x_1,y_1) \in \left \{ x \in \mathcal{B}_r(x_0) \mid  {\mathcal{M}}_{\mathcal{B}_{2r}(x_0)} ( W^2 ) (x,y) \leq N_0^2\lambda^2 \right \}. 
	\end{equation}
	For $ \rho < r$, we have $\mathcal{B}_\rho (x_1,y_1) \subset \mathcal{B}_r(x_1,y_1) \subset \mathcal{B}_{2r}(x_0)$, so that together with $(\ref{menge})$ and $(\ref{loclinf})$ we deduce 
	\begin{align*}
	\dashint_{\mathcal{B}_\rho (x_1,y_1)} U^2 d\mu & \leq 2 \text{ } \dashint_{\mathcal{B}_\rho(x_1,y_1)} \left ( W^2+ V^2 \right )d\mu \\
	& \leq 2 \text{ } \dashint_{\mathcal{B}_\rho(x_1,y_1)} W^2 d\mu + 2 \text{ } ||V||_{L^\infty(\mathcal{B}_\rho(x_1,y_1))}^2 \\
	& \leq 2 \text{ }  {\mathcal{M}}_{\mathcal{B}_{2r}(x_0)} ( W^2 )(x_1,y_1) + 2 \text{ } ||V||_{L^\infty(\mathcal{B}_{2r}(x_0))}^2 \leq 4 N_0^2\lambda^2. 
	\end{align*}
	On the other hand, for $\rho \geq r$ we have $ \mathcal{B}_\rho (x_1,y_1) \subset \mathcal{B}_{3 \rho}(x^\prime,y^\prime) \subset \mathcal{B}_{5 \rho}(x_1,y_1)$, so that $(\ref{lumh})$ implies 
	\begin{align*}
	\dashint_{\mathcal{B}_\rho(x_1,y_1)} \chi_{\mathcal{B}_{4n}} U^2 d\mu \leq & \frac{\mu(\mathcal{B}_{3 \rho} (x^\prime,y^\prime))}{\mu(\mathcal{B}_\rho(x_1,y_1))} \dashint_{\mathcal{B}_{3 \rho} (x^\prime,y^\prime)} \chi_{\mathcal{B}_{4n}} U^2 d\mu \\ \leq & \frac{\mu(\mathcal{B}_{5 \rho} (x_1,y_1))}{\mu(\mathcal{B}_\rho(x_1,y_1))} \dashint_{\mathcal{B}_{3 \rho} (x^\prime,y^\prime)} \chi_{\mathcal{B}_{4n}} U^2 d\mu \leq 5^{n+2\theta}\lambda^2.
	\end{align*}
	Thus, we have $$ (x_1,y_1) \in \left \{ (x,y) \in \mathcal{B}_r(x_0,y_0) \mid  {\mathcal{M}}_{\mathcal{B}_{4n}} ( U^2 )(x,y) \leq N_d^2 \lambda^2 \right \} ,$$ 
	which implies $(\ref{inclusion})$. 
	Now using $(\ref{inclusion})$, the weak $1$-$1$ estimate from Proposition \ref{Maxfun} and $(\ref{aprox5})$, we conclude that there exists some constant $C=C(n,\theta)>0$ such that 
	\begin{align*}
	& \mu \left ( \left \{(x,y) \in \mathcal{B}_r(x_0) \mid  {\mathcal{M}}_{\mathcal{B}_{4n}} (U^2)(x,y) > N_d^2\lambda^2 \right \} \right ) \\
	\leq & \mu \left ( \left \{ (x,y) \in \mathcal{B}_r(x_0) \mid  {\mathcal{M}}_{\mathcal{B}_{2r}(x_0)} ( W^2 )(x,y) > N_0^2\lambda^2 \right \} \right ) \\
	\leq & \frac{C}{N_0^2\lambda^2} \int_{\mathbb{R}^{2n}} W^2 d\mu \\
	\leq & \frac{C}{N_0^2} \mu(\mathcal{B}_{r}(x_0)) \varepsilon_0^2 < \varepsilon \kappa \mu(\mathcal{B}_{r}(x_0)),
	\end{align*}
	where the last inequality is obtained by choosing $\varepsilon_0$ and thus also $\delta$ sufficiently small. This contradicts (\ref{ccll}) and thus finishes our proof.
\end{proof}

\subsection{Off-diagonal reverse H\"older inequalities}
Our next goal is to prove an analogue of the above diagonal good-$\lambda$ inequality on balls that are far away from the diagonal. In order to prove Lemma \ref{mfuse}, the main tool was given by the comparison estimates from section \ref{ce}. Unfortunately, far away from the diagonal the equation cannot be used very efficiently, since the further away we are from the diagonal, the less the estimates available reflect the scale we are working at. In particular, far away from the diagonal no useful comparison estimates are available. \par In order to bypass this loss of information, we replace the comparison estimates used in the diagonal setting by certain off-diagonal reverse H\"older inequalities with diagonal correction terms.
Although such reverse H\"older inequalities lead to in some sense weaker good-$\lambda$ inequalities than comparison estimates, by using combinatorial techniques and an iteration argument in the end we will nevertheless be able to deduce the desired regularity. \par
For this reason, from now on we assume that for any $r>0$, $x_0 \in \mathbb{R}^n$ with $B_{r}(x_0) \subset B_{4n}$, $U$ satisfies an estimate of the form
\begin{equation} \label{J}
\begin{aligned}
\left (\dashint_{\mathcal{B}_{r/2}(x_0)} U^{q} d\mu \right )^{\frac{1}{q}} \leq C_q& \Bigg ( \sum_{k=1}^\infty 2^{-k(s-\theta)} \left ( \dashint_{\mathcal{B}_{2^kr}(x_0)} U^2 d\mu \right )^\frac{1}{2} \\ & + \left (\dashint_{\mathcal{B}_{r}(x_0)} G^{q_0} d\mu \right )^\frac{1}{q_0} + \sum_{k=1}^\infty 2^{-k(s-\theta)} \left ( \dashint_{\mathcal{B}_{2^kr}(x_0)} G^2 d\mu \right )^\frac{1}{2} \Bigg ),
\end{aligned}
\end{equation}
where $C_q$ depends only on $q,n,s,\theta$ and $\Lambda$.
\begin{prop} \label{offdiagreverse}
	Let $r>0$, $x_0,y_0 \in \mathbb{R}^n$ and suppose that for some $m \in (0,1]$ we have $\textnormal{dist}(B_r(x_0),B_r(y_0)) \geq mr$.
	Then we have
	\begin{align*}
	& \left (\dashint_{\mathcal{B}_r(x_0,y_0)} U^{q^\star} d\mu \right )^{\frac{1}{q^\star}} \\ \leq & C_{nd} \left (\dashint_{\mathcal{B}_r(x_0,y_0)} U^{2} d\mu \right )^\frac{1}{2} \\
	& + C_{nd} \left ( \frac{r}{\textnormal{dist}(B_r(x_0),B_r(y_0))} \right )^{s+\theta} \Bigg ( \sum_{k=1}^\infty 2^{-k(s-\theta)} \left ( \dashint_{\mathcal{B}_{2^kr}(x_0)} U^2 d\mu \right )^\frac{1}{2} \\ & \text{ }+ \left (\dashint_{\mathcal{B}_{2r}(x_0)} G^{q_0} d\mu \right )^\frac{1}{q_0} + \sum_{k=1}^\infty 2^{-k(s-\theta)} \left ( \dashint_{\mathcal{B}_{2^kr}(x_0)} G^2 d\mu \right )^\frac{1}{2} \Bigg ) \\
	& + C_{nd} \left ( \frac{r}{\textnormal{dist}(B_r(x_0),B_r(y_0))} \right )^{s+\theta} \Bigg ( \sum_{k=1}^\infty 2^{-k(s-\theta)} \left ( \dashint_{\mathcal{B}_{2^kr}(y_0)} U^2 d\mu \right )^\frac{1}{2} \\ & \text{ }+ \left (\dashint_{\mathcal{B}_{2r}(y_0)} G^{q_0} d\mu \right )^\frac{1}{q_0} + \sum_{k=1}^\infty 2^{-k(s-\theta)} \left ( \dashint_{\mathcal{B}_{2^kr}(y_0)} G^2 d\mu \right )^\frac{1}{2} \Bigg ),
	\end{align*}
	where $C_{nd}=C_{nd}(n,s,\theta,\Lambda,m,q,p) \geq 1$ and $q^\star$ is given by (\ref{qstar}).
\end{prop}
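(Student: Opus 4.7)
The plan is to pointwise decompose
\[ u(x) - u(y) = \bigl(u(x) - \overline{u}_{B_r(x_0)}\bigr) - \bigl(u(y) - \overline{u}_{B_r(y_0)}\bigr) + \bigl(\overline{u}_{B_r(x_0)} - \overline{u}_{B_r(y_0)}\bigr), \]
and exploit that for $(x,y) \in \mathcal{B}_r(x_0, y_0)$ one has $d \leq |x-y| \leq (1+4/m)\,d$, where $d := \textnormal{dist}(B_r(x_0), B_r(y_0))$. This yields the pointwise bound
\[ U(x,y) \leq \frac{C}{d^{s+\theta}} \Bigl(|u(x) - \overline{u}_{B_r(x_0)}| + |u(y) - \overline{u}_{B_r(y_0)}| + |\overline{u}_{B_r(x_0)} - \overline{u}_{B_r(y_0)}|\Bigr), \]
with $C$ depending only on $m$, $s$ and $\theta$.

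Next I would take averaged $L^{q^\star}(d\mu)$ norms over $\mathcal{B}_r(x_0,y_0)$. Since $|x-y| \sim d$ on this product ball, one has $\mu(\mathcal{B}_r(x_0, y_0)) \sim r^{2n}/d^{n-2\theta}$ and Fubini gives $\dashint_{\mathcal{B}_r(x_0,y_0)} |f(x)|^{q^\star}\,d\mu \sim \dashint_{B_r(x_0)} |f(x)|^{q^\star}\,dx$ for any $f$ depending only on $x$, with the analogous identity in the $y$ variable. After this reduction, the first two terms of the decomposition are controlled by the standard Lebesgue $L^{q^\star}$ averages of $u - \overline{u}_{B_r(x_0)}$ on $B_r(x_0)$ and $u - \overline{u}_{B_r(y_0)}$ on $B_r(y_0)$, both carrying a factor $d^{-(s+\theta)}$.

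To estimate these diagonal $L^{q^\star}$ averages, I would apply the fractional Sobolev-Poincar\'e inequality (Lemma \ref{SobPoincare}) with the integrability exponents chosen as $q$ and $q^\star$, which is legitimate by the very definition \eqref{qstar} of $q^\star$. The resulting $W^{s,q}$-seminorm is converted to a $\mu$-average of $U^q$ via the identity
\[ \frac{|u(x)-u(y)|^q}{|x-y|^{n+sq}}\,dx\,dy = |x-y|^{(q-2)\theta}\, U^q \, d\mu \]
together with the bound $|x-y|^{(q-2)\theta} \leq (2r)^{(q-2)\theta}$ valid on $\mathcal{B}_r(x_0)$ for $q \geq 2$. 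This produces a factor $r^{s+\theta}$ which, combined with the $d^{-(s+\theta)}$ from the previous step, yields the required $(r/d)^{s+\theta}$ prefactor; finally, applying the reverse H\"older hypothesis \eqref{J} at scale $2r$ converts $(\dashint_{\mathcal{B}_r(x_0)} U^q d\mu)^{1/q}$ into exactly the tail sums and $G$ terms appearing in the statement. The analogous argument handles the contribution centered at $y_0$.

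For the mean-difference term I would use Jensen's inequality together with the same $|x-y| \sim d$ estimate to obtain
\[ |\overline{u}_{B_r(x_0)} - \overline{u}_{B_r(y_0)}|^2 \leq \dashint_{B_r(x_0)} \dashint_{B_r(y_0)} |u(x) - u(y)|^2\,dx\,dy \leq C\, d^{2(s+\theta)} \dashint_{\mathcal{B}_r(x_0, y_0)} U^2\,d\mu, \]
so that after dividing by $d^{2(s+\theta)}$ this term contributes the clean $L^2(d\mu)$ average on $\mathcal{B}_r(x_0, y_0)$ without any $(r/d)^{s+\theta}$ factor, matching the first term in the conclusion. The hard part will be the careful bookkeeping of exponents when passing between $\mu$-averages and Lebesgue averages on off-diagonal product balls: the weight $|x-y|^{-(n-2\theta)}$ in $d\mu$, the corrective factor $|x-y|^{(q-2)\theta}$ from the $W^{s,q}$-to-$L^q(d\mu)$ conversion, and the scale relations $\mu(\mathcal{B}_r(x_0,y_0)) \sim r^{2n}/d^{n-2\theta}$ and $\mu(\mathcal{B}_r(x_0)) \sim r^{n+2\theta}$ must combine in precisely such a way that only the advertised prefactor $(r/d)^{s+\theta}$ survives in the end.
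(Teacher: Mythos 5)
Your proposal is correct and follows essentially the same approach as the paper's own proof: the same three-term decomposition of $u(x)-u(y)$, the same observation that $|x-y|\sim \textnormal{dist}(B_r(x_0),B_r(y_0))$ on $\mathcal{B}_r(x_0,y_0)$ (whence $\mu(\mathcal{B}_r(x_0,y_0))\sim r^{2n}/d^{n-2\theta}$), the fractional Sobolev--Poincar\'e inequality on each factor ball combined with the $|x-y|^{(q-2)\theta}$ conversion to a $\mu$-average, and Jensen's inequality for the mean-difference term. The only cosmetic difference is that the paper first passes to the mixed Lebesgue average over $B_r(x_0)\times B_r(y_0)$ and then applies Minkowski's inequality, whereas you start from a pointwise decomposition of $U(x,y)$; the exponent bookkeeping and lemma invocations are otherwise identical.
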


\begin{proof}
	Choose points $x_1 \in \overline B_r(x_0)$ and $y_1 \in \overline B_r(y_0)$ such that $\textnormal{dist}(B_r(x_0),B_r(y_0))=|x_1-y_1|$. For any $(x,y) \in \mathcal{B}_r(x_0,y_0)$, we obtain
	\begin{align*}
	|x-y| \leq & |x_1-y_1| + |x_1-x| + |y_1-y| \\
	\leq & \textnormal{dist}(B_r(x_0),B_r(y_0)) + 2 r
	\leq 3 \textnormal{dist}(B_r(x_0),B_r(y_0))/m.
	\end{align*}
	Together with the definition of $\textnormal{dist}(B_r(x_0),B_r(y_0))$, it follows that for any $(x,y) \in \mathcal{B}_r(x_0,y_0)$, we have
	\begin{equation} \label{ineq99}
	1 \leq \frac{|x-y|}{\textnormal{dist}(B_r(x_0),B_r(y_0))} \leq 3 /m.
	\end{equation}
	Therefore, by taking into account the definition of the measure $\mu$, we conclude that
	\begin{equation} \label{comparable}
	\frac{c_1 r^{2n}}{\textnormal{dist}(B_r(x_0),B_r(y_0))^{n-2\theta}} \leq \mu(\mathcal{B}_r(x_0,y_0)) \leq \frac{C_1 r^{2n}}{\textnormal{dist}(B_r(x_0),B_r(y_0))^{n-2\theta}},
	\end{equation}
	where $c_1=c_1(n,m,\theta) \in (0,1)$ and $C_1=C_1(n) \geq 1$.
	By (\ref{comparable}) and (\ref{ineq99}), we have
	\begin{align*}
	\left (\dashint_{\mathcal{B}_r(x_0,y_0)} U^{q^\star} d\mu \right )^{\frac{1}{q^\star}} \leq & \left (\frac{\textnormal{dist}(B_r(x_0),B_r(y_0))^{n-2\theta}}{c_1 r^{2n}} \int_{B_r(x_0)}\int_{B_r(y_0)} \frac{|u(x)-u(y)|^{q^\star}}{|x-y|^{n-2\theta+{q^\star}(s+\theta)}}dydx \right )^{\frac{1}{q^\star}} \\
	\leq & C_2 \textnormal{dist}(B_r(x_0),B_r(y_0))^{-(s+\theta)} \left ( \dashint_{B_r(x_0)} \dashint_{B_r(y_0)} |u(x)-u(y)|^{q^\star}dydx \right )^{\frac{1}{q^\star}},
	\end{align*}
	where $C_2=C_2(n,m,\theta)\geq 1$. By using Minkowski's inequality, we further estimate the integral on the right-hand side as follows
	\begin{align*}
	\left ( \dashint_{B_r(x_0)} \dashint_{B_r(y_0)} |u(x)-u(y)|^{q^\star}dydx \right )^{\frac{1}{q^\star}} \leq & \underbrace{\left ( \dashint_{B_r(x_0)} |u(x)- \overline u_{B_r(x_0)}|^{q^\star}dydx \right )^{\frac{1}{q^\star}}}_{=:I_1} \\
	& + \underbrace{\left ( \dashint_{B_r(y_0)} |u(x)- \overline u_{B_r(y_0)}|^{q^\star}dydx \right )^{\frac{1}{q^\star}}}_{=:I_2} \\
	& + \underbrace{|\overline u_{B_r(x_0)}-\overline u_{B_r(y_0)}|}_{=:I_3}.
	\end{align*}
	By using the fractional Sobolev-Poincar\'e inequality (Lemma \ref{SobPoincare}) and then the estimate (\ref{J}), for $I_1$ we obtain
	\begin{align*}
	I_1 \leq & C_3 r^s \left ( \frac{1}{r^n} \int_{B_r(x_0)} \int_{B_r(x_0)} \frac{|u(x)-u(y)|^{q}}{|x-y|^{n+sq}}dydx \right )^{\frac{1}{q}} \\
	= & C_4 r^s \left (\frac{r^{2 \theta} }{\mu(\mathcal{B}_r(x_0))} \int_{B_r(x_0)} \int_{B_r(x_0)} \frac{|u(x)-u(y)|^{q} |x-y|^{(q-2)\theta}}{|x-y|^{n-2\theta+q(s+\theta)}}dydx \right )^{\frac{1}{q}} \\
	\leq & C_5 r^s \left ( r^{q\theta} \dashint_{\mathcal{B}_r(x_0)} \frac{|u(x)-u(y)|^{q}}{|x-y|^{q(s+\theta)}}d\mu \right )^{\frac{1}{q}} \\
	= & C_5 r^{s+\theta} \left ( \dashint_{\mathcal{B}_r(x_0)} U^q d\mu \right )^\frac{1}{q} \\
	\leq & C_q C_5 r^{s+\theta} \Bigg ( \sum_{k=1}^\infty 2^{-k(s-\theta)} \left ( \dashint_{\mathcal{B}_{2^kr}(x_0)} U^2 d\mu \right )^\frac{1}{2} \\ & \text{ } + \left (\dashint_{\mathcal{B}_{2r}(x_0)} G^{q_0} d\mu \right )^\frac{1}{q_0} + \sum_{k=1}^\infty 2^{-k(s-\theta)} \left ( \dashint_{\mathcal{B}_{2^kr}(x_0)} G^2 d\mu \right )^\frac{1}{2} \Bigg )
	\end{align*}
	where $C_3,C_4$ and $C_5$ depend only on $n,s$ and $\theta$. In the same way, for $I_2$ we have
	\begin{align*}
	I_2 \leq & C_q C_5 r^{s+\theta} \Bigg ( \sum_{k=1}^\infty 2^{-k(s-\theta)} \left ( \dashint_{\mathcal{B}_{2^kr}(y_0)} U^2 d\mu \right )^\frac{1}{2} \\ & \text{ } + \left (\dashint_{\mathcal{B}_{2r}(y_0)} G^{q_0} d\mu \right )^\frac{1}{q_0} + \sum_{k=1}^\infty 2^{-k(s-\theta)} \left ( \dashint_{\mathcal{B}_{2^kr}(y_0)} G^2 d\mu \right )^\frac{1}{2} \Bigg ).
	\end{align*}
	Finally, by the Cauchy-Schwarz inequality, (\ref{comparable}) and (\ref{ineq99}), for $I_3$ we have
	\begin{align*}
	I_3 \leq & \dashint_{B_r(x_0)} \dashint_{B_r(y_0)} |u(x)-u(y)|dydx \\
	\leq & \left (\dashint_{B_r(x_0)} \dashint_{B_r(y_0)} |u(x)-u(y)|^2 dydx \right )^\frac{1}{2} \\
	\leq & \left (\frac{C_1}{\textnormal{dist}(B_r(x_0),B_r(y_0))^{n-2\theta} \mu(\mathcal{B}_r(x_0,y_0))} \int_{B_r(x_0)} \int_{B_r(y_0)} |u(x)-u(y)|^2 dydx \right )^\frac{1}{2} \\
	\leq & C_6 \left (\dashint_{\mathcal{B}_r(x_0,y_0)} |u(x)-u(y)|^2 d\mu \right )^\frac{1}{2} \\
	\leq & C_7 \textnormal{dist}(B_r(x_0),B_r(y_0))^{s+\theta} \left (\dashint_{\mathcal{B}_r(x_0,y_0)} U^2 d\mu \right )^\frac{1}{2},
	\end{align*}
	where $C_6=C_6(n,m,\theta) \geq 1$ and $C_7=C_7(n,m,\theta) \geq 1$. The claim now follows by combining the last five displays, so that the proof is finished.
\end{proof}
\subsection{Off-diagonal good-$\lambda$ inequalities}
In what follows, fix some $\varepsilon \in (0,1)$ to be chosen small enough and set 
\begin{equation} \label{Neps}
N_{\varepsilon,q}:=\frac{C_{nd} C_{s,\theta} N_d 10^{10n}}{ \varepsilon^{1/q^\star}},
\end{equation}
where $N_d=N_d(n,s,\theta,q,\Lambda) \geq 1$ is given by Lemma \ref{mfuse},  $C_{nd}=C_{nd}(n,s,\theta,\Lambda,m,q) \geq 1$ is given by Proposition \ref{offdiagreverse} with $m$ to be chosen and 
\begin{equation} \label{Cst}
1 \leq C_{s,\theta}:= \sum_{k=1}^\infty 2^{-k(s-\theta)} < \infty.
\end{equation}
Moreover, for all $r \in \left (0,\frac{\sqrt{n}}{2} \right )$ and all $(x_0,y_0) \in \mathcal{Q}_1$ we define
\begin{equation} \label{distfct}
\widetilde \phi(r,x_0,y_0):= \frac{r}{\textnormal{dist}(B_\frac{r}{2}(x_0),B_\frac{r}{2}(y_0))}.
\end{equation}

\begin{lem} \label{mfusenondiag}
	For any $\lambda \geq \lambda_0$, $r \in \left (0,\frac{\sqrt{n}}{2} \right )$ and any point $(x_0,y_0) \in \mathcal{Q}_1$ satisfying $|x_0-y_0| \geq (3\sqrt{n}+1)r$ and
	\begin{equation} \label{ccllz}
	\mu \left ( \left \{(x,y) \in \mathcal{B}_{\frac{\sqrt{n}}{2}r}(x_0,y_0) \mid  {\mathcal{M}}_{\mathcal{B}_{4n}} (U^2)(x,y) > N_{\varepsilon,q}^2 \lambda^2 \right \} \right ) \geq \varepsilon \mu(\mathcal{B}_{\frac{r}{2}}(x_0,y_0)),
	\end{equation}
	we have
	\begin{align*}
	\mathcal{B}_{\frac{r}{2}}(x_0,y_0)
	\subset & \left \{ (x,y) \in \mathcal{B}_{\frac{r}{2}}(x_0,y_0) \mid  {\mathcal{M}}_{\mathcal{B}_{4n}} (U^2)(x,y) > \lambda^2 \right \} \\ 
	\cup & \left \{ (x,y) \in \mathcal{B}_{\frac{r}{2}}(x_0,y_0) \mid  {\mathcal{M}}_{\geq r,\mathcal{B}_{4n}} (U^2)(x,x) > 3^{n+2\theta}N_d^2 \widetilde \phi(r,x_0,y_0)^{-2(s+\theta)} \lambda^2 \right \} \\
	\cup & \left \{ (x,y) \in \mathcal{B}_{\frac{r}{2}}(x_0,y_0) \mid  {\mathcal{M}}_{\geq r,\mathcal{B}_{4n}} (U^2)(y,y) > 3^{n+2\theta}N_d^2 \widetilde \phi(r,x_0,y_0)^{-2(s+\theta)} \lambda^2 \right \} \\
	\cup & \left \{ (x,y) \in \mathcal{B}_{\frac{r}{2}}(x_0,y_0) \mid  {\mathcal{M}}_{\geq r,\mathcal{B}_{4n}} (G^{q_0})(x,x) > 3^{n+2\theta} \widetilde \phi(r,x_0,y_0)^{-q_0(s+\theta)} \lambda^{q_0} \right \} \\
	\cup & \left \{ (x,y) \in \mathcal{B}_{\frac{r}{2}}(x_0,y_0) \mid  {\mathcal{M}}_{\geq r,\mathcal{B}_{4n}} (G^{q_0})(y,y) > 3^{n+2\theta} \widetilde \phi(r,x_0,y_0)^{-q_0(s+\theta)} \lambda^{q_0} \right \}.
	\end{align*}
\end{lem}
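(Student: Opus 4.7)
The plan is to argue by contrapositive: I assume the existence of a point $(x_1, y_1) \in \mathcal{B}_{r/2}(x_0, y_0)$ belonging to none of the five sets on the right-hand side of the claimed inclusion, and derive a contradiction with (\ref{ccllz}). Such a point simultaneously satisfies $\mathcal{M}_{\mathcal{B}_{4n}}(U^2)(x_1, y_1) \leq \lambda^2$, the bound $\mathcal{M}_{\geq r, \mathcal{B}_{4n}}(U^2)(x_1, x_1) \leq 3^{n+2\theta} N_d^2 \widetilde\phi^{-2(s+\theta)}\lambda^2$ together with the same at $y_1$, and the two analogous bounds for $G^{q_0}$.

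The first main step is to apply Proposition \ref{offdiagreverse} on the ball $\mathcal{B}_\rho(x_0, y_0)$ with $\rho := \sqrt{n}\,r$. The geometric hypothesis $|x_0 - y_0| \geq (3\sqrt{n}+1)r$ yields $\textnormal{dist}(B_\rho(x_0), B_\rho(y_0)) \geq (\sqrt{n}+1)r \geq \rho$, so the proposition applies with $m = 1$, and an elementary estimate gives the comparability $(\rho/\textnormal{dist})^{s+\theta} \leq C_n \widetilde\phi(r, x_0, y_0)^{s+\theta}$. Each term on the resulting right-hand side will be controlled using the assumed bounds at $(x_1, y_1)$: the base average $(\dashint_{\mathcal{B}_\rho(x_0, y_0)} U^2 \, d\mu)^{1/2}$ is bounded by $C_n\lambda$ via the inclusion $\mathcal{B}_\rho(x_0, y_0) \subset \mathcal{B}_{2\rho}(x_1, y_1)$, off-diagonal volume comparability, and $\mathcal{M}_{\mathcal{B}_{4n}}(U^2)(x_1, y_1) \leq \lambda^2$; the diagonal tail sum at $x_0$ is bounded by $C_n C_{s,\theta} N_d \widetilde\phi^{-(s+\theta)} \lambda$ via the inclusion $\mathcal{B}_{2^k\rho}(x_0) \subset \mathcal{B}_{2^{k+1}\rho}(x_1)$, the exact scaling for diagonal balls from Proposition \ref{doublingmeasure}(i), and the control on $\mathcal{M}_{\geq r, \mathcal{B}_{4n}}(U^2)(x_1, x_1)$; the $G^{q_0}$ and $y_0$ terms are entirely analogous. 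After the cancellation $\widetilde\phi^{s+\theta} \cdot \widetilde\phi^{-(s+\theta)} = 1$, this produces
$$\Big(\dashint_{\mathcal{B}_\rho(x_0, y_0)} U^{q^\star}\, d\mu\Big)^{1/q^\star} \leq C_n' C_{nd} C_{s, \theta} N_d \lambda$$
for a dimensional constant $C_n'$.

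The second step is to decompose the maximal function by scale. For any $(x, y) \in \mathcal{B}_{\sqrt{n}r/2}(x_0, y_0)$ and any $\rho' < \sqrt{n}r/2$, one has $\mathcal{B}_{\rho'}(x, y) \subset \mathcal{B}_\rho(x_0, y_0)$, whereas for $\rho' \geq \sqrt{n}r/2$ the bound $\max(|x - x_1|, |y - y_1|) \leq (\sqrt{n}+1)r/2 \leq 2\rho'$ yields $\mathcal{B}_{\rho'}(x, y) \subset \mathcal{B}_{3\rho'}(x_1, y_1)$. Combined with volume doubling, these two inclusions give the pointwise estimate
$$\mathcal{M}_{\mathcal{B}_{4n}}(U^2)(x, y) \leq \max\bigl(\mathcal{M}(U^2 \chi_{\mathcal{B}_\rho(x_0, y_0) \cap \mathcal{B}_{4n}})(x, y),\; C_n \lambda^2\bigr).$$
Since $N_{\varepsilon, q}^2 \gg C_n$ by (\ref{Neps}), the bad set in $\mathcal{B}_{\sqrt{n}r/2}(x_0, y_0)$ is contained in the level set of the localized maximal function on the right. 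Applying Chebyshev's inequality together with the strong $(q^\star/2)$-$(q^\star/2)$ bound from Proposition \ref{Maxfun}(ii), the $L^{q^\star}$ bound from the previous paragraph, and $\mu(\mathcal{B}_\rho(x_0, y_0)) \leq C_n \mu(\mathcal{B}_{r/2}(x_0, y_0))$ then produces a measure estimate of order $(C_{nd} C_{s,\theta} N_d / N_{\varepsilon, q})^{q^\star} \mu(\mathcal{B}_{r/2}(x_0, y_0))$, which is strictly less than $\varepsilon \mu(\mathcal{B}_{r/2}(x_0, y_0))$ by the choice of $N_{\varepsilon, q}$ in (\ref{Neps})---the factor $10^{10n}\varepsilon^{-1/q^\star}$ being arranged exactly to absorb all dimensional constants. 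This contradicts (\ref{ccllz}).

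The main technical obstacle will be the simultaneous verification of the geometric inclusions used in the scale decomposition together with matching off-diagonal volume comparisons: since Proposition \ref{doublingmeasure}(ii) furnishes exact scaling only at a fixed center, translating between the shifting centers $(x, y)$, $(x_1, y_1)$, and $(x_0, y_0)$ requires explicit care, especially away from the diagonal where the measure $\mu$ behaves anisotropically. A secondary but essential subtlety is tracking the precise cancellation $\widetilde\phi^{s+\theta} \cdot \widetilde\phi^{-(s+\theta)} = 1$ between the prefactor appearing in Proposition \ref{offdiagreverse} and the inverse power in the assumed diagonal-maximal-function bounds, since only this cancellation prevents the right-hand side of the $L^{q^\star}$ estimate from blowing up as $|x_0 - y_0| \to \infty$.
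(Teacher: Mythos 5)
Your overall strategy — contrapositive, exploit the exceptional point $(x_1,y_1)$ to get pointwise maximal-function bounds, localize the maximal function, then use Proposition~\ref{offdiagreverse} plus a weak- or strong-type estimate — is exactly the paper's route, and your geometric inclusions (the choice $\rho=\sqrt{n}r$, the containment $\mathcal{B}_{\rho'}(x,y)\subset\mathcal{B}_{3\rho'}(x_1,y_1)$ for $\rho'\geq\sqrt{n}r/2$, the $\widetilde\phi^{s+\theta}\cdot\widetilde\phi^{-(s+\theta)}=1$ cancellation) all check out.

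However, there is a genuine gap in how you bound the two tail sums $\sum_{k\geq 1}2^{-k(s-\theta)}\bigl(\dashint_{\mathcal{B}_{2^k\rho}(x_0)}U^2\,d\mu\bigr)^{1/2}$ and the analogous sum for $G^2$ appearing in Proposition~\ref{offdiagreverse}. You claim these are controlled purely by the inclusion $\mathcal{B}_{2^k\rho}(x_0)\subset\mathcal{B}_{2^{k+1}\rho}(x_1)$, diagonal scaling, and the bound on $\mathcal{M}_{\geq r,\mathcal{B}_{4n}}(U^2)(x_1,x_1)$. But that maximal function only bounds averages of $\chi_{\mathcal{B}_{4n}}U^2$, whereas the tail sums in Proposition~\ref{offdiagreverse} carry \emph{no} cutoff: for $k$ large enough that $\mathcal{B}_{2^k\rho}(x_0)$ sticks out of $\mathcal{B}_{4n}$, the contribution of $U^2(1-\chi_{\mathcal{B}_{4n}})$ is simply not reachable from any of the five maximal-function bounds at $(x_1,y_1)$. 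The paper handles this by splitting at the scale $m$ determined by $2^{m-1}r<\sqrt{n}\leq 2^m r$: for $k<m$ the balls stay inside $\mathcal{B}_{4n}$ and the maximal bounds apply as you describe, while for $k\geq m$ one compares $\mathcal{B}_{2^k\cdot(\text{const})r}(x_0)$ to $\mathcal{B}_{2^{k^\prime}4n}$ for a shifted index $k'$ and invokes the a priori quantity $\lambda_0$ from~(\ref{tailcontrol}), which was defined precisely to dominate $\sum_k 2^{-k(s-\theta)}\bigl(\dashint_{\mathcal{B}_{2^k 4n}}U^2\,d\mu\bigr)^{1/2}$ and its $G$-counterpart (together with the hypothesis $\lambda\geq\lambda_0$). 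Your sketch never uses $\lambda_0$, so those far annuli are uncontrolled and the claimed $L^{q^\star}$ bound does not follow. Inserting the small-$k$/large-$k$ split and the $\lambda_0$ input (exactly as in~(\ref{Ulambdax}) and~(\ref{Glambdax}) of the paper) repairs the argument.
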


\begin{proof}
	Assume that (\ref{ccllz}) holds, but that the conclusion is false, so that there exists a point $(x^\prime,y^\prime) \in \mathcal{B}_{\frac{r}{2}}(x_0,y_0)$ such that 
	\begin{align*}
	& {\mathcal{M}}_{\mathcal{B}_{4n}}(U^2)(x^\prime,y^\prime) \leq \lambda^2, \\ &  {\mathcal{M}}_{\geq r,\mathcal{B}_{4n}} (U^2)(x^\prime,x^\prime) \leq 3^{n+2\theta}N_d^2 \widetilde \phi(r,x_0,y_0)^{-2(s+\theta)} \lambda^2, \quad
	 {\mathcal{M}}_{\geq r,\mathcal{B}_{4n}} (U^2)(y^\prime,y^\prime) \leq 3^{n+2\theta}N_d^2 \widetilde \phi(r,x_0,y_0)^{-2(s+\theta)} \lambda^2 \\ &  {\mathcal{M}}_{\geq r,\mathcal{B}_{4n}} (G^{q_0})(x^\prime,x^\prime) \leq 3^{n+2\theta} \widetilde \phi(r,x_0,y_0)^{-{q_0}(s+\theta)} \lambda^{q_0}, \quad
	 {\mathcal{M}}_{\geq r,\mathcal{B}_{4n}} (G^{q_0})(y^\prime,y^\prime) \leq 3^{n+2\theta} \widetilde \phi(r,x_0,y_0)^{-{q_0}(s+\theta)} \lambda^{q_0}.
	\end{align*}
	Therefore, for any $\rho \geq r$ we have
	\begin{equation} \label{lumhn}
	\dashint_{\mathcal{B}_\rho(x^\prime,y^\prime)}\chi_{\mathcal{B}_{4n}} U^2 d\mu \leq \lambda^2 ,
	\end{equation}
	\begin{equation} \label{lumhn1}
	\begin{aligned}
	&\dashint_{\mathcal{B}_\rho(x^\prime)} \chi_{\mathcal{B}_{4n}} U^2 d\mu \leq 3^{n+2\theta}N_d^2 \widetilde \phi(r,x_0,y_0)^{-{2}(s+\theta)} \lambda^2, \\ & \dashint_{\mathcal{B}_\rho(y^\prime)} \chi_{\mathcal{B}_{4n}} U^2 d\mu \leq 3^{n+2\theta}N_d^2 \widetilde \phi(r,x_0,y_0)^{-{2}(s+\theta)}\lambda^2
	\end{aligned}
	\end{equation} 
	and
	\begin{equation} \label{lumhn1x}
	\begin{aligned}
	& \dashint_{\mathcal{B}_\rho(x^\prime)} \chi_{\mathcal{B}_{4n}} G^{q_0} d\mu \leq 3^{n+2\theta}\widetilde \phi(r,x_0,y_0)^{-{q_0}(s+\theta)} \lambda^{q_0}, \\ & \dashint_{\mathcal{B}_\rho(y^\prime)} \chi_{\mathcal{B}_{4n}} G^{q_0} d\mu \leq 3^{n+2\theta} \widetilde \phi(r,x_0,y_0)^{-{q_0}(s+\theta)}\lambda^{q_0}
	\end{aligned}
	\end{equation} 
	Since for any $\rho \geq r$ we have $\mathcal{B}_\rho(x_0,y_0) \subset \mathcal{B}_{2\rho}(x^\prime,y^\prime) \subset \mathcal{B}_{3\rho}(x_0,y_0)$, from (\ref{lumhn}) we deduce
	\begin{equation} \label{Uoffdiag}
	\dashint_{\mathcal{B}_\rho(x_0,y_0)} \chi_{\mathcal{B}_{4n}} U^2 d\mu \leq \frac{\mu(\mathcal{B}_{2\rho}(x^\prime,y^\prime))}{\mu(\mathcal{B}_\rho(x_0,y_0))} \dashint_{\mathcal{B}_{2\rho}(x^\prime)} \chi_{\mathcal{B}_{4n}} U^2 d\mu \leq \frac{\mu(\mathcal{B}_{3\rho}(x_0,y_0))}{\mu(\mathcal{B}_\rho(x_0,y_0))} \leq 3^{n+2\theta} \lambda^2.
	\end{equation}
	Note also that for any $\rho \geq r$ we have $\mathcal{B}_\rho(x_0) \subset \mathcal{B}_{2\rho}(x^\prime)$.
	Together with (\ref{lumhn1}) this observation yields
	\begin{equation} \label{Ux0}
	\dashint_{\mathcal{B}_\rho(x_0)}\chi_{\mathcal{B}_{4n}} U^2 d\mu \leq \frac{\mu(\mathcal{B}_{2\rho}(x^\prime))}{\mu(\mathcal{B}_\rho(x_0))} \dashint_{\mathcal{B}_{2\rho}(x^\prime)}\chi_{\mathcal{B}_{4n}} U^2 d\mu \leq 6^{n+2\theta}N_d^2 \widetilde \phi(r,x_0,y_0)^{-2(s+\theta)} \lambda^2 
	\end{equation}
	and similarly
	\begin{equation} \label{Gx0}
	\dashint_{\mathcal{B}_\rho(x_0)}\chi_{\mathcal{B}_{4n}} G^{q_0} d\mu \leq \frac{\mu(\mathcal{B}_{2\rho}(x^\prime))}{\mu(\mathcal{B}_\rho(x_0))} \dashint_{\mathcal{B}_{2\rho}(x^\prime)}\chi_{\mathcal{B}_{4n}} G^{q_0} d\mu \leq 6^{n+2\theta} \widetilde \phi(r,x_0,y_0)^{-{q_0}(s+\theta)} \lambda^{q_0}.
	\end{equation}
	By the same reasoning, (\ref{Ux0}) and (\ref{Gx0}) clearly hold also with $x_0$ replaced by $y_0$.
	Next, we claim that 
	\begin{equation} \label{inclusion1}
	\begin{aligned}
	& \left \{ (x,y) \in \mathcal{B}_{\frac{\sqrt{n}}{2}r}(x_0,y_0) \mid  {\mathcal{M}}_{\mathcal{B}_{4n}}( U^2 )(x,y) > N_{\varepsilon,q}^2 \lambda^2 \right \} \\ \subset & \left \{ (x,y) \in \mathcal{B}_{\frac{ \sqrt{n}}{2}r}(x_0,y_0) \mid  {\mathcal{M}}_{\mathcal{B}_{\frac{3\sqrt{n}}{2}r}(x_0,y_0)} ( U^2 )(x,y) > N_{\varepsilon,q}^2 \lambda^2 \right \}. 
	\end{aligned}
	\end{equation}
	To see this, assume that 
	\begin{equation} \label{menge1}
	(x_1,y_1) \in \left \{ x \in \mathcal{B}_{\frac{\sqrt{n}}{2}r}(x_0,y_0) \mid  {\mathcal{M}}_{\mathcal{B}_{\frac{3\sqrt{n}}{2}r}(x_0,y_0)} ( U^2 ) (x,y) \leq N_{\varepsilon,q}^2 \lambda^2 \right \}. 
	\end{equation}
	For $ \rho < \sqrt{n}r$, we have $\mathcal{B}_\rho (x_1,y_1) \subset \mathcal{B}_{\sqrt{n}r}(x_1,y_1) \subset \mathcal{B}_{\frac{3\sqrt{n}}{2}r}(x_0,y_0)$, so that together with $(\ref{menge1})$ we deduce 
	\begin{align*}
	\dashint_{\mathcal{B}_\rho (x_1,y_1)} U^2 d\mu & \leq  {\mathcal{M}}_{\mathcal{B}_{\frac{3\sqrt{n}}{2}r}(x_0,y_0)} ( U^2 )(x_1,y_1) \leq N_{\varepsilon,q}^2 \lambda^2. 
	\end{align*}
	On the other hand, for $\rho \geq \sqrt{n}r$ we have $ \mathcal{B}_\rho (x_1,y_1) \subset \mathcal{B}_{3 \rho}(x^\prime,y^\prime) \subset \mathcal{B}_{5 \rho}(x_1,y_1)$, so that $(\ref{lumhn})$ implies 
	\begin{align*}
	\dashint_{\mathcal{B}_\rho(x_1,y_1)} \chi_{\mathcal{B}_{4n}} U^2 d\mu \leq \frac{\mu(\mathcal{B}_{3 \rho} (x^\prime,y^\prime))}{\mu(\mathcal{B}_\rho(x_1,y_1))} \dashint_{\mathcal{B}_{3 \rho} (x^\prime,y^\prime)} \chi_{\mathcal{B}_{4n}} U^2 d\mu \leq & \frac{\mu(\mathcal{B}_{5 \rho} (x_1,y_1))}{\mu(\mathcal{B}_\rho(x_1,y_1))} \dashint_{\mathcal{B}_{3 \rho} (x^\prime,y^\prime)} \chi_{\mathcal{B}_{4n}} U^2 d\mu \\ \leq & 5^{n+2\theta} \lambda^2 \leq N_{\varepsilon,q}^2 \lambda^2.
	\end{align*}
	Thus, we have $$ (x_1,y_1) \in \left \{ (x,y) \in \mathcal{B}_r(x_0,y_0) \mid  {\mathcal{M}}_{\mathcal{B}_{4n}}( U^2 )(x,y) \leq N_{\varepsilon,q}^2 \lambda^2 \right \} ,$$ 
	which implies $(\ref{inclusion1})$.
As in the proof of Lemma \ref{apppl}, let $m \in \mathbb{N}$ be determined by $2^{m-1} r < \sqrt{n} \leq 2^{m} r$, note that $m \geq 2$. Then for any $k < m$, by (\ref{Ux0}) and (\ref{Gx0}) we have
\begin{equation} \label{UGx}
\begin{aligned}
&\dashint_{\mathcal{B}_{2^k\frac{3\sqrt{n}}{2} r}(x_0)} U^2 d\mu \leq 6^{n+2\theta}N_d^2 \lambda^2 \widetilde \phi(r,x_0,y_0)^{-2(s+\theta)}, \\ &\dashint_{\mathcal{B}_{2^k\frac{3\sqrt{n}}{2} r}(x_0)} G^{q_0} d\mu \leq 6^{n+2\theta}\lambda^{q_0} \delta^{q_0} \widetilde \phi(r,x_0,y_0)^{-q_0(s+\theta)}.
\end{aligned}
\end{equation}
Moreover, in view of (\ref{tailcontrol}), the inclusions $$\mathcal{B}_{2^k \frac{n}{2}}(x_0) \subset \mathcal{B}_{2^{k+m-1} \frac{3\sqrt{n}}{2} r}(x_0) \subset \mathcal{B}_{2^k \frac{3n}{2}}(x_0) \subset \mathcal{B}_{2^k 4n}$$ and the fact that $\widetilde \phi(r,x_0,y_0) \leq 1$, we have
\begin{align*}
& \sum_{k=m}^\infty 2^{-k(s-\theta)} \left ( \dashint_{\mathcal{B}_{2^k \frac{3\sqrt{n}}{2}r}(x_0)} U^2 d\mu \right )^\frac{1}{2} \\ = & 2^{-(m-1)(s-\theta)} \sum_{k=1}^\infty 2^{-k(s-\theta)} \left ( \dashint_{\mathcal{B}_{2^{k+m-1}\frac{3\sqrt{n}}{2}r}(x_0)} U^2 d\mu \right )^\frac{1}{2} \\
\leq & \sum_{k=1}^\infty 2^{-k(s-\theta)} \left (\frac{\mu(\mathcal{B}_{2^k 4n})}{\mu \left (\mathcal{B}_{2^k \frac{n}{2}} \right)} \dashint_{\mathcal{B}_{2^k 4n}} U^2 d\mu \right )^\frac{1}{2} \\
\leq & 8^{\frac{n}{2}+\theta} \lambda_0 \leq 8^{\frac{n}{2}+\theta} \lambda_0 \widetilde \phi(r,x_0,y_0)^{-(s+\theta)}. 
\end{align*}
Together with (\ref{UGx}) and the assumption that $\lambda \geq \lambda_0$, we arrive at
\begin{equation} \label{Ulambdax}
\begin{aligned}
& \sum_{k=1}^\infty 2^{-k(s-\theta)} \left ( \dashint_{\mathcal{B}_{2^k \frac{3\sqrt{n}}{2}r}(x_0)} U^2 d\mu \right )^\frac{1}{2} \\ \leq & \sum_{k=1}^{m-1} 2^{-k(s-\theta)} \left ( \dashint_{\mathcal{B}_{2^k \frac{3\sqrt{n}}{2} r}(x_0)} U^2 d\mu \right )^\frac{1}{2} + \sum_{k=m}^\infty 2^{-k(s-\theta)} \left ( \dashint_{\mathcal{B}_{2^k \frac{3\sqrt{n}}{2}r}(x_0)} U^2 d\mu \right )^\frac{1}{2} \\
\leq & 6^{\frac{n}{2}+\theta} C_{s,\theta} N_d^2 \widetilde \phi(r,x_0,y_0)^{-(s+\theta)} \lambda + 8^{\frac{n}{2}+\theta} \widetilde \phi(r,x_0,y_0)^{-(s+\theta)}\lambda_0 \\ \leq & 8^{\frac{n}{2}+\theta} C_{s,\theta}N_d^2 \widetilde \phi(r,x_0,y_0)^{-(s+\theta)} \lambda.
\end{aligned}
\end{equation}
By a similar reasoning as above, we have
\begin{align*}
\sum_{k=m}^\infty 2^{-k(s-\theta)} \left ( \dashint_{\mathcal{B}_{2^k \frac{3\sqrt{n}}{2}r}(x_0)} G^2 d\mu \right )^\frac{1}{2} 
\leq & 8^{\frac{n}{2}+\theta} \sum_{k=1}^\infty 2^{-k(s-\theta)} \left ( \dashint_{\mathcal{B}_{2^k 4n}} G^2 d\mu \right )^\frac{1}{2} \\ \leq & 8^{\frac{n}{2}+\theta} \widetilde \phi(r,x_0,y_0)^{-2(s+\theta)} \lambda_0
\end{align*}
and therefore along with H\"older's inequality
\begin{equation} \label{Glambdax}
\begin{aligned}
& \sum_{k=1}^\infty 2^{-k(s-\theta)} \left ( \dashint_{\mathcal{B}_{2^k \frac{3\sqrt{n}}{2}r}(x_0)} G^2 d\mu \right )^\frac{1}{2} \\ \leq & \sum_{k=1}^{m-1} 2^{-k(s-\theta)} \left ( \dashint_{\mathcal{B}_{2^k \frac{3\sqrt{n}}{2} r}(x_0)} G^{q_0} d\mu \right )^\frac{1}{q_0} + \sum_{k=m}^\infty 2^{-k(s-\theta)} \left ( \dashint_{\mathcal{B}_{2^k \frac{3\sqrt{n}}{2} r}(x_0)} G^2 d\mu \right )^\frac{1}{2} \\ 
\leq & 8^{\frac{n}{2}+\theta} C_{s,\theta} \widetilde \phi(r,x_0,y_0)^{-(s+\theta)} \lambda.
\end{aligned}
\end{equation}
Again, by the same arguments as above (\ref{Ulambdax}) and (\ref{Glambdax}) clearly also hold for $x_0$ replaced by $y_0$.
	Therefore, together with the weak $\frac{q^\star}{2}-\frac{q^\star}{2}$ estimate for the Hardy-Littlewood maximal function, Proposition \ref{offdiagreverse} with $m=\frac{1}{3\sqrt{n}}$, (\ref{Uoffdiag}), (\ref{Ulambdax}), (\ref{Gx0}), (\ref{Glambdax}) and (\ref{Neps}), we arrive at
	\begin{align*}
	& \mu \left (\left \{ (x,y) \in \mathcal{B}_{\frac{\sqrt{n}}{2}r}(x_0,y_0) \mid  {\mathcal{M}}_{\mathcal{B}_{4n}}( U^2 )(x,y) > N_{\varepsilon,q}^2 \lambda^2 \right \} \right ) \\ \leq & \mu \left (\left \{ (x,y) \in \mathcal{B}_{\frac{ \sqrt{n}}{2}r}(x_0,y_0) \mid  {\mathcal{M}}_{\mathcal{B}_{\frac{3\sqrt{n}}{2}r}(x_0,y_0)} ( U^2 )(x,y) > N_{\varepsilon,q}^2 \lambda^2 \right \} \right ) \\
	\leq & N_{\varepsilon,q}^{-q^\star} \lambda^{-q^\star} \int_{\mathcal{B}_{\frac{3\sqrt{n}}{2}r}(x_0,y_0)} U^{q^\star} d\mu \\
	\leq & N_{\varepsilon,q}^{-q^\star} \lambda^{-q^\star}3^{q^\star} C_{nd}^{q^\star} \mu \left(\mathcal{B}_{\frac{3\sqrt{n}}{2}r}(x_0,y_0) \right ) \Bigg [ \left (\dashint_{\mathcal{B}_{\frac{3\sqrt{n}}{2}r}(x_0,y_0)} U^{2} d\mu \right )^\frac{q^\star}{2} \\
	& + \left ( \frac{3\sqrt{n}r/2}{\textnormal{dist} \left (B_{\frac{3\sqrt{n}}{2}r}(x_0),B_{\frac{3\sqrt{n}}{2}r}(y_0) \right )} \right )^{q^\star (s+\theta)} \Bigg ( \sum_{k=1}^\infty 2^{-k(s-\theta)} \left ( \dashint_{\mathcal{B}_{2^k\frac{3\sqrt{n}}{2}r}(x_0)} U^2 d\mu \right )^\frac{1}{2} \\ & \text{ }+ \left (\dashint_{\mathcal{B}_{3 \sqrt{n} r}(x_0)} G^{q_0} d\mu \right )^\frac{1}{q_0} + \sum_{k=1}^\infty 2^{-k(s-\theta)} \left ( \dashint_{\mathcal{B}_{2^k\frac{3\sqrt{n}}{2}r}(x_0)} G^2 d\mu \right )^\frac{1}{2} \Bigg )^{q^\star} \\
	& + \left ( \frac{3\sqrt{n}r/2}{\textnormal{dist} \left (B_{\frac{3\sqrt{n}}{2}r}(x_0),B_{\frac{3\sqrt{n}}{2}r}(y_0) \right )} \right )^{q^\star (s+\theta)} \Bigg ( \sum_{k=1}^\infty 2^{-k(s-\theta)} \left ( \dashint_{\mathcal{B}_{2^k \frac{3\sqrt{n}}{2}r}(y_0)} U^2 d\mu \right )^\frac{1}{2} \\ & \text{ }+ \left (\dashint_{\mathcal{B}_{3\sqrt{n}r}(y_0)} G^{q_0} d\mu \right )^\frac{1}{q_0} + \sum_{k=1}^\infty 2^{-k(s-\theta)} \left ( \dashint_{\mathcal{B}_{2^k \frac{3\sqrt{n}}{2}r}(y_0)} G^2 d\mu \right )^\frac{1}{2} \Bigg )^{q^\star} \Bigg ] \\
	\leq & N_{\varepsilon,q}^{-q\star} \lambda^{-q\star} 3^{q^\star} C_{nd}^{q^\star} \left (3\sqrt{n} \right )^{n+2\theta} \mu \left(\mathcal{B}_{\frac{r}{2}}(x_0,y_0) \right ) \bigg (3^{(\frac{n}{2}+\theta)q^\star} \lambda^{q^\star} \\
	& +6^{q^\star} (9n)^{q^\star(s+\theta)} \widetilde \phi(r,x_0,y_0)^{q^\star (s+\theta)} 8^{(\frac{n}{2}+\theta)q^\star} C_{s,\theta}^{q^\star} N_d^{q^\star} \widetilde \phi(r,x_0,y_0)^{-q^\star(s+\theta)} \lambda^{q^\star} \bigg ) \\
	< & \varepsilon \mu \left(\mathcal{B}_{\frac{r}{2}}(x_0,y_0) \right ),
	\end{align*}
	which contradicts (\ref{ccllz}) and thus finishes the proof.
\end{proof}

Since we are going to use a Calderón-Zygmund cube decomposition, we next prove a version of the previous Lemma with balls replaced by cubes. For notational convenience, in analogy to the quantity $\widetilde \phi(r,x_0,y_0)$ defined in (\ref{distfct}), for any $r \in \left (0,\frac{\sqrt{n}}{2} \right )$ and all $x_0,y_0 \in \mathbb{R}^n$ with $|x_0-y_0|>\sqrt{n}r$, we define the quantity
\begin{equation} \label{phidef}
\phi(r,x_0,y_0):=\frac{r}{\textnormal{dist}(Q_r(x_0),Q_r(y_0))}.
\end{equation}
Note that since $B_{r/2}(x_0) \subset Q_r(x_0)$ and $B_{r/2}(y_0) \subset Q_r(y_0)$, the two quantities are related as follows
\begin{equation} \label{phirel}
\widetilde \phi(r,x_0,y_0) \leq \phi(r,x_0,y_0).
\end{equation}

\begin{cor} \label{mfusenondiagcubes}
	For any $\lambda \geq \lambda_0$, $r \in \left (0,\frac{\sqrt{n}}{2} \right )$ and any point $(x_0,y_0) \in \mathcal{Q}_1$ satisfying $|x_0-y_0| \geq (3\sqrt{n}+1)r$ and
	\begin{equation} \label{ccllx64}
	\mu \left ( \left \{(x,y) \in \mathcal{Q}_{r}(x_0,y_0) \mid  {\mathcal{M}}_{\mathcal{B}_{4n}}(U^2)(x,y) > N_{\varepsilon,q}^2 \lambda^2 \right \} \right ) > \varepsilon \mu(\mathcal{Q}_r(x_0,y_0)),
	\end{equation}
	we have
	\begin{align*}
	& \mu(\mathcal{Q}_{r}(x_0,y_0)) \\
	\leq & (\sqrt{n})^{n+2\theta} \bigg( \mu \left (\left \{ (x,y) \in \mathcal{Q}_r(x_0,y_0) \mid  {\mathcal{M}}_{\mathcal{B}_{4n}} (U^2)(x,y) > \lambda^2 \right \} \right ) \\ 
	& + \phi(r,x_0,y_0)^{n-2 \theta} \mu \left (\left \{ (x,y) \in \mathcal{Q}_r(x_0) \mid  {\mathcal{M}}_{\mathcal{B}_{4n}} (U^2)(x,y) > N_d^2 \phi(r,x_0,y_0)^{-2(\theta+s)} \lambda^2 \right \} \right ) \\
	& + \phi(r,x_0,y_0)^{n-2 \theta} \mu \left (\left \{ (x,y) \in \mathcal{Q}_r(y_0) \mid  {\mathcal{M}}_{\mathcal{B}_{4n}} (U^2)(x,y) > N_d^2 \phi(r,x_0,y_0)^{-2(\theta+s)} \lambda^2 \right \} \right ) \\
	& + \phi(r,x_0,y_0)^{n-2 \theta} \mu \left (\left \{ (x,y) \in \mathcal{Q}_r(x_0) \mid  {\mathcal{M}}_{\mathcal{B}_{4n}} (G^{q_0})(x,y) > \phi(r,x_0,y_0)^{-{q_0}(\theta+s)} \lambda^{q_0} \right \} \right ) \\
	& + \phi(r,x_0,y_0)^{n-2 \theta} \mu \left (\left \{ (x,y) \in \mathcal{Q}_r(y_0) \mid  {\mathcal{M}}_{\mathcal{B}_{4n}} (G^{q_0})(x,y) > \phi(r,x_0,y_0)^{-{q_0}(\theta+s)}\lambda^{q_0} \right \} \right ) \bigg ).
	\end{align*}
\end{cor}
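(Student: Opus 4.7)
The plan is to reduce the cube-based statement to the ball-based Lemma \ref{mfusenondiag} via the geometric inclusions $B_{r/2}(x_0) \subset Q_r(x_0) \subset B_{\sqrt{n}r/2}(x_0)$, which at the product level read $\mathcal{B}_{r/2}(x_0,y_0) \subset \mathcal{Q}_r(x_0,y_0) \subset \mathcal{B}_{\sqrt{n}r/2}(x_0,y_0)$. By Proposition \ref{doublingmeasure}(ii), $\mu(\mathcal{B}_{\sqrt{n}r/2}(x_0,y_0)) = (\sqrt{n})^{n+2\theta} \mu(\mathcal{B}_{r/2}(x_0,y_0))$, and combining these inclusions with (\ref{ccllx64}) yields
\[
\mu\!\left(\{\mathcal{M}_{\mathcal{B}_{4n}}(U^2) > N_{\varepsilon,q}^2 \lambda^2\} \cap \mathcal{B}_{\frac{\sqrt{n}}{2}r}(x_0,y_0)\right) > \varepsilon \mu(\mathcal{Q}_r(x_0,y_0)) \geq \varepsilon \mu(\mathcal{B}_{r/2}(x_0,y_0)),
\]
which is precisely the hypothesis (\ref{ccllz}) of Lemma \ref{mfusenondiag}. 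Applying that lemma and taking $\mu$-measure of the resulting covering of $\mathcal{B}_{r/2}(x_0,y_0)$ gives
\[
\mu(\mathcal{Q}_r(x_0,y_0)) \leq (\sqrt{n})^{n+2\theta} \mu(\mathcal{B}_{r/2}(x_0,y_0)) \leq (\sqrt{n})^{n+2\theta} \sum_{i=1}^{5} \mu(E_i),
\]
where $E_1,\ldots,E_5$ denote the five sets from the right-hand side of the conclusion of Lemma \ref{mfusenondiag}.

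It then suffices to bound each $\mu(E_i)$ by the corresponding term on the right-hand side of the claimed inequality. For the $\mathcal{M}(U^2) > \lambda^2$ contribution $E_1$, the inclusion $\mathcal{B}_{r/2}(x_0,y_0) \subset \mathcal{Q}_r(x_0,y_0)$ does this immediately. For each of the four diagonal-type sets, say $E_2$, the plan is to show that whenever $E_2 \neq \emptyset$, the corollary's cube-based level set already equals all of $\mathcal{Q}_r(x_0)$; the bound then follows from a direct change-of-variables computation showing that the separation $|x_0-y_0| \geq (3\sqrt{n}+1)r$ makes $\mu(\mathcal{B}_{r/2}(x_0,y_0))$ comparable to $\phi(r,x_0,y_0)^{n-2\theta}\mu(\mathcal{Q}_r(x_0))$, up to constants depending only on $n$ and $\theta$.

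The nontrivial content is the inclusion claim. If some $x \in B_{r/2}(x_0)$ satisfies $\mathcal{M}_{\geq r,\mathcal{B}_{4n}}(U^2)(x,x) > 3^{n+2\theta} N_d^2 \widetilde{\phi}(r,x_0,y_0)^{-2(s+\theta)} \lambda^2$, then for every $(x',y') \in \mathcal{Q}_r(x_0)$ the triangle inequality gives $|x-x'|, |x-y'| \leq \sqrt{n}r$, so $\mathcal{B}_\rho(x,x) \subset \mathcal{B}_{\rho+\sqrt{n}r}(x',y')$ for all $\rho \geq r$. The volume doubling property, together with the observation that $(x',y')$ lies within box-distance $\sqrt{n}r \leq \sqrt{n}\rho$ of the diagonal, bounds the ratio $\mu(\mathcal{B}_{\rho+\sqrt{n}r}(x',y'))/\mu(\mathcal{B}_\rho(x,x))$ by a constant depending only on $n$ and $\theta$. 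Combined with the factor $3^{n+2\theta}$ built into the Lemma's threshold and the inequality $\widetilde{\phi} \leq \phi$ from (\ref{phirel}), this produces $\mathcal{M}_{\mathcal{B}_{4n}}(U^2)(x',y') > N_d^2 \phi(r,x_0,y_0)^{-2(\theta+s)} \lambda^2$ as needed; the analogous arguments apply to $E_3,E_4,E_5$. The main obstacle is the careful bookkeeping of all the doubling- and change-of-variables-generated constants, in particular verifying that the slack $3^{n+2\theta}$ built into the thresholds of Lemma \ref{mfusenondiag} exactly suffices to absorb them.
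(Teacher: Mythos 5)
Your overall strategy — reduce (\ref{ccllx64}) to the hypothesis of Lemma \ref{mfusenondiag} via the inclusions $\mathcal{B}_{r/2}(x_0,y_0)\subset\mathcal{Q}_r(x_0,y_0)\subset\mathcal{B}_{\sqrt{n}r/2}(x_0,y_0)$, invoke the lemma, and then convert the five $\mu$-measures — is the right one, and the first two terms are handled as the paper does. The gap lies in the treatment of the diagonal level sets $E_2,\ldots,E_5$.

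You try to show that $E_2\neq\emptyset$ forces the \emph{entire} cube $\mathcal{Q}_r(x_0)$ to lie in the target level set, i.e.\ that largeness of $\mathcal{M}_{\geq r}(U^2)(x,x)$ at a single $x\in B_{r/2}(x_0)$ propagates to $\mathcal{M}(U^2)(x',y')$ for \emph{every} $(x',y')\in\mathcal{Q}_r(x_0)$. The propagation constant you need is the sup over $\rho\geq r$ of $\mu(\mathcal{B}_{\rho+\sqrt{n}r}(x',y'))/\mu(\mathcal{B}_\rho(x,x))$, and because $|x-x'|,|x-y'|$ can be as large as roughly $\sqrt{n}r$ and $|x'-y'|$ as large as $\sqrt{n}r$, the best bound one gets is of the form $(1+2\sqrt{n})^{n+2\theta}$ (passing through the diagonal ball $\mathcal{B}_{\rho+2\sqrt{n}r}(x')$). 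This strictly exceeds the $3^{n+2\theta}$ slack baked into Lemma \ref{mfusenondiag} as soon as $n\geq 2$, so the claimed inclusion fails; and since the corollary's right-hand side carries \emph{no} extra free constant beyond the explicit $(\sqrt{n})^{n+2\theta}$ (which is already spent on the doubling step $\mu(\mathcal{Q}_r(x_0,y_0))\leq(\sqrt{n})^{n+2\theta}\mu(\mathcal{B}_{r/2}(x_0,y_0))$), there is no room to absorb the deficit. Your concluding remark that the "bookkeeping" needs to be checked is not a side issue — the constants genuinely do not close.

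The paper's argument avoids this by exploiting that $E_2$ is a \emph{product set}: the condition in $E_2$ depends only on $x$, so $E_2 = F_1\times B_{r/2}(y_0)$ with $F_1\subset B_{r/2}(x_0)$. One then changes the $y$-fiber from $B_{r/2}(y_0)$ to $B_{r/2}(x_0)$ at the exact cost $\widetilde\phi(r,x_0,y_0)^{\,n-2\theta}$, with no extra multiplicative constant (using $|x-y|\geq\operatorname{dist}(B_{r/2}(x_0),B_{r/2}(y_0))$ in one direction and $|x-y|\leq r$ in the other). After that, the inclusion one needs is only $F_1\times B_{r/2}(x_0)\subset\{\mathcal{M}(U^2)>N_d^2\widetilde\phi^{-2(\theta+s)}\lambda^2\}$, which follows from the estimate $\mathcal{M}_{\geq r}(U^2)(x,x)\leq 3^{n+2\theta}\mathcal{M}(U^2)(x,y)$ for $x,y\in B_{r/2}(x_0)$; here $|x-y|<r\leq\rho$, which is exactly what makes the chain $\mathcal{B}_\rho(x)\subset\mathcal{B}_{2\rho}(x,y)\subset\mathcal{B}_{3\rho}(x)$ work and produces precisely the factor $3^{n+2\theta}$ that the lemma's threshold was designed to absorb. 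Your version enlarges the target from $F_1\times B_{r/2}(x_0)$ (a set whose $\mu$-measure already suffices) to all of $\mathcal{Q}_r(x_0)$, which is both unnecessary and too expensive. Similarly, your proposed bound "$\mu(\mathcal{B}_{r/2}(x_0,y_0))$ comparable to $\phi^{n-2\theta}\mu(\mathcal{Q}_r(x_0))$" is only an equivalence up to dimensional constants, whereas the paper's fiberwise change of variables gives an actual one-sided inequality with no constant at all.
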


\begin{proof}
	First of all, note that $(x_0,y_0) \in \mathcal{Q}_{1} \subset \mathcal{B}_{\frac{\sqrt{n}}{2}}$.
	By the assumption $(\ref{ccllx64})$ and the fact that $\mathcal{Q}_{r}(x_0,y_0) \subset \mathcal{B}_{\frac{\sqrt{n}}{2}r}(x_0,y_0)$, we have 
	\begin{align*}
	& \mu \left ( \left \{(x,y) \in \mathcal{B}_{\frac{\sqrt{n}}{2}r}(x_0,y_0) \mid  {\mathcal{M}}_{\mathcal{B}_{4n}}(U^2)(x,y) > N_{\varepsilon,q}^2 \right \} \right ) \\
	\geq &\mu \left ( \left \{(x,y) \in \mathcal{Q}_{r}(x_0,y_0) \mid  {\mathcal{M}}_{\mathcal{B}_{4n}}(U^2)(x,y) > N_{\varepsilon,q}^2 \right \} \right ) \\
	> & \varepsilon \mu(\mathcal{Q}_r(x_0,y_0)) \geq \varepsilon \mu(\mathcal{B}_{r/2}(x_0,y_0)).
	\end{align*}
	Therefore, the assumption $(\ref{ccllz})$ from Lemma \ref{mfusenondiag} is satisfied, so that by the volume doubling property of $\mu$, Lemma \ref{mfusenondiag} and the inclusion $\mathcal{B}_{r/2}(x_0,y_0) \subset \mathcal{Q}_r(x_0,y_0)$, we obtain
	\begin{align*}
	& \mu(\mathcal{Q}_{r}(x_0,y_0)) \leq \mu(\mathcal{B}_{\frac{ \sqrt{n}}{2}r}(x_0,y_0)) \\
	\leq & (\sqrt{n})^{n+2\theta} \mu(\mathcal{B}_{r/2}(x_0,y_0)) \\
	\leq & (\sqrt{n})^{n+2\theta} \bigg( \mu \left (\left \{ (x,y) \in \mathcal{Q}_{r}(x_0,y_0) \mid  {\mathcal{M}}_{\mathcal{B}_{4n}} (U^2)(x,y) > \lambda^2 \right \} \right ) \\ 
	& + \mu \left (\left \{ (x,y) \in \mathcal{B}_{r/2}(x_0,y_0) \mid  {\mathcal{M}}_{\geq r,\mathcal{B}_{4n}} (U^2)(x,x) > 3^{n+2\theta} N_d^2 \widetilde \phi(r,x_0,y_0)^{-2(\theta+s)} \lambda^2 \right \} \right ) \\
	& + \mu \left (\left \{ (x,y) \in \mathcal{B}_{r/2}(x_0,y_0) \mid  {\mathcal{M}}_{\geq r,\mathcal{B}_{4n}} (U^2)(y,y) > 3^{n+2\theta} N_d^2 \widetilde \phi(r,x_0,y_0)^{-2(\theta+s)} \lambda^2 \right \} \right ) \\
	& + \mu \left (\left \{ (x,y) \in \mathcal{B}_{r/2}(x_0,y_0) \mid  {\mathcal{M}}_{\geq r,\mathcal{B}_{4n}} (G^{q_0})(x,x) > 3^{n+2\theta} \widetilde \phi(r,x_0,y_0)^{-q_0(\theta+s)} \lambda^{q_0} \right \} \right ) \\
	& + \mu \left (\left \{ (x,y) \in \mathcal{B}_{r/2}(x_0,y_0) \mid  {\mathcal{M}}_{\geq r,\mathcal{B}_{4n}} (G^{q_0})(y,y) > 3^{n+2\theta} \widetilde \phi(r,x_0,y_0)^{-q_0(\theta+s)} \lambda^{q_0} \right \} \right ) \bigg ).
	\end{align*}
	We proceed by further estimating the second term on the right-hand side of the last display and note the last three terms can be estimated similarly.
	Set $$F_1:=\left \{ x \in B_{r/2}(x_0) \mid  {\mathcal{M}}_{\geq r,\mathcal{B}_{4n}} (U^2)(x,x) > 3^{n+2\theta} N_d^2 \widetilde \phi(r,x_0,y_0)^{-2(\theta+s)} \lambda^2 \right \}.$$ 
	We have
	\begin{align*}
	& \mu \left (\left \{ (x,y) \in \mathcal{B}_{r/2}(x_0,y_0) \mid  {\mathcal{M}}_{\geq r,\mathcal{B}_{4n}} (U^2)(x,x) > 3^{n+2\theta} N_d^2 \widetilde \phi(r,x_0,y_0)^{-2(\theta+s)} \lambda^2 \right \} \right ) \\
	= & \mu \left (F_1 \times B_{r/2}(y_0) \right ) \\
	= & \int_{F_1} \int_{B_{r/2}(y_0)} \frac{dydx}{|x-y|^{n-2\theta}} \\
	\leq & \textnormal{dist}(B_{r/2}(x_0),B_{r/2}(y_0))^{-(n-2\theta)} |F_1| |B_{r/2}(y_0)| \\
	= & \textnormal{dist}(B_{r/2}(x_0),B_{r/2}(y_0))^{-(n-2\theta)} |F_1| |B_{r/2}(x_0)| \\
	\leq & \textnormal{dist}(B_{r/2}(x_0),B_{r/2}(y_0))^{-(n-2\theta)} r^{n-2\theta} \int_{F_1} \int_{B_{r/2}(x_0)} \frac{dydx}{|x-y|^{n-2\theta}} \\
	= & \widetilde \phi(r,x_0,y_0)^{n-2\theta} \mu(F_1 \times B_{r/2}(x_0)) \\
	= & \widetilde \phi(r,x_0,y_0)^{n-2\theta} \mu \left ( \left \{ (x,y) \in \mathcal{B}_{r/2}(x_0) \mid  {\mathcal{M}}_{\geq r,\mathcal{B}_{4n}} (U^2)(x,x) > 3^{n+2\theta} N_d^q \widetilde \phi(r,x_0,y_0)^{-2(\theta+s)} \lambda^2 \right \} \right ) .
	\end{align*}
	In order to further estimate the right-hand side, we claim that for all $x,y \in B_{r/2}(x_0)$ we have
	\begin{equation} \label{maxfrel}
	 {\mathcal{M}}_{\geq r,\mathcal{B}_{4n}} (U^2)(x,x) \leq 3^{n+2\theta}  {\mathcal{M}}_{\mathcal{B}_{4n}} (U^2)(x,y).
	\end{equation}
	Indeed, for any $\rho \geq r$ we have $\mathcal{B}_{\rho}(x) \subset \mathcal{B}_{2\rho}(x,y) \subset \mathcal{B}_{3\rho}(x)$ and therefore
	$$\dashint_{\mathcal{B_\rho}(x)} \chi_{\mathcal{B}_{4n}} U^2d\mu \leq \frac{\mu(\mathcal{B}_{3\rho}(x))}{\mu(B_\rho(x))} \dashint_{\mathcal{B}_{2\rho}(x,y)} \chi_{\mathcal{B}_{4n}} U^2 d\mu \leq 3^{n+2\theta} \widetilde{\mathcal{M}}_{\mathcal{B}_{4n}} (U^2)(x,y), $$
	which proves (\ref{maxfrel}).
	By (\ref{maxfrel}) and the display before that, we arrive at
	\begin{align*}
	& \mu \left (\left \{ (x,y) \in \mathcal{B}_{r/2}(x_0,y_0) \mid  {\mathcal{M}}_{\geq r,\mathcal{B}_{4n}} (U^2)(x,x) > 3^{n+2\theta} N_d^2 \widetilde \phi(r,x_0,y_0)^{-2(\theta+s)} \right \} \right ) \\
	\leq & \widetilde \phi(r,x_0,y_0)^{n-2\theta} \mu \left (\left \{ (x,y) \in \mathcal{B}_{r/2}(x_0) \mid  {\mathcal{M}}_{\geq r,\mathcal{B}_{4n}} (U^2)(x,x) > 3^{n+2\theta} N_d^2 \widetilde \phi(r,x_0,y_0)^{-2(\theta+s)} \right \} \right )\\
	\leq & \widetilde \phi(r,x_0,y_0)^{n-2\theta} \mu \left (\left \{ (x,y) \in \mathcal{B}_{r/2}(x_0) \mid  {\mathcal{M}}_{\mathcal{B}_{4n}} (U^2)(x,y) > N_d^2 \widetilde \phi(r,x_0,y_0)^{-2(\theta+s)} \right \} \right )\\
	\leq & \phi(r,x_0,y_0)^{n-2\theta} \mu \left (\left \{ (x,y) \in \mathcal{Q}_{r}(x_0) \mid  {\mathcal{M}}_{\mathcal{B}_{4n}} (U^2)(x,y) > N_d^2 \phi(r,x_0,y_0)^{-2(\theta+s)} \right \} \right ),
	\end{align*}
	where we used the inequality (\ref{phirel}) and the inclusion $\mathcal{B}_{r/2}(x_0) \subset \mathcal{Q}_r(x_0)$ in order to obtain the last inequality. As mentioned, by observing that the last three terms on the right-hand side in the second display of the proof can be estimated similarly, the proof is finished.
\end{proof}

\section{A covering argument} \label{cover}
In order to proceed, we split the unit cube $Q_1$ in $\mathbb{R}^n$ into dyadic cubes as follows. First, we split $Q_1$ into $2^n$ cubes of sidelength $\frac{1}{2}$. Next, we split each of the resulting cubes into $2^n$ cubes of sidelength $\frac{1}{2^2}=\frac{1}{4}$ and iterate this process. The resulting family of cubes are called dyadic cubes. By using the same procedure with $n$ replaced by $2n$, we can also split the unit cube $\mathcal{Q}_1=Q_1 \times Q_1$ in $\mathbb{R}^{2n}$ into a family of dyadic cubes. We observe that any dyadic cube $\mathcal{K} \subset \mathcal{Q}_1$ of the resulting dyadic cubes can be written as $\mathcal{K} = K_1 \times K_2$, where $K_1$ and $K_2$ are $n$-dimensional dyadic cubes contained in $Q_1$. By construction, any such dyadic cube $\mathcal{K}$ has sidelength $2^{-k(\mathcal{K})}$ for some non-negative integer $k(\mathcal{K}) \geq 0$. Moreover, for $k \geq 1$ any such dyadic cube $\mathcal{K}$ with sidelength $2^{-k(\mathcal{K})}$ is contained in exactly one dyadic cube $\widetilde{\mathcal{K}}$ with sidelength $2^{-k(\mathcal{K})+1}$, we call $\widetilde{\mathcal{K}}$ the predecessor of $\mathcal{K}$. \newline
We need the following version of the Calderón-Zygmund decomposition, which roughly speaking shows that a subset of $\mathcal{Q}_1$ with small enough density can be covered by a sequence of dyadic cubes with density properties that are desirable for our purposes. For a proof we refer to \cite[Lemma 1.1]{CaffarelliPeral}, where the result is proved with respect to the Lebesgue measure instead of $\mu$. However, the proof also works for the doubling measure $\mu$ by taking into account that the Lebesgue differentiation theorem (see Proposition \ref{LDT}) holds also with respect to $\mu$. 

\begin{lem} \label{CaldZyg}
	Let $E \subset \mathcal{Q}_1$ be a measurable set satisfying
	\begin{equation} \label{smalldensity}
	0<\mu(E)<\varepsilon \mu(\mathcal{Q}_1) \quad \text{for some } \varepsilon \in (0,1).
	\end{equation}
	Then there exists a countable family $\mathcal{U}_{\lambda}$ of dyadic cubes obtained from $\mathcal{Q}_1$, such that
	\begin{equation} \label{dyadiccover}
	\mu \left (E \setminus \bigcup_{\mathcal{K} \in \mathcal{U}_{ \lambda}}\mathcal{K} \right ) =0
	\end{equation}
	and such that for any $\mathcal{K} \in \mathcal{U}_{\lambda}$ we have
	\begin{equation} \label{lowerdensity}
	\mu(E \cap \mathcal{K}) \geq \varepsilon \mu(\mathcal{K})
	\end{equation}
	and
	\begin{equation} \label{upperdensity}
	\mu(E \cap \widetilde{\mathcal{K}}) < \varepsilon \mu(\widetilde{\mathcal{K}}),
	\end{equation}
	where $\widetilde{\mathcal{K}}$ denotes the predecessor of $\mathcal{K}$.
\end{lem}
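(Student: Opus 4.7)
The plan is to run a standard stopping-time argument on the dyadic tree rooted at $\mathcal{Q}_1$, and then invoke the Lebesgue differentiation theorem for $\mu$ (Proposition \ref{LDT}) to show that the resulting family of selected cubes almost covers $E$. The doubling property of $\mu$ (Proposition \ref{doublingmeasure}) will be the only measure-theoretic input beyond differentiation.

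More concretely, I would proceed as follows. The assumption $\mu(E) < \varepsilon \mu(\mathcal{Q}_1)$ says that the root cube $\mathcal{Q}_1$ itself fails the density condition $\mu(E\cap\mathcal{Q}_1)\geq \varepsilon\mu(\mathcal{Q}_1)$. Subdivide $\mathcal{Q}_1$ into its $2^{2n}$ dyadic children. For each child $\mathcal{K}$, test the condition $\mu(E\cap\mathcal{K})\geq\varepsilon\mu(\mathcal{K})$: if it holds, place $\mathcal{K}$ into the collection $\mathcal{U}_\lambda$ and stop subdividing it; otherwise, subdivide $\mathcal{K}$ further and repeat. This produces at most countably many selected cubes. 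Property (\ref{lowerdensity}) is immediate from the selection rule, while (\ref{upperdensity}) follows because the predecessor $\widetilde{\mathcal{K}}$ was not selected, i.e.\ it was further subdivided, which by construction means $\mu(E\cap\widetilde{\mathcal{K}})<\varepsilon\mu(\widetilde{\mathcal{K}})$ (including the base case where $\widetilde{\mathcal{K}}=\mathcal{Q}_1$, handled by the hypothesis).

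The nontrivial point is the almost-covering statement (\ref{dyadiccover}). Let $N:=E\setminus\bigcup_{\mathcal{K}\in\mathcal{U}_\lambda}\mathcal{K}$. Every point $(x,y)\in N$ lies in a strictly decreasing chain of dyadic cubes $\mathcal{K}_0\supset\mathcal{K}_1\supset\mathcal{K}_2\supset\cdots$, each of which was rejected, so that
\[
\mu(E\cap\mathcal{K}_j)<\varepsilon\,\mu(\mathcal{K}_j)\qquad\text{for every }j\geq 0.
\]
Each $\mathcal{K}_j$ has sidelength $2^{-j}$ and satisfies $\mathcal{K}_j\subset\mathcal{B}_{2^{-j}}(x,y)\subset\mathcal{B}_{2\cdot 2^{-j}}((x,y))$ with $\mu(\mathcal{B}_{2\cdot 2^{-j}}(x,y))\leq C\mu(\mathcal{K}_j)$ by the volume doubling property (Proposition \ref{doublingmeasure}), where $C=C(n,\theta)$. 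Consequently
\[
\dashint_{\mathcal{B}_{2\cdot 2^{-j}}(x,y)}\chi_E\,d\mu \leq \frac{\mu(E\cap\mathcal{B}_{2\cdot 2^{-j}}(x,y))}{\mu(\mathcal{B}_{2\cdot 2^{-j}}(x,y))} \leq C\varepsilon<1
\]
for every $j$ (after enlarging if necessary to include the Lebesgue-differentiation ball), whereas Proposition \ref{LDT} forces these averages to converge to $\chi_E(x,y)=1$ at $\mu$-a.e.\ point of $E$. Hence $\mu(N)=0$, proving (\ref{dyadiccover}).

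The one place to be careful — and what I expect to be the only real obstacle — is this last comparison between dyadic-cube averages and ball averages: it must be done for the measure $\mu$, which is not the Lebesgue measure on $\mathbb{R}^{2n}$ but the weighted measure (\ref{mudef}). Luckily Proposition \ref{doublingmeasure} gives exact scaling $\mu(\mathcal{B}_{Mr}(x,y))=M^{n+2\theta}\mu(\mathcal{B}_r(x,y))$, so dyadic cubes and concentric balls of comparable radius have $\mu$-measures that differ only by a dimensional constant, and the Lebesgue differentiation theorem for $\mu$ transfers without change from balls to dyadic cubes. No new idea is needed beyond this scale comparison; the rest of the proof is a verbatim adaptation of the classical Lebesgue-measure argument from \cite{CaffarelliPeral}.
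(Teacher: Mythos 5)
Your overall strategy is the right one and is exactly what the paper intends: a stopping-time/exit-time selection on the dyadic tree, with (\ref{lowerdensity}) and (\ref{upperdensity}) being immediate from the selection rule (and the base case handled by (\ref{smalldensity})), and (\ref{dyadiccover}) following from a Lebesgue-point argument transferred from balls to dyadic cubes via the doubling structure of~$\mu$. That matches what the paper itself does — it does not give a self-contained proof but cites \cite[Lemma 1.1]{CaffarelliPeral} and notes that Proposition~\ref{LDT} makes the adaptation to~$\mu$ routine.

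However, the final inequality in your almost-covering step is not correct as written. You claim that
\[
\dashint_{\mathcal{B}_{2\cdot 2^{-j}}(x,y)}\chi_E\,d\mu
=\frac{\mu\bigl(E\cap\mathcal{B}_{2\cdot 2^{-j}}(x,y)\bigr)}{\mu\bigl(\mathcal{B}_{2\cdot 2^{-j}}(x,y)\bigr)}
\leq C\varepsilon .
\]
What you actually know is $\mu(E\cap\mathcal{K}_j)<\varepsilon\,\mu(\mathcal{K}_j)$ together with $\mathcal{K}_j\subset\mathcal{B}_{2\cdot 2^{-j}}(x,y)$ and $\mu\bigl(\mathcal{B}_{2\cdot 2^{-j}}(x,y)\bigr)\leq C\,\mu(\mathcal{K}_j)$. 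This gives no upper bound on $\mu\bigl(E\cap\mathcal{B}_{2\cdot 2^{-j}}(x,y)\bigr)$: the ball is larger than the cube, and it may contain a neighbouring dyadic cube that was \emph{selected} (hence has $E$-density $\geq\varepsilon$), or simply a lot of $E$-mass outside $\mathcal{K}_j$. Thus the inequality $\mu\bigl(E\cap\mathcal{B}_{2\cdot 2^{-j}}(x,y)\bigr)\leq C\varepsilon\,\mu\bigl(\mathcal{B}_{2\cdot 2^{-j}}(x,y)\bigr)$ does not follow, and indeed need not be true. (Also, $C\varepsilon<1$ is not automatic since $C=C(n,\theta)>1$, but that is a cosmetic issue.)

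The fix is to run the comparison on the complement instead. From $\mu(E\cap\mathcal{K}_j)<\varepsilon\,\mu(\mathcal{K}_j)$ one gets $\mu(\mathcal{K}_j\setminus E)>(1-\varepsilon)\,\mu(\mathcal{K}_j)$. Since $\mathcal{K}_j\subset\mathcal{B}_{r_j}(x,y)$ with, say, $r_j=\sqrt{n}\,2^{-j}$ (note the factor $\sqrt{n}$, which is needed since the cubes are $n$-dimensional products), and since $\mu\bigl(\mathcal{B}_{r_j}(x,y)\bigr)\leq C\,\mu(\mathcal{K}_j)$ by the doubling property, it follows that
\[
\mu\bigl(\mathcal{B}_{r_j}(x,y)\setminus E\bigr)\;\geq\;\mu(\mathcal{K}_j\setminus E)\;>\;\frac{1-\varepsilon}{C}\,\mu\bigl(\mathcal{B}_{r_j}(x,y)\bigr),
\]
so that $\dashint_{\mathcal{B}_{r_j}(x,y)}\chi_E\,d\mu<1-\tfrac{1-\varepsilon}{C}<1$ for every $j$. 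Now the ball averages are bounded away from $1$ along a sequence of radii tending to $0$, which contradicts Proposition~\ref{LDT} at any $\mu$-Lebesgue point $(x,y)$ of $\chi_E$ with $\chi_E(x,y)=1$, hence $\mu(N)=0$. So the conclusion you want is correct, but the right statement is that the ball averages are bounded away from $1$ (via $E^c$), not that they are $\leq C\varepsilon$.
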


Next, fix some $\lambda \geq \lambda_0$ and consider the level set
\begin{equation} \label{E}
E:=\left \{(x,y) \in \mathcal{Q}_{1} \mid  {\mathcal{M}}_{\mathcal{B}_{4n}} (U^2)(x,y) > N_{\varepsilon,q}^2 \lambda^2 \right \}.
\end{equation}
Provided that (\ref{smalldensity}) is satisfied with respect to this $E$ and some $\varepsilon \in (0,1)$ which we will choose later, by Lemma \ref{CaldZyg} there exists a countable family $\mathcal{U}_{\lambda}$ of dyadic cubes obtained from $\mathcal{Q}_1$, such that (\ref{dyadiccover}), (\ref{lowerdensity}) and (\ref{upperdensity}) are satisfied with respect to $E$.
\par In order to treat the cubes of the family $\mathcal{U}_{\lambda}$ which are in some sense close enough to the diagonal, we also construct an auxiliary diagonal cover consisting of balls as follows.
For $x_0 \in Q_1$ and $r >0$, consider the quantity
\begin{align*}
\Psi_{\lambda}(x_0,r):= & \mu \left (\left \{(x,y) \in \mathcal{B}_r(x_0) \cap \mathcal{Q}_1 \mid  {\mathcal{M}}_{\mathcal{B}_{4n}}(U^2)(x,y) > N_{d}^2 \lambda^2 \right \} \right ) \\
& + \mu \left (\left \{(x,y) \in \mathcal{B}_{r}(x_0) \cap \mathcal{Q}_1 \mid  {\mathcal{M}}_{\mathcal{B}_{4n}} \left (G^{q_0} \right )(x,y) > \lambda^{q_0} \right \} \right ) ,
\end{align*}
where $q_0$ is given by (\ref{q0}).
Observe that since $N_d \leq N_{\varepsilon,q}$, for any $x_0 \in Q_1$ and any $r>0$ we have
\begin{equation} \label{NDD}
\mu(E \cap B_r(x_0)) \leq \Psi_{\lambda}(x_0,r).
\end{equation}
Now fix $\kappa \in (0,1)$ to be chosen later and consider the following subset of the diagonal in $\mathcal{Q}_1$
$$D_{\kappa \varepsilon}:=\left \{ (x,x) \in \mathcal{Q}_1 \mid \sup_{0<r < \sqrt{n}/2} \Psi_{\lambda}(x,r) \geq \kappa \varepsilon \mu(\mathcal{B}_r(x)) \right \}.$$
By the weak $1$-$1$ estimate for the Hardy-Littlewood maximal function (see Proposition \ref{Maxfun}) along with using that $\lambda \geq \lambda_0$, we obtain
\begin{equation} \label{smalldensity1}
\begin{aligned}
& \mu \left (\left \{(x,y) \in \mathcal{Q}_1 \mid  {\mathcal{M}}_{\mathcal{B}_{4n}}(U^2)(x,y) > N_d^2 \lambda^2 \right \} \right ) \\
& + \mu \left (\left \{(x,y) \in \mathcal{Q}_1 \mid  {\mathcal{M}}_{\mathcal{B}_{4n}} \left (G^{q_0} \right )(x,y) > \lambda^{q_0} \right \} \right ) \\
\leq & \frac{C_1}{N_{d}^2 \lambda^2} \int_{\mathcal{B}_{4n}} U^2 d\mu + \frac{C_1}{\lambda^{q_0}} \int_{\mathcal{B}_{4n}} G^{q_0} d\mu \\
\leq & \frac{C_2}{N_{d}^2 \lambda_0^2} \left (\sum_{k=1}^\infty 2^{-k(s-\theta)} \left (\dashint_{\mathcal{B}_{2^k 4n}} U^2 d\mu \right )^\frac{1}{2} \right )^2 + \frac{C_3}{\lambda_0^{q_0}} \dashint_{\mathcal{B}_{4n}} G^{q_0} d\mu < \kappa \varepsilon \mu(\mathcal{Q}_1),
\end{aligned}
\end{equation}
where all constants depend only on $n,s$ and $\theta$ and the last inequality is obtained by choosing $M_0$ large enough in (\ref{tailcontrol}).
Since $N_d \leq N_{\varepsilon,q}$, (\ref{smalldensity1}) in particular implies the condition (\ref{smalldensity}) with respect to the set $E$ defined in (\ref{E}), so that the family $\mathcal{U}_{ \lambda}$ of dyadic cubes as stated above indeed exists. \par
Since by (\ref{smalldensity1}) for any $x \in Q_1$ and any $r \geq \sqrt{n}/2$ we have
\begin{equation} \label{smallden}
\begin{aligned}
\Psi_{\lambda}(x,r) \leq & \mu \left (\left \{(x,y) \in \mathcal{Q}_1 \mid  {\mathcal{M}}_{\mathcal{B}_{4n}}(U^2)(x,y) > N_{d}^2 \lambda^2 \right \} \right ) \\
& + \mu \left (\left \{(x,y) \in \mathcal{Q}_1 \mid  {\mathcal{M}}_{\mathcal{B}_{4n}} \left (G^{q_0} \right )(x,y) > \lambda^{q_0} \right \} \right ) \\
< & \kappa \varepsilon \mu(\mathcal{Q}_1) \leq \kappa \varepsilon \mu(\mathcal{B}_{r}) = \kappa \varepsilon \mu(\mathcal{B}_{r}(x)),
\end{aligned}
\end{equation}
and by definition of $D_{\kappa \varepsilon}$ for any $(x,x) \in D_{\kappa \varepsilon}$ there exists some $0<r<\sqrt{n}/{2}$ such that 
$$ \Psi_{\lambda}(x,r) \geq \kappa \varepsilon \mu(\mathcal{B}_{r}(x)),$$
we see that for any $(x,x) \in D_{\kappa \varepsilon}$ there exists some exit radius $r_x \in (0,\sqrt{n}/2)$ such that
\begin{equation} \label{lowerexit}
\Psi_{\lambda}(x,r_x) \geq \kappa \varepsilon \mu(\mathcal{B}_{r_x}(x))
\end{equation}
and 
\begin{equation} \label{upperexit}
\Psi_{\lambda}(x,r) \leq \kappa \varepsilon \mu(\mathcal{B}_{r}(x)) \quad \text{for all } r > r_x.
\end{equation}
Now we consider the diagonal covering $\left \{ \mathcal{B}_{r_x}(x) \mid (x,x) \in D_{\kappa \varepsilon} \right \}$. Since $(\mathbb{R}^{2n},||\cdot||)$ is a separable metric space, by the Vitali covering lemma there exists a countable subset $J_D$ of $D_{\kappa \varepsilon}$, such that the family of balls $\{B_{r_x}(x)\}_{(x,x) \in J_D}$ is disjoint and we have
\begin{equation} \label{Vitaliap}
\bigcup_{(x,x) \in D_{\kappa \varepsilon}} \mathcal{B}_{r_x}(x) \subset \bigcup_{(x,x) \in J_D} \mathcal{B}_{5r_x}(x).
\end{equation}
Next, we classify the cubes from the family $\mathcal{U}_{\lambda}$ as follows. Let $\mathcal{U}_{\lambda}^d$ be the collection of all cubes from $\mathcal{U}_{ \lambda}$ that can be sucked up by the diagonal cover, that is,
\begin{equation} \label{Ud}
\mathcal{U}_{\lambda}^d:= \left \{ \mathcal{K} \in \mathcal{U}_{ \lambda} \mid \mathcal{K} \subset \bigcup_{(x,x) \in D_{\kappa \varepsilon}} \mathcal{B}_{r_x}(x) \right \} .
\end{equation}
Moreover, we define the family
$$ \mathcal{U}_{\lambda}^{nd} := \mathcal{U}_{\lambda} \setminus \mathcal{U}_{ \lambda}^d,$$
so that $\mathcal{U}_{ \lambda}$ is the disjoint union of $\mathcal{U}_{ \lambda}^d$ and $\mathcal{U}_{\lambda}^{nd}$.

The following Lemma reduces the problem of estimating the measure of $E$ with respect to $\mu$ to estimating the measures of the diagonal balls in the family $J_D$ and the measures of the off-diagonal cubes in the family $\mathcal{U}_{\lambda}^{nd}$.
\begin{lem} \label{decompcover}
	Let $E$ be given by (\ref{E}) and let $\varepsilon \in (0,1)$. Then we have
	\begin{equation} \label{coverest}
	\mu(E) \leq C \varepsilon \left(\kappa \sum_{(x,x) \in J_D} \mu(\mathcal{B}_{r_x}(x) \cap \mathcal{Q}_1) + \sum_{\mathcal{K} \in \mathcal{U}_{\lambda}^{nd}} \mu(\mathcal{K}) \right ),
	\end{equation}
	where $C=C(n,\theta)>0$.
\end{lem}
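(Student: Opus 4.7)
The strategy is to cover $E$ by the Calderón-Zygmund family $\mathcal{U}_\lambda$ and estimate the contributions from the diagonal and off-diagonal subfamilies $\mathcal{U}_\lambda^d$ and $\mathcal{U}_\lambda^{nd}$ by independent mechanisms. Since (\ref{dyadiccover}) writes $E$ as a subset of the union of the cubes in $\mathcal{U}_\lambda$ up to a $\mu$-null set,
$$\mu(E) \leq \sum_{\mathcal{K} \in \mathcal{U}_\lambda^d} \mu(E \cap \mathcal{K}) + \sum_{\mathcal{K} \in \mathcal{U}_\lambda^{nd}} \mu(E \cap \mathcal{K}),$$
and it suffices to estimate each sum by the corresponding term on the right-hand side of (\ref{coverest}).

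For the off-diagonal sum I would invoke the upper density (\ref{upperdensity}) at the dyadic predecessor $\widetilde{\mathcal{K}}$ of each $\mathcal{K} \in \mathcal{U}_\lambda^{nd}$, giving $\mu(E \cap \mathcal{K}) \leq \mu(E \cap \widetilde{\mathcal{K}}) < \varepsilon \mu(\widetilde{\mathcal{K}})$, and then convert this to a bound involving $\mu(\mathcal{K})$ itself via a uniform dyadic doubling estimate $\mu(\widetilde{\mathcal{K}}) \leq C(n,\theta) \mu(\mathcal{K})$. The latter reduces to a short case analysis on the explicit kernel $|x-y|^{-(n-2\theta)}$: writing $\mathcal{K} = \mathcal{Q}_r(x_0, y_0)$, if $|x_0 - y_0| \lesssim r$ then both $\mathcal{K}$ and $\widetilde{\mathcal{K}}$ lie in the "close to the diagonal" regime where $\mu$ scales as $r^{n+2\theta}$, while if $|x_0 - y_0|$ is much larger than $r$ both cubes see $|u-v| \approx |x_0 - y_0|$ and $\mu$ scales as $r^{2n}|x_0-y_0|^{-(n-2\theta)}$; in either regime the ratio of the measures is controlled by a dimensional constant.

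For the diagonal part, the definition (\ref{Ud}) of $\mathcal{U}_\lambda^d$ combined with the Vitali subcover (\ref{Vitaliap}) yields $\bigcup_{\mathcal{K} \in \mathcal{U}_\lambda^d} \mathcal{K} \subset \bigcup_{(x,x) \in J_D} \mathcal{B}_{5 r_x}(x)$. Since $E \subset \mathcal{Q}_1$, (\ref{NDD}) gives $\mu(E \cap \mathcal{B}_{5 r_x}(x)) \leq \Psi_\lambda(x, 5 r_x)$, and since $5 r_x > r_x$ the exit-time estimate (\ref{upperexit}) together with the volume doubling of $\mu$ produces $\Psi_\lambda(x, 5 r_x) \leq 5^{n+2\theta} \kappa \varepsilon\, \mu(\mathcal{B}_{r_x}(x))$. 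Summing over $J_D$ reduces the desired bound to the comparison $\mu(\mathcal{B}_{r_x}(x)) \leq C(n,\theta)\, \mu(\mathcal{B}_{r_x}(x) \cap \mathcal{Q}_1)$.

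This last comparison is the main obstacle, since it genuinely fails at large radii in high dimensions. The plan is to exploit the two a priori restrictions $x \in Q_1$ and $r_x < \sqrt{n}/2$: a standard orthant/cone argument at $x \in Q_1$ shows that one of the $2^n$ orthants of $\mathbb{R}^n$ at $x$ meets $Q_1$ in a set containing a ball of radius $c_n \min(r_x, 1)$, so its square is a diagonal ball inside $\mathcal{B}_{r_x}(x) \cap \mathcal{Q}_1$ of $\mu$-measure at least $c(n,\theta) \min(r_x, 1)^{n+2\theta}$. Since $r_x < \sqrt{n}/2$ the ratio $r_x^{n+2\theta}/\min(r_x,1)^{n+2\theta}$ is bounded by a dimensional constant, and this closes the argument and produces (\ref{coverest}).
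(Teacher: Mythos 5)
Your proposal is correct and follows exactly the same route as the paper's proof: decompose $E$ via (\ref{dyadiccover}) into the $\mathcal{U}_\lambda^d$ and $\mathcal{U}_\lambda^{nd}$ contributions, absorb the diagonal cubes into the Vitali cover $\{\mathcal{B}_{5r_x}(x)\}_{(x,x)\in J_D}$ and invoke $\Psi_\lambda(x,5r_x)\le\kappa\varepsilon\,\mu(\mathcal{B}_{5r_x}(x))$ from (\ref{upperexit}), and handle the off-diagonal cubes by (\ref{upperdensity}) at the predecessor together with dyadic doubling. The only place you deviate in execution is cosmetic: for the doubling estimate $\mu(\widetilde{\mathcal{K}})\le C(n,\theta)\mu(\mathcal{K})$ the paper squeezes $\mathcal{K}$ between $\|\cdot\|$-balls and uses the exact scaling $\mu(\mathcal{B}_{Mr}(x_0,y_0))=M^{n+2\theta}\mu(\mathcal{B}_r(x_0,y_0))$ of Proposition \ref{doublingmeasure}(ii), which holds for arbitrary centers $(x_0,y_0)$, whereas you do the case analysis on $|x_0-y_0|$ against $r$ by hand; both are fine. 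Your identification of the final comparison $\mu(\mathcal{B}_{r_x}(x))\le C(n,\theta)\,\mu(\mathcal{B}_{r_x}(x)\cap\mathcal{Q}_1)$ as the genuine remaining step is accurate (the paper absorbs it silently into the constant $C_1$), and your inward-orthant argument, using $x\in Q_1$ and $r_x<\sqrt n/2$, is the right way to close it.
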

\begin{proof}
	Any $\mathcal{K} \in \mathcal{U}_{\lambda}^{nd}$ can be written as $\mathcal{K}=\mathcal{Q}_r(x_0,y_0)$ for some $r>0$ and some $x_0,y_0 \in \mathbb{R}^n$. Since $\widetilde{\mathcal{K}} \subset \mathcal{Q}_{3r}(x_0,y_0)$, we have
	\begin{equation} \label{cubedoubling}
	\mu(\widetilde{\mathcal{K}}) \leq \mu(\mathcal{Q}_{3r}(x_0,y_0)) \leq \mu \left (\mathcal{B}_{\frac{3\sqrt{n}}{2}r}(x_0,y_0) \right ) \leq \left (3\sqrt{n})^{n+2\theta} \mu(\mathcal{B}_{\frac{r}{2}}(x_0,y_0) \right ) \leq (3\sqrt{n})^{n+2\theta} \mu(\mathcal{K}).
	\end{equation}
	By (\ref{dyadiccover}), (\ref{Vitaliap}), (\ref{NDD}), (\ref{upperexit}),  (\ref{upperdensity}), (\ref{cubedoubling}) and Lemma \ref{doublingmeasure}, we have
	\begin{align*}
	\mu(E) & \leq \mu \left (\bigcup_{\mathcal{K} \in \mathcal{U}_{ \lambda}} (\mathcal{K} \cap E) \right ) \\ & = \mu \left (\bigcup_{\mathcal{K} \in \mathcal{U}_{\lambda}^d} (\mathcal{K} \cap E) \right ) +  \mu \left (\bigcup_{\mathcal{K} \in \mathcal{U}_{\lambda}^{nd}} (\mathcal{K} \cap E) \right ) \\
	& \leq \mu \left (\bigcup_{(x,x) \in D_{\kappa \varepsilon}} (\mathcal{B}_{r_x}(x) \cap E) \right ) +  \mu \left (\bigcup_{\mathcal{K} \in \mathcal{U}_{\lambda}^{nd}} (\mathcal{K} \cap E) \right ) \\
	& \leq \mu \left (\bigcup_{(x,x) \in J_D} (\mathcal{B}_{5r_x}(x) \cap E) \right ) +  \mu \left (\bigcup_{\mathcal{K} \in \mathcal{U}_{\lambda}^{nd}} (\widetilde{\mathcal{K}} \cap E) \right ) \\
	& \leq \sum_{(x,x) \in J_D} \mu(\mathcal{B}_{5r_x}(x) \cap E) + \sum_{\mathcal{K} \in \mathcal{U}_{\lambda}^{nd}} \mu (\widetilde{\mathcal{K}} \cap E) \\
	& \leq \sum_{(x,x) \in J_D} \Psi_{\lambda}(x,5r_x) + \sum_{\mathcal{K} \in \mathcal{U}_{\lambda}^{nd}} \mu (\widetilde{\mathcal{K}} \cap E) \\
	& \leq \sum_{(x,x) \in J_D} \kappa \varepsilon \mu(\mathcal{B}_{5r_x}(x)) + \sum_{\mathcal{K} \in \mathcal{U}_{\lambda}^{nd}} \varepsilon \mu (\widetilde{\mathcal{K}}) \\
	& = \varepsilon \left( \kappa 5^{n+2\theta} \sum_{(x,x) \in J_D} \mu(\mathcal{B}_{r_x}(x)) + \sum_{\mathcal{K} \in \mathcal{U}_{ \lambda}^{nd}} \mu(\widetilde{\mathcal{K}}) \right ) \\
	& \leq \varepsilon \left( \kappa (5\sqrt{n})^{n+2\theta} C_1 \sum_{(x,x) \in J_D} \mu(\mathcal{B}_{r_x}(x) \cap \mathcal{Q}_1) + (3\sqrt{n})^{n+2\theta} \sum_{\mathcal{K} \in \mathcal{U}_{\lambda}^{nd}} \mu(\mathcal{K}) \right ),
	\end{align*}
	where $C_1=C_1(n,\theta) \geq 1$. Thus, (\ref{coverest}) holds with $C=(5\sqrt{n})^{n+2\theta}C_1$.
\end{proof}
\subsection{Diagonal estimates}
Our next goal is to use the results from section \ref{gl}
in order to further estimate the right-hand side of (\ref{coverest}).
We start by estimating the first sum on the right-hand side of (\ref{coverest}), i.e. we are first dealing with the diagonal case, so that Lemma \ref{mfuse} is the crucial tool.

\begin{lem} \label{ondiagsum}
	Let $\delta = \delta(\varepsilon,\kappa,n,s,\theta,\Lambda) \in (0,1)$ be given by Lemma \ref{mfuse}. Then have 
	\begin{equation} \label{ondiagsumeq}
	\begin{aligned}
	\sum_{(x,x) \in J_D} \mu(\mathcal{B}_{r_x}(x) \cap \mathcal{Q}_1) \leq & \mu \left (\left \{(x,y) \in \mathcal{Q}_1 \mid  {\mathcal{M}}_{\mathcal{B}_{4n}}(U^2)(x,y) > \lambda^2 \right \} \right ) \\
	& + 2 \kappa^{-1} \varepsilon^{-1} \mu \left (\left \{(x,y) \in \mathcal{Q}_1 \mid  {\mathcal{M}}_{\mathcal{B}_{4n}} \left (G^{q_0} \right )(x,y) > \delta^{q_0} \lambda^{q_0} \right \} \right ).
	\end{aligned}
	\end{equation}
\end{lem}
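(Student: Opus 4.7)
The plan is to exploit the defining inequality (\ref{lowerexit}) of the exit radii: at $r=r_x$, the quantity $\Psi_\lambda(x,r_x)$ splits into a $U$-part and a $G$-part, of which at least one must account for at least half of $\kappa\varepsilon\mu(\mathcal{B}_{r_x}(x))$. Partitioning $J_D$ accordingly, the diagonal good-$\lambda$ inequality (Lemma \ref{mfuse}) will be applied to the $U$-dominated points, while a direct rearrangement handles the $G$-dominated ones; the disjointness of the Vitali family $\{\mathcal{B}_{r_x}(x)\}_{(x,x)\in J_D}$ then converts the individual estimates into the desired global bound.

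More precisely, for each $(x,x)\in J_D$ we denote the two summands of $\Psi_\lambda(x,r_x)$ by
\[
A_x := \mu\bigl(\{(x',y')\in \mathcal{B}_{r_x}(x)\cap \mathcal{Q}_1 : \mathcal{M}_{\mathcal{B}_{4n}}(U^2)(x',y') > N_d^2\lambda^2\}\bigr)
\]
and
\[
B_x := \mu\bigl(\{(x',y')\in \mathcal{B}_{r_x}(x)\cap \mathcal{Q}_1 : \mathcal{M}_{\mathcal{B}_{4n}}(G^{q_0})(x',y') > \lambda^{q_0}\}\bigr),
\]
so that $A_x+B_x = \Psi_\lambda(x,r_x) \geq \kappa\varepsilon\,\mu(\mathcal{B}_{r_x}(x))$ by (\ref{lowerexit}). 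Let $J_1$ collect those $(x,x)\in J_D$ with $A_x \geq (\kappa\varepsilon/2)\mu(\mathcal{B}_{r_x}(x))$, and set $J_2 := J_D\setminus J_1$; then every $(x,x)\in J_2$ satisfies $B_x \geq (\kappa\varepsilon/2)\mu(\mathcal{B}_{r_x}(x))$.

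For $(x,x)\in J_1$, since $A_x$ is obtained by further intersecting with $\mathcal{Q}_1$, its lower bound transfers to the unrestricted measure, so hypothesis (\ref{ccll}) of Lemma \ref{mfuse} holds with $\kappa$ replaced by $\kappa/2$. Lemma \ref{mfuse} then yields $\mathcal{B}_{r_x}(x)\subset\{(x',y'):\mathcal{M}_{\mathcal{B}_{4n}}(U^2)(x',y')>\lambda^2\}$, and summing over the disjoint family $J_1$ gives
\[
\sum_{(x,x)\in J_1}\mu(\mathcal{B}_{r_x}(x)\cap \mathcal{Q}_1) \le \mu\bigl(\{(x',y')\in \mathcal{Q}_1 : \mathcal{M}_{\mathcal{B}_{4n}}(U^2)(x',y')>\lambda^2\}\bigr).
\]
For $(x,x)\in J_2$, rearranging $B_x \geq (\kappa\varepsilon/2)\mu(\mathcal{B}_{r_x}(x))$ gives $\mu(\mathcal{B}_{r_x}(x)\cap \mathcal{Q}_1)\le \mu(\mathcal{B}_{r_x}(x))\le (2/(\kappa\varepsilon))B_x$; summing with disjointness then produces
\[
\sum_{(x,x)\in J_2}\mu(\mathcal{B}_{r_x}(x)\cap \mathcal{Q}_1) \le \frac{2}{\kappa\varepsilon}\,\mu\bigl(\{(x',y')\in \mathcal{Q}_1 : \mathcal{M}_{\mathcal{B}_{4n}}(G^{q_0})(x',y')>\lambda^{q_0}\}\bigr).
\]
Since $\delta<1$, the inclusion $\{\mathcal{M}_{\mathcal{B}_{4n}}(G^{q_0})>\lambda^{q_0}\}\subset \{\mathcal{M}_{\mathcal{B}_{4n}}(G^{q_0})>\delta^{q_0}\lambda^{q_0}\}$ upgrades the right-hand side to match (\ref{ondiagsumeq}); adding the two partial bounds finishes the proof. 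The argument is essentially combinatorial, and the only minor bookkeeping point is the invocation of Lemma \ref{mfuse} with $\kappa/2$ in place of $\kappa$, which is harmless since its conclusion simply produces a (possibly smaller) $\delta$ of the form referenced in the statement.
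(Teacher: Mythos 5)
Your argument follows the same structure as the paper's: split $J_D$ according to whether the $U$- or the $G$-part of $\Psi_\lambda(x,r_x)$ accounts for at least half of the lower bound in (\ref{lowerexit}), invoke Lemma~\ref{mfuse} with $\kappa/2$ in the first case, rearrange in the second, and sum over the disjoint Vitali family. However, there is a genuine gap in your treatment of the $J_1$ family. You claim that Lemma~\ref{mfuse} yields $\mathcal{B}_{r_x}(x)\subset\{\mathcal{M}_{\mathcal{B}_{4n}}(U^2)>\lambda^2\}$. It does not: the conclusion (\ref{incly}) as printed contains a typo (the $\cap$ should be a $\cup$), as one sees both from the proof of Lemma~\ref{mfuse} --- the contradiction hypothesis there is that some point belongs to \emph{neither} level set, which negates the inclusion into a union, not into an intersection --- and from display (\ref{inclhi}) in the paper's own proof of the present lemma, which records the conclusion with a $\cup$. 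Reading the lemma correctly, a ball with $(x,x)\in J_1$ is only covered by the union of the $U$-level set at threshold $\lambda^2$ and the $G^{q_0}$-level set at threshold $\delta^{q_0}\lambda^{q_0}$, so the $J_1$-sum necessarily also produces a $G$-term, and your clean bound $\sum_{J_1}\mu(\mathcal{B}_{r_x}(x)\cap\mathcal{Q}_1)\le \mu(\{\mathcal{M}_{\mathcal{B}_{4n}}(U^2)>\lambda^2\}\cap\mathcal{Q}_1)$ does not follow.

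The repair is routine. Using the corrected union conclusion, for each $(x,x)\in J_1$ one has
\begin{align*}
\mu(\mathcal{B}_{r_x}(x)\cap\mathcal{Q}_1) \le{}& \mu\bigl(\mathcal{B}_{r_x}(x)\cap\mathcal{Q}_1\cap\{\mathcal{M}_{\mathcal{B}_{4n}}(U^2)>\lambda^2\}\bigr) \\ &+ \mu\bigl(\mathcal{B}_{r_x}(x)\cap\mathcal{Q}_1\cap\{\mathcal{M}_{\mathcal{B}_{4n}}(G^{q_0})>\delta^{q_0}\lambda^{q_0}\}\bigr),
\end{align*}
and since $\kappa\varepsilon<1$ implies $1\le 2\kappa^{-1}\varepsilon^{-1}$, the coefficient of the $G$-term may be enlarged to $2\kappa^{-1}\varepsilon^{-1}$, so that the \emph{same} per-ball bound holds as in your $J_2$ case; summing this uniform bound over the disjoint family $J_D$ then gives (\ref{ondiagsumeq}) exactly as in the paper. (Equivalently, keeping the two sub-sums separate, the $G$-contributions are supported on the disjoint unions of $J_1$- and $J_2$-balls, so they combine to at most $2\kappa^{-1}\varepsilon^{-1}\mu(\{\mathcal{M}_{\mathcal{B}_{4n}}(G^{q_0})>\delta^{q_0}\lambda^{q_0}\}\cap\mathcal{Q}_1)$.) The remainder of your proof --- the $J_2$-rearrangement, the use of disjointness, and the upgrade from threshold $\lambda^{q_0}$ to $\delta^{q_0}\lambda^{q_0}$ via $\delta<1$ --- is correct.
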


\begin{proof}
	Fix some $(x,x) \in J_D$, so that by (\ref{lowerexit}) for the corresponding exit radius $r_x$ at least one of the following two inequalities must hold:
	\begin{equation} \label{ineq1}
	\mu \left (\left \{(x,y) \in \mathcal{B}_r(x) \cap \mathcal{Q}_1 \mid  {\mathcal{M}}_{\mathcal{B}_{4n}}(U^2)(x,y) > N_{d}^2 \lambda^2 \right \} \right ) \geq \frac{\kappa \varepsilon}{2} \mu(\mathcal{B}_r(x)),
	\end{equation}
	\begin{equation} \label{ineq2}
	\mu \left (\left \{(x,y) \in \mathcal{B}_{r}(x) \cap \mathcal{Q}_1 \mid  {\mathcal{M}}_{\mathcal{B}_{4n}} \left (G^{q_0} \right )(x,y) > \lambda^{q_0} \right \} \right ) \geq \frac{\kappa \varepsilon}{2} \mu(\mathcal{B}_r(x)).
	\end{equation}
	If (\ref{ineq1}) is satisfied, then Lemma \ref{mfuse} with $\kappa$ replaced by $\kappa/2$ implies 
	\begin{equation} \label{inclhi}
	\begin{aligned}
	B_{r_x}(x) \cap \mathcal{Q}_1 \subset & \left \{ (x,y) \in B_{r_x}(x) \cap \mathcal{Q}_1 \mid  {\mathcal{M}}_{\mathcal{B}_{4n}}(U^2)(x,y) > \lambda^2 \right \} \\ 
	& \cup \left \{ (x,y) \in B_{r_x}(x) \cap \mathcal{Q}_1 \mid  {\mathcal{M}}_{\mathcal{B}_{4n}}\left (G^{q_0} \right )(x,y) > \delta^{q_0} \lambda^{q_0} \right \}.
	\end{aligned}
	\end{equation}
	If on the other hand (\ref{ineq2}) is satisfied, then we directly obtain that
	\begin{equation} \label{inclhi1}
	\begin{aligned}
	& \mu(\mathcal{B}_{r_x}(x) \cap \mathcal{Q}_1) \leq \mu(\mathcal{B}_{r_x}(x)) \\ \leq & 2 \kappa^{-1} \varepsilon^{-1} \mu \left (\left \{(x,y) \in \mathcal{B}_{r_x}(x) \cap \mathcal{Q}_1 \mid  {\mathcal{M}}_{\mathcal{B}_{4n}} \left (G^{q_0} \right )(x,y) > \lambda^{q_0} \right \} \right ) \\
	\leq & 2 \kappa^{-1} \varepsilon^{-1} \mu \left (\left \{(x,y) \in \mathcal{B}_{r_x}(x) \cap \mathcal{Q}_1 \mid  {\mathcal{M}}_{\mathcal{B}_{4n}} \left (G^{q_0} \right )(x,y) > \delta^{q_0} \lambda^{q_0} \right \} \right ).
	\end{aligned}
	\end{equation}
	Therefore, in view of (\ref{inclhi}) and (\ref{inclhi1}) for any $(x,x) \in J_D$ we have
	\begin{align*}
	\mu(\mathcal{B}_{r_x}(x) \cap \mathcal{Q}_1) \leq & \mu \left (\left \{(x,y) \in \mathcal{B}_{r_x}(x) \cap \mathcal{Q}_1 \mid  {\mathcal{M}}_{\mathcal{B}_{4n}}(U^2)(x,y) > \lambda^2 \right \} \right ) \\
	& + 2 \kappa^{-1} \varepsilon^{-1} \mu \left (\left \{(x,y) \in \mathcal{B}_{r_x}(x) \cap \mathcal{Q}_1 \mid  {\mathcal{M}}_{\mathcal{B}_{4n}} \left (G^{q_0} \right )(x,y) > \delta^{q_0} \lambda^{q_0} \right \} \right ).
	\end{align*}
	Since the family of balls $\{B_{r_x}(x)\}_{(x,x) \in J_D}$ is disjoint, the assertion (\ref{ondiagsumeq}) immediately follows from the last display.
\end{proof}

\subsection{Off-diagonal estimates}

In order to estimate the measures of the off-diagonal cubes of class $\mathcal{U}_{\lambda}^{nd}$, we have ensure that as our terminology suggests, such cubes are indeed sufficiently far away from the diagonal in terms of their own sidelength.

\begin{lem} \label{faraway}
	There exists $\kappa=\kappa(n,\theta)>0$ small enough, such that for any cube $\mathcal{K}=K_1 \times K_2 \in \mathcal{U}_{\lambda}^{nd}$ of sidelength $2^{-k(\mathcal{K})}$, we have 
	$$ \textnormal{dist}(K_1,K_2) \geq (3 \sqrt{n}+1) 2^{-k(\mathcal{K})}.$$
\end{lem}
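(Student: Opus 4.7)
The plan is to argue by contradiction: I will suppose $\mathcal{K} = K_1 \times K_2 \in \mathcal{U}_\lambda^{nd}$ has $\textnormal{dist}(K_1, K_2) < (3\sqrt{n}+1)\,r$ with $r = 2^{-k(\mathcal{K})}$, and produce a diagonal point $(z_0, z_0) \in D_{\kappa\varepsilon}$ whose associated exit ball contains all of $\mathcal{K}$; by the definition \eqref{Ud} of $\mathcal{U}_\lambda^d$ this forces $\mathcal{K} \in \mathcal{U}_\lambda^d$, contradicting the assumption.

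For the reference point I would take $z_0$ to be the midpoint of a pair $p \in \overline{K_1}$, $q \in \overline{K_2}$ realising $|p-q| = \textnormal{dist}(K_1,K_2) =: d$. For any $(a,b) \in \mathcal{K}$ the triangle inequality gives
\[
\|(a,b) - (z_0,z_0)\| = \max(|a-z_0|, |b-z_0|) \leq r\sqrt{n} + \tfrac{d}{2} < R, \qquad R := \bigl(\tfrac{5\sqrt{n}}{2}+\tfrac{1}{2}\bigr)r,
\]
so $\mathcal{K} \subset \mathcal{B}_R(z_0)$. Using the bound $|x-y| \leq |x_0-y_0| + r\sqrt{n} < (5\sqrt{n}+1)r$ for $(x,y) \in \mathcal{K}$, one further estimates
\[
\mu(\mathcal{K}) = \int_{K_1}\int_{K_2} \frac{dx\,dy}{|x-y|^{n-2\theta}} \geq \frac{r^{2n}}{((5\sqrt{n}+1)r)^{n-2\theta}} = c(n,\theta)\,r^{n+2\theta},
\]
while Proposition~\ref{doublingmeasure} yields $\mu(\mathcal{B}_R(z_0)) = c\,R^{n+2\theta} \asymp r^{n+2\theta}$; combining these gives a uniform comparison $\mu(\mathcal{B}_R(z_0)) \leq C_1(n,\theta)\,\mu(\mathcal{K})$.

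Since $N_d \leq N_{\varepsilon,q}$ by \eqref{Neps}, the definition \eqref{E} of $E$ implies $E \subset \{\mathcal{M}_{\mathcal{B}_{4n}}(U^2) > N_d^2\lambda^2\}$. Combining this inclusion with the Calder\'on--Zygmund density property \eqref{lowerdensity},
\[
\Psi_\lambda(z_0, R) \geq \mu(E \cap \mathcal{K}) \geq \varepsilon\,\mu(\mathcal{K}) \geq \frac{\varepsilon}{C_1(n,\theta)}\,\mu(\mathcal{B}_R(z_0)).
\]
Fixing $\kappa = \kappa(n,\theta) := 1/C_1(n,\theta)$, this reads $\Psi_\lambda(z_0, R) \geq \kappa\varepsilon\,\mu(\mathcal{B}_R(z_0))$, so $(z_0, z_0) \in D_{\kappa\varepsilon}$ with exit radius $r_{z_0} \geq R$, and consequently
\[
\mathcal{K} \subset \mathcal{B}_R(z_0) \subset \mathcal{B}_{r_{z_0}}(z_0) \subset \bigcup_{(x,x)\in D_{\kappa\varepsilon}} \mathcal{B}_{r_x}(x),
\]
placing $\mathcal{K}$ in $\mathcal{U}_\lambda^d$ and yielding the desired contradiction.

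The main technical obstacle will be ensuring that $R$ stays strictly below the threshold $\sqrt{n}/2$ imposed on admissible exit radii in the definition of $D_{\kappa\varepsilon}$. The inequality $R < \sqrt{n}/2$ amounts to $r < \sqrt{n}/(5\sqrt{n}+1)$, which is precisely the range where the closeness hypothesis $\textnormal{dist}(K_1,K_2) < (3\sqrt{n}+1)r$ is compatible with the geometric constraint $\textnormal{dist}(K_1,K_2) \leq \sqrt{n}(1-2r)$ coming from $K_1, K_2 \subset Q_1$; this is precisely why the numerical factor $(3\sqrt{n}+1)$ in the statement is the correct one, as it synchronises the two thresholds. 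Borderline configurations corresponding to equality at the endpoint will be handled by a small perturbation of $R$, using that the cubes involved are open.
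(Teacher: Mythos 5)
Your overall strategy is the same as the paper's: assume $\mathrm{dist}(K_1,K_2) < (3\sqrt n+1)\,2^{-k(\mathcal K)}$, enclose $\mathcal K$ in a diagonal ball $\mathcal B_R(z_0)$, estimate $\mu(\mathcal B_R(z_0))\le C_1(n,\theta)\mu(\mathcal K)$, invoke $N_d\le N_{\varepsilon,q}$ and the Calder\'on--Zygmund density bound \eqref{lowerdensity} to get $\Psi_\lambda(z_0,R)\ge\kappa\varepsilon\mu(\mathcal B_R(z_0))$ with $\kappa=1/C_1(n,\theta)$, and conclude that $\mathcal K$ is swallowed by the diagonal cover, contradicting $\mathcal K\in\mathcal U_\lambda^{nd}$. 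All of this is correct and tracks the paper's proof; the only cosmetic difference is that you centre the ball at the midpoint of a distance-realising pair rather than at the midpoint of the cube centres, which is immaterial.

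The genuine gap is in your final paragraph, where you try to verify $R<\sqrt n/2$ by geometry. You assert that $R<\sqrt n/2$ (equivalently $r<\sqrt n/(5\sqrt n+1)$) is ``precisely the range'' where the closeness hypothesis $d<(3\sqrt n+1)r$ is compatible with the packing constraint $d\le\sqrt n(1-2r)$. This is false: for every $r\in(0,1/2)$ the value $d=0$ (e.g.\ $K_1=K_2$) satisfies both inequalities, so the two constraints are compatible for all admissible dyadic sidelengths, not only for $r<\sqrt n/(5\sqrt n+1)$. A cube $\mathcal K$ with sidelength $r=1/2$ may well satisfy the closeness hypothesis, and then your $R\ge\sqrt n/2$, so the geometric argument cannot close the gap. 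The factor $3\sqrt n+1$ in the statement is not chosen to ``synchronise'' these thresholds; it is what the later off-diagonal good-$\lambda$ inequality (Lemma~\ref{mfusenondiag}) requires.

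What the paper does instead, and what you need here, is to deduce $R<\sqrt n/2$ \emph{a posteriori} from the exit inequality itself: the smallness estimate \eqref{smalldensity1}, with $M_0$ in \eqref{tailcontrol} chosen large enough, shows (see \eqref{smallden}) that $\Psi_\lambda(x,\rho)<\kappa\varepsilon\mu(\mathcal B_\rho(x))$ for every $x\in Q_1$ and every $\rho\ge\sqrt n/2$. Since you have just derived $\Psi_\lambda(z_0,R)\ge\kappa\varepsilon\mu(\mathcal B_R(z_0))$, it follows that $R<\sqrt n/2$, hence $(z_0,z_0)\in D_{\kappa\varepsilon}$, and by \eqref{upperexit} that $r_{z_0}\ge R$. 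With this replacement your proof is complete.
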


\begin{proof}
	Let $\mathcal{K}=K_1 \times K_2 \in \mathcal{U}_{\lambda}^{nd}$ and assume that we have $ \textnormal{dist}(K_1,K_2) < (3 \sqrt{n}+1) 2^{-k(\mathcal{K})}.$ Let us show that in this case for $\kappa$ small enough we have $\mathcal{K} \in \mathcal{U}_{\lambda}^{d}$, leading to a contradicton. Let $x$ be the center of $K_1$, $y$ be the center of $K_2$ and set $z:= (x+y)/2$. Then for $r:=\left ( 5 \sqrt{n}+1 \right ) 2^{-k(\mathcal{K})} $, we have
	$\mathcal{K} \subset \mathcal{B}_r(z).$
	Moreover, we have
	\begin{align*}
	\mu(\mathcal{B}_r(z)) & = c r^{n+2\theta} \\ & = c \left (5 \sqrt{n}+1 \right )^{n+2\theta} \left (2^{-k(\mathcal{K})} \right )^{n+2\theta} \\
	& = c \left ( 5 \sqrt{n}+1 \right )^{n+2\theta} \left (2^{-k(\mathcal{K})} \right )^{-(n-2 \theta)} \int_{K_1} \int_{K_2} dxdy \\
	& \leq c \left ( 5 \sqrt{n}+1 \right )^{n+2\theta} \left ( 5 \sqrt{n}+1 \right )^{n-2\theta} \int_{K_1} \int_{K_2} \frac{dxdy}{|x-y|^{n-2\theta}} \\
	& = c \left ( 5 \sqrt{n}+1 \right )^{2n} \mu(\mathcal{K})
	\end{align*}
	where $c=c(n,\theta)>0$. Now we assume that
	\begin{equation} \label{kappa}
	\kappa \leq c^{-1} \left ( 5 \sqrt{n}+1 \right )^{-2n}.
	\end{equation}
	Together with (\ref{lowerdensity}) applied to the set $E$ defined in (\ref{E}), we obtain
	\begin{equation} \label{cubeball}
	\Psi_{\lambda}(z,r) \geq \mu(E \cap \mathcal{K}) \geq \varepsilon \mu(\mathcal{K}) \geq \kappa \varepsilon \mu(\mathcal{B}_r(z)).
	\end{equation}
	In particular, (\ref{cubeball}) implies that $r<\sqrt{n}/2$, since otherwise we get a contradiction to (\ref{smallden}). Therefore, we have
	$$ \sup_{0<r < \sqrt{n}/2} \Psi_{\lambda}(z,r) \geq \kappa \varepsilon \mu(\mathcal{B}_r(z)),$$
	so that by definition of $\mathcal{D}_{\kappa \varepsilon}$ we obtain that $(z,z) \in \mathcal{D}_{\kappa \varepsilon}$. Moreover, in view of (\ref{upperexit}) we deduce that $r \leq r_z$, where $r_z$ is the exit radius at the point $z$ determined in (\ref{lowerexit}) and (\ref{upperexit}). We therefore have
	$$ \mathcal{K} \subset \mathcal{B}_r(z) \subset \mathcal{B}_{r_z}(z) \subset \bigcup_{(x,x) \in D_{\kappa \varepsilon}} \mathcal{B}_{r_x}(x),$$ 
	so that $\mathcal{K} \in \mathcal{U}^d_{\lambda}$, which contradicts the assumption that $\mathcal{K} \in \mathcal{U}^{nd}_{\lambda}$.
\end{proof}
In what follows, we set
\begin{equation} \label{kappadef}
\kappa:= \min \left \{c^{-1} \left ( 5 \sqrt{n}+1 \right )^{-2n}, (6\sqrt{n})^{-(n+2\theta)} \right \},
\end{equation}
where $c$ is given as in $(\ref{kappa})$, so that in particular Lemma \ref{faraway} is at our disposal. \par For any cube $\mathcal{K}=K_1 \times K_2 \in \mathcal{U}^{nd}_{\lambda}$, we write $P_1(\mathcal{K}):=K_1 \times K_1$ and $P_2(\mathcal{K}):=K_2 \times K_2$. Furthermore, we write
$$ \phi(\mathcal{K}):=\frac{2^{-k(\mathcal{K})}}{\textnormal{dist}(K_1, K_2)}, $$
which matches the function $\phi(r,x_0,y_0)$ introduced in (\ref{phidef}). \par
In view of Corollary \ref{mfusenondiagcubes} and Lemma \ref{faraway}, for any cube $\mathcal{K} \in \mathcal{U}^{nd}_{\lambda}$ we have
\begin{equation} \label{nondiag}
\begin{aligned}
& \mu(\mathcal{K}) = \mu(\mathcal{Q}_{r}(x_0,y_0)) \\
\leq & (\sqrt{n})^{n+2\theta} \bigg( \mu \left (\left \{ (x,y) \in \mathcal{K} \mid  {\mathcal{M}}_{\mathcal{B}_{4n}} (U^2)(x,y) > \lambda^2 \right \} \right ) \\ 
& + \phi(\mathcal{K})^{n-2 \theta} \mu \left (\left \{ (x,y) \in P_1\mathcal{K} \mid  {\mathcal{M}}_{\mathcal{B}_{4n}} (U^2)(x,y) > N_d^2 \phi(\mathcal{K})^{-2(\theta+s)} \lambda^2 \right \} \right ) \\
& + \phi(\mathcal{K})^{n-2 \theta} \mu \left (\left \{ (x,y) \in P_2\mathcal{K} \mid  {\mathcal{M}}_{\mathcal{B}_{4n}} (U^2)(x,y) > N_d^2 \phi(\mathcal{K})^{-2(\theta+s)} \lambda^2 \right \} \right ) \\
& + \phi(\mathcal{K})^{n-2 \theta} \mu \left (\left \{ (x,y) \in P_1 \mathcal{K} \mid  {\mathcal{M}}_{\mathcal{B}_{4n}} (G^{q_0})(x,y) > \phi(\mathcal{K})^{-{q_0}(\theta+s)} \lambda^{q_0} \right \} \right ) \\
& + \phi(\mathcal{K})^{n-2 \theta} \mu \left (\left \{ (x,y) \in P_2\mathcal{K} \mid  {\mathcal{M}}_{\mathcal{B}_{4n}} (G^{q_0})(x,y) > \phi(\mathcal{K})^{-{q_0}(\theta+s)}\lambda^{q_0} \right \} \right ) \bigg ).
\end{aligned}
\end{equation}
For $h=1,2$, for simplicity of notation we define 
$$ Z_\lambda^{U,h}(\mathcal{K}) := \left \{ (x,y) \in P_h \mathcal{K} \mid  {\mathcal{M}}_{\mathcal{B}_{4n}} (U^2)(x,y) > N_d^2 \lambda^2 \right \} $$
and
$$ Z_\lambda^{G,h}(\mathcal{K}) := \left \{ (x,y) \in P_h \mathcal{K} \mid  {\mathcal{M}}_{\mathcal{B}_{4n}} (G^{q_0})(x,y) > \lambda^{q_0} \right \}. $$ 
In order to handle the diagonal level sets on the right-hand side of (\ref{nondiag}), for $h=1,2$ we also define the subfamilies
$$ \mathcal{G}^{h}_{\lambda} := \left \{ \mathcal{K} \in \mathcal{U}^{nd}_{\lambda} \mid \mu(Z_\lambda^{U,h}(\mathcal{K}))+\mu(Z_\lambda^{G,h}(\mathcal{K})) \leq \varepsilon \mu(P_h \mathcal{K}) \right \} $$ and
$$ \mathcal{N}^{h}_{\lambda} := \left \{ \mathcal{K} \in \mathcal{U}^{nd}_{\lambda} \mid \mu(Z_\lambda^{U,h}(\mathcal{K}))+\mu(Z_\lambda^{G,h}(\mathcal{K})) > \varepsilon \mu(P_h \mathcal{K}) \right \}. $$
Moreover, set $$\mathcal{G}_{\lambda}:=\mathcal{G}^{1}_{\lambda} \cap \mathcal{G}^{2}_{\lambda} \text{ and } \mathcal{N}_{\lambda}:=\mathcal{N}^{1}_{\lambda} \cup \mathcal{N}^{2}_{\lambda},$$
so that we clearly have $\mathcal{U}^{nd}_{\lambda} = \mathcal{G}_{\lambda} \cup \mathcal{N}_{\lambda}$, where the union is disjoint. \par
The following Lemma shows that if $\mathcal{K} \in \mathcal{G}_{\lambda}$, then the diagonal terms on the right-hand side of (\ref{nondiag}) can be treated in a fairly straightforward manner.
\begin{lem} \label{softest}
For $\varepsilon$ small enough, we have
\begin{equation} \label{soft}
\sum_{\mathcal{K} \in \mathcal{G}_{\lambda}} \mu(\mathcal{K}) \leq C \mu \left (\left \{ (x,y) \in \mathcal{Q}_1 \mid  {\mathcal{M}}_{\mathcal{B}_{4n}} (U^2)(x,y) > \lambda^2 \right \} \right ),
\end{equation}
where $C=C(n,\theta)>0$.
\end{lem}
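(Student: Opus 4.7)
The plan is to feed the inequality (\ref{nondiag}) into the definition of $\mathcal{G}_\lambda$ and absorb the diagonal error terms. First I would observe that by Lemma \ref{faraway}, for any $\mathcal{K} \in \mathcal{U}^{nd}_\lambda$ one has $\phi(\mathcal{K}) \leq 1/(3\sqrt{n}+1) < 1$. Consequently $\phi(\mathcal{K})^{-2(\theta+s)} \geq 1$ and $\phi(\mathcal{K})^{-q_0(\theta+s)} \geq 1$, so that the level sets appearing in the four ``diagonal'' terms of (\ref{nondiag}) are contained in the sets $Z^{U,h}_\lambda(\mathcal{K})$ and $Z^{G,h}_\lambda(\mathcal{K})$ respectively. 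When $\mathcal{K} \in \mathcal{G}_\lambda$, the defining property then gives
$$\mu \left (\left \{ (x,y) \in P_h\mathcal{K} \mid  {\mathcal{M}}_{\mathcal{B}_{4n}} (U^2) > N_d^2 \phi(\mathcal{K})^{-2(\theta+s)} \lambda^2 \right \} \right ) + \mu \left (\left \{ (x,y) \in P_h\mathcal{K} \mid  {\mathcal{M}}_{\mathcal{B}_{4n}} (G^{q_0}) > \phi(\mathcal{K})^{-q_0(\theta+s)}\lambda^{q_0} \right \} \right ) \leq \varepsilon \mu(P_h\mathcal{K})$$
for $h = 1,2$.

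The next step is the geometric comparison between $\phi(\mathcal{K})^{n-2\theta} \mu(P_h\mathcal{K})$ and $\mu(\mathcal{K})$. Writing $\mathcal{K} = K_1 \times K_2$ of sidelength $r = 2^{-k(\mathcal{K})}$, Proposition \ref{doublingmeasure}(i) gives $\mu(P_h\mathcal{K}) \leq C_1 r^{n+2\theta}$, while by the same computation as in the proof of Proposition \ref{offdiagreverse} (see in particular (\ref{comparable})) we have $\mu(\mathcal{K}) \geq c_1 r^{2n} / \operatorname{dist}(K_1,K_2)^{n-2\theta}$, where $c_1,C_1$ depend only on $n,\theta$ (here Lemma \ref{faraway} is what controls the lower bound on $\operatorname{dist}(K_1,K_2)/r$ needed for the implicit constant). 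Dividing gives
$$ \phi(\mathcal{K})^{n-2\theta} \mu(P_h\mathcal{K}) \leq C_2 \mu(\mathcal{K}) \qquad (h=1,2) $$
with $C_2 = C_2(n,\theta) > 0$.

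Combining these two observations with (\ref{nondiag}) yields, for every $\mathcal{K} \in \mathcal{G}_\lambda$,
$$ \mu(\mathcal{K}) \leq (\sqrt{n})^{n+2\theta} \mu \left ( \mathcal{K} \cap \{(x,y) \mid  {\mathcal{M}}_{\mathcal{B}_{4n}}(U^2) > \lambda^2\} \right ) + 4(\sqrt{n})^{n+2\theta} C_2 \varepsilon \mu(\mathcal{K}). $$
Choosing $\varepsilon \leq \varepsilon_0(n,\theta)$ small enough so that $4(\sqrt{n})^{n+2\theta} C_2 \varepsilon \leq 1/2$, we can absorb the last term into the left-hand side and obtain $\mu(\mathcal{K}) \leq 2(\sqrt{n})^{n+2\theta} \mu\bigl(\mathcal{K} \cap \{\mathcal{M}_{\mathcal{B}_{4n}}(U^2) > \lambda^2\}\bigr)$.

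Finally, the cubes in $\mathcal{U}_\lambda$ produced by the Calder\'on–Zygmund decomposition (Lemma \ref{CaldZyg}) are pairwise disjoint, and all are contained in $\mathcal{Q}_1$, so summing the previous inequality over $\mathcal{K} \in \mathcal{G}_\lambda \subset \mathcal{U}_\lambda^{nd}$ gives
$$ \sum_{\mathcal{K} \in \mathcal{G}_\lambda} \mu(\mathcal{K}) \leq 2(\sqrt{n})^{n+2\theta} \mu \left ( \{(x,y) \in \mathcal{Q}_1 \mid  {\mathcal{M}}_{\mathcal{B}_{4n}}(U^2)(x,y) > \lambda^2\} \right ), $$
which is the desired estimate with $C = 2(\sqrt{n})^{n+2\theta}$. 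No step here is genuinely difficult; the only point that requires care is the geometric comparison $\phi(\mathcal{K})^{n-2\theta} \mu(P_h\mathcal{K}) \lesssim \mu(\mathcal{K})$, which crucially uses the off-diagonal separation provided by Lemma \ref{faraway}.
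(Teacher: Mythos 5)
Your proof is correct and follows essentially the same argument as the paper: Lemma \ref{faraway} gives $\phi(\mathcal{K}) \leq 1$ so the diagonal level sets sit inside $Z^{U,h}_\lambda(\mathcal{K})$ and $Z^{G,h}_\lambda(\mathcal{K})$, the defining property of $\mathcal{G}_\lambda$ bounds their measures by $\varepsilon \mu(P_h\mathcal{K})$, the geometric inequality $\phi(\mathcal{K})^{n-2\theta}\mu(P_h\mathcal{K}) \leq C_2\mu(\mathcal{K})$ (which is (\ref{phiesta}) in the paper) converts this to an error $\lesssim \varepsilon\mu(\mathcal{K})$ that is absorbed for $\varepsilon$ small, and one concludes by summing over the disjoint cubes of $\mathcal{G}_\lambda \subset \mathcal{U}_\lambda$. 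The only discrepancy is a harmless factor of $4$ versus $2$ in the absorbed error, which does not affect the conclusion.
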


\begin{proof}
Since in view of Lemma \ref{faraway} we have $\phi(\mathcal{K}) \leq 1$, together with (\ref{nondiag}), using that $\mu(P_1 \mathcal{K})=\mu(P_2 \mathcal{K})$, for any $\mathcal{K} \in \mathcal{G}_\lambda$ we obtain
\begin{align*}
\mu(\mathcal{K})
\leq & (\sqrt{n})^{n+2\theta} \bigg( \mu \left (\left \{ (x,y) \in \mathcal{K} \mid  {\mathcal{M}}_{\mathcal{B}_{4n}} (U^2)(x,y) > \lambda^2 \right \} \right ) \\ 
& + \phi(\mathcal{K})^{n-2 \theta} \left (\mu \left (Z^{U,1}_\lambda (\mathcal{K})\right ) + \mu \left (Z^{U,2}_\lambda(\mathcal{K}) \right ) + \mu \left (Z^{G,1}_\lambda(\mathcal{K}) \right ) +\mu \left (Z^{G,2}_\lambda(\mathcal{K}) \right ) \right ) \bigg )\\
\leq & (\sqrt{n})^{n+2\theta} \bigg( \mu \left (\left \{ (x,y) \in \mathcal{K} \mid  {\mathcal{M}}_{\mathcal{B}_{4n}} (U^2)(x,y) > \lambda^2 \right \} \right )
+ 2\phi(\mathcal{K})^{n-2 \theta} \varepsilon \mu(P_1 \mathcal{K}) \bigg ).
\end{align*}
Note that we have $\mathcal{K}=\mathcal{Q}_r(x_0,y_0)$ for $r=2^{-k(\mathcal{K})}$ and $x_0, y_0 \in Q_1$. In view of Lemma \ref{faraway}, we can argue in the same way as in (\ref{ineq99}) and (\ref{comparable}) to obtain
$$ \frac{2^{-2nk(\mathcal{K})}}{\textnormal{dist}(K_1,K_2)^{n-2\theta}} \leq 2^{2n+1} \frac{(r/2)^{2n}}{\textnormal{dist}(B_\frac{r}{2}(x_0),B_\frac{r}{2}(y_0))^{n-2\theta}} \leq 2^{2n+1}\mu(\mathcal{B}_\frac{r}{2}(x_0,y_0)) \leq 2^{2n+1}\mu(\mathcal{K}).$$
Since also $$\mu(P_1 \mathcal{K}) \leq \mu \left (\mathcal{B}_{\frac{\sqrt{n}}{2}r}(x_0) \right )=C_1r^{n+2\theta}=C_1 2^{-(n+2\theta)k(\mathcal{K})}$$ for some $C_1=C_1(n,\theta) \geq 1$, we deduce that
\begin{equation} \label{phiesta}
\phi(\mathcal{K})^{n-2\theta} \leq C_2 \frac{\mu(\mathcal{K})}{\mu(P_1 \mathcal{K})},
\end{equation}
where $C_2=C_2(n,\theta)\geq 1$.
By connecting the previous display with the first one in the proof and assuming that
\begin{equation} \label{epsrestr}
\varepsilon \leq \frac{1}{4 (\sqrt{n})^{n+2\theta} C_2},
\end{equation}
we arrive at
\begin{align*}
\mu(\mathcal{K})
\leq & (\sqrt{n})^{n+2\theta} \left( \mu \left (\left \{ (x,y) \in \mathcal{K} \mid  {\mathcal{M}}_{\mathcal{B}_{4n}} (U^2)(x,y) > \lambda^2 \right \} \right )
+ 2C_2 \varepsilon \mu(\mathcal{K}) \right ) \\
\leq & (\sqrt{n})^{n+2\theta} \mu \left (\left \{ (x,y) \in \mathcal{K} \mid  {\mathcal{M}}_{\mathcal{B}_{4n}} (U^2)(x,y) > \lambda^2 \right \} \right )
+ \frac{\mu(\mathcal{K})}{2},
\end{align*}
which by reabsorbing the last term on the the right-hand side into the left-hand side implies 
$$ \mu(\mathcal{K}) \leq 2(\sqrt{n})^{n+2\theta} \mu \left (\left \{ (x,y) \in \mathcal{K} \mid  {\mathcal{M}}_{\mathcal{B}_{4n}} (U^2)(x,y) > \lambda^2 \right \} \right ).$$
Summing over $\mathcal{K} \in \mathcal{G}_{\lambda}$ and using that all cubes in the family $\mathcal{U}_\lambda$ and therefore also all cubes in $\mathcal{G}_{\lambda}$ are disjoint and contained in $\mathcal{Q}_1$, we see that (\ref{soft}) holds with $C=2(\sqrt{n})^{n+2\theta}$. This finishes the proof.
\end{proof}
From now on, we will always assume that the restriction (\ref{epsrestr}) on $\varepsilon$ holds, so that the estimate (\ref{soft}) from Lemma \ref{softest} holds. \par
It remains to control the last four terms on the right-hand side of (\ref{nondiag}) also in the more involved case when $\mathcal{K} \in \mathcal{N}_\lambda$, which requires a delicate combinatorial argument inspired by \cite{selfimpro}. In order to accomplish this, for $h=1,2$ we define the families
$$ P_h \mathcal{N}_\lambda := \left \{ P_h \mathcal{K} \mid \mathcal{K} \in \mathcal{N}_\lambda^{h} \right \}. $$
Since $P_1 \mathcal{N}_\lambda \cup P_2 \mathcal{N}_\lambda$ is a family of dyadic cubes, clearly there is a disjoint subfamily $P \mathcal{N}_\lambda$ of $P_1 \mathcal{N}_\lambda \cup P_2 \mathcal{N}_\lambda$ such that
\begin{equation} \label{disjointcover}
\bigcup_{\mathcal{H} \in P \mathcal{N}_\lambda} \mathcal{H} = \bigcup_{\mathcal{K} \in P_1 \mathcal{N}_\lambda \cup P_2 \mathcal{N}_\lambda} \mathcal{K}.
\end{equation}
In other words, any cube in $P_1 \mathcal{N}_\lambda \cup P_2 \mathcal{N}_\lambda$ is contained in exactly one cube $\mathcal{H} \in P \mathcal{N}_\lambda$. The following Lemma plays a crucial role in the mentioned combinatorial argument. It shows that a cube of class $\mathcal{N}_\lambda$ is not only far away from the diagonal in terms of its own sidelength as shown in Lemma \ref{faraway}, but also in terms of the sidelength of the larger cube $\mathcal{H} \in P \mathcal{N}_\lambda$ in which its projection onto the diagonal is contained.

\begin{lem} \label{farawayH}
	Let $\mathcal{K}=K_1 \times K_2 \in \mathcal{N}_\lambda$, so that for some $h \in \{1,2\}$ $P_h \mathcal{K}$ belongs to $P_h \mathcal{N}_\lambda$. Moro ever, let $\mathcal{H}=H \times H$ be the unique cube that belongs to $P \mathcal{N}_\lambda$ and contains $P_h \mathcal{K}$. Then we have $\textnormal{dist}(K_1,K_2) \geq 2^{-k(\mathcal{H})}$.
\end{lem}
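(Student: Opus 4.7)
The plan is to argue by contradiction: assume that $\textnormal{dist}(K_1,K_2) < 2^{-k(\mathcal{H})}$ and derive a contradiction with either the maximality of $\mathcal{H}$ in $P_1\mathcal{N}_\lambda \cup P_2\mathcal{N}_\lambda$ or with the fact that $\mathcal{K} \in \mathcal{U}_\lambda^{nd}$.

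First I would set up notation. Since $\mathcal{H} \in P\mathcal{N}_\lambda \subset P_1\mathcal{N}_\lambda \cup P_2\mathcal{N}_\lambda$, there exist $h' \in \{1,2\}$ and $\mathcal{K}' = K_1' \times K_2' \in \mathcal{N}_\lambda^{h'}$ with $P_{h'}\mathcal{K}' = \mathcal{H}$, so $K_{h'}' = H$ and $k(\mathcal{K}') = k(\mathcal{H})$. Because $\mathcal{K}' \in \mathcal{U}_\lambda^{nd}$, Lemma \ref{faraway} applied to $\mathcal{K}'$ gives the crucial gap
$$
\textnormal{dist}(K_1',K_2') \ge (3\sqrt{n}+1)\,2^{-k(\mathcal{H})}.
$$
From $P_h \mathcal{K} \subset \mathcal{H} = H\times H$ I have $K_h \subset H$, so comparing dyadic ancestors of $K_h$ at level $k(\mathcal{H})$, the $h$-th component $K_h^\ast$ of the ancestor $\mathcal{K}^\ast$ of $\mathcal{K}$ at level $k(\mathcal{H})$ must equal $H$.

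Next I would analyze where $K_{3-h}$ can sit under the contradiction hypothesis. The inequality $\textnormal{dist}(K_1,K_2) < 2^{-k(\mathcal{H})}$ combined with $K_h \subset H$ confines $K_{3-h}$ to the $2^{-k(\mathcal{H})}$-neighborhood of $H$, so the dyadic ancestor $K_{3-h}^\ast$ is either $H$ itself (giving $\mathcal{K} \subset \mathcal{H}$) or a dyadic neighbor of $H$ at the same level. Either way, the whole cube $\mathcal{K}$ lies in a small diagonal ball $\mathcal{B}_R(z)$ with $R$ of order $2^{-k(\mathcal{H})}$ and $z$ chosen on the diagonal (for instance, the midpoint of the centers of $K_1$ and $K_2$).

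At this stage I would try to mimic the closing step of Lemma \ref{faraway}: combine the Calder\'on--Zygmund lower density $\mu(E \cap \mathcal{K}) \ge \varepsilon \mu(\mathcal{K})$ with a volume comparison $\mu(\mathcal{B}_R(z)) \lesssim \mu(\mathcal{K})$ to conclude $\Psi_\lambda(z,R) \ge \kappa\varepsilon\mu(\mathcal{B}_R(z))$, which would force $(z,z) \in D_{\kappa\varepsilon}$ with exit radius $r_z \ge R$, and hence $\mathcal{K} \in \mathcal{U}_\lambda^d$, contradicting $\mathcal{K} \in \mathcal{N}_\lambda \subset \mathcal{U}_\lambda^{nd}$.

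The hardest part, and the step where the naive version of Lemma \ref{faraway}'s proof breaks down, is precisely the volume comparison: at the coarser scale $2^{-k(\mathcal{H})}$ the ratio $\mu(\mathcal{B}_R(z))/\mu(\mathcal{K})$ can blow up like $2^{2n(k(\mathcal{K})-k(\mathcal{H}))}$, so it cannot be absorbed by the single constant $\kappa$ fixed in \eqref{kappadef}. I expect the fix to require exploiting both the geometric gap $(3\sqrt{n}+1)\,2^{-k(\mathcal{H})}$ between $K_{h'}' = H$ and $K_{3-h'}'$ provided by the paired cube $\mathcal{K}'$, and the CZ information that $\mathcal{H}$, being a dyadic ancestor of $\mathcal{K}$, satisfies $\mu(E \cap \mathcal{H}) < \varepsilon \mu(\mathcal{H})$ by \eqref{upperdensity}. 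Combining these with a bookkeeping of how many cubes $\mathcal{K} \in \mathcal{N}_\lambda$ at varying levels can share the same ancestor $\mathcal{H}$ — the combinatorial input advertised in the introduction as inspired by \cite{selfimpro} — should supply the missing density control at the larger scale and close the contradiction.
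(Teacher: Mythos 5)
You correctly diagnose that the naive transplantation of Lemma \ref{faraway}'s closing step breaks, because the volume ratio $\mu(\mathcal{B}_R(z))/\mu(\mathcal{K})$ with $R\sim 2^{-k(\mathcal{H})}$ can be arbitrarily large when $k(\mathcal{K})\gg k(\mathcal{H})$, and the fixed constant $\kappa$ from \eqref{kappadef} cannot absorb it. That diagnosis is the right one. However, the fix you sketch does not work, and it is not what the paper does. Two problems: first, $\mathcal{H}=H\times H$ is a \emph{diagonal} cube while $\mathcal{K}=K_1\times K_2$ is off-diagonal, so $\mathcal{H}$ is not a dyadic ancestor of $\mathcal{K}$ and need not contain $\mathcal{K}$ at all — hence \eqref{upperdensity} does not apply to the pair $(\mathcal{K},\mathcal{H})$. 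Second, even if it did, \eqref{upperdensity} gives an \emph{upper} bound $\mu(E\cap\widetilde{\mathcal{K}})<\varepsilon\mu(\widetilde{\mathcal{K}})$, whereas what the argument needs to force $(x_\mathcal{H},x_\mathcal{H})\in D_{\kappa\varepsilon}$ and a large exit radius is a \emph{lower} density bound at the $\mathcal{H}$-scale. The combinatorial bookkeeping you gesture at is also not how the paper closes this lemma; that bookkeeping appears later in Lemma \ref{hardest}.

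The missing idea is that the lower density at scale $2^{-k(\mathcal{H})}$ comes for free from the very definition of the family $P\mathcal{N}_\lambda$, not from the CZ property of $\mathcal{K}$. Since $\mathcal{H}\in P\mathcal{N}_\lambda\subset P_1\mathcal{N}_\lambda\cup P_2\mathcal{N}_\lambda$, there is some $h''$ and some $\mathcal{K}''\in\mathcal{N}_\lambda^{h''}$ with $P_{h''}\mathcal{K}''=\mathcal{H}$, and the membership criterion for $\mathcal{N}_\lambda^{h''}$ reads
\begin{equation*}
\mu\bigl(Z_\lambda^{U,h''}(\mathcal{K}'')\bigr)+\mu\bigl(Z_\lambda^{G,h''}(\mathcal{K}'')\bigr)>\varepsilon\,\mu(\mathcal{H}),
\end{equation*}
where the two sets on the left are exactly the $U$- and $G$-superlevel sets inside $\mathcal{H}$ at the thresholds $N_d^2\lambda^2$ and $\lambda^{q_0}$ entering $\Psi_\lambda$. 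This directly gives $\Psi_\lambda(x_\mathcal{H},3\sqrt{n}\,2^{-k(\mathcal{H})})>\varepsilon\mu(\mathcal{H})\geq\kappa\varepsilon\,\mu(\mathcal{B}_{3\sqrt{n}\,2^{-k(\mathcal{H})}}(x_\mathcal{H}))$, with no volume mismatch to absorb, and hence $r_{x_\mathcal{H}}\geq 3\sqrt{n}\,2^{-k(\mathcal{H})}$. Combined with your (correct) geometric observation that the contradiction hypothesis forces $\mathcal{K}\subset\mathcal{B}_{3\sqrt{n}\,2^{-k(\mathcal{H})}}(x_\mathcal{H})$, this places $\mathcal{K}$ inside $\mathcal{B}_{r_{x_\mathcal{H}}}(x_\mathcal{H})$ and hence in $\mathcal{U}_\lambda^d$, contradicting $\mathcal{K}\in\mathcal{N}_\lambda\subset\mathcal{U}_\lambda^{nd}$. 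The $(3\sqrt{n}+1)$-gap from Lemma \ref{faraway} applied to $\mathcal{K}'$ and the cube-counting you mention are not needed here.
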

\begin{proof}
	Without loss of generality we can assume that $h=1$, since the case when $h=2$ can be treated in the same way. We prove by contradiction. Assume that 
	\begin{equation} \label{contr}
	\textnormal{dist}(K_1,K_2) < 2^{-k(\mathcal{H})}
	\end{equation}
	and denote by $x_\mathcal{H}$ the center of the cube $H$. Choose points $x_1 \in \overline H$ and $y_1 \in \overline K_2$ such that $\textnormal{dist}(\overline H, \overline K_2)=|x_1-y_1|$ and denote by $y_0$ the center of $K_2$. Then for any $y \in K_2$, we have
	\begin{align*}
	|x_\mathcal{H}-y| \leq |x_1-y_1|+|x_\mathcal{H}-x|+|y_1-y_0|+|y_0-y| < & 2^{-k(\mathcal{H})} + \frac{\sqrt{n}}{2} 2^{-k(\mathcal{H})} + \sqrt{n} 2^{-k(\mathcal{K})} \\
	\leq & 3 \sqrt{n} 2^{-k(\mathcal{H})},
	\end{align*}
	so that $K_2 \subset B_{3 \sqrt{n} 2^{-k(\mathcal{H})}}(x_\mathcal{H})$. Since by assumption $K_1 \subset H$, we arrive at 
	\begin{equation} \label{Kcont}
	\mathcal{K} \subset \mathcal{B}_{3 \sqrt{n} 2^{-k(\mathcal{H})}}(x_\mathcal{H}).
	\end{equation}
	Since $\mathcal{H}$ belongs to $P \mathcal{N}_\lambda$ and thus to $P_1 \mathcal{N}_\lambda \cup P_2 \mathcal{N}_\lambda$, we have 
	\begin{equation} \label{HexitU}
	\begin{aligned}
	& \mu \left (\left \{ (x,y) \in \mathcal{H} \mid  {\mathcal{M}}_{\mathcal{B}_{4n}} (U^2)(x,y) > N_d^2 \lambda^2 \right \} \right ) \\ & +  \mu \left (\left \{ (x,y) \in \mathcal{H} \mid  {\mathcal{M}}_{\mathcal{B}_{4n}} (G^{q_0})(x,y) > \lambda^{q_0} \right \} \right )  > \varepsilon \mu(\mathcal{H}).
	\end{aligned}
	\end{equation}
	(\ref{HexitU}) implies
	\begin{align*}
	 \Psi_\lambda(x_\mathcal{H},3 \sqrt{n} 2^{-k(\mathcal{H})}) = &\mu \left (\left \{ (x,y) \in \mathcal{B}_{3 \sqrt{n} 2^{-k(\mathcal{H})}}(x_\mathcal{H}) \mid  {\mathcal{M}}_{\mathcal{B}_{4n}} (U^2)(x,y) > N_d^2 \lambda^2 \right \} \right ) \\ & + \mu \left (\left \{ (x,y) \in \mathcal{B}_{3 \sqrt{n} 2^{-k(\mathcal{H})}}(x_\mathcal{H}) \mid  {\mathcal{M}}_{\mathcal{B}_{4n}} (G^{q_0})(x,y) > \lambda^{q_0} \right \} \right ) \\ \geq & 
	 \mu \left (\left \{ (x,y) \in \mathcal{H} \mid  {\mathcal{M}}_{\mathcal{B}_{4n}} (U^2)(x,y) > N_d^2 \lambda^2 \right \} \right ) \\ &+
	 \mu \left (\left \{ (x,y) \in \mathcal{H} \mid  {\mathcal{M}}_{\mathcal{B}_{4n}} (G^{q_0})(x,y) > \lambda^{q_0} \right \} \right ) \\ > & \varepsilon \mu(\mathcal{H}) \geq \varepsilon \mu \left (\mathcal{B}_{2^{-(k(\mathcal{H})+1)}}(x_\mathcal{H}) \right ) \\ \geq & \frac{1}{(6\sqrt{n})^{n+2\theta}} \varepsilon \mu \left (\mathcal{B}_{3 \sqrt{n} 2^{-k(\mathcal{H})}}(x_\mathcal{H}) \right ) \geq \kappa \varepsilon \mu \left (\mathcal{B}_{3 \sqrt{n} 2^{-k(\mathcal{H})}}(x_\mathcal{H}) \right ) ,
	\end{align*}
	where we also used (\ref{kappadef}) in order to obtain the last inequality.
	Therefore, in view of (\ref{lowerexit}) and (\ref{upperexit}) we have
	$3 \sqrt{n} 2^{-k(\mathcal{H})} \leq r_{x_\mathcal{H}},$ where $r_{x_\mathcal{H}}$ is the exit radius at the point $x_\mathcal{H}$. In particular, we have
	$$ \mathcal{B}_{3 \sqrt{n} 2^{-k(\mathcal{H})}}(x_\mathcal{H}) \subset \mathcal{B}_{r_{x_\mathcal{H}}}(x_\mathcal{H}),$$
	which together with (\ref{Kcont}) implies
	$$ \mathcal{K} \subset \mathcal{B}_{r_{x_\mathcal{H}}}(x_\mathcal{H}).$$
	But then by definition of $\mathcal{U}^d_\lambda$ (see (\ref{Ud})), we obtain that $\mathcal{K} \in \mathcal{U}^d_\lambda$, which is a contradiction since $\mathcal{K} \in \mathcal{N}_\lambda \subset \mathcal{U}^{nd}_\lambda$ and $\mathcal{U}^d_\lambda \cap \mathcal{U}^{nd}_\lambda = \emptyset$. Thus, the proof is finished.
\end{proof}

We now estimate the measures of the cubes of class $\mathcal{N}_\lambda$. The main tools are (\ref{nondiag}) and the above Lemma \ref{farawayH}, which allows to classify the projected diagonal cubes in a way that enables us to control the diagonal terms in (\ref{nondiag}).

\begin{lem} \label{hardest}
We have
	\begin{equation} \label{hard}
	\begin{aligned}
	\sum_{\mathcal{K} \in \mathcal{N}_{\lambda}} \mu(\mathcal{K}) \leq & \frac{C}{\lambda^2} \int_{\mathcal{Q}_1 \cap \left \{ {\mathcal{M}}_{\mathcal{B}_{4n}} (U^2) > \lambda^2 \right \}} {\mathcal{M}}_{\mathcal{B}_{4n}} (U^2 ) d\mu \\ & + \frac{C}{\lambda^{q_0}} \int_{\mathcal{Q}_1 \cap \left \{ {\mathcal{M}}_{\mathcal{B}_{4n}} (G^{q_0}) > \lambda^{q_0} \right \}} {\mathcal{M}}_{\mathcal{B}_{4n}} (G^{q_0} ) d\mu,
	\end{aligned}
	\end{equation}
	where $C=C(n,s,\theta)>0$.
\end{lem}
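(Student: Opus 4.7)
The strategy is to start from (\ref{nondiag}), applied to each $\mathcal{K}\in\mathcal{N}_\lambda\subset\mathcal{U}_\lambda^{nd}$, which bounds $\mu(\mathcal{K})$ by a main term $\mu(\mathcal{K}\cap\{\mathcal{M}_{\mathcal{B}_{4n}}(U^2)>\lambda^2\})$ plus four diagonal correction terms of the shape $\phi(\mathcal{K})^{n-2\theta}\mu(T)$, where $T$ denotes the level sets appearing on the right-hand side of (\ref{nondiag}) with the inflated thresholds $N_d^2\phi(\mathcal{K})^{-2(s+\theta)}\lambda^2$ and $\phi(\mathcal{K})^{-q_0(s+\theta)}\lambda^{q_0}$. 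Since the cubes of $\mathcal{N}_\lambda$ are pairwise disjoint subsets of $\mathcal{Q}_1$, summing the main terms yields at most $\mu(\mathcal{Q}_1\cap\{\mathcal{M}_{\mathcal{B}_{4n}}(U^2)>\lambda^2\})$, which Chebyshev turns into the first term on the right-hand side of (\ref{hard}). The task therefore reduces to estimating the four diagonal correction sums.

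For each of these, the plan is to apply Chebyshev pointwise inside $\mu(T)$: this trades $\phi(\mathcal{K})^{n-2\theta}$ for $\phi(\mathcal{K})^{n+2s}$ in the $U$-case, respectively $\phi(\mathcal{K})^{n+q_0s+(q_0-2)\theta}$ in the $G$-case, at the cost of a factor $\mathcal{M}_{\mathcal{B}_{4n}}(U^2)/(N_d^2\lambda^2)$ (respectively $\mathcal{M}_{\mathcal{B}_{4n}}(G^{q_0})/\lambda^{q_0}$). After Fubini, the problem reduces to proving the pointwise combinatorial bound
\[
S^{U,h}(x,y):=\sum_{\mathcal{K}\in\mathcal{N}_\lambda^h:\;(x,y)\in P_h\mathcal{K}}\phi(\mathcal{K})^{n+2s}\leq C
\]
uniformly in $(x,y)\in\mathcal{Q}_1$, and the analogous bound with exponent $n+q_0s+(q_0-2)\theta$ for the $G$-version. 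Note the key gain: both new exponents exceed $n$, whereas $n-2\theta<n$.

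The main obstacle is proving this combinatorial estimate, and Lemma~\ref{farawayH} is the critical ingredient. Every $\mathcal{K}\in\mathcal{N}_\lambda^h$ has $P_h\mathcal{K}$ contained in a unique $\mathcal{H}\in P\mathcal{N}_\lambda$ by (\ref{disjointcover}); since the cubes in $P\mathcal{N}_\lambda$ are pairwise disjoint, each fixed $(x,y)$ lies in at most one such $\mathcal{H}$, and it suffices to bound the sub-sum over cubes $\mathcal{K}$ with $P_h\mathcal{K}\subset\mathcal{H}$ for that single $\mathcal{H}$. By Lemma~\ref{farawayH}, each such $\mathcal{K}=K_1\times K_2$ satisfies $\textnormal{dist}(K_1,K_2)\geq 2^{-k(\mathcal{H})}$. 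The remaining sum will be organized by the generation $l\geq k(\mathcal{H})$ of $K_h$ (which, given $x,y\in K_h$, is uniquely determined whenever such a dyadic cube exists) and by the dyadic distance class $2^{-l+k}$ of $K_{3-h}$, forced to satisfy $k\geq l-k(\mathcal{H})$. A direct count shows the annulus at distance class $k$ contributes $\lesssim 2^{kn}\cdot 2^{-k(n+2s)}=2^{-2sk}$, so summation in $k$ yields $\sum_{K_{3-h}}\phi(\mathcal{K})^{n+2s}\lesssim 2^{-2s(l-k(\mathcal{H}))}$ by geometric series, and the subsequent sum over $l\geq k(\mathcal{H})$ is again a convergent geometric series bounded by an absolute constant. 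Positivity of $s$ is essential, and without the sharper distance bound from Lemma~\ref{farawayH} (as opposed to merely Lemma~\ref{faraway}) the inner geometric series would fail to be summable; the identical bookkeeping for the $G$-version uses positivity of $q_0s+(q_0-2)\theta$.

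Combining the combinatorial bound with Fubini, and noting that the threshold $\mathcal{M}_{\mathcal{B}_{4n}}(U^2)>N_d^2\phi(\mathcal{K})^{-2(s+\theta)}\lambda^2$ forces $\mathcal{M}_{\mathcal{B}_{4n}}(U^2)>N_d^2\lambda^2\geq\lambda^2$ on the support of the integrand (since $\phi(\mathcal{K})\leq 1$ by Lemma~\ref{faraway}), gives
\[
\sum_{\mathcal{K}\in\mathcal{N}_\lambda}\phi(\mathcal{K})^{n-2\theta}\mu(T^{U,h}_\lambda(\mathcal{K}))\leq\frac{C}{\lambda^2}\int_{\mathcal{Q}_1\cap\{\mathcal{M}_{\mathcal{B}_{4n}}(U^2)>\lambda^2\}}\mathcal{M}_{\mathcal{B}_{4n}}(U^2)\,d\mu,
\]
together with the analogous $G$-estimate. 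Adding the four diagonal contributions to the main term handled in the first paragraph finishes the proof of (\ref{hard}).
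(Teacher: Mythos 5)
Your combinatorial argument (Chebyshev, Fubini, the pointwise bound $S^{U,h}(x,y)\leq C$, the generation-and-distance-class count using Lemma~\ref{farawayH}, positivity of $s$) is sound and is in fact an equivalent reorganization of the paper's decomposition into $[\mathcal{N}_{\lambda}^h(\mathcal{H})]_{i,j,m}$: where the paper counts cubes and uses (\ref{subaddU}), (\ref{subaddG}) and (\ref{cardest}), you integrate against the pointwise multiplicity function and get the same geometric series. The identification of a unique $\mathcal{H}\in P\mathcal{N}_\lambda$ per point, and the observation that the threshold $N_d^2\phi(\mathcal{K})^{-2(s+\theta)}\lambda^2\geq\lambda^2$ because $\phi(\mathcal{K})\leq 1$, are both correct.

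However, there is a genuine gap in the step that connects summing (\ref{nondiag}) over \emph{all} $\mathcal{K}\in\mathcal{N}_\lambda$ to your combinatorial bound, which covers the $P_h$-correction terms only for $\mathcal{K}\in\mathcal{N}_\lambda^h$. For a cube $\mathcal{K}\in\mathcal{N}_\lambda\setminus\mathcal{N}_\lambda^h=\mathcal{G}_\lambda^h\cap\mathcal{N}_\lambda^{3-h}$, the projection $P_h\mathcal{K}$ does \emph{not} belong to $P_h\mathcal{N}_\lambda$, so Lemma~\ref{farawayH} does not apply to give the sharp distance bound $\textnormal{dist}(K_1,K_2)\geq 2^{-k(\mathcal{H})}$; only the weaker scale-of-$\mathcal{K}$ bound from Lemma~\ref{faraway} is available, under which the outer sum over generations $l$ in your count need not converge. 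Consequently the corresponding $P_h$-correction sums are left uncontrolled in your proposal. The paper closes exactly this gap by a per-cube absorption: for $\mathcal{K}\in\mathcal{G}_\lambda^h\cap\mathcal{N}_\lambda$, the definition of $\mathcal{G}_\lambda^h$ and $\phi(\mathcal{K})\leq 1$ give $\mu(Z^{U,h}_\lambda(\mathcal{K}))+\mu(Z^{G,h}_\lambda(\mathcal{K}))\leq\varepsilon\mu(P_h\mathcal{K})$, and combined with (\ref{phiesta}) the $P_h$-corrections in (\ref{nondiag}) are $\leq C_2\varepsilon\mu(\mathcal{K})$, which can be reabsorbed into $\mu(\mathcal{K})$ on the left after shrinking $\varepsilon$ via (\ref{epsrestr}). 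You need to add this case split ($\mathcal{K}\in\mathcal{N}_\lambda^1\cap\mathcal{N}_\lambda^2$ versus $\mathcal{K}\in\mathcal{G}_\lambda^h\cap\mathcal{N}_\lambda^{3-h}$) and the absorption before the summation; otherwise your combinatorial bound applies to a strictly smaller sum than the one produced by (\ref{nondiag}).
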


\begin{proof}
\emph{Step 1: A combinatorial argument.}
For any $\mathcal{H} \in P \mathcal{N}_{\lambda}$ and $h \in \{1,2\}$, we define the family of cubes
$$ \mathcal{N}_\lambda^{h}(\mathcal{H}) := \left \{\mathcal{K} \in \mathcal{N}_\lambda \mid P_h \mathcal{K} \subset \mathcal{H} \right \} .$$
Since the family $P \mathcal{N}_{\lambda}$ is a disjoint covering of the family $P_1 \mathcal{N}_\lambda \cup P_2 \mathcal{N}_\lambda$, we can decompose $\mathcal{N}^h_{\lambda}$ into mutually disjoint subfamilies as follows
\begin{equation} \label{disdecomp}
\mathcal{N}^h_{\lambda} = \bigcup_{\mathcal{H} \in P \mathcal{N}_{\lambda}} \mathcal{N}^h_{\lambda} (\mathcal{H}),
\end{equation}
in the sense that we have $\mathcal{N}^h_{\lambda} (\mathcal{H}_1) \cap \mathcal{N}^h_{\lambda} (\mathcal{H}_2) = \emptyset$ whenever $\mathcal{H}_1 \neq \mathcal{H}_2$. Since for any $\mathcal{H} \in P \mathcal{N}_{\lambda}$ and any $\mathcal{K} \in \mathcal{N}^h_{\lambda} (\mathcal{H})$ we have $k(\mathcal{K})=k(P_1\mathcal{K}) \geq k(\mathcal{H})$, we can decompose $\mathcal{N}^h_{\lambda} (\mathcal{H})$ into the following classes
$$ [\mathcal{N}^h_{\lambda} (\mathcal{H})]_i := \left \{\mathcal{K} \in \mathcal{N}^h_{\lambda} (\mathcal{H}) \mid k(\mathcal{K}) = i+k(\mathcal{H}) \right \}, \quad i \in \mathbb{N}_0, \quad h\in \{1,2\}.$$
More precisely, we have the decomposition into mutually disjoint subfamilies
\begin{equation} \label{disdecompi}
\mathcal{N}^h_{\lambda} (\mathcal{H}) = \bigcup_{i \geq 0} [\mathcal{N}^h_{\lambda} (\mathcal{H})]_i,
\end{equation}
in the sense that $[\mathcal{N}^h_{\lambda} (\mathcal{H})]_i \cap [\mathcal{N}^h_{\lambda} (\mathcal{H})]_j = \emptyset$ whenever $i \neq j$. \par 
Next, for  $h\in \{1,2\}$ and $i,j \in \mathbb{N}_0$ we define further subfamilies by
$$ [\mathcal{N}^h_{\lambda} (\mathcal{H})]_{i,j} := \left \{\mathcal{K}=K_1 \times K_2 \in [\mathcal{N}^h_{\lambda} (\mathcal{H})]_i \mid 2^{j-k(\mathcal{H})} \leq \textnormal{dist}(K_1,K_2) < 2^{j+1-k(\mathcal{H})} \right \}. $$
Since by Lemma \ref{farawayH} for any $\mathcal{K}=K_1 \times K_2 \in \mathcal{N}^h_{\lambda} (\mathcal{H})$ we have $2^{-k(\mathcal{H})} \leq \textnormal{dist}(K_1,K_2)$, we have the disjoint decomposition
\begin{equation} \label{disdecompij}
\mathcal{N}^h_{\lambda} (\mathcal{H}) = \bigcup_{i,j \geq 0} [\mathcal{N}^h_{\lambda} (\mathcal{H})]_{i,j},
\end{equation}
in the sense that $[\mathcal{N}^h_{\lambda} (\mathcal{H})]_{i_1,j_1} \cap [\mathcal{N}^h_{\lambda} (\mathcal{H})]_{i_2,j_2} = \emptyset$ whenever $(i_1,j_1) \neq (i_2,j_2)$. \par 
Therefore, by combining (\ref{disdecomp}) and (\ref{disdecompij}), we arrive at the following decomposition of mutually disjoint subfamilies
\begin{equation} \label{dd}
\mathcal{N}^h_{\lambda} = \bigcup_{\mathcal{H} \in P \mathcal{N}_{\lambda}} \bigcup_{i,j \geq 0} [\mathcal{N}^h_{\lambda} (\mathcal{H})]_{i,j}.
\end{equation}
Now fix some $\mathcal{H}=H \times H \in P \mathcal{N}_{\lambda}$. Our next goal is to prove that for $h \in \{1,2\}$ the following inequality holds
\begin{equation} \label{combineq}
\begin{aligned}
&\sum_{\mathcal{K} \in \mathcal{N}^h_{\lambda} (\mathcal{H})} \phi(\mathcal{K})^{n-2 \theta} \mu \left (\left \{ (x,y) \in P_h \mathcal{K} \mid  {\mathcal{M}}_{\mathcal{B}_{4n}} (U^2)(x,y) > N_d^2 \phi(\mathcal{K})^{-2(\theta+s)} \lambda^2 \right \} \right ) \\
&+ \sum_{\mathcal{K} \in \mathcal{N}^h_{\lambda} (\mathcal{H})} \phi(\mathcal{K})^{n-2 \theta} \mu \left (\left \{ (x,y) \in P_h\mathcal{K} \mid  {\mathcal{M}}_{\mathcal{B}_{4n}} (G^{q_0})(x,y) > \phi(\mathcal{K})^{-{q_0}(\theta+s)} \lambda^{q_0} \right \} \right ) \\
\leq & C_0 \left (\frac{1}{\lambda^2} \int_{\mathcal{H} \cap \left \{ {\mathcal{M}}_{\mathcal{B}_{4n}} (U^2) > \lambda^2 \right \}} {\mathcal{M}}_{\mathcal{B}_{4n}} (U^2 ) d\mu + \frac{1}{\lambda^{q_0}} \int_{\mathcal{H} \cap \left \{ {\mathcal{M}}_{\mathcal{B}_{4n}} (G^{q_0}) > \lambda^{q_0} \right \}} {\mathcal{M}}_{\mathcal{B}_{4n}} (G^{q_0} ) d\mu \right ),
\end{aligned}
\end{equation}
where $C_0=C_0(n,s)>0$. By definition of the class $[\mathcal{N}^h_{\lambda} (\mathcal{H})]_{i,j}$, for any $ \mathcal{K}=K_1 \times K_2 \in [\mathcal{N}^h_{\lambda} (\mathcal{H})]_{i,j}$ we have
$$ \phi(\mathcal{K})= \frac{2^{-k(\mathcal{K})}}{\textnormal{dist}(K_1,K_2)} = \frac{1}{2^i} \frac{2^{-k(\mathcal{H})}}{\textnormal{dist}(K_1,K_2)} \leq \frac{1}{2^{i+j}} .$$
By using Chebychev's inequality, (\ref{disdecompij}) and then the last display, we obtain
\begin{equation} \label{chebappU}
\begin{aligned}
&\sum_{\mathcal{K} \in \mathcal{N}^h_{\lambda} (\mathcal{H})} \phi(\mathcal{K})^{n-2 \theta} \mu \left (\left \{ (x,y) \in P_h \mathcal{K} \mid  {\mathcal{M}}_{\mathcal{B}_{4n}} (U^2)(x,y) > N_d^2 \phi(\mathcal{K})^{-2(\theta+s)} \lambda^2 \right \} \right ) \\
\leq & \frac{1}{N_d^2 \lambda^2} \sum_{\mathcal{K} \in \mathcal{N}^h_{\lambda} (\mathcal{H})} \phi(\mathcal{K})^{n+2s} \int_{P_h \mathcal{K} \cap \left \{ {\mathcal{M}}_{\mathcal{B}_{4n}} (U^2) > N_d^2 \lambda^2 \right \}} {\mathcal{M}}_{\mathcal{B}_{4n}} (U^2 ) d\mu \\
\leq & \frac{1}{\lambda^2} \sum_{i,j=0}^\infty \sum_{\mathcal{K} \in [\mathcal{N}^h_{\lambda} (\mathcal{H})]_{i,j}} \phi(\mathcal{K})^{n+2s} \int_{P_h \mathcal{K} \cap \left \{ {\mathcal{M}}_{\mathcal{B}_{4n}} (U^2) > \lambda^2 \right \}} {\mathcal{M}}_{\mathcal{B}_{4n}} (U^2 ) d\mu \\
\leq & \frac{1}{\lambda^2} \sum_{i,j=0}^\infty \left (\frac{1}{2^{i+j}} \right )^{n+2s} \sum_{\mathcal{K} \in [\mathcal{N}^h_{\lambda} (\mathcal{H})]_{i,j}} \int_{P_h \mathcal{K} \cap \left \{ {\mathcal{M}}_{\mathcal{B}_{4n}} (U^2) > \lambda^2 \right \}} {\mathcal{M}}_{\mathcal{B}_{4n}} (U^2 ) d\mu 
\end{aligned}
\end{equation} 
and similarly by additionally using that $n+(q_0-2)\theta+q_0s \geq n+2s$
\begin{equation} \label{chebappG}
\begin{aligned}
&\sum_{\mathcal{K} \in \mathcal{N}^h_{\lambda} (\mathcal{H})} \phi(\mathcal{K})^{n-2 \theta} \mu \left (\left \{ (x,y) \in P_h \mathcal{K} \mid  {\mathcal{M}}_{\mathcal{B}_{4n}} (G^{q_0})(x,y) > \phi(\mathcal{K})^{-2(\theta+s)} \lambda^2 \right \} \right ) \\
\leq & \frac{1}{\lambda^{q_0}} \sum_{\mathcal{K} \in \mathcal{N}^h_{\lambda} (\mathcal{H})} \phi(\mathcal{K})^{n+(q_0-2)\theta+q_0s} \int_{P_h \mathcal{K} \cap \left \{ {\mathcal{M}}_{\mathcal{B}_{4n}} (G^{q_0}) > \lambda^{q_0} \right \}} {\mathcal{M}}_{\mathcal{B}_{4n}} (G^{q_0} ) d\mu \\
\leq & \frac{1}{\lambda^{q_0}} \sum_{i,j=0}^\infty \sum_{\mathcal{K} \in [\mathcal{N}^h_{\lambda} (\mathcal{H})]_{i,j}} \phi(\mathcal{K})^{n+(q_0-2)\theta+q_0s} \int_{P_h \mathcal{K} \cap \left \{ {\mathcal{M}}_{\mathcal{B}_{4n}} (G^{q_0}) > \lambda^{q_0} \right \}} {\mathcal{M}}_{\mathcal{B}_{4n}} (G^{q_0} ) d\mu \\
\leq & \frac{1}{\lambda^{q_0}} \sum_{i,j=0}^\infty \left (\frac{1}{2^{i+j}} \right )^{n+2s} \sum_{\mathcal{K} \in [\mathcal{N}^h_{\lambda} (\mathcal{H})]_{i,j}} \int_{P_h \mathcal{K} \cap \left \{ {\mathcal{M}}_{\mathcal{B}_{4n}} (G^{q_0}) > \lambda^{q_0} \right \}} {\mathcal{M}}_{\mathcal{B}_{4n}} (G^{q_0} ) d\mu. 
\end{aligned}
\end{equation}
In order to proceed, we need to further decompose the families $[\mathcal{N}^h_{\lambda} (\mathcal{H})]_{i,j}$. For any $i \geq 0$, $\mathcal{H}$ contains exactly $2^{ni}$ disjoint diagonal cubes $\mathcal{H}^m_i=H^m_i \times H^m_i$, $m \in \{1,...,2^{ni}\}$, with sidelength $2^{-i-k(\mathcal{H})}$. In particular, the disjointness of these cubes implies
\begin{equation} \label{subaddU}
\sum_{m=1}^{2^{ni}} \int_{\mathcal{H}^m_i \cap \left \{ {\mathcal{M}}_{\mathcal{B}_{4n}} (U^2) >\lambda^2 \right \}} {\mathcal{M}}_{\mathcal{B}_{4n}} (U^2 ) d\mu \leq \int_{\mathcal{H} \cap \left \{ {\mathcal{M}}_{\mathcal{B}_{4n}} (U^2) >\lambda^2 \right \}} {\mathcal{M}}_{\mathcal{B}_{4n}} (U^2 ) d\mu
\end{equation}
and 
\begin{equation} \label{subaddG}
\sum_{m=1}^{2^{ni}} \int_{\mathcal{H}^m_i \cap \left \{ {\mathcal{M}}_{\mathcal{B}_{4n}} (G^{q_0}) >\lambda^{q_0} \right \}} {\mathcal{M}}_{\mathcal{B}_{4n}} (G^{q_0} ) d\mu \leq \int_{\mathcal{H} \cap \left \{ {\mathcal{M}}_{\mathcal{B}_{4n}} (G^{q_0}) >\lambda^{q_0} \right \}} {\mathcal{M}}_{\mathcal{B}_{4n}} (G^{q_0}) d\mu.
\end{equation}
For $h \in \{1,2\}$, $i,j \geq 0$ and $m \in \{1,...,2^{ni}\}$, define the families
$$ [\mathcal{N}^h_{\lambda} (\mathcal{H})]_{i,j,m} := \left \{\mathcal{K} \in [\mathcal{N}^h_{\lambda} (\mathcal{H})]_{i,j} \mid P_h \mathcal{K} = \mathcal{H}^m_i \right \} $$ and observe that we have the disjoint decomposition
\begin{equation} \label{decomp3}
[\mathcal{N}^h_{\lambda} (\mathcal{H})]_{i,j} = \bigcup_{m=1}^{2^{ni}} [\mathcal{N}^h_{\lambda} (\mathcal{H})]_{i,j,m}.
\end{equation}
For a moment, let us focus on the case when $h=1$. Note that since $\mathcal{N}_\lambda^1$ is a family of dyadic cubes, we have 
$P_2\mathcal{K}_1 \cap P_2\mathcal{K}_2 = \emptyset$ for all cubes $\mathcal{K}_1,\mathcal{K}_2 \in [\mathcal{N}^1_{\lambda} (\mathcal{H})]_{i,j,m}$ with $ \mathcal{K}_1 \neq \mathcal{K}_2$, since otherwise $\mathcal{K}_1$ and $\mathcal{K}_2$ would coincide.
Therefore, we observe that the cubes in $[\mathcal{N}^1_{\lambda} (\mathcal{H})]_{i,j,m}$ are all contained in the family $\mathcal{F}^1_{i,j,m}(\mathcal{H})$ consisting of all distinct dyadic cubes of the form $\mathcal{K} = H^m_i \times K$ with $K \subset \mathcal{Q}_1$ and sidelength $2^{-i-k(\mathcal{H})}$ that additionally satisfy 
\begin{equation} \label{dist}
2^{j-k(\mathcal{H})} \leq \textnormal{dist}(H^m_i,K) < 2^{j+1-k(\mathcal{H})}.
\end{equation}
Then in view of a combinatorial consideration, we have the estimate
\begin{equation} \label{cardest}
\# [\mathcal{N}^1_{\lambda} (\mathcal{H})]_{i,j,m} \leq \#\mathcal{F}^1_{i,j,m}(\mathcal{H}) \leq C_1 2^{n(i+j)},
\end{equation}
where $C_1=C_1(n)>0$.
Thus, in view of (\ref{decomp3}), (\ref{cardest}), (\ref{subaddU}) and (\ref{subaddG}), we obtain
\begin{align*}
& \sum_{\mathcal{K} \in [\mathcal{N}^1_{\lambda} (\mathcal{H})]_{i,j}} \int_{P_h \mathcal{K} \cap \left \{ {\mathcal{M}}_{\mathcal{B}_{4n}} (U^2) > \lambda^2 \right \}} {\mathcal{M}}_{\mathcal{B}_{4n}} (U^2 ) d\mu \\
= & \sum_{m=1}^{2^{ni}} \sum_{\mathcal{K} \in [\mathcal{N}^1_{\lambda} (\mathcal{H})]_{i,j}} \int_{\mathcal{H}^m_i \cap \left \{ {\mathcal{M}}_{\mathcal{B}_{4n}} (U^2) > \lambda^2 \right \}} {\mathcal{M}}_{\mathcal{B}_{4n}} (U^2 ) d\mu \\
\leq & C_1 2^{n(i+j)} \sum_{m=1}^{2^{ni}} \int_{\mathcal{H}^m_i \cap \left \{ {\mathcal{M}}_{\mathcal{B}_{4n}} (U^2) > \lambda^2 \right \}} {\mathcal{M}}_{\mathcal{B}_{4n}} (U^2 ) d\mu \\
\leq & C_1 2^{n(i+j)} \int_{\mathcal{H} \cap \left \{ {\mathcal{M}}_{\mathcal{B}_{4n}} (U^2) > \lambda^2 \right \}} {\mathcal{M}}_{\mathcal{B}_{4n}} (U^2 ) d\mu
\end{align*}
and by the same reasoning 
\begin{align*}
& \sum_{\mathcal{K} \in [\mathcal{N}^1_{\lambda} (\mathcal{H})]_{i,j}} \int_{P_h \mathcal{K} \cap \left \{ {\mathcal{M}}_{\mathcal{B}_{4n}} (G^{q_0}) > \lambda^{q_0} \right \}} {\mathcal{M}}_{\mathcal{B}_{4n}} (G^{q_0} ) d\mu \\ \leq & C_1 2^{n(i+j)} \int_{\mathcal{H} \cap \left \{ {\mathcal{M}}_{\mathcal{B}_{4n}} (G^{q_0}) > \lambda^{q_0} \right \}} {\mathcal{M}}_{\mathcal{B}_{4n}} (G^{q_0} ) d\mu.
\end{align*}
In addition, by arguing similarly, the last two displays clearly also hold for $[\mathcal{N}^1_{\lambda} (\mathcal{H})]_{i,j}$ replaced by $[\mathcal{N}^2_{\lambda} (\mathcal{H})]_{i,j}$.
Therefore, for $h \in \{1,2\}$ we deduce
\begin{align*}
& \sum_{i,j=0}^\infty \left (\frac{1}{2^{i+j}} \right )^{n+2s} \sum_{\mathcal{K} \in [\mathcal{N}^h_{\lambda} (\mathcal{H})]_{i,j}} \int_{P_h \mathcal{K} \cap \left \{ {\mathcal{M}}_{\mathcal{B}_{4n}} (U^2) > \lambda^2 \right \}} {\mathcal{M}}_{\mathcal{B}_{4n}} (U^2 ) d\mu \\
\leq & C_1 \sum_{i,j=0}^\infty \left (\frac{1}{2^{i+j}} \right )^{2s} \int_{\mathcal{H} \cap \left \{ {\mathcal{M}}_{\mathcal{B}_{4n}} (U^2) > \lambda^2 \right \}} {\mathcal{M}}_{\mathcal{B}_{4n}} (U^2 ) d\mu \\
\leq & C_0 \int_{\mathcal{H} \cap \left \{ {\mathcal{M}}_{\mathcal{B}_{4n}} (U^2) > \lambda^2 \right \}} {\mathcal{M}}_{\mathcal{B}_{4n}} (U^2 ) d\mu,
\end{align*}
where $C_0=C_1 \left (\frac{1}{1-2^{-2s}} \right )^2<\infty$. Similarly, we also have
\begin{align*}
& \sum_{i,j=0}^\infty \left (\frac{1}{2^{i+j}} \right )^{n+2s} \sum_{\mathcal{K} \in [\mathcal{N}^h_{\lambda} (\mathcal{H})]_{i,j}} \int_{P_h \mathcal{K} \cap \left \{ {\mathcal{M}}_{\mathcal{B}_{4n}} (G^{q_0}) > \lambda^{q_0} \right \}} {\mathcal{M}}_{\mathcal{B}_{4n}} (G^{q_0} ) d\mu \\
\leq & C_0 \int_{\mathcal{H} \cap \left \{ {\mathcal{M}}_{\mathcal{B}_{4n}} (G^{q_0}) > \lambda^{q_0} \right \}} {\mathcal{M}}_{\mathcal{B}_{4n}} (G^{q_0} ) d\mu.
\end{align*}
By combining the last two displays with (\ref{chebappU}) and (\ref{chebappG}), we finally arrive at the estimate (\ref{combineq}) with respect to $C_0$. \par
\emph{Step 2: Summation.}
For any $\mathcal{K} \in \mathcal{N}_\lambda$, we either have $\mathcal{K} \in \mathcal{M}^1_\lambda \cap \mathcal{N}^2_\lambda$, $\mathcal{K} \in \mathcal{M}^2_\lambda \cap \mathcal{N}^1_\lambda$ or $\mathcal{K} \in \mathcal{N}^1_\lambda \cap \mathcal{N}^2_\lambda$.
If $\mathcal{K} \in \mathcal{M}^1_\lambda \cap \mathcal{N}^2_\lambda$, then in a similar way as in the proof of Lemma \ref{softest}, by using (\ref{nondiag}) and taking into account (\ref{phiesta}), we have
\begin{align*}
\mu(\mathcal{K})
\leq & (\sqrt{n})^{n+2\theta} \bigg( \mu \left (\left \{ (x,y) \in \mathcal{K} \mid  {\mathcal{M}}_{\mathcal{B}_{4n}} (U^2)(x,y) > \lambda^2 \right \} \right ) \\ 
& + C_2 \varepsilon \mu(\mathcal{K})\\
&+ \phi(\mathcal{K})^{n-2 \theta} \mu \left (\left \{ (x,y) \in P_2 \mathcal{K} \mid  {\mathcal{M}}_{\mathcal{B}_{4n}} (U^2)(x,y) > N_d^2 \phi(\mathcal{K})^{-2(\theta+s)} \lambda^2 \right \} \right ) \\
&+ \phi(\mathcal{K})^{n-2 \theta} \mu \left (\left \{ (x,y) \in P_2\mathcal{K} \mid  {\mathcal{M}}_{\mathcal{B}_{4n}} (G^{q_0})(x,y) > \phi(\mathcal{K})^{-{q_0}(\theta+s)} \lambda^{q_0} \right \} \right ) \bigg ) ,
\end{align*}
so that in view of the restriction (\ref{epsrestr}) imposed on $\varepsilon$, reabsorbing the second term on the right-hand side into the left-hand side of the previous display yields
\begin{align*}
\mu(\mathcal{K})
\leq & C_3 \bigg( \mu \left (\left \{ (x,y) \in \mathcal{K} \mid  {\mathcal{M}}_{\mathcal{B}_{4n}} (U^2)(x,y) > \lambda^2 \right \} \right ) \\ 
&+ \phi(\mathcal{K})^{n-2 \theta} \mu \left (\left \{ (x,y) \in P_2 \mathcal{K} \mid  {\mathcal{M}}_{\mathcal{B}_{4n}} (U^2)(x,y) > N_d^2 \phi(\mathcal{K})^{-2(\theta+s)} \lambda^2 \right \} \right ) \\
&+ \phi(\mathcal{K})^{n-2 \theta} \mu \left (\left \{ (x,y) \in P_2\mathcal{K} \mid  {\mathcal{M}}_{\mathcal{B}_{4n}} (G^{q_0})(x,y) > \phi(\mathcal{K})^{-{q_0}(\theta+s)} \lambda^{q_0} \right \} \right ) \bigg ) ,
\end{align*}
where $C_3=C_3(n,\theta)>0$.
By a similar argument, we also obtain that for any $\mathcal{K} \in \mathcal{M}^2_\lambda \cap \mathcal{N}^1_\lambda$, we have
\begin{align*}
\mu(\mathcal{K})
\leq & C_3 \bigg( \mu \left (\left \{ (x,y) \in \mathcal{K} \mid  {\mathcal{M}}_{\mathcal{B}_{4n}} (U^2)(x,y) > \lambda^2 \right \} \right ) \\ 
&+ \phi(\mathcal{K})^{n-2 \theta} \mu \left (\left \{ (x,y) \in P_1 \mathcal{K} \mid  {\mathcal{M}}_{\mathcal{B}_{4n}} (U^2)(x,y) > N_d^2 \phi(\mathcal{K})^{-2(\theta+s)} \lambda^2 \right \} \right ) \\
&+ \phi(\mathcal{K})^{n-2 \theta} \mu \left (\left \{ (x,y) \in P_1\mathcal{K} \mid  {\mathcal{M}}_{\mathcal{B}_{4n}} (G^{q_0})(x,y) > \phi(\mathcal{K})^{-{q_0}(\theta+s)} \lambda^{q_0} \right \} \right ) \bigg ) .
\end{align*}
By combining the last two displays with the fact that for any $\mathcal{K} \in \mathcal{N}^1_\lambda \cap \mathcal{N}^2_\lambda$ we have the estimate (\ref{nondiag}), we arrive at
\begin{align*}
& \sum_{\mathcal{K} \in \mathcal{N}_{\lambda}} \mu(\mathcal{K}) \\
\leq & C_4 \Bigg( \sum_{\mathcal{K} \in \mathcal{N}_{\lambda}} \mu \left (\left \{ (x,y) \in \mathcal{K} \mid  {\mathcal{M}}_{\mathcal{B}_{4n}} (U^2)(x,y) > \lambda^2 \right \} \right ) \\ 
& + \sum_{\mathcal{K} \in \mathcal{N}^1_{\lambda}} \phi(\mathcal{K})^{n-2 \theta} \mu \left (\left \{ (x,y) \in P_1\mathcal{K} \mid  {\mathcal{M}}_{\mathcal{B}_{4n}} (U^2)(x,y) > N_d^2 \phi(\mathcal{K})^{-2(\theta+s)} \lambda^2 \right \} \right ) \\
& + \sum_{\mathcal{K} \in \mathcal{N}^2_{\lambda}} \phi(\mathcal{K})^{n-2 \theta} \mu \left (\left \{ (x,y) \in P_2\mathcal{K} \mid  {\mathcal{M}}_{\mathcal{B}_{4n}} (U^2)(x,y) > N_d^2 \phi(\mathcal{K})^{-2(\theta+s)} \lambda^2 \right \} \right ) \\
& + \sum_{\mathcal{K} \in \mathcal{N}^1_{\lambda}} \phi(\mathcal{K})^{n-2 \theta} \mu \left (\left \{ (x,y) \in P_1 \mathcal{K} \mid  {\mathcal{M}}_{\mathcal{B}_{4n}} (G^{q_0})(x,y) > \phi(\mathcal{K})^{-{q_0}(\theta+s)} \lambda^{q_0} \right \} \right ) \\
& + \sum_{\mathcal{K} \in \mathcal{N}^2_{\lambda}} \phi(\mathcal{K})^{n-2 \theta} \mu \left (\left \{ (x,y) \in P_2\mathcal{K} \mid  {\mathcal{M}}_{\mathcal{B}_{4n}} (G^{q_0})(x,y) > \phi(\mathcal{K})^{-{q_0}(\theta+s)}\lambda^{q_0} \right \} \right ) \Bigg ),
\end{align*}
where $C_4=C_4(n,s,\theta)>0$.
Using the disjointness of the cubes $\mathcal{K} \in \mathcal{N}_{\lambda}$ and then Chebychev's inequality, for the first term on the right-hand side of the previous display, we deduce
\begin{align*}
\sum_{\mathcal{K} \in \mathcal{N}_{\lambda}} \mu \left (\left \{ (x,y) \in \mathcal{K} \mid  {\mathcal{M}}_{\mathcal{B}_{4n}} (U^2)(x,y) > \lambda^2 \right \} \right ) \leq & \mu \left (\left \{ (x,y) \in \mathcal{Q}_1 \mid  {\mathcal{M}}_{\mathcal{B}_{4n}} (U^2)(x,y) > \lambda^2 \right \} \right ) \\
\leq & \frac{1}{\lambda^2} \int_{\mathcal{Q}_1 \cap \left \{ {\mathcal{M}}_{\mathcal{B}_{4n}} (U^2) > \lambda^2 \right \}} {\mathcal{M}}_{\mathcal{B}_{4n}} (U^2 ) d\mu.
\end{align*}
Moreover, in view of (\ref{disdecomp}), (\ref{combineq}) and the disjointness of the cubes $\mathcal{H} \in P \mathcal{N}_{\lambda}$, for $h \in \{1,2\}$ we obtain that
\begin{align*}
& \sum_{\mathcal{K} \in \mathcal{N}^h_{\lambda}} \phi(\mathcal{K})^{n-2 \theta} \mu \left (\left \{ (x,y) \in P_h\mathcal{K} \mid  {\mathcal{M}}_{\mathcal{B}_{4n}} (U^2)(x,y) > N_d^2 \phi(\mathcal{K})^{-2(\theta+s)} \lambda^2 \right \} \right ) \\
&+ \sum_{\mathcal{K} \in \mathcal{N}^h_{\lambda}} \phi(\mathcal{K})^{n-2 \theta} \mu \left (\left \{ (x,y) \in P_h\mathcal{K} \mid  {\mathcal{M}}_{\mathcal{B}_{4n}} (G^{q_0})(x,y) > \phi(\mathcal{K})^{-{q_0}(\theta+s)} \lambda^{2} \right \} \right ) \\
= & \sum_{\mathcal{H} \in P \mathcal{N}_{\lambda}} \sum_{\mathcal{K} \in \mathcal{N}^h_{\lambda}(\mathcal{H})} \phi(\mathcal{K})^{n-2 \theta} \mu \left (\left \{ (x,y) \in P_h\mathcal{K} \mid  {\mathcal{M}}_{\mathcal{B}_{4n}} (U^2)(x,y) > N_d^2 \phi(\mathcal{K})^{-2(\theta+s)} \lambda^2 \right \} \right ) \\
&+ \sum_{\mathcal{H} \in P \mathcal{N}_{\lambda}} \sum_{\mathcal{K} \in \mathcal{N}^h_{\lambda}(\mathcal{H})} \phi(\mathcal{K})^{n-2 \theta} \mu \left (\left \{ (x,y) \in P_h\mathcal{K} \mid  {\mathcal{M}}_{\mathcal{B}_{4n}} (G^{q_0})(x,y) > \phi(\mathcal{K})^{-{q_0}(\theta+s)} \lambda^{q_0} \right \} \right ) \\
\leq & C_0 \sum_{\mathcal{H} \in P \mathcal{N}_{\lambda}} \left (\frac{1}{\lambda^2}  \int_{\mathcal{H} \cap \left \{ {\mathcal{M}}_{\mathcal{B}_{4n}} (U^2) > \lambda^2 \right \}} {\mathcal{M}}_{\mathcal{B}_{4n}} (U^2 ) d\mu + \frac{1}{\lambda^{q_0}} \int_{\mathcal{H} \cap \left \{ {\mathcal{M}}_{\mathcal{B}_{4n}} (G^{q_0}) > \lambda^{q_0} \right \}} {\mathcal{M}}_{\mathcal{B}_{4n}} (G^{q_0} ) d\mu \right ) \\
= & C_0 \left (\frac{1}{\lambda^2} \int_{\mathcal{Q}_1 \cap \left \{ {\mathcal{M}}_{\mathcal{B}_{4n}} (U^2) > \lambda^2 \right \}} {\mathcal{M}}_{\mathcal{B}_{4n}} (U^2 ) d\mu +\frac{1}{\lambda^{q_0}}  \int_{\mathcal{Q}_1 \cap \left \{ {\mathcal{M}}_{\mathcal{B}_{4n}} (G^{q_0}) > \lambda^{q_0} \right \}} {\mathcal{M}}_{\mathcal{B}_{4n}} (G^{q_0} ) d\mu \right ).
\end{align*}
The estimate (\ref{hard}) now follows directly by combining the last three displays.
\end{proof}

\subsection{Level set estimate}

By combining the above results, we are finally able to estimate the measure of the level set of ${\mathcal{M}}_{\mathcal{B}_{4n}} (U^2)$ in the whole cube $\mathcal{Q}_1$.

\begin{cor} \label{levelsetest}
Under all the assumptions made above, for any $\lambda \geq \lambda_0$ we have
\begin{align*}
& \mu \left (\left \{(x,y) \in \mathcal{Q}_{1} \mid  {\mathcal{M}}_{\mathcal{B}_{4n}} (U^2)(x,y) > N_{\varepsilon,q}^2 \lambda^2 \right \} \right ) \\ \leq & C \left (\frac{\varepsilon}{\lambda^2} \int_{\mathcal{Q}_1 \cap \left \{ {\mathcal{M}}_{\mathcal{B}_{4n}} (U^2) > \lambda^2 \right \}} {\mathcal{M}}_{\mathcal{B}_{4n}} (U^2 ) d\mu + \frac{1}{\delta^{q_0} \lambda^{q_0}} \int_{\mathcal{Q}_1 \cap \left \{ {\mathcal{M}}_{\mathcal{B}_{4n}} (G^{q_0}) > \delta^{q_0} \lambda^{q_0} \right \}} {\mathcal{M}}_{\mathcal{B}_{4n}} (G^{q_0} ) d\mu \right ),
\end{align*}
where $C=C(n,s,\theta)>0$ and $\delta = \delta(\varepsilon,n,s,\theta,\Lambda) \in (0,1)$ is given by Lemma \ref{mfuse}.
\end{cor}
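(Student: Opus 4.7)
The plan is to simply stitch together Lemmas~\ref{decompcover}, \ref{ondiagsum}, \ref{softest} and \ref{hardest}, which collectively already contain all the hard work; the remaining task is essentially an accounting exercise that tracks the $\varepsilon$, $\delta$ and $\kappa$ factors to the right place in the final estimate. The main (minor) obstacle is to verify that the $\varepsilon$ ends up only in front of the $U^2$ integral while the factor $\delta^{-q_0}$ appears only with the $G^{q_0}$ integral, as claimed.

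First I would check that the Calder\'on--Zygmund machinery from Section~\ref{cover} is actually applicable to the set $E$ defined in~(\ref{E}). Since $N_d \leq N_{\varepsilon,q}$ (see~(\ref{Neps})), the weak $1$-$1$ computation carried out in~(\ref{smalldensity1}), which relies on the choice of $M_0$ in~(\ref{tailcontrol}) together with $\lambda \geq \lambda_0$, gives $\mu(E) \leq \kappa\varepsilon\mu(\mathcal{Q}_1) < \varepsilon \mu(\mathcal{Q}_1)$, so Lemma~\ref{CaldZyg} produces the dyadic family $\mathcal{U}_\lambda$ and the auxiliary diagonal cover $\{\mathcal{B}_{r_x}(x)\}_{(x,x)\in J_D}$ is well-defined.

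Next I would apply Lemma~\ref{decompcover} to reduce $\mu(E)$ to
$$
\mu(E)\;\leq\; C\varepsilon\Bigl(\kappa\sum_{(x,x)\in J_D}\mu(\mathcal{B}_{r_x}(x)\cap\mathcal{Q}_1)\;+\;\sum_{\mathcal{K}\in\mathcal{U}_\lambda^{nd}}\mu(\mathcal{K})\Bigr),
$$
and then estimate each sum separately. The diagonal sum is bounded by Lemma~\ref{ondiagsum}; when multiplied by the outer factor $\varepsilon\kappa$, the $\kappa^{-1}\varepsilon^{-1}$ appearing in front of the $G^{q_0}$-term collapses to an absolute constant, which is precisely why the final estimate has no $\varepsilon$ in front of the $G^{q_0}$ integral. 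For the off-diagonal sum I would decompose $\mathcal{U}_\lambda^{nd}=\mathcal{G}_\lambda\cup\mathcal{N}_\lambda$: Lemma~\ref{softest} bounds the $\mathcal{G}_\lambda$ part by $\mu(\{\mathcal{M}_{\mathcal{B}_{4n}}(U^2)>\lambda^2\})$, while Lemma~\ref{hardest} already controls the $\mathcal{N}_\lambda$ part directly in terms of the integrals of $\mathcal{M}_{\mathcal{B}_{4n}}(U^2)$ and $\mathcal{M}_{\mathcal{B}_{4n}}(G^{q_0})$ over their superlevel sets.

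To finish, I would upgrade the remaining level set measures to integrals via Chebyshev's inequality
$$
\mu(\{F>\tau\})\;\leq\;\tau^{-1}\int_{\{F>\tau\}}F\,d\mu,
$$
applied with $F=\mathcal{M}_{\mathcal{B}_{4n}}(U^2)$ and $\tau=\lambda^2$, and with $F=\mathcal{M}_{\mathcal{B}_{4n}}(G^{q_0})$ and $\tau=\delta^{q_0}\lambda^{q_0}$. The $G^{q_0}$-contribution coming from $\mathcal{N}_\lambda$ involves the integral over the larger superlevel set $\{\mathcal{M}_{\mathcal{B}_{4n}}(G^{q_0})>\lambda^{q_0}\}\subset\{\mathcal{M}_{\mathcal{B}_{4n}}(G^{q_0})>\delta^{q_0}\lambda^{q_0}\}$ with prefactor $\varepsilon\lambda^{-q_0}\leq \delta^{-q_0}\lambda^{-q_0}$ (using $\varepsilon,\delta<1$), so it is absorbed into the second term on the right-hand side of the statement. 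All remaining constants depend only on $n,s,\theta$, delivering the corollary.
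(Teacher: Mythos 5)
Your proposal is correct and follows the paper's proof step for step: combine Lemmas~\ref{decompcover}, \ref{ondiagsum}, \ref{softest} and \ref{hardest}, then upgrade the remaining level-set measures to integrals via Chebyshev; you merely spell out the bookkeeping of the $\varepsilon$, $\kappa$ and $\delta^{-q_0}$ factors and the absorption $\{\mathcal{M}_{\mathcal{B}_{4n}}(G^{q_0})>\lambda^{q_0}\}\subset\{\mathcal{M}_{\mathcal{B}_{4n}}(G^{q_0})>\delta^{q_0}\lambda^{q_0}\}$ in more detail than the paper's terse single display.
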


\begin{proof}
In view of Lemma \ref{decompcover}, Lemma \ref{ondiagsum}, Lemma \ref{softest}, Lemma \ref{hardest} and Chebychev's inequality, we obtain
\begin{align*}
& \mu \left (\left \{(x,y) \in \mathcal{Q}_{1} \mid  {\mathcal{M}}_{\mathcal{B}_{4n}} (U^2)(x,y) > N_{\varepsilon,q}^2 \lambda^2 \right \} \right ) \\
\leq & C_1 \varepsilon \left(\kappa \sum_{(x,x) \in J_D} \mu(\mathcal{B}_{r_x}(x) \cap \mathcal{Q}_1) + \sum_{\mathcal{K} \in \mathcal{G}_{\lambda}} \mu(\mathcal{K}) + \sum_{\mathcal{K} \in \mathcal{N}_{\lambda}} \mu(\mathcal{K}) \right ) \\
\leq & C \left (\frac{\varepsilon}{\lambda^2} \int_{\mathcal{Q}_1 \cap \left \{ {\mathcal{M}}_{\mathcal{B}_{4n}} (U^2) > \lambda^2 \right \}} {\mathcal{M}}_{\mathcal{B}_{4n}} (U^2 ) d\mu + \frac{1}{\delta^{q_0} \lambda^{q_0}} \int_{\mathcal{Q}_1 \cap \left \{ {\mathcal{M}}_{\mathcal{B}_{4n}} (G^{q_0}) > \delta^{q_0} \lambda^{q_0} \right \}} {\mathcal{M}}_{\mathcal{B}_{4n}} (G^{q_0} ) d\mu \right ),
\end{align*}
where all constants depend only on $n,s$ and $\theta$. This finishes the proof.
\end{proof}

\section{$L^p$ estimates for $U$} \label{ape}
We first prove the estimate we are interested in on a fixed scale in the form of an priori estimate and under the additional assumption that $U$ satisfies the estimate (\ref{J}). In order to do this, we use the following standard alternative characterization of the $L^p$ norm which follows from Fubini's theorem in a straightforward way.
\begin{lem} \label{altchar}
	Let $\nu$ be a $\sigma$-finite measure on $\mathbb{R}^n$ and let $h:\Omega \rightarrow [0,+\infty]$ be a $\nu$-measurable function in a domain $\Omega \subset \mathbb{R}^n$. Then for any $0< \beta < \infty$, we have 
	$$ \int_{\Omega} h^\beta d\nu = \beta \int_0^{\infty} \lambda^{\beta-1} \nu \left ( \{x \in \Omega \mid h(x)>\lambda \} \right ) d\lambda.$$
\end{lem}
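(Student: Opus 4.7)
The plan is to use the layer-cake representation of $h^\beta$ together with Tonelli's theorem. Since the result assumes only that $h$ is $\nu$-measurable with values in $[0,+\infty]$ and that $\nu$ is $\sigma$-finite, Tonelli (rather than full Fubini) is the appropriate tool, as all integrands involved are non-negative.

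First, I would observe the pointwise identity
\begin{equation*}
h(x)^\beta = \beta \int_0^{h(x)} \lambda^{\beta-1} d\lambda = \beta \int_0^\infty \lambda^{\beta-1} \chi_{\{\lambda < h(x)\}}(\lambda) d\lambda,
\end{equation*}
valid for every $x \in \Omega$ (including the case $h(x) = +\infty$, where both sides equal $+\infty$ since $\beta>0$). Next, I would integrate this identity over $\Omega$ with respect to $\nu$ and apply Tonelli's theorem on the product space $\Omega \times (0,\infty)$, equipped with the product of $\nu$ and the one-dimensional Lebesgue measure, to interchange the order of integration. This yields
\begin{equation*}
\int_\Omega h^\beta d\nu = \beta \int_0^\infty \lambda^{\beta-1} \left(\int_\Omega \chi_{\{h(x) > \lambda\}}(x) d\nu(x)\right) d\lambda = \beta \int_0^\infty \lambda^{\beta-1} \nu(\{x \in \Omega \mid h(x) > \lambda\}) d\lambda,
\end{equation*}
which is the claimed identity.

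The only point requiring some care is the justification of Tonelli's theorem, which needs the joint measurability of the map $(x,\lambda) \mapsto \chi_{\{h(x) > \lambda\}}$ on the product space and $\sigma$-finiteness of both marginal measures. Joint measurability follows from the $\nu$-measurability of $h$ (the set $\{(x,\lambda) : h(x) > \lambda\}$ is the preimage of $(0,\infty)$ under the measurable map $(x,\lambda) \mapsto h(x) - \lambda$, with the convention $(+\infty) - \lambda = +\infty$), while $\sigma$-finiteness of Lebesgue measure on $(0,\infty)$ is standard and $\sigma$-finiteness of $\nu$ is part of the hypothesis. Since there is no subtle estimate or delicate cancellation to control, I do not anticipate any genuine obstacle here; this is a purely measure-theoretic computation.
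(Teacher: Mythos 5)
Your proof is correct and follows exactly the route the paper indicates (the paper gives no explicit proof, only remarking that the lemma "follows from Fubini's theorem in a straightforward way"). Your use of the layer-cake identity together with Tonelli's theorem, and your care about joint measurability and $\sigma$-finiteness, is the standard and intended argument.
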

\begin{prop} \label{aprioriest}
	Let $q \in [2,p)$ and $\widetilde q \in (q_0,q^\star)$, where $q_0$ is given by (\ref{q0}). Then there exists some small enough $\delta = \delta(n,s,\theta,\Lambda,q,\widetilde q) > 0$ such that if $A \in \mathcal{L}_0(\Lambda)$ is $\delta$-vanishing in $\mathcal{B}_{4n}$ and $g \in W^{s,2}(\mathbb{R}^n)$ satisfies $G \in L^{\widetilde q}(\mathcal{B}_{4n},\mu)$, then for any weak solution $u \in W^{s,2}(\mathbb{R}^n)$ of the equation $L_{A}^\Phi u = (-\Delta)^s g$ in $B_{4n}$ that satisfies $U \in L^{\widetilde q}(\mathcal{B}_{4n},\mu)$ and the estimate (\ref{J}) in any ball contained in $B_{4n}$ with respect to $q$, we have
\begin{align*}
\left (\dashint_{\mathcal{B}_{1/2}} U^{\widetilde q} d\mu \right )^{\frac{1}{\widetilde q}} \leq & C \Bigg (\sum_{k=1}^\infty 2^{-k(s-\theta)} \left ( \dashint_{\mathcal{B}_{2^k 4n}} U^2 d\mu \right )^\frac{1}{2} \\
	& + \left ( \dashint_{\mathcal{B}_{4n}} G^{\widetilde q} d\mu \right )^\frac{1}{\widetilde q} + \sum_{k=1}^\infty 2^{-k(s-\theta)} \left ( \dashint_{\mathcal{B}_{2^k 4n}} G^2 d\mu \right )^\frac{1}{2} \Bigg ),
	\end{align*}
	where $C=C(n,s,\theta,\Lambda,q,\widetilde q,p)>0$.
\end{prop}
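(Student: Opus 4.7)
My plan is to deduce the $L^{\widetilde q}$ bound on $U$ from the level-set estimate of Corollary \ref{levelsetest} by the standard good-$\lambda$/Fubini scheme, with the absorption step made legitimate by the a priori assumption $U \in L^{\widetilde q}(\mathcal{B}_{4n},\mu)$. First I would use Corollary \ref{maxdominate} to dominate $U^{\widetilde q}$ pointwise by $\mathcal{M}_{\mathcal{B}_{4n}}(U^2)^{\widetilde q/2}$ and then, via Lemma \ref{altchar} with $\beta=\widetilde q/2$ together with the substitution $\lambda \mapsto \lambda^2$, rewrite
\[
\int_{\mathcal{B}_{1/2}} U^{\widetilde q}\, d\mu \leq \int_{\mathcal{Q}_1} \mathcal{M}_{\mathcal{B}_{4n}}(U^2)^{\widetilde q/2}\, d\mu = \widetilde q \int_0^\infty \lambda^{\widetilde q - 1}\, \mu\bigl(\{\mathcal{M}_{\mathcal{B}_{4n}}(U^2) > \lambda^2\} \cap \mathcal{Q}_1\bigr)\, d\lambda.
\]
I would then split this integral at $\Lambda_0 := N_{\varepsilon,q}\lambda_0$, bounding the piece over $[0,\Lambda_0]$ trivially by $\Lambda_0^{\widetilde q}\mu(\mathcal{Q}_1)$; once $\lambda_0$ is unwound via \eqref{tailcontrol} and H\"older is used to pass from $(\dashint G^{q_0})^{1/q_0}$ to $(\dashint G^{\widetilde q})^{1/\widetilde q}$ (which is allowed since $q_0 < \widetilde q$), this piece already matches the target right-hand side.

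For the tail $\lambda > \Lambda_0$ I would substitute $\lambda \mapsto N_{\varepsilon,q}\lambda$ and invoke Corollary \ref{levelsetest}. Fubini's theorem converts the resulting double integrals cleanly: since $\widetilde q > 2$, the $U$-contribution yields $\tfrac{2}{\widetilde q-2}\int_{\mathcal{Q}_1} \mathcal{M}_{\mathcal{B}_{4n}}(U^2)^{\widetilde q/2}\, d\mu$, and since $\widetilde q > q_0$, the $G$-contribution yields $C\int_{\mathcal{Q}_1} \mathcal{M}_{\mathcal{B}_{4n}}(G^{q_0})^{\widetilde q/q_0}\, d\mu$. Combined with the trivial piece, this produces
\[
\int_{\mathcal{Q}_1} \mathcal{M}_{\mathcal{B}_{4n}}(U^2)^{\widetilde q/2}\, d\mu \leq C_1 N_{\varepsilon,q}^{\widetilde q}\,\varepsilon \int_{\mathcal{Q}_1} \mathcal{M}_{\mathcal{B}_{4n}}(U^2)^{\widetilde q/2}\, d\mu + C_2 \delta^{-\widetilde q}\int_{\mathcal{Q}_1} \mathcal{M}_{\mathcal{B}_{4n}}(G^{q_0})^{\widetilde q/q_0}\, d\mu + C_3 \lambda_0^{\widetilde q}.
\]

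To close I would fix $\varepsilon$ so small that $C_1 N_{\varepsilon,q}^{\widetilde q}\varepsilon \leq 1/2$, which is possible because \eqref{Neps} gives $N_{\varepsilon,q} \sim \varepsilon^{-1/q^\star}$, so $N_{\varepsilon,q}^{\widetilde q}\varepsilon \sim \varepsilon^{(q^\star-\widetilde q)/q^\star} \to 0$ thanks to $\widetilde q < q^\star$; the $\delta$ coming from Corollary \ref{levelsetest} (hence from Lemma \ref{mfuse}) is then fixed. Absorption of the $U$-term is legitimate because the hypothesis $U \in L^{\widetilde q}(\mathcal{B}_{4n},\mu)$ combined with the strong $\widetilde q/2$-$\widetilde q/2$ estimate of Proposition \ref{Maxfun} gives $\int_{\mathcal{Q}_1}\mathcal{M}_{\mathcal{B}_{4n}}(U^2)^{\widetilde q/2} d\mu < \infty$. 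Applying Proposition \ref{Maxfun} once more with the exponent $\widetilde q/q_0 > 1$ bounds the remaining $G$-term by $C\int_{\mathcal{B}_{4n}} G^{\widetilde q}\, d\mu$, and dividing by $\mu(\mathcal{B}_{1/2})$ yields the claim.

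The main obstacle is the interplay between the two small parameters: the blow-up of $N_{\varepsilon,q}$ as $\varepsilon \to 0$ is only beaten by the linear factor $\varepsilon$ because $\widetilde q$ stays strictly below the off-diagonal reverse-H\"older exponent $q^\star$, so the hypothesis $\widetilde q < q^\star$ is exactly what drives the argument. The second subtlety is that the absorption step requires the a priori finiteness $U \in L^{\widetilde q}(\mathcal{B}_{4n},\mu)$, which is why the proposition must be stated as an a priori estimate and why the actual proof of Theorem \ref{mainint5z} will need the mollification/smoothing step referred to in Section \ref{app}.
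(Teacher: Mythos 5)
Your proposal is correct and matches the paper's proof essentially step for step: dominate $U^{\widetilde q}$ by $\mathcal{M}_{\mathcal{B}_{4n}}(U^2)^{\widetilde q/2}$, rewrite via Lemma \ref{altchar}, split at $N_{\varepsilon,q}\lambda_0$, invoke Corollary \ref{levelsetest} on the tail, convert back by Fubini/Lemma \ref{altchar}, absorb the $U$-term using $\widetilde q<q^\star$ so that $N_{\varepsilon,q}^{\widetilde q}\varepsilon\to 0$, and control the $G$-term and $\lambda_0$ with Proposition \ref{Maxfun} and H\"older. You correctly flag both the role of the a priori assumption $U\in L^{\widetilde q}(\mathcal{B}_{4n},\mu)$ in making the absorption legitimate and the precise reason $\widetilde q<q^\star$ is needed, which are the two genuine pressure points in the argument.
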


\begin{proof}
Let $\varepsilon$ to be chosen small enough and consider the corresponding $\delta = \delta(\varepsilon,n,s,\theta,\Lambda) > 0$ given by Lemma \ref{mfuse}. Then by using Lemma \ref{altchar} multiple times, first with $\beta=\widetilde q$, $h={\mathcal{M}}_{\mathcal{B}_{4n}} (U^2 )^\frac{1}{2}$ and $d\nu=d\mu$, then with $\beta=\widetilde q-2$, $h={\mathcal{M}}_{\mathcal{B}_{4n}} (U^2 )^\frac{1}{2}$ and $d\nu={\mathcal{M}}_{\mathcal{B}_{4n}} (U^2 )d\mu$, and also with $\beta=\widetilde q-q_0$,  $h={\mathcal{M}}_{\mathcal{B}_{4n}} (G^{q_0} )^\frac{1}{q_0}$ and $d\nu={\mathcal{M}}_{\mathcal{B}_{4n}} (G^{q_0} )d\mu$, a change of variables, Corollary \ref{levelsetest} and the definition of $N_{\varepsilon,q}$ from (\ref{Neps}), we obtain
\begin{align*}
& \int_{\mathcal{Q}_1} \left ({\mathcal{M}}_{\mathcal{B}_{4n}} (U^2 ) \right )^\frac{\widetilde q}{2} d\mu \\ = & \widetilde q \int_0^{\infty} \lambda^{\widetilde q-1} \mu \left ( \mathcal{Q}_1 \cap \left \{{\mathcal{M}}_{\mathcal{B}_{4n}} (U^2 ) > \lambda^2 \right \} \right ) d\lambda \\
= & \widetilde q N_{\varepsilon,q}^{\widetilde q} \int_0^{\infty} \lambda^{\widetilde q-1} \mu \left ( \mathcal{Q}_1 \cap \left \{{\mathcal{M}}_{\mathcal{B}_{4n}} (U^2 ) > N_{\varepsilon,q}^2 \lambda^2 \right \} \right ) d\lambda \\
= & \widetilde q N_{\varepsilon,q}^{\widetilde q} \int_0^{\lambda_0} \lambda^{\widetilde q-1} \mu \left ( \mathcal{Q}_1 \cap \left \{{\mathcal{M}}_{\mathcal{B}_{4n}} (U^2 ) > N_{\varepsilon,q}^2 \lambda^2 \right \} \right ) d\lambda \\ & + \widetilde q N_{\varepsilon,q}^{\widetilde q} \int_{\lambda_0}^{\infty} \lambda^{\widetilde q-1} \mu \left ( \mathcal{Q}_1 \cap \left \{{\mathcal{M}}_{\mathcal{B}_{4n}} (U^2 ) > N_{\varepsilon,q}^2 \lambda^2 \right \} \right ) d\lambda \\
\leq & \widetilde q N_{\varepsilon,q}^{\widetilde q} \mu(\mathcal{Q}_1) \lambda_0^{\widetilde q} \\
&+ C_1 \widetilde q N_{\varepsilon,q}^{\widetilde q} \varepsilon \int_{0}^{\infty} \lambda^{\widetilde q-3} \int_{\mathcal{Q}_1 \cap \left \{ {\mathcal{M}}_{\mathcal{B}_{4n}} (U^2) > \lambda^2 \right \}} {\mathcal{M}}_{\mathcal{B}_{4n}} (U^2 ) d\mu d\lambda \\
&+ C_1 \widetilde q N_{\varepsilon,q}^{\widetilde q}\delta^{-q_0} \int_{0}^{\infty} \lambda^{\widetilde q-q_0-1} \int_{\mathcal{Q}_1 \cap \left \{ {\mathcal{M}}_{\mathcal{B}_{4n}} (G^{q_0}) > \delta^{q_0} \lambda^{q_0} \right \}} {\mathcal{M}}_{\mathcal{B}_{4n}} (G^{q_0} ) d\mu d\lambda \\
= & \widetilde q N_{\varepsilon,q}^{\widetilde q} \mu(\mathcal{Q}_1) \lambda_0^{\widetilde q} \\
&+ C_1 \widetilde q C_{nd} C_{s,\theta} N_d 10^{10n} \varepsilon^{1-\widetilde q/q^\star} \int_{\mathcal{Q}_1} \left ({\mathcal{M}}_{\mathcal{B}_{4n}} (U^2 ) \right )^\frac{\widetilde q}{2} d\mu \\
&+ C_1 \widetilde q N_{\varepsilon,q}^{\widetilde q} \delta^{-q_0} \int_{\mathcal{Q}_1} \left ({\mathcal{M}}_{\mathcal{B}_{4n}} (G^{q_0} ) \right )^\frac{\widetilde q}{q_0} d\mu,
\end{align*}
where $C_1=C_1(n,s,\theta) \geq 1$. Now we set
$$ \varepsilon := \min \left \{ \left ( 4 (\sqrt{n})^{n+2\theta} C_2 \right )^{-1}, \left (2 C_1 \widetilde q C_{nd} C_{s,\theta} N_d 10^{10n} \right )^{-\frac{q^\star}{q^\star-\widetilde q}} \right \}, $$
so that $\varepsilon$ satisfies the restriction (\ref{epsrestr}) and moreover, we have
$$ C_1 \widetilde q C_{nd} C_{s,\theta} N_d 10^{10n} \varepsilon^{1-\widetilde q/q^\star} \leq \frac{1}{2}.$$ Since in addition by assumption we have $U \in L^{\widetilde q}(\mathcal{B}_{4n},\mu)$, by Proposition \ref{Maxfun} we have $$\int_{\mathcal{Q}_1} \left ({\mathcal{M}}_{\mathcal{B}_{4n}} (U^2 ) \right )^\frac{\widetilde q}{2} d\mu < \infty,$$ so that we can reabsorb the second to last term on the right-hand side of the first display of the proof in the the left-hand side, which yields
\begin{align*}
\int_{\mathcal{Q}_1} \left ({\mathcal{M}}_{\mathcal{B}_{4n}} (U^2 ) \right )^\frac{\widetilde q}{2} d\mu
\leq & 2 \widetilde q N_{\varepsilon,q}^{\widetilde q} \mu(\mathcal{Q}_1) \lambda_0^{\widetilde q}+ 2 C_1 \widetilde q N_{\varepsilon,q}^{\widetilde q} \delta^{-q_0} \int_{\mathcal{Q}_1} \left ({\mathcal{M}}_{\mathcal{B}_{4n}} (G^{q_0} ) \right )^\frac{\widetilde q}{q_0} d\mu.
\end{align*}
Now in view of Corollary \ref{maxdominate} and Proposition \ref{Maxfun}, taking into account the definition of $\lambda_0$ from (\ref{tailcontrol}) and using H\"older's inequality, we obtain
\begin{align*}
& \dashint_{\mathcal{B}_{1/2}} U^{\widetilde q} d\mu
\leq \frac{1}{\mu \left (\mathcal{B}_{1/2} \right )} \int_{\mathcal{Q}_1} \left ({\mathcal{M}}_{\mathcal{B}_{4n}} (U^2 ) \right )^\frac{\widetilde q}{2} d\mu \\ 
\leq & C_2 \left ( \lambda_0^{\widetilde q}+ \int_{\mathcal{Q}_1} \left ({\mathcal{M}}_{\mathcal{B}_{4n}} (G^{q_0} ) \right )^\frac{\widetilde q}{q_0} d\mu \right ) \\
\leq & C_3 \Bigg (\sum_{k=1}^\infty 2^{-k(s-\theta)} \left (\dashint_{\mathcal{B}_{2^k 4n}} U^2 d\mu \right )^\frac{1}{2} + \sum_{k=1}^\infty 2^{-k(s-\theta)} \left (\dashint_{\mathcal{B}_{2^k 4n}} G^2 d\mu \right )^\frac{1}{2} +\left (\dashint_{\mathcal{B}_{4n}} G^{q_0} d\mu \right )^\frac{1}{q_0} \Bigg )^{\widetilde q} \\
&+ C_3 \int_{\mathcal{B}_{4n}} G^{\widetilde q} d\mu \\
\leq & C_4 \Bigg (\sum_{k=1}^\infty 2^{-k(s-\theta)} \left (\dashint_{\mathcal{B}_{2^k 4n}} U^2 d\mu \right )^\frac{1}{2} + \sum_{k=1}^\infty 2^{-k(s-\theta)} \left (\dashint_{\mathcal{B}_{2^k 4n}} G^2 d\mu \right )^\frac{1}{2} \Bigg )^{\widetilde q} + C_4 \dashint_{\mathcal{B}_{4n}} G^{\widetilde q} d\mu,
\end{align*}
where all constants depend only on $n,s,\theta,\Lambda,q,\widetilde q$ and $p$. This proves the desired estimate with $C=C_4^{1/\widetilde q}$.
\end{proof}

\begin{cor} \label{aprioriestcor}
	Consider some $q \in [2,p)$ and some $\widetilde q \in (q_0,q^\star)$. Then there exists some small enough $\delta = \delta(n,s,\theta,\Lambda,q,\widetilde q) > 0$ such that if $A \in \mathcal{L}_0(\Lambda)$ is $\delta$-vanishing in $B_1$ and $g \in W^{s,2}(\mathbb{R}^n)$ satisfies $G \in L^{\widetilde q}(\mathcal{B}_1,\mu)$, then for any weak solution $u \in W^{s,2}(\mathbb{R}^n)$ of the equation $L_{A}^\Phi u = (-\Delta)^s g$ in $B_1$ that satisfies $U \in L^{\widetilde q}(\mathcal{B}_1,\mu)$ and the estimate (\ref{J}) in any ball contained in $B_1$ with respect to $q$, we have the estimate
	\begin{align*}
	\left (\dashint_{\mathcal{B}_{1/2}} U^{\widetilde q} d\mu \right )^{\frac{1}{\widetilde q}} \leq & C \Bigg (\sum_{k=1}^\infty 2^{-k(s-\theta)} \left ( \dashint_{\mathcal{B}_{2^k}} U^2 d\mu \right )^\frac{1}{2} \\
	& + \left ( \dashint_{\mathcal{B}_{1}} G^{\widetilde q} d\mu \right )^\frac{1}{\widetilde q} + \sum_{k=1}^\infty 2^{-k(s-\theta)} \left ( \dashint_{\mathcal{B}_{2^k}} G^2 d\mu \right )^\frac{1}{2} \Bigg ),
	\end{align*}
	where $C=C(n,s,\theta,\Lambda,q,\widetilde q,p)>0$.
\end{cor}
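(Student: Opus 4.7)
The plan is to reduce Corollary \ref{aprioriestcor} to Proposition \ref{aprioriest} via a rescaling and translation argument around interior points of $B_1$, together with a finite covering argument handling the diagonal and off-diagonal parts of $\mathcal{B}_{1/2}$ separately. For any $z \in \overline{B_{1/2}}$, I would set $\rho_z := (1-|z|)/(4n) \in [1/(8n), 1/(4n)]$ and define the rescaled data $u_z(x) := u(z+\rho_z x)$, $g_z(x) := g(z+\rho_z x)$, $A_z(x,y) := A(z+\rho_z x, z+\rho_z y)$. A direct change of variables (using the scaling identity for $(-\Delta)^s$) shows that $u_z$ is a weak solution of $L^\Phi_{A_z}u_z = (-\Delta)^s g_z$ in $B_{4n}$, that $A_z \in \mathcal{L}_0(\Lambda)$ is $\delta$-vanishing on the corresponding ball (since $z+\rho_z B_{4n} = B_{1-|z|}(z) \subset B_1$), and that $U_z, G_z \in L^{\widetilde q}(\mathcal{B}_{4n},\mu)$ together with the estimate (\ref{J}) transfer to $u_z$ on balls contained in $B_{4n}$. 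Applying Proposition \ref{aprioriest} to $(u_z, g_z, A_z)$ and undoing the rescaling produces
\begin{align*}
\left(\dashint_{\mathcal{B}_{\rho_z/2}(z)} U^{\widetilde q}\,d\mu\right)^{1/\widetilde q} \leq C \Bigg[ & \sum_{k=1}^\infty 2^{-k(s-\theta)} \left(\dashint_{\mathcal{B}_{2^k \rho_z \cdot 4n}(z)} U^2\,d\mu\right)^{1/2} + \left(\dashint_{\mathcal{B}_{\rho_z \cdot 4n}(z)} G^{\widetilde q}\,d\mu\right)^{1/\widetilde q} \\
& + \sum_{k=1}^\infty 2^{-k(s-\theta)} \left(\dashint_{\mathcal{B}_{2^k \rho_z \cdot 4n}(z)} G^2\,d\mu\right)^{1/2} \Bigg].
\end{align*}

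Next, I would cover $\mathcal{B}_{1/2}$ by finitely many balls of two types. For the near-diagonal piece $\{(x,y) \in \mathcal{B}_{1/2} : |x-y| \leq 1/(32n)\}$, a finite $1/(32n)$-net $\{z_i\} \subset \overline{B_{1/2}}$ yields a cover by the diagonal balls $\mathcal{B}_{\rho_{z_i}/2}(z_i)$ handled above, since $\rho_{z_i}/2 \geq 1/(16n)$. For the off-diagonal piece $\{(x,y) \in \mathcal{B}_{1/2} : |x-y| > 1/(32n)\}$, I would choose a finite family of off-diagonal balls $\mathcal{B}_{r_\alpha}(x_\alpha,y_\alpha)$ with $x_\alpha, y_\alpha \in B_{1/2}$, with $r_\alpha$ suitably small, and satisfying the uniform separation $\mathrm{dist}(B_{r_\alpha}(x_\alpha), B_{r_\alpha}(y_\alpha)) \geq m r_\alpha$ for some $m = m(n) > 0$. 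On each such off-diagonal ball Proposition \ref{offdiagreverse} is available thanks to the standing hypothesis (\ref{J}) and yields an $L^{q^\star}$ estimate for $U$; since $\widetilde q < q^\star$, H\"older's inequality then produces the required $L^{\widetilde q}$ bound with tail terms on diagonal balls around $x_\alpha$ and $y_\alpha$.

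To conclude, I would raise each local estimate to the $\widetilde q$-th power, integrate against $\mu$ over $\mathcal{B}_{1/2}$, and sum the finitely many contributions. Since $\rho_z \cdot 4n = 1-|z| \in [1/2,1]$ for $z \in \overline{B_{1/2}}$, and all the tail balls $\mathcal{B}_{2^k \rho_z \cdot 4n}(z)$ and $\mathcal{B}_{2^k r_\alpha}(x_\alpha)$ are contained in $\mathcal{B}_{2^{k+1}}$ with measures comparable up to $n$-dependent constants to $\mu(\mathcal{B}_{2^k})$, the volume doubling property of $\mu$ (Proposition \ref{doublingmeasure}) lets me replace every shifted tail by the corresponding centered average $\dashint_{\mathcal{B}_{2^k}}$, so that the sums telescope into the right-hand side of the claim.

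The main obstacle I expect is handling the off-diagonal region of $\mathcal{B}_{1/2}$: since the rescaling only produces diagonal $2n$-balls $\mathcal{B}_{\rho_z/2}(z)$ centered on the diagonal of $\mathbb{R}^{2n}$, Proposition \ref{aprioriest} alone cannot cover the full $\mathcal{B}_{1/2}$, forcing one to invoke the off-diagonal reverse H\"older inequality of Proposition \ref{offdiagreverse}. The delicate point will be verifying that its $L^2$ and $L^{q_0}$ tail terms fit back into the centered tails on $\mathcal{B}_{2^k}$, rather than producing new uncontrolled diagonal correction terms outside the form of the target estimate.
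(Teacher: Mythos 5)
Your proposal follows the paper's rescaling scheme for the near-diagonal part: set $u_z(x)=u(\rho_z x+z)$, transfer the $\delta$-vanishing property and the estimate (\ref{J}) to the rescaled data, apply Proposition \ref{aprioriest}, unscale to obtain an estimate of the type (\ref{scalingarg}), and take a finite cover along the diagonal (the paper uses a fixed $r_1$ rather than your variable $\rho_z$, but this is cosmetic). What you do differently --- splitting off the far-off-diagonal region of $\mathcal{B}_{1/2}$ and treating it via Proposition \ref{offdiagreverse} --- turns out to be exactly what the paper's own argument glosses over. After (\ref{scalingarg}), the paper takes a finite subcover $\{B_{r_1/2}(z_j)\}_{j=1}^m$ of $B_{1/2}$ in $\mathbb{R}^n$ and asserts that the diagonal balls $\{\mathcal{B}_{r_1/2}(z_j)\}_{j=1}^m$ form a cover of $\mathcal{B}_{1/2}$ in $\mathbb{R}^{2n}$. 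This inclusion is false: any $(x,y)\in B_{1/2}\times B_{1/2}$ with $|x-y|>r_1$ lies outside every $\mathcal{B}_{r_1/2}(z_j)$, since $B_{r_1/2}(z_j)$ has diameter $r_1$; the diagonal balls only reach the near-diagonal strip $\{|x-y|<r_1\}$ of $\mathcal{B}_{1/2}$. So the obstacle you flag at the end of your proposal --- that Proposition \ref{aprioriest} alone cannot reach the off-diagonal part of $\mathcal{B}_{1/2}$ --- is a genuine gap in the paper's written proof at this step, not merely a potential refinement. Your repair via Proposition \ref{offdiagreverse} together with H\"older's inequality (using $\widetilde q<q^\star$ and $q_0<\widetilde q$), with the $L^2$ averages on the off-diagonal balls and the resulting diagonal correction terms reabsorbed into the centered tails $\sum_k 2^{-k(s-\theta)}(\dashint_{\mathcal{B}_{2^k}}U^2\,d\mu)^{1/2}$ and $\sum_k 2^{-k(s-\theta)}(\dashint_{\mathcal{B}_{2^k}}G^2\,d\mu)^{1/2}$ by the doubling property of $\mu$, is the right kind of fix; the delicate point you rightly single out --- making those correction terms fold into the target right-hand side rather than producing new uncontrolled quantities --- is precisely where the remaining work lies.
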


\begin{proof}
	There exists some small enough radius $r_1 \in \left (0,1 \right)$ such that
	\begin{equation} \label{rz}
	B_{4n r_1}(z) \Subset B_1
	\end{equation}
	for any $z \in B_{1/2}$. Now fix some $z \in B_{1/2}$ and
	consider the scaled functions $u_z,g_z \in W^{s,2}(\mathbb{R}^n)$ given by
	$$ u_z(x):=u(r_1 x+z), \quad g_z(x):=g(r_1 x+z), \quad A_z(x,y):= A(r_1 x+z,r_1 y+z).$$
	Since $A$ is $\delta$-vanishing in $\mathcal{B}_1$, we see that $A_z$ clearly is $\delta$-vanishing in $B_{\frac{1}{4nr_1}}(-z) \supset B_{4n}$.
	Furthermore, in view of (\ref{rz}), $u_z$ is a weak solution of $L_{A_z}^\Phi u_z = (-\Delta)^s g_z$ in $B_{\frac{1}{4nr_1}}(-z) \supset B_{4n}$. Now fix some $r>0$ and some $x_0 \in \mathbb{R}^n$ such that $B_r(x_0) \subset B_{4n}$. Then again in view of (\ref{rz}), we clearly have $$B_{r_1 r}(r_1 x_0+z) \subset B_1,$$ so that by the assumption that the estimate (\ref{J}) holds for any ball contained in $B_1$, the estimate (\ref{J}) holds with respect to the ball $B_{r_1 r}(r_1 x_0+z)$. Together with changes of variables, we see that for the functions
	\begin{align*}
	U_z(x,y):=\frac{|u_z(x)-u_z(y)|}{|x-y|^{s+\theta}}, \quad G_z(x,y):=\frac{|g_z(x)-g_z(y)|}{|x-y|^{s+\theta}},
	\end{align*}
	we have
	\begin{equation} \label{sca1}
	\begin{aligned}
	& \left (\dashint_{\mathcal{B}_{r/2}(x_0)} U_z^{q} d\mu \right )^{\frac{1}{q}} = \left ( \frac{1}{\mu(\mathcal{B}_{r/2}(x_0))} \int_{B_{r/2}(x_0)} \int_{B_{r/2}(x_0)} \frac{|u_z(x)-u_z(y)|^{q}}{|x-y|^{n-2\theta+q(s+\theta)}}dydx \right )^{\frac{1}{q}} \\
	= & C_1 \left (\frac{r_1^{n-2\theta+q(s+\theta)}}{r_1^{2n}} \frac{1}{\mu(\mathcal{B}_{r/2})} \int_{B_{\frac{r_1r}{2}}(r_1 x_0+z)} \int_{B_{\frac{r_1r}{2}}(r_1 x_0+z)} \frac{|u(x)-u(y)|^{q}}{|x-y|^{n-2\theta+q(s+\theta)}}dydx \right )^{\frac{1}{q}} \\
	= & C_1 \Bigg (r_1^{-n-2\theta+q(s+\theta)} \underbrace{\frac{\mu \left (\mathcal{B}_{r_1r/2} \right )}{\mu(\mathcal{B}_{r/2})}}_{=r_1^{n+2\theta}} \dashint_{\mathcal{B}_{\frac{r_1r}{2}}(r_1 x_0+z)} U^{q} d\mu \Bigg )^{\frac{1}{q}} \\
	= & C_1 r_1^{s+\theta} \left (\dashint_{\mathcal{B}_{\frac{r_1r}{2}}(r_1 x_0+z)} U^{q} d\mu \right )^{\frac{1}{q}} \\ \leq & C_2 r_1^{s+\theta} \Bigg ( \sum_{k=1}^\infty 2^{-k(s-\theta)} \left ( \dashint_{\mathcal{B}_{2^kr_1 r}(r_1 x_0+z x_0)} U^2 d\mu \right )^\frac{1}{2} \\ & + \left (\dashint_{\mathcal{B}_{r_1 r}(r_1 x_0+z)} G^{q_0} d\mu \right )^\frac{1}{q_0} + \sum_{k=1}^\infty 2^{-k(s-\theta)} \left ( \dashint_{\mathcal{B}_{2^kr_1 r}(r_1 x_0+z x_0)} G^2 d\mu \right )^\frac{1}{2} \Bigg ) \\
	= & C_3 r_1^{s+\theta} \Bigg ( \sum_{k=1}^\infty 2^{-k(s-\theta)} \left (\frac{r_1^{2n}}{r_1^{n+2s}} \frac{\mu(\mathcal{B}_{2^kr})}{\mu(\mathcal{B}_{2^kr_1 r})} \dashint_{\mathcal{B}_{2^kr}(x_0)} U_z^2 d\mu \right )^\frac{1}{2} \\ & + \left (\frac{r_1^{2n}}{r_1^{n-2\theta+q_0(s+\theta)}} \frac{\mu(\mathcal{B}_{r})}{\mu(\mathcal{B}_{r_1 r})} \dashint_{\mathcal{B}_{r}(x_0)} G_z^{q_0} d\mu \right )^\frac{1}{q_0} \\ & + \sum_{k=1}^\infty 2^{-k(s-\theta)} \left (\frac{r_1^{2n}}{r_1^{n+2s}} \frac{\mu(\mathcal{B}_{2^kr})}{\mu(\mathcal{B}_{2^kr_1 r})} \dashint_{\mathcal{B}_{2^kr}(x_0)} G_z^2 d\mu \right )^\frac{1}{2} \Bigg ) \\
	= & C_3 \Bigg ( \sum_{k=1}^\infty 2^{-k(s-\theta)} \left ( \dashint_{\mathcal{B}_{2^kr}(x_0)} U_z^2 d\mu \right )^\frac{1}{2} \\ & + \left (\dashint_{\mathcal{B}_{r}(x_0)} G_z^{q_0} d\mu \right )^\frac{1}{q_0} + \sum_{k=1}^\infty 2^{-k(s-\theta)} \left ( \dashint_{\mathcal{B}_{2^kr}(x_0)} G_z^2 d\mu \right )^\frac{1}{2} \Bigg ),
	\end{aligned}
	\end{equation}
	where all constants depend only on $q,n,s,\theta$ and $\Lambda$.
	Therefore, we see that $U_z$ and $G_z$ satisfy the estimate (\ref{J}) in any ball that is contained in $B_{4n}$. Since in addition the assumption that $U \in L^{\widetilde q}(\mathcal{B}_1,\mu)$ clearly implies that $U_z \in L^{\widetilde q} \left (\mathcal{B}_{{\frac{1}{4nr_1}}(-z)},\mu \right ) \subset L^{\widetilde q}\left (\mathcal{B}_{4n} ,\mu \right )$, by Proposition \ref{aprioriest} we obtain that
	\begin{align*}
	\left (\dashint_{\mathcal{B}_{1/2}} U_z^{\widetilde q} d\mu \right )^{\frac{1}{\widetilde q}} \leq & C_4 \Bigg (\sum_{k=1}^\infty 2^{-k(s-\theta)} \left ( \dashint_{\mathcal{B}_{2^k 4n}} U_z^2 d\mu \right )^\frac{1}{2} \\
	& + \left ( \dashint_{\mathcal{B}_{4n}} G_z^{\widetilde q} d\mu \right )^\frac{1}{\widetilde q} + \sum_{k=1}^\infty 2^{-k(s-\theta)} \left ( \dashint_{\mathcal{B}_{2^k 4n}} G_z^2 d\mu \right )^\frac{1}{2} \Bigg ),
	\end{align*}
	where $C_4=C_4(n,s,\theta,\Lambda,q,\widetilde q,p)>0$. By combining the last display with changes of variables, we deduce
	\begin{equation} \label{scalingarg}
	\begin{aligned}
	& \left (\dashint_{\mathcal{B}_{r_1/2}(z)} U^{\widetilde q} d\mu \right )^{\frac{1}{\widetilde q}} = \left ( \frac{1}{\mu(\mathcal{B}_{r_1/2}(z))} \int_{B_{r_1/2}(z)} \int_{B_{r_1/2}(z)} \frac{|u(x)-u(y)|^{\widetilde q}}{|x-y|^{n-2\theta+q(s+\theta)}}dydx \right )^{\frac{1}{\widetilde q}} \\ 
	= & C_5 \left (r_1^{-n-2\theta} \frac{r_1^{2n}}{r_1^{n-2\theta+q(s+\theta)}} \int_{B_{1/2}} \int_{B_{1/2}} \frac{|u_z(x)-u_z(y)|^{\widetilde q}}{|x-y|^{n-2\theta+q(s+\theta)}}dydx \right )^{\frac{1}{\widetilde q}} \\
	= & C_6 r_1^{-s-\theta} \left (\dashint_{\mathcal{B}_{1/2}} U_z^{\widetilde q} d\mu \right )^{\frac{1}{\widetilde q}} \\
	\leq & C_7 r_1^{-s-\theta} \Bigg (\sum_{k=1}^\infty 2^{-k(s-\theta)} \left ( \dashint_{\mathcal{B}_{2^k 4n}} U_z^2 d\mu \right )^\frac{1}{2} \\
	& + \left ( \dashint_{\mathcal{B}_{4n}} G_z^{\widetilde q} d\mu \right )^\frac{1}{\widetilde q} + \sum_{k=1}^\infty 2^{-k(s-\theta)} \left ( \dashint_{\mathcal{B}_{2^k 4n}} G_z^2 d\mu \right )^\frac{1}{2} \Bigg ) \\
	= & C_7 r_1^{-s-\theta} \Bigg (\sum_{k=1}^\infty 2^{-k(s-\theta)} \left ( \frac{r_1^{n+2s}}{r_1^{2n}} \frac{\mu(\mathcal{B}_{2^k 4n r_1})}{\mu(\mathcal{B}_{2^k 4n})} \dashint_{\mathcal{B}_{2^k 4nr_1(z)}} U^2 d\mu \right )^\frac{1}{2} \\
	& + \left ( \frac{r_1^{n-2\theta+\widetilde q(s+\theta)}}{r_1^{2n}} \frac{\mu(\mathcal{B}_{4n r_1})}{\mu(\mathcal{B}_{4n})} \dashint_{\mathcal{B}_{4n r_1(z)}} G^{\widetilde q} d\mu \right )^\frac{1}{\widetilde q} \\ & + \sum_{k=1}^\infty 2^{-k(s-\theta)} \left ( \frac{r_1^{n+2s}}{r_1^{2n}} \frac{\mu(\mathcal{B}_{2^k 4n r_1})}{\mu(\mathcal{B}_{2^k 4n})} \dashint_{\mathcal{B}_{2^k 4nr_1(z)}} G^2 d\mu \right )^\frac{1}{2} \Bigg ) \\
	= & C_7 \Bigg (\sum_{k=1}^\infty 2^{-k(s-\theta)} \left ( \dashint_{\mathcal{B}_{2^k 4nr_1(z)}} U^2 d\mu \right )^\frac{1}{2} \\
	& + \left ( \dashint_{\mathcal{B}_{4nr_1(z)}} G^{\widetilde q} d\mu \right )^\frac{1}{\widetilde q} + \sum_{k=1}^\infty 2^{-k(s-\theta)} \left ( \dashint_{\mathcal{B}_{2^k 4nr_1(z)}} G^2 d\mu \right )^\frac{1}{2} \Bigg ),
	\end{aligned}
	\end{equation}
	where all constants depend only on $q,\widetilde q,p,n,s,\theta$ and $\Lambda$. Since $\left \{B_{r_1/2}(z) \right \}_{z \in B_{1/2}}$ is an open covering of $\overline B_{1/2}$ and $\overline B_{1/2}$ is compact, there is a finite subcover $\left \{B_{r_1/2}(z_j) \right \}_{j=1}^m$ of $\overline B_{1/2}$ and hence also of $B_{1/2}$. In particular, $\left \{\mathcal{B}_{r/2}(z_j) \right \}_{j=1}^m$ is a finite subcover of $\mathcal{B}_{1/2}$. Therefore, by summing up the above estimates, we obtain
	\begin{align*}
	& \left (\dashint_{\mathcal{B}_{1/2}} U^{\widetilde q} d\mu \right )^{\frac{1}{\widetilde q}} \leq \sum_{j=1}^m 
	\left (\dashint_{\mathcal{B}_{r_1/2}(z_j)} U^{\widetilde q} d\mu \right )^{\frac{1}{\widetilde q}} \\
	\leq & C_7 \sum_{j=1}^m \Bigg (\sum_{k=1}^\infty 2^{-k(s-\theta)} \left ( \dashint_{\mathcal{B}_{2^k 4nr_1({z_j})}} U^2 d\mu \right )^\frac{1}{2} \\
	& + \left ( \dashint_{\mathcal{B}_{4nr_1({z_j})}} G^{\widetilde q} d\mu \right )^\frac{1}{\widetilde q} + \sum_{k=1}^\infty 2^{-k(s-\theta)} \left ( \dashint_{\mathcal{B}_{2^k 4nr_1({z_j})}} G^2 d\mu \right )^\frac{1}{2} \Bigg ) \\
	\leq & C_8 \sum_{j=1}^m \Bigg (\sum_{k=1}^\infty 2^{-k(s-\theta)} \left ( \frac{\mu(\mathcal{B}_{2^k4n})}{\mu(\mathcal{B}_{2^k 4nr_1})} \dashint_{\mathcal{B}_{2^k}} U^2 d\mu \right )^\frac{1}{2} \\
	& + \left ( \frac{\mu(\mathcal{B}_{4n})}{\mu(\mathcal{B}_{4nr_1})} \dashint_{\mathcal{B}_{1}} G^{\widetilde q} d\mu \right )^\frac{1}{\widetilde q} + \sum_{k=1}^\infty 2^{-k(s-\theta)} \left ( \frac{\mu(\mathcal{B}_{2^k})}{\mu(\mathcal{B}_{2^k 4nr_1})} \dashint_{\mathcal{B}_{2^k 4n}} G^2 d\mu \right )^\frac{1}{2} \Bigg ) \\
	\leq & C_8 m r_1^{-n/2-\theta} \Bigg (\sum_{k=1}^\infty 2^{-k(s-\theta)} \left ( \dashint_{\mathcal{B}_{2^k}} U^2 d\mu \right )^\frac{1}{2} \\
	& + \left ( \dashint_{\mathcal{B}_{1}} G^{\widetilde q} d\mu \right )^\frac{1}{\widetilde q} + \sum_{k=1}^\infty 2^{-k(s-\theta)} \left ( \dashint_{\mathcal{B}_{2^k}} G^2 d\mu \right )^\frac{1}{2} \Bigg ),
	\end{align*}
	where $C_8=C_8(n,s,\theta,\Lambda,q,\widetilde q,p)>0$. Since in addition $m$ and $r_1$ depend only on $B_{1/2}$ and thus only on $n$, the proof is finished.
\end{proof}

\begin{cor} \label{aprioriestcorscaled}
	Let $r>0$ and $z \in \mathbb{R}^n$ and consider some $q \in [2,p)$ and some $\widetilde q \in (q_0,q^\star)$. Then there exists some small enough $\delta = \delta(n,s,\theta,\Lambda,q,\widetilde q) > 0$ such that if $A \in \mathcal{L}_0(\Lambda)$ is $\delta$-vanishing in $\mathcal{B}_r(z)$ and $g \in W^{s,2}(\mathbb{R}^n)$ satisfies $G \in L^{\widetilde q}(\mathcal{B}_r(z),\mu)$, then for any weak solution $u \in W^{s,2}(\mathbb{R}^n)$ of the equation $L_{A}^\Phi u = (-\Delta)^s g$ in $B_r(z)$ that satisfies $U \in L^{\widetilde q}(\mathcal{B}_r(z),\mu)$ and the estimate (\ref{J}) in any ball contained in $B_r(z)$ with respect to $q$, we have the estimate
	\begin{align*}
	\left (\dashint_{\mathcal{B}_{r/2}(z)} U^{\widetilde q} d\mu \right )^{\frac{1}{\widetilde q}} \leq & C \Bigg (\sum_{k=1}^\infty 2^{-k(s-\theta)} \left ( \dashint_{\mathcal{B}_{2^k r(z)}} U^2 d\mu \right )^\frac{1}{2} \\
	& + \left ( \dashint_{\mathcal{B}_{r}(z)} G^{\widetilde q} d\mu \right )^\frac{1}{\widetilde q} + \sum_{k=1}^\infty 2^{-k(s-\theta)} \left ( \dashint_{\mathcal{B}_{2^k r}(z)} G^2 d\mu \right )^\frac{1}{2} \Bigg ),
	\end{align*}
	where $C=C(n,s,\theta,\Lambda,q,\widetilde q,p)>0$.
\end{cor}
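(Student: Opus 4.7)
The plan is to reduce the statement to Corollary \ref{aprioriestcor} via the standard translation-and-rescaling argument. Define the rescaled data
$$u_r(x):=u(rx+z),\quad g_r(x):=g(rx+z),\quad A_r(x,y):=A(rx+z,ry+z),$$
so that $A_r\in\mathcal{L}_0(\Lambda)$. A direct change of variables in the principal-value integrals defining $L_A^\Phi$ and $(-\Delta)^s$ shows that both operators are homogeneous of order $-2s$ under this scaling, hence $L^\Phi_{A_r}u_r=(-\Delta)^s g_r$ weakly in $B_1$.

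The main verifications are then that the hypotheses of Corollary \ref{aprioriestcor} transfer to $(u_r,g_r,A_r)$. First, $A_r$ is $\delta$-vanishing in $B_1$: for any $\rho>0$ and $x_0,y_0\in B_1$ with $B_\rho(x_0),B_\rho(y_0)\subset B_1$, the change of variables $x\mapsto rx+z$, $y\mapsto ry+z$ turns the $\delta$-vanishing condition for $A_r$ on $B_\rho(x_0)\times B_\rho(y_0)$ into the corresponding condition for $A$ on $B_{r\rho}(rx_0+z)\times B_{r\rho}(ry_0+z)\subset B_r(z)$, which holds by assumption. Second, exactly as in the computation (\ref{sca1}) from the proof of Corollary \ref{aprioriestcor}, using Proposition \ref{doublingmeasure} to track the factors of $r^{n+2\theta}$ coming from $\mu$, the assumed estimate (\ref{J}) for $U,G$ on every ball inside $B_r(z)$ becomes exactly (\ref{J}) for $U_r,G_r$ on every ball inside $B_1$. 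Finally, the same change of variables gives $U_r,G_r\in L^{\widetilde q}(\mathcal{B}_1,\mu)$ from the corresponding integrability of $U,G$ on $\mathcal{B}_r(z)$.

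With every hypothesis verified, Corollary \ref{aprioriestcor} applied to $(u_r,g_r,A_r)$ yields
$$\left(\dashint_{\mathcal{B}_{1/2}}U_r^{\widetilde q}\,d\mu\right)^{1/\widetilde q}\leq C\Bigg(\sum_{k=1}^\infty 2^{-k(s-\theta)}\bigg(\dashint_{\mathcal{B}_{2^k}}U_r^2\,d\mu\bigg)^{1/2}+\bigg(\dashint_{\mathcal{B}_1}G_r^{\widetilde q}\,d\mu\bigg)^{1/\widetilde q}+\sum_{k=1}^\infty 2^{-k(s-\theta)}\bigg(\dashint_{\mathcal{B}_{2^k}}G_r^2\,d\mu\bigg)^{1/2}\Bigg),$$
with $C=C(n,s,\theta,\Lambda,q,\widetilde q,p)$. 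Scaling back via the same change of variables, precisely as in display (\ref{scalingarg}) in the proof of Corollary \ref{aprioriestcor}, the scaling factors $r^{s+\theta}$ on the left and right-hand sides cancel thanks to the homogeneity of $\mu$, and the averaged norms over $\mathcal{B}_{2^k}$ become averaged norms over $\mathcal{B}_{2^kr}(z)$. This yields exactly the claimed inequality.

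There is no genuine obstacle here: the entire argument is bookkeeping to ensure that under the translation $x\mapsto rx+z$, each piece of data (the equation, the VMO smallness, the assumption (\ref{J}), and the $L^{\widetilde q}$ integrability) transforms correctly, and that the weights $r^{s+\theta}$ and $r^{n+2\theta}$ coming from the measure $\mu$ cancel out to leave a scale-invariant inequality. The only mildly delicate point is that the constant in (\ref{J}) for $U_r$ must not pick up an $r$-dependent factor, which is exactly what the computation (\ref{sca1}) confirms.
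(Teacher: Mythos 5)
Your proof is correct and follows essentially the same path as the paper's: define the rescaled data $u_1(x):=u(rx+z)$, $g_1(x):=g(rx+z)$, $A_1(x,y):=A(rx+z,ry+z)$, verify that the $\delta$-vanishing condition, the equation, the integrability of $U$, and the estimate (\ref{J}) all transfer to the unit scale, apply Corollary \ref{aprioriestcor}, and scale back using the same computation as display (\ref{scalingarg}). The only cosmetic difference is your explicit remark that both $L_A^\Phi$ and $(-\Delta)^s$ pick up the same factor $r^{2s}$ under the translation-dilation, which the paper leaves implicit.
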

\begin{rem} \normalfont
	In the above estimate, it is essential that the constant $C$ does not depend on $r$ and $z$.
\end{rem}

\begin{proof}
Consider the scaled functions $u_1,g_1 \in W^{s,2}(\mathbb{R}^n)$ given by
$$ u_1(x):=u(r x+z), \quad g_1(x):=g(r x+z)$$
and also 
$$A_1(x,y):= A(r x+z,r y+z).$$
Since $A$ is $\delta$-vanishing in $\mathcal{B}_r(z)$, $A_1$ clearly is $\delta$-vanishing in $B_1$. Also, in view of a change of variables, for $U_1(x,y):=\frac{u_1(x)-u_1(y)}{|x-y|^{s+\theta}}$ we clearly have $U_1 \in L^{\widetilde q}(\mathcal{B}_1,\mu)$. Moreover, since $u$ satisfies the estimate (\ref{J}) in any ball contained in $B_r(z)$, by a scaling argument similar to the first one in the proof of Corollary \ref{aprioriestcor} (see (\ref{sca1})), we deduce that $u_1$ satisfies the estimate (\ref{J}) in any ball contained in $B_1$.
In addition, $u_1$ is a weak solution of $L_{A_1}^\Phi u_1 = (-\Delta)^s g_1$ in $B_1$. Therefore, $u_1$ and $A_1$ satisfy all assumptions from Corollary \ref{aprioriestcor}. The desired estimate now follows by applying essentially the same scaling argument as the second one in the proof of Corollary \ref{aprioriestcor} (see (\ref{scalingarg})), by replacing $u_z$ by $u_1$, $g_z$ by $g_1$, $A_z$ by $A_1$, replacing the number $4n$ by $1$ and applying Corollary \ref{aprioriestcor} to $u_1$ along the way.
\end{proof}

\begin{prop} \label{aprioriestx}
	Let $r>0$, $z \in \mathbb{R}^n$, $s \in (0,1)$ and $p \in (2,\infty)$. Then there exists some small enough $\delta = \delta(n,s,\theta,\Lambda,p) > 0$ such that if $A \in \mathcal{L}_0(\Lambda)$ is $\delta$-vanishing in $\mathcal{B}_{r_1}(z)$ and $g \in W^{s,2}(\mathbb{R}^n)$ satisfies $G \in L^{p}(\mathcal{B}_{r}(z),\mu)$, then for any weak solution $u \in W^{s,2}(\mathbb{R}^n)$ of the equation $L_{A}^\Phi u = (-\Delta)^s g$ in $B_{r}(z)$ that satisfies $U \in L^{p}(\mathcal{B}_{r}(z),\mu)$, we have the estimate
	\begin{equation} \label{estp}
	\begin{aligned}
	\left (\dashint_{\mathcal{B}_{r/2}(z)} U^{p} d\mu \right )^{\frac{1}{p}} \leq & C \Bigg (\sum_{k=1}^\infty 2^{-k(s-\theta)} \left ( \dashint_{\mathcal{B}_{2^k r}(z)} U^2 d\mu \right )^\frac{1}{2} \\
	& + \left ( \dashint_{\mathcal{B}_{r}(z)} G^{p} d\mu \right )^\frac{1}{p} + \sum_{k=1}^\infty 2^{-k(s-\theta)} \left ( \dashint_{\mathcal{B}_{2^k r}(z)} G^2 d\mu \right )^\frac{1}{2} \Bigg ),
	\end{aligned}
	\end{equation}
	where $C=C(n,s,\theta,\Lambda,p)>0$.
\end{prop}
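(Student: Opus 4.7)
The proof will be a finite bootstrap: Corollary~\ref{aprioriestcorscaled} upgrades the reverse Hölder-type estimate (\ref{J}) from any exponent $q$ to any exponent $\widetilde q \in (q_0, q^\star)$, and the conclusion on an arbitrary sub-ball of $B_r(z)$ is itself an instance of (\ref{J}) at the new exponent for that sub-ball. Thus the output of one step feeds directly into the hypothesis of the next, and my plan is to climb a finite chain $q_{(0)} < q_{(1)} < \cdots < q_{(N)} = p$ of exponents starting from a small $q_{(0)} > 2$ and ending at $p$.

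The starting exponent will be furnished by Theorem~\ref{KMS} in the form (\ref{KMSmod}): since $p > 2$, both $U$ and $G$ lie locally in $L^{2+\sigma_0}(\mathcal{B}_r(z),\mu)$ for any sufficiently small $\sigma_0 > 0$, so (\ref{J}) holds at an exponent $q_{(0)} = 2 + \sigma_1$ on every ball contained in $B_r(z)$, with $\sigma_1 > 0$ depending only on $n,s,\Lambda$. The chain will be built so that $q_{(j+1)} \in (q_{(j)}, q^\star_{(j)})$ for each $j$: while $s q_{(j)} < n$, (\ref{qstar}) gives $q^\star_{(j)}/q_{(j)} = n/(n-sq_{(j)}) \geq 1 + 2s/n$, forcing geometric growth of the exponents; once $s q_{(j)} \geq n$, the value $q^\star_{(j)} = 2p$ permits a direct jump to $p$. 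Hence $N$ will depend only on $n,s,p$. The matching of the $G$-terms between the Corollary's conclusion (which carries $G^{\widetilde q}$) and (\ref{J}) at the next level (which carries $G^{q_0(\widetilde q)}$) will hold automatically by taking $\sigma_0 \le \sigma_1$, since then $q_{(j+1)} \geq 2+\sigma_0$ forces $q_0(q_{(j+1)}) = q_{(j+1)}$.

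For the inductive step, I will apply Corollary~\ref{aprioriestcorscaled} on each sub-ball $B_\rho(w) \subset B_r(z)$: the $\delta$-vanishing hypothesis in $\mathcal{B}_\rho(w)$ is inherited from $\mathcal{B}_r(z)$, the integrability $U, G \in L^{q_{(j+1)}}(\mathcal{B}_\rho(w),\mu)$ follows from the global hypothesis $U, G \in L^p$, and the reverse Hölder hypothesis at exponent $q_{(j)}$ is the induction hypothesis. The conclusion is precisely (\ref{J}) at exponent $q_{(j+1)}$ on $B_\rho(w)$. After $N$ iterations, (\ref{J}) will hold at exponent $p$ on every sub-ball of $B_r(z)$, and specializing to $B_r(z)$ itself will give (\ref{estp}); the required $\delta$ will be the minimum of the finitely many values $\delta(n,s,\theta,\Lambda,q_{(j)},q_{(j+1)})$, so it will depend only on $n,s,\theta,\Lambda,p$. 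The main technical point — really the only nontrivial observation — is that because the iteration upgrades (\ref{J}) on arbitrary sub-balls rather than on a single shrinking one, no loss of domain occurs along the chain, so the quantitative gain $q^\star/q \geq 1 + 2s/n$ is sufficient to reach the prescribed exponent $p$ in finitely many steps.
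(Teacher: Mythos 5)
Your core argument — bootstrap Corollary \ref{aprioriestcorscaled} along a finite chain of exponents climbing to $p$, using that the conclusion holds on arbitrary sub-balls of $B_r(z)$ and hence becomes the hypothesis (\ref{J}) at the next level without loss of domain, with chain length controlled because $q^\star/q$ is bounded away from $1$, and taking $\delta$ as the minimum over the finitely many steps — is exactly the paper's proof. The one flaw is the base case. You assert that (\ref{KMSmod}), combined with local $L^{2+\sigma_0}(\mu)$-membership of $U$ and $G$, yields (\ref{J}) at $q_{(0)}=2+\sigma_1$; but (\ref{KMSmod}) controls $\left(\dashint_{\mathcal{B}_r} U_\gamma^{2+\sigma}\,d\mu_\gamma\right)^{1/(2+\sigma)}$ with $\gamma\le\theta$, not $\left(\dashint_{\mathcal{B}_r} U^{2+\sigma}\,d\mu\right)^{1/(2+\sigma)}$, and these differ in the wrong direction: since $s_\gamma\le s_\theta$, the kernel $|x-y|^{-(n+(2+\sigma)s_\gamma)}$ is dominated by $|x-y|^{-(n+(2+\sigma)s_\theta)}$ for $|x-y|\le 1$, so a bound on the $\mu_\gamma$-average of $U_\gamma$ gives no bound on the $\mu$-average of $U$. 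Also, the mere local $L^{2+\sigma_0}(\mu)$-integrability of $U$ and $G$ does not by itself furnish a reverse H\"older inequality.

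The paper avoids this entirely by observing that (\ref{J}) is \emph{trivially} satisfied for $q=q_0=2$: since $\mathcal{B}_{r/2}(x_0)\subset\mathcal{B}_{2r}(x_0)$ and $\mu$ is doubling (Proposition \ref{doublingmeasure}), the $k=1$ summand on the right-hand side of (\ref{J}) already dominates the left-hand side after adjusting $C_2$. Corollary \ref{aprioriestcorscaled} admits $q=2$ in its hypothesis, so the bootstrap can be initialized there. Replace your base case with this observation, set $q_0:=2$ and $q_{i+1}:=\min\{(q_i+(q_i)^\star)/2,p\}$, and the rest of your argument goes through as written and coincides with the paper's proof.
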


\begin{proof}
Define iteratively a sequence $\{q_i\}_{i=0}^\infty$ of real numbers by
$$ q_0:=2, \quad q_{i+1}:= \min \{(q_i+(q_i)^\star)/2,p\},$$
where as in (\ref{qstar}) we let
\begin{align*}
(q_i)^\star=\begin{cases} 
\frac{nq_i}{n-sq_i}, & \text{if } n>sq_i \\
2p, & \text{if } n \leq sq_i.
\end{cases}
\end{align*}
Since for any $i$ with $n>sq_{i+1}$ we have
$$ \left (q_i+\frac{nq_i}{n-sq_i} \right )/2 -q_i =\frac{nq_{i}}{2(n-sq_{i})} - \frac{q_i}{2} \geq \frac{4s}{2(n-s)}>0,$$
there clearly exists some $i_p \in \mathbb{N}$ such that $q_{i_p} = p$. \newline
Since the estimate (\ref{J}) is trivially satisfied for $q=q_0=2$, and in view of the additional assumption that $U \in L^{p}(\mathcal{B}_{r}(z),\mu)$ we in particular have $U \in L^{q_1}(\mathcal{B}_{r}(z),\mu)$, if we choose $\delta$ small enough such that Corollary \ref{aprioriestcorscaled} is applicable with $q=2$ and $\widetilde q = q_1$, then all assumptions of Corollary \ref{aprioriestcorscaled} are satisfied with respect to $q=q_0=2$ and $\widetilde q = q_1\in (q_0,(q_0)^\star)$, so that we obtain
\begin{equation} \label{q1est}
\begin{aligned}
\left (\dashint_{\mathcal{B}_{r/2}(z)} U^{q_1} d\mu \right )^{\frac{1}{q_1}} \leq & C \Bigg (\sum_{k=1}^\infty 2^{-k(s-\theta)} \left ( \dashint_{\mathcal{B}_{2^k r(z)}} U^2 d\mu \right )^\frac{1}{2} \\
& + \left ( \dashint_{\mathcal{B}_{r}(z)} G^{q_1} d\mu \right )^\frac{1}{q_1} + \sum_{k=1}^\infty 2^{-k(s-\theta)} \left ( \dashint_{\mathcal{B}_{2^k r}(z)} G^2 d\mu \right )^\frac{1}{2} \Bigg ),
\end{aligned}
\end{equation}
where $C_1=C_1(n,s,\theta,\Lambda,p)>0$. If $i_p=1$, then $q_1=p$ and the proof is finished. Otherwise, we note that since $r$ and $z$ are arbitrary, the estimate (\ref{q1est}) holds also in any ball that is contained in $B_r(z)$, which means that the estimate (\ref{J}) is satisfied with respect to $q=q_1$ in any ball contained in $B_r(z)$. Since also $U \in L^{p}(\mathcal{B}_{r}(z),\mu) \subset L^{q_2}(\mathcal{B}_{r}(z),\mu)$, if we choose $\delta$ smaller if necessary such that Corollary \ref{aprioriestcorscaled} is applicable with $q=q_1$ and $\widetilde q = q_2$, then all assumptions of Corollary \ref{aprioriestcorscaled} are satisfied with respect to $q=q_1$ and $\widetilde q = q_2 = (q_i+(q_i)^\star)/2 \in (q_1,(q_1)^\star)$, so that we obtain the estimate 
\begin{align*}
\left (\dashint_{\mathcal{B}_{r/2}(z)} U^{q_2} d\mu \right )^{\frac{1}{q_2}} \leq & C_2 \Bigg (\sum_{k=1}^\infty 2^{-k(s-\theta)} \left ( \dashint_{\mathcal{B}_{2^k r(z)}} U^2 d\mu \right )^\frac{1}{2} \\
& + \left ( \dashint_{\mathcal{B}_{r}(z)} G^{q_2} d\mu \right )^\frac{1}{q_2} + \sum_{k=1}^\infty 2^{-k(s-\theta)} \left ( \dashint_{\mathcal{B}_{2^k r}(z)} G^2 d\mu \right )^\frac{1}{2} \Bigg ),
\end{align*}
where $C_2=C_2(n,s,\theta,\Lambda,p)>0$.
By iterating this procedure $i_p$ times and using that $q_{i_p}=p$, we finally arrive at the estimate (\ref{estp}).
\end{proof}
\section{Proofs of the main results} \label{pmr}
We are now in the position to prove our main results.
\begin{thm} \label{mainint5}
	Let $\Omega \subset \mathbb{R}^n$ be a domain, $s \in (0,1)$, $\Lambda \geq 1$, $R>0$ and $p \in (2,\infty)$. Moreover, fix some $t$ such that
	\begin{equation} \label{trangex}
	s < t < \min  \left \{2s \left (1-\frac{1}{p} \right) ,1-\frac{2-2s}{p} \right \}.
	\end{equation} Then there exists some small enough $\delta=\delta(p,n,s,t,\Lambda)>0$, such that if $A \in \mathcal{L}_0(\Lambda)$ is $(\delta,R)$-BMO in $\Omega$ and if $\Phi$ satisfies the conditions (\ref{PhiLipschitz}) and (\ref{PhiMonotone}) with respect to $\Lambda$, 
	then for any weak solution $u \in W^{s,2}(\mathbb{R}^n)$
	of the equation
	$$
	L_A^\Phi u = f \text{ in } \Omega,
	$$
	we have the implication $$f \in L^\frac{np}{n+(2s-t)p}_{loc}(\Omega) \implies u \in W^{t,p}_{loc}(\Omega).$$ Moreover, for all relatively compact bounded open sets ${\Omega^\prime} \Subset {\Omega^{\prime \prime}} \Subset \Omega$, we have the estimate
	\begin{equation} \label{Wstest}
	[u]_{W^{t,p}(\Omega^\prime)} \leq C \left ([u]_{W^{s,2}(\mathbb{R}^n)} + ||f||_{L^{\frac{np}{n+(2s-t)p}}(\Omega^{\prime \prime})} \right ),
	\end{equation}
	where $C=C(n,s,t,\Lambda,R,p,\Omega^\prime,\Omega^{\prime \prime})>0$.
\end{thm}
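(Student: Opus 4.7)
My plan is to combine the a priori estimate of Proposition \ref{aprioriestx} with a transfer-of-regularity through the fractional Laplacian and a mollification argument. First I would fix the parameter governing the measure $\mu$ by setting $\theta:=(t-s)p/(p-2)$; a direct manipulation shows that the range of $t$ in (\ref{trangex}) is exactly equivalent to $0<\theta<\min\{s,1-s\}$, and moreover $t=s+\theta(1-2/p)$, matching the exponent in Lemma \ref{Sobolevrelate}. Then I fix nested relatively compact open sets $\Omega'\Subset\Omega''\Subset \Omega$, a smooth bounded domain $D$ with $\Omega''\Subset D\Subset\Omega$, and set $q:=np/(n+(2s-t)p)\in(2_\star,\infty)$. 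Applying Theorem \ref{H2spest} to the Dirichlet problem $(-\Delta)^s g=f\chi_{\Omega''}$ in $D$ with $g\equiv 0$ on $\mathbb{R}^n\setminus D$ produces $g\in H^{2s,q}_{loc}(D)$ together with $\|g\|_{H^{2s,q}(\widetilde\Omega)}\le C\|f\|_{L^q(\Omega'')}$ for $\widetilde\Omega\Subset D$. Since $2s-n/q=t-n/p$, Proposition \ref{BesselTr} embeds $g$ into $W^{t,p}(\Omega'')$, and Lemma \ref{Sobolevrelate} then turns this into $G\in L^p(\Omega''\times\Omega'',\mu)$ with a matching norm bound. At this stage the function $u$ is a weak solution of $L_A^\Phi u=(-\Delta)^s g$ in $\Omega''$.

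Next I would invoke Proposition \ref{aprioriestx} on each small ball $\mathcal{B}_r(z)\subset\Omega''$ chosen so that $r\le R$ and $A$ is $\delta$-vanishing in $\mathcal{B}_r(z)$ for the $\delta=\delta(p,n,s,t,\Lambda)$ demanded by that proposition. The only obstacle is the qualitative assumption $U\in L^p(\mathcal{B}_r(z),\mu)$, which I remove by a standard mollification scheme: setting $u_\varepsilon:=\rho_\varepsilon\star u$ on a slightly enlarged region and leaving $u$ unchanged outside, the smoothed $u_\varepsilon$ has $U_\varepsilon\in L^\infty_{loc}$ and hence trivially lies in $L^p$, while it satisfies a perturbed equation of the form $L_A^\Phi u_\varepsilon=(-\Delta)^s g+\hat f_\varepsilon$. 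The defect $\hat f_\varepsilon$ arises from the non-commutation of convolution with the nonlinearity $\Phi$ and with the two-variable dependence of $A$, and can be controlled in $L^{2_\star+\sigma_0}(\Omega'')$ with $\|\hat f_\varepsilon\|\to 0$ as $\varepsilon\to 0$. This is the main technical difficulty: the extra term $\hat f_\varepsilon$ has to be propagated through the comparison estimate of Proposition \ref{appplxy}, the good-$\lambda$ inequalities of Section \ref{gl}, the level-set estimate of Corollary \ref{levelsetest}, and the iterative bootstrap inside Proposition \ref{aprioriestx}, while preserving uniformity in $\varepsilon$. Since Proposition \ref{appplxy} already accommodates an $L^{2_\star+\sigma_0}$ right-hand side with an absorbable constant, this extension is compatible with the existing machinery; a uniform bound on $\|U_\varepsilon\|_{L^p(\mathcal{B}_{r/2}(z),\mu)}$ then follows, and Fatou's lemma yields $U\in L^p(\mathcal{B}_{r/2}(z),\mu)$ together with the limiting estimate.

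Finally, I would cover $\Omega'$ by finitely many balls $\mathcal{B}_{r/2}(z_j)$ and sum the local estimates. The tail sums $\sum_{k\ge 1}2^{-k(s-\theta)}\bigl(\dashint_{\mathcal{B}_{2^kr}(z_j)}U^2\,d\mu\bigr)^{1/2}$ are bounded by $C(r,s,\theta)[u]_{W^{s,2}(\mathbb{R}^n)}$, because $\theta<s$ forces geometric summability and the volume identity of Proposition \ref{doublingmeasure} together with the definition of $\mu$ give $\int_{\mathcal{B}_{R}}U^2\,d\mu\le [u]_{W^{s,2}(\mathbb{R}^n)}^2$ on any bounded set; the analogous tail sums for $G$ are controlled by $[g]_{W^{s,2}(\mathbb{R}^n)}\le C\|f\|_{L^q(\Omega'')}$ via the energy estimate for the Dirichlet problem of the first step, so that all terms in the local a priori bound are dominated by $[u]_{W^{s,2}(\mathbb{R}^n)}+\|f\|_{L^q(\Omega'')}$. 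Converting back through Lemma \ref{Sobolevrelate} turns the resulting $L^p(\Omega'\times\Omega',\mu)$ bound on $U$ into $[u]_{W^{t,p}(\Omega')}$, yielding (\ref{Wstest}) and in particular $u\in W^{t,p}_{loc}(\Omega)$.
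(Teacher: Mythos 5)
Your first step (constructing $g$ globally via Theorem \ref{H2spest} on a fixed smooth domain $D$ rather than on each ball) is a perfectly sound variant of the paper's argument, and your choice $\theta=(t-s)p/(p-2)$ with the equivalence to $0<\theta<\min\{s,1-s\}$ is exactly right. The tail-sum estimates you sketch at the end are also correct.

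The genuine gap is in your handling of the qualitative assumption $U\in L^p$. You propose mollifying the \emph{solution}, $u_\varepsilon=\rho_\varepsilon\star u$, and you assert that the resulting defect $\hat f_\varepsilon$ ``can be controlled in $L^{2_\star+\sigma_0}(\Omega'')$ with $\|\hat f_\varepsilon\|\to 0$.'' This is the crux, and it is not established; in fact, for a nonlinear nonlocal operator with a merely bounded measurable two-variable coefficient $A(x,y)$, it is far from clear that any such control holds. Mollification of $u$ does not commute with the nonlinearity $\Phi$, so $\Phi(u_\varepsilon(x)-u_\varepsilon(y))$ has no useful relation to $\rho_\varepsilon\star[\Phi(u(\cdot)-u(\cdot))]$, and it certainly does not commute with multiplication by $A(x,y)/|x-y|^{n+2s}$. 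The ``commutator'' is therefore not a small perturbation that you can bound by a soft argument; identifying it as an $L^{2_\star+\sigma_0}$ function with vanishing norm would require a substantial new lemma, and I do not see how to prove it with the tools available in the paper. The paper avoids this entirely by mollifying the \emph{data} rather than the solution: it regularizes $A$ (to $\widetilde A_m$, continuous near the diagonal and still $\delta$-vanishing) and $f$ (to $f_m\in L^\infty_{loc}$), then solves a genuinely new Dirichlet problem $L_{\widetilde A_m}^\Phi u_m=f_m$ in a small ball with boundary datum $u$. Because $u_m$ exactly solves a smooth-coefficient equation, Theorem \ref{HiHol} gives $u_m\in C^{s+\theta}_{loc}$, hence $U_m\in L^\infty_{loc}\subset L^p$, with no commutator error at all; the comparison estimate of Proposition \ref{appplxy} then yields $[u_m]_{W^{s,2}(\mathbb{R}^n)}\to[u]_{W^{s,2}(\mathbb{R}^n)}$, and Fatou's lemma passes the a priori bound from $U_m$ to $U$. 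You should replace your solution-mollification step with this data-mollification-plus-auxiliary-Dirichlet-problem step; the remainder of your argument then goes through.
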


\begin{rem} \normalfont
Note that in view of Proposition \ref{Sobcont}, the conclusion that $u \in W^{t,p}_{loc}(\Omega)$ for some $p \in (2,\infty)$ and for any $t$ in the range (\ref{trangex}) also implies that $u \in W^{t,p_0}_{loc}(\Omega)$ for any $p_0 \in (1,p]$.
\end{rem}

\begin{proof}[Proof of Theorem \ref{mainint5}]
Fix relatively compact bounded open sets ${\Omega^\prime} \Subset {\Omega^{\prime \prime}} \Subset \Omega$.
Let $\delta=\delta(p,n,s,\theta,\Lambda)>0$ be given by Proposition \ref{aprioriestx}. There exists some small enough $r_1 \in (0,1)$ such that $2r_1 \leq R$ and $B_{2r_1}(z) \Subset \Omega^{\prime \prime}$ for any $z \in \Omega^\prime$. Now fix some $z \in \Omega^{\prime \prime}$. Since $A$ is $(\delta,R)$-BMO in $\Omega$, we obtain that $A$ is $\delta$-vanishing in $B_{2r_1}(z)$.
Let $\{\psi_m\}_{m=1}^\infty$ be a sequence of standard mollifiers in $\mathbb{R}^{n}$ with the properties 
\begin{equation} \label{molprop}
\psi_m \in C_0^\infty(B_{1/m}), \quad \psi_m \geq 0, \quad \int_{\mathbb{R}^{n}} \psi_m(x)dx=1 \quad \text{for all } m \in \mathbb{N}.
\end{equation}
In addition, for any $m \in \mathbb{N}$ we define $$A_m(x,y):= \int_{\mathbb{R}^{n}} \int_{\mathbb{R}^{n}} A(x-x^\prime,y-y^\prime) \psi_m(x^\prime) \psi_m (y^\prime)dy^\prime dx^\prime .$$ 
Clearly, $A_m$ is symmetric and belongs to $\mathcal{L}_0(\Lambda)$ for any $m \in\mathbb{N}$. In addition, there exists some large enough $m_0 \in \mathbb{N}$ such that $$ \frac{1}{m_0} < \min\{r_1,\textnormal{dist}(B_{2r_1}(z),\Omega)\}.$$
Fix $r>0$ and $x_0,y_0 \in B_{r_1}(z)$ with $B_r(x_0) \subset B_{r_1}(z)$, $B_r(y_0) \subset B_{r_1}(z)$. Then for any $m \geq m_0$ and all $x^\prime,y^\prime \in B_{1/m}$, we have $B_{r}(x_0-x^\prime) \subset B_{2r_1}(z)$ and $B_{r}(y_0-y^\prime) \subset B_{2r_1}(z)$. Therefore, since $A$ is $\delta$-vanishing in $B_{2r_1}(z)$, for any $m \geq m_0$ and $x^\prime,y^\prime \in B_{1/m}$, we have $$ \dashint_{B_{r}(x_0-x^\prime)} \dashint_{B_{r}(y_0-y^\prime)} |A(x,y)- \overline A_{r,x_0-x^\prime,y_0-y^\prime}|dydx \leq \delta .$$ 
Therefore, together with changes of variables, Fubini's theorem and (\ref{molprop}),
we obtain
\begin{align*}
& \dashint_{B_r(x_0)} \dashint_{B_r(y_0)} |A_m(x,y)-\overline {(A_m)}_{r,x_0,y_0}|dydx \\
\leq & \int_{B_\frac{1}{m}} \int_{B_\frac{1}{m}} \dashint_{B_r(x_0)} \dashint_{B_r(y_0)} \bigg |A(x-x^\prime,y-y^\prime) - \dashint_{B_r(x_0)} \dashint_{B_r(y_0)} A(x_1-x^\prime,y_1-y^\prime)dy_1dx_1 \bigg |dydx \\ & \times \psi_m(x^\prime) \psi_m (y^\prime) dy^\prime dx^\prime \\
= & \int_{B_\frac{1}{m}} \int_{B_\frac{1}{m}} \left ( \dashint_{B_r(x_0-x^\prime)} \dashint_{B_r(y_0-y^\prime)} |A(x,y)-\overline A_{r,x_0-x^\prime,y_0-y^\prime}|dydx \right ) \psi_m(x^\prime) \psi_m (y^\prime) dy^\prime dx^\prime \\ \leq & \delta \int_{B_\frac{1}{m}} \int_{B_\frac{1}{m}} \psi_m(x^\prime) \psi_m (y^\prime) dy^\prime dx^\prime = \delta ,
\end{align*}
so that we conclude that $A_m$ is $\delta$-vanishing in $B_{r_1}(z)$ for any $m \geq m_0$.
Now define
$$ \widetilde A_m(x,y) := \begin{cases} \normalfont
A_m(x,y) & \text{if } (x,y) \in \mathcal{B}_{2r_1}(z) \\
A(x,y) & \text{if } (x,y) \notin \mathcal{B}_{2r_1}(z).
\end{cases} $$
Since $A \in L^\infty(\mathbb{R}^n) \subset L^1_{loc}(\mathbb{R}^n)$, by standard properties of mollifiers we have
\begin{equation} \label{Aapr}
\widetilde A_m \xrightarrow{m \to \infty} A \text{ in } L^1(\mathcal{B}_{2r_1}(z))
\end{equation}
and also $\widetilde A_m(x,y) \in C^\infty(\mathcal{B}_{2r_1}(z))$. In particular, $\widetilde A_m$ is continuous inside of $B_{2r_1}(z) \times B_{2r_1}(z)$. \par
Next, for any $m \in \mathbb{N}$ and $x \in \Omega_m := \left \{x \in \Omega \mid \textnormal{dist}(x, \partial \Omega) > 1/m \right \}$, define
$$ f_m(x):= \int_{\Omega} f(y)\psi_m(x-y)dy.$$
Since $f \in L^\frac{np}{n+(2s-t)p}_{loc}(\Omega)$ and $B_{2r_1}(z) \Subset \Omega$, again by standard properties of mollifiers we have
\begin{equation} \label{appf}
f_m \xrightarrow{m \to \infty} f \text{ in } L^\frac{np}{n+(2s-t)p}(B_{2r_1}(z))
\end{equation}
and $f_m \in C^\infty(B_{2r_1}(z)) \subset L^\infty(B_{\frac{3}{2}r_1}(z))$. Next, for any $m \geq m_0$ we let $u_m \in W^{s,2}(\mathbb{R}^n)$ be the unique weak solution of the Dirichlet problem 
\begin{equation} \label{constcof3k}
\begin{cases} \normalfont
L_{\widetilde A_m}^\Phi u_m = f_m & \text{ in } B_{2r_1}(z) \\
u_m = u & \text{ a.e. in } \mathbb{R}^n \setminus B_{2r_1}(z).
\end{cases}
\end{equation}
Since $2_\star=\frac{2n}{n+2s} < \frac{np}{n+(2s-t)p}$, we can choose the number $\sigma_0>0$ from Theorem \ref{KMS} small enough such that
$$2_\star +\sigma_0 \leq \frac{np}{n+(2s-t)p}.$$
Then by Proposition \ref{appplxy}, (\ref{Cst}), H\"older's inequality, (\ref{Aapr}) and (\ref{appf}), for $w_m:=u-u_m$ we obtain
\begin{align*}
& \int_{\mathbb{R}^n} \int_{\mathbb{R}^n} \frac{(w_m(x)-w_m(y))^2}{|x-y|^{n+2s}}dydx \\
\leq & C_1 \omega(A- \widetilde A_m,2r_1,x_0)^\frac{\gamma}{n-\gamma} \mu(\mathcal{B}_{r_1}(z)) \left (\sum_{k=1}^\infty 2^{-k(s-\theta)} \left ( \dashint_{\mathcal{B}_{2^kr_1}(z)} U^2 d\mu \right )^\frac{1}{2} \right )^2 \\ & + C_1 \omega(A- \widetilde A_m,2r_1,z)^\frac{\gamma}{n-\gamma} r_1^{2(s-\theta)}\mu(\mathcal{B}_{r_1}(z)) \left ( \dashint_{B_{2r_1}(z)}|f|^{2_\star+\sigma_0} dx \right )^\frac{2}{2_\star+\sigma_0} \\
& + C_1 r_1^{2(s-\theta)}\mu(\mathcal{B}_{r_1}(z)) \left (\dashint_{B_{2r_1}(z)}|f-f_m|^{2_\star} dx \right )^\frac{2}{2_\star} \\
\leq & C_2 \bigg ( ||A-\widetilde A_m||_{L^1(\mathcal{B}_{2r_1}(z))}^\frac{\gamma}{n-\gamma} [u]_{W^{s,2}(\mathbb{R}^n)}^2 + ||A-\widetilde A_m||_{L^1(\mathcal{B}_{2r_1}(z))}^\frac{\gamma}{n-\gamma} ||f||_{L^{\frac{np}{n+(2s-t)p}}(B_{2r_1}(z))}^2 \\
& + ||f-f_m||_{L^{\frac{np}{n+(2s-t)p}}(B_{2r_1}(z))}^2 \bigg) \xrightarrow{k \to \infty} 0,
\end{align*}
where $C_1=C_1(n,s,\theta,\Lambda,\sigma_0)>0$ and $C_2=C_2(n,s,\theta,\Lambda,\sigma_0,r_1)>0$.
Thus, we deduce
\begin{equation} \label{UktoU}
\lim_{m \to \infty} [u_m]_{W^{s,2}(\mathbb{R}^n)}
= [u]_{W^{s,2}(\mathbb{R}^n)}.
\end{equation}
Next, for any $m \in \mathbb{N}$ let $g_m \in W^{s,2}(\mathbb{R}^n)$ be the unique weak solution of the Dirichlet problem
\begin{equation} \label{constcof3gk}
\begin{cases} \normalfont
(-\Delta)^s g_m = f_m & \text{ in } B_{2r_1}(z) \\
g_m = 0 & \text{ a.e. in } \mathbb{R}^n \setminus B_{2r_1}(z).
\end{cases}
\end{equation}
Then by (\ref{basic}), we have the estimate
\begin{equation} \label{fkestimate}
[g_m]_{W^{s,2}(\mathbb{R}^n)} \leq C_3 ||f_m||_{L^{2_\star}(B_{2r_1}(z))} \leq C_4 ||f_m||_{L^{\frac{np}{n+(2s-t)p}}(B_{2r_1}(z))},
\end{equation}
where $C_3=C_3(n,s,r_1)>0$ and $C_4=C_4(n,s,t,p,r_1)>0$.
In addition, by Theorem \ref{H2spest} we have the estimate
\begin{equation} \label{H2spa}
||g_m||_{H^{2s,\frac{np}{n+(2s-t)p}}(B_{r_1}(z))} \leq C_5 ||f_m||_{L^{\frac{np}{n+(2s-t)p}}(B_{2r_1}(z))},
\end{equation}
where $C_5=C_5(n,s,t,p)>0$.
Also, by Proposition \ref{BesselTr}, we have 
\begin{equation} \label{embend}
[g_m]_{W^{t,p}(B_{r_1}(z))} \leq C_6 ||g_m||_{H^{2s,\frac{np}{n+(2s-t)p}}(B_{r_1}(z))},
\end{equation}
where $C_6=C_6(n,s,t,p)>0$. In view of (\ref{constcof3k}) and (\ref{constcof3gk}), $u_m$ is a weak solution of the equation
$$ L_{\widetilde A_m}^\Phi u_m= (-\Delta)^s g_m \text{ in } B_{2r_1}(z).$$
Define $$U_m(x,y):=\frac{|u_m(x)-u_m(y)|}{|x-y|^{s+\theta}}, \quad G_m(x,y):=\frac{|g_m(x)-g_m(y)|}{|x-y|^{s+\theta}}.$$ 
Since $\widetilde A_m$ is continuous in $B_{2r_1}(z) \times B_{2r_1}(z)$ and $f_m \in L^\infty(B_{\frac{3}{2}r_1}(z))$, by Theorem \ref{HiHol} we have $u_m \in C^{s+\theta}(B_{r_1}(z))$ and therefore $U_m \in L^\infty(B_{r_1}(z),\mu) \subset L^p(B_{r_1}(x_0),\mu)$. Therefore, by Proposition \ref{aprioriestx}, (\ref{Cst}), (\ref{fkestimate}), (\ref{embend}) and (\ref{H2spa}), we have
\begin{align*}
\left (\dashint_{\mathcal{B}_{r_1/2}(z)} U_m^{p} d\mu \right )^{\frac{1}{p}} \leq & C_7 \Bigg (\sum_{k=1}^\infty 2^{-k(s-\theta)} \left ( \dashint_{\mathcal{B}_{2^k r_1}(z)} U_m^2 d\mu \right )^\frac{1}{2} \\
& + \left ( \dashint_{\mathcal{B}_{r_1}(z)} G_m^{p} d\mu \right )^\frac{1}{p} + \sum_{k=1}^\infty 2^{-k(s-\theta)} \left ( \dashint_{\mathcal{B}_{2^k r_1}(z)} G_m^2 d\mu \right )^\frac{1}{2} \Bigg ) \\
\leq & C_8 \left ([u_m]_{W^{s,2}(\mathbb{R}^n)} + [g_m]_{W^{t,p}(B_{r_1}(z)} + [g_m]_{W^{s,2}(\mathbb{R}^n)} \right ) \\
\leq & C_9 \left ([u_m]_{W^{s,2}(\mathbb{R}^n)} + ||f_m||_{L^{\frac{np}{n+(2s-t)p}}(B_{2r_1}(z))} \right ) ,
\end{align*}
where all constants depend only on $n,s,t,\theta,\Lambda,p$ and $r_1$.
Combining the previous display with Fatou's Lemma (which is applicable after passing to a subsequence if necessary), (\ref{UktoU}) and (\ref{appf}), we conclude that
\begin{equation} \label{liminfest}
\begin{aligned}
\left (\dashint_{\mathcal{B}_{r_1/2}(z)} U^{p} d\mu \right )^{\frac{1}{p}} \leq & \liminf_{m \to \infty} \left (\dashint_{\mathcal{B}_{r_1/2}(z)} U_m^{p} d\mu \right )^{\frac{1}{p}} \\
\leq & C_{10} \lim_{m \to \infty} \left ([u_m]_{W^{s,2}(\mathbb{R}^n)} + ||f_m||_{L^{\frac{np}{n+(2s-t)p}}(B_{2r_1}(z))} \right ) \\
= & C_{10} \left ([u]_{W^{s,2}(\mathbb{R}^n)} + ||f||_{L^{\frac{np}{n+(2s-t)p}}(B_{2r_1}(z))} \right ) ,
\end{aligned}
\end{equation}
where $C_{10}=C_{10}(n,s,t,\theta,\Lambda,p,r_1)>0$. \par 
Since $\left \{B_{r_1/2}(z) \right \}_{z \in {\Omega^\prime}}$ is an open covering of $\overline {\Omega^\prime}$ and $\overline {\Omega^\prime}$ is compact, there exists a finite subcover $\left \{B_{r_1/2}(z_i) \right \}_{i=1}^N$ of $\overline {\Omega^\prime}$ and hence of $\Omega^\prime$. Now summing over $i=1,...,N$ and using the estimate (\ref{liminfest}) for any $i$, we arrive at
\begin{equation} \label{cest}
\begin{aligned}
\left (\int_{\Omega^\prime \times \Omega^\prime} U^{p} d\mu \right )^{\frac{1}{p}} \leq & \sum_{i=1}^N \left (\int_{\mathcal{B}_{r_1/2}(z_i)} U^{p} d\mu \right )^{\frac{1}{p}} \\
\leq & \sum_{i=1}^N C_{11} \left ([u]_{W^{s,2}(\mathbb{R}^n)} + ||f||_{L^{\frac{np}{n+(2s-t)p}}(B_{2r_1}(z_i))} \right ) \\
\leq & C_{11}N \left ([u]_{W^{s,2}(\mathbb{R}^n)} + ||f||_{L^{\frac{np}{n+(2s-t)p}}(\Omega^{\prime \prime})} \right ),
\end{aligned}
\end{equation}
where $C_{11}=C_{11}(n,s,t,\theta,\Lambda,p,r_1)>0$.
Clearly, for any $t$ in the range (\ref{trangex}), there exists some $0<\theta <\min \{s,1-s\}$ such that $t= s+\theta \left (1-\frac{2}{p} \right)$,
so that by choosing this $\theta$ in our definition of $\mu$, we arrive at
\begin{align*}
[u]_{W^{t,p}(\Omega^\prime)} = \left (\int_{\Omega^\prime \times \Omega^\prime} U^{p} d\mu \right )^{\frac{1}{p}} \leq C \left ([u]_{W^{s,2}(\mathbb{R}^n)} + ||f||_{L^{\frac{np}{n+(2s-t)p}}(\Omega^{\prime \prime})} \right ),
\end{align*}
where $C=C(n,s,t,\Lambda,p,\Omega^\prime,\Omega^{\prime \prime})>0$. Here we also used that $r_1$ depends only on $R,\Omega^\prime$ and $\Omega^{\prime \prime}$ and that $\theta$ depends only on $n,s$ and $t$. This proves the estimate (\ref{Wstest}). \par 
Next, let us prove that $u \in L^p_{loc}(\Omega)$. For any $q \in (2,p]$, we fix some 
$$s< t_q < \min  \left \{2s \left (1-\frac{1}{q} \right) ,1-\frac{2-2s}{q} \right \}$$ and define
$$ q^\star := \begin{cases} 
\min \left \{\frac{nq}{n-t_q q},p \right \}, & \text{if } t_q q < n \\
p, & \text{if } t_q q \geq n .
\end{cases}$$
Since $u \in W^{s,2}(\mathbb{R}^n)$, by the Sobolev embedding (Proposition \ref{Sobemb}) we have $u \in L^{2^\star}_{loc}(\Omega)$, where $2^\star:=\min \left \{\frac{2n}{n-2s},p \right \}$. If $p=2^\star$, the proof is finished. Otherwise, together with the estimate (\ref{Wstest}) with $p$ replaced by $2^\star$, we conclude that $u \in W^{t_{2^\star},2^\star}_{loc}(\Omega)$. Again by the Sobolev embedding, we then obtain that $u \in L^{2^{{\star}^\star}}_{loc}(\Omega)$. If $p=2^{{\star}^\star}$, the proof is finished. Otherwise, iterating this procedure also leads to the conclusion that $u \in L^p_{loc}(\Omega)$ at some point, so that we conclude that $u \in W^{t,p}_{loc}(\Omega)$. This finishes the proof.
\end{proof}

\begin{proof}[Proof of Theorem \ref{mainint5z}]
Let us first handle the case when $t>s$. Since $A$ is assumed to be VMO in $\Omega$, for any $\delta>0$, there exists some $R>0$ such that is $(\delta,R)$-BMO in $\Omega$. Therefore, in this case Theorem \ref{mainint5z} follows directly from Theorem \ref{mainint5}. This finishes the proof in the case then $t>s$. \par 
In the case when $t=s$, fix some small enough $\varepsilon>0$ such that $\widetilde s := s+\varepsilon$ belongs to the range (\ref{trangex}) and $\widetilde p := \frac{np}{n+\varepsilon p}>2$.
Then by assumption and an elementary computation, we have $f \in L^\frac{np}{n+sp}_{loc}(\Omega) = L^\frac{n \widetilde p}{n+ (2s-\widetilde s) \widetilde p}_{loc}(\Omega)$. By applying the previous case when $t>s$ with $t=\widetilde s$ and with $p$ replaced by $\widetilde p$, we obtain that $u \in W^{\widetilde s,\widetilde p}_{loc}(\Omega)$, which by Proposition \ref{BesselTr} leads to $u \in W^{s,p}_{loc}(\Omega)$. Thus, the proof is finished.
\end{proof}

\begin{proof}[Proof of Theorem \ref{mainint5zx}]
	Fix some $t$ such that $s \leq t < 1$. First, we assume that $t$ satisfies (\ref{trangexy}). Then we have $n>(2s-t)q$ and set $p:=\frac{nq}{n-(2s-t)q}$, so that we have $q=\frac{np}{n+(2s-t)p}$ and thus $f \in L^\frac{np}{n+(2s-t)p}_{loc}(\Omega)$. Then in view of (\ref{trangexy}) and elementary computations, we obtain that $p>2$ and
	$$
		s\leq t < \min  \left \{2s \left (1-\frac{1}{p} \right) ,1-\frac{2-2s}{p} \right \}
	$$
	so that by Theorem \ref{mainint5z} we obtain $u \in W^{t,p}_{loc}(\Omega) = W^{t,\frac{nq}{n-(2s-t)q}}_{loc}(\Omega)$. \par Next, suppose that $t = 2s-\frac{n}{q}$. Since $t<1$, in this case we have $2s-\frac{n}{q} < 1$. Using the latter inequality, a direct computation shows that
	\begin{equation} \label{simpcompx}
		2s-\frac{n}{q} < \min \left \{2s \left (1- \frac{n}{(n+2s)q} \right ),1-\frac{(2-2s)(n+q-2sq)}{(n+2-2s)q} \right \},
	\end{equation} so that there exists some $t^\prime \geq s$ such that 	
$$ 2s-\frac{n}{q} < t^\prime< \min \left \{2s \left (1- \frac{n}{(n+2s)q} \right ),1-\frac{(2-2s)(n+q-2sq)}{(n+2-2s)q} \right \}.$$
Then by the previous case, we obtain that $u \in W^{t^\prime,\frac{nq}{n-(2s-t^\prime)q}}_{loc}(\Omega)$, which by Proposition \ref{BesselTr} and Proposition \ref{Sobcont} implies that $u \in W^{2s-\frac{n}{q},p}_{loc}(\Omega)=W^{t,p}_{loc}(\Omega)$ for any $p \in (1,\infty)$. \par
Finally, if we have $2s-\frac{n}{q} >t$, then there exists some $\varepsilon>0$ such that $2s-\frac{n}{q} > t + \varepsilon$. Then for any $p >1$, we have $q \geq \frac{np}{n+(2s-t-\varepsilon)p}$ and therefore $f \in L^\frac{np}{n+(2s-t-\varepsilon)p}_{loc}(\Omega)$. Furthermore, for $p>\max \left \{\frac{2s}{2s-t},\frac{2-2s}{1-t} \right \}$, we see that $$s \leq t+\varepsilon < \min  \left \{2s \left (1-\frac{1}{p} \right) ,1-\frac{2-2s}{p} \right \},$$ so that by Theorem \ref{mainint5z} we obtain that $u \in W^{t+\varepsilon,p}_{loc}(\Omega)$ for any $p>\max \left \{\frac{2s}{2s-t},\frac{2-2s}{1-t} \right \}$, which by Proposition \ref{Sobcont} implies that $u \in W^{t,p}_{loc}(\Omega)$ for any $p \in (1,\infty)$, which finishes the proof.
\end{proof}

\begin{proof}[Proof of Theorem \ref{C2sreg1}]
	First of all, we remark that by the assumption that $q>\frac{n}{2s}$, we always have $2s-\frac{n}{q}>0.$ Moreover, by a simple computation we have $q> \frac{2n}{n+2(2s-t)}$ for any $t <1$. Now consider the case when $0<2s-\frac{n}{q}<1$.
	Then in view of (\ref{simpcompx}), there exists some $t \geq s$ such that
	$$
	2s-\frac{n}{q} < t< \min \left \{2s \left (1- \frac{n}{(n+2s)q} \right ),1-\frac{(2-2s)(n+q-2sq)}{(n+2-2s)q} \right \},
	$$
	so that by Theorem \ref{mainint5zx}, for $p:=\frac{nq}{n-(2s-t)q}$ we obtain that $u \in W^{t,p}_{loc}(\Omega)$. Since in addition we have $$t-\frac{n}{p}=t-\frac{n(n-(2s-t)q)}{nq}=2s-\frac{n}{q}>0, $$
	by the Sobolev embedding (Proposition \ref{Sobemb}) we conclude that $u \in C^{2s-\frac{n}{q}}_{loc}(\Omega)$. \par
	Next, consider the case when $2s-\frac{n}{q} \geq 1$. In this case, by Theorem \ref{mainint5zx} we obtain $u \in W^{t,p}_{loc}(\Omega)$ for any $s \leq t<1$ and any $p \in (1,\infty)$, which by the Sobolev embedding implies that $u \in C^\alpha_{loc}(\Omega)$ for any $\alpha \in (0,1)$. This finishes the proof.
\end{proof}

\begin{rem} \label{endremark} \normalfont
Our main results remain valid for another large class of coefficients $A$ that in general might not be VMO. Namely, the conclusions of Theorem \ref{mainint5z}, Theorem \ref{mainint5zx} and Theorem \ref{C2sreg1} remain true if we replace the assumption that $A$ is VMO with the following assumption used in for example \cite{MeH}:
Namely, our main results remain true if there exists some small $\varepsilon>0$ such that
\begin{equation} \label{contkernel}
	\lim_{h \to 0} \sup_{\substack{_{x,y \in K}\\{|x-y| \leq \varepsilon}}} |A(x+h,y+h)-A(x,y)| =0 \quad \text{for any compact set } K \subset \Omega.
\end{equation}
This is because by \cite[Theorem 1.1]{MeH}, the H\"older estimate from Theorem \ref{HiHol} remains valid under the assumption (\ref{contkernel}). Therefore, in contrast to the case when $A$ is VMO, under the assumption (\ref{contkernel}) the above proof can be executed without the need to freeze the coefficient $A$, so that the proof actually simplifies in this case. The condition (\ref{contkernel}) is for example satisfied in the case when $A$ is translation invariant in $\Omega$, that is, if there exists a measurable function $a: \mathbb{R}^n \to \mathbb{R}$ such that $A(x,y)=a(x-y)$ for all $x,y \in \Omega$. Since in this case $A$ is otherwise not required to satisfy any additional smoothness assumption, $A$ might not be VMO in $\Omega$ but still satisfies (\ref{contkernel}).
\end{rem}

\bibliographystyle{amsplain}

\end{document}